\newcommand{\noop}[1]{}
\tikzset{
  symbol/.style={
    draw=none,
    every to/.append style={
      edge node={node [sloped, allow upside down, auto=false]{$#1$}}}
  }
}
\newcounter{dummy}
\numberwithin{dummy}{section}
\newtheorem{thm}[dummy]{Theorem}
\newtheorem{defn}[dummy]{Definition}
\newtheorem{conj}[dummy]{Conjecture}
\newtheorem{lem}[dummy]{Lemma}
\newtheorem{prop}[dummy]{Proposition}
\newtheorem{cor}[dummy]{Corollary}
\theoremstyle{definition}
\newtheorem{rmk}[dummy]{Remark}
\numberwithin{equation}{section}
\newtheorem*{claim}{Claim}
\renewcommand\Re{\operatorname{Re}}
\renewcommand\Im{\operatorname{Im}}
\DeclareMathOperator{\id}{id}
\DeclareMathOperator{\der}{der}
\DeclareMathOperator{\ad}{ad}
\DeclareMathOperator{\ws}{ws}
\DeclareMathOperator{\red}{red}
\DeclareMathOperator{\Gal}{\mathrm{Gal}}
\DeclareMathOperator{\GL}{\mathrm{GL}}
\DeclareMathOperator{\Lie}{\mathrm{Lie}}
\DeclareMathOperator{\SL}{\mathrm{SL}}
\DeclareMathOperator{\GSp}{\mathrm{GSp}}
\DeclareMathOperator{\Spec}{\mathrm{Spec}}
\DeclareMathOperator{\Hom}{\mathrm{Hom}}
\DeclareMathOperator{\Res}{\mathrm{Res}}
\DeclareMathOperator{\ZP}{\mathrm{ZP}}
\DeclareMathOperator{\unif}{\mathrm{unif}}
\DeclareMathOperator{\cl}{cl}
\DeclareMathOperator{\ab}{ab}
\renewcommand{\@biblabel}[1]{[#1]\hfill}
\begin{document}
\title{Distinguished Categories and the Zilber-Pink Conjecture}

\author[F. Barroero]{Fabrizio Barroero}
\address[F. Barroero]{Universit\`a degli studi Roma Tre, Dipartimento di Matematica e Fisica, Largo San Murialdo 1, 00146 Roma, Italy}
\email{fbarroero@gmail.com}

\author[G. A. Dill]{Gabriel A. Dill}
\address[G. A. Dill]{Institut de Math\'ematiques, Universit\'e de Neuch\^atel, Rue Emile-Argand 11, 2000 Neuch\^atel, Switzerland}
\email{gabriel.dill@unine.ch}

\subjclass[2010]{11G18, 14G35, 14K10, 14L10}
\keywords{Unlikely intersections, Zilber-Pink, semiabelian varieties, commutative algebraic groups, mixed Shimura varieties, Legendre family, special subvarieties.}
\date{\today}
\maketitle

\begin{abstract}
We propose an axiomatic approach towards studying unlikely intersections by introducing the framework of distinguished categories. This includes commutative algebraic groups and mixed Shimura varieties. It allows us to define all basic concepts of the field and to prove some fundamental facts about them, e.g., the defect condition.

In some categories that we call very distinguished, we are able to show some implications between Zilber-Pink statements with respect to base change. This yields unconditional results, i.e., the Zilber-Pink conjecture for a complex curve in $\mathcal{A}_2$ that cannot be defined over $\bar{\mathbb{Q}}$, a complex curve in the $g$-th fibered power of the Legendre family, and a complex curve in the base change of a semiabelian variety over $\bar{\mathbb{Q}}$.
\end{abstract}

\tableofcontents

\section{Introduction}

This article is inspired by recent advances in the field of unlikely intersections, most importantly by \cite{HP}. Its main purpose is to introduce the general framework of distinguished categories. A distinguished category is a category with a functor to the category of algebraic varieties over an algebraically closed field of characteristic $0$ that satisfies four axioms (see Section \ref{sec:dc} for the precise definition and the list of the axioms \ref{ax:1} to \ref{ax:4}). Algebraic varieties are always irreducible in this article; while (mixed) Shimura varieties as usually defined might not be irreducible, we consider here only connected (mixed) Shimura varieties, which are irreducible. We sometimes omit the word ``connected" when speaking informally. Fields are always of characteristic $0$ and subvarieties are always closed and defined over the field of definition of the ambient variety.

Equivalents or close analogues of some results in this paper were obtained independently by Cassani \cite{Cass2, Cass3} in the case of connected mixed Shimura varieties (of Kuga type).

The fundamental insight that underlies this article is that many statements about unlikely intersections are highly formal in nature, hence one can prove them without using any more properties of the studied objects than the ones codified in the definition of a distinguished category.

To give an idea of the power of this approach, we note that it encompasses algebraic tori, (semi-)abelian varieties, pure and mixed Shimura varieties, and even powers of the additive group as well as general connected commutative algebraic groups. Many (but not all) of these examples of distinguished categories also satisfy a fifth axiom \ref{ax:5}, which is needed for some of the results in this article.

Such an axiomatic approach has advantages as well as disadvantages: In the case of Shimura varieties (pure or mixed), the method allows to dispense with their very technical definition once the axioms have been verified. Another obvious advantage is that certain statements can then be proven for all examples at once. As for disadvantages, the approach is far too general to grasp the arithmetic subtleties that have so far been crucial in proving deep results on unlikely intersections (lower bounds for the size of Galois orbits, height upper bounds, etc.). Nevertheless, it provides a streamlined approach to performing certain reduction steps. Other attempts to axiomatize basic facts about unlikely intersections have been made by Zilber \cite{Zilber16}, Ullmo \cite{Ullmo}, Pila \cite{PilaUnpubl}, and Eterovi\'{c} and Scanlon \cite{Eterovic_Scanlon_Preprint}.

We have already mentioned the article \cite{HP} by Habegger and Pila. This did not come out of the blue but, while of great importance, was just one of the last steps after almost two decades of progress on the topic. Indeed, one of the first attempts to go beyond problems of Manin-Mumford-Andr\'e-Oort type arguably dates back to the end of the 90s, when Bombieri, Masser and Zannier \cite{BMZ99} showed that a curve in $\mathbb{G}^n_m$ has finite intersection with the union of algebraic subgroups of codimension at least 2, provided that it is defined over the algebraic numbers and not contained in a translate of a proper algebraic subgroup of $\mathbb{G}^n_m$. Precursors of this result can be found in \cite{Schinzel89} and in the appendix to \cite{SchinzelBook} by Zannier. See also the conjecture on p. 223 of \cite{Zhang_1998}. Maurin \cite{Maurin} later showed that the hypothesis of not being contained in a translate of a proper algebraic subgroup of $\mathbb{G}^n_m$ in \cite{BMZ99} can be replaced by the weaker but necessary hypothesis of not being contained in a proper algebraic subgroup of $\mathbb{G}^n_m$.

In \cite{BMZ99}, the authors mentioned possible analogues of their result in the setting of abelian varieties and families thereof, and so, while Bombieri, Masser and Zannier continued their study of intersections of subvarieties of $\mathbb{G}^n_m$ with algebraic subgroups, other authors started considering the abelian analogues. The earliest work in this direction is from 2003, due to Viada \cite{Viada2003} as well as R\'emond and Viada \cite{RemVia}.

In the meanwhile, Zilber \cite{Zilber} independently considered problems of this kind from a completely different viewpoint and with different motivations coming from model theory. He formulated a general conjecture for subvarieties of arbitrary dimension of semiabelian varieties.

Related conjectures for powers of the multiplicative group were formulated by Bombieri, Masser and Zannier in \cite{BMZ06} and \cite{BMZ07}; see the Appendix to \cite{BMZ08} for a proof of the equivalence of the various conjectures in the case of powers of the multiplicative group.

Around the same time, in \cite{Pink} and \cite{PinkUnpubl}, Pink proposed very general conjectures in the context of mixed Shimura varieties. His most general conjecture contains all of the abovementioned results about $\mathbb{G}^n_m$ and abelian varieties. At the same time, it provides an analogue of Zilber's conjecture in the more general context of mixed Shimura varieties. Pink's formulation of the conjecture seems to be slightly weaker than Zilber's, but we will show in this article that they are often equivalent. Pink's conjecture also implies the ``modular analogue'' of Zilber's conjecture, which generalizes the Andr\'e-Oort conjecture in the same way as Zilber's conjecture generalizes the Manin-Mumford conjecture.

After the groundbreaking article \cite{PilaZannier} of Pila and Zannier, in which they developed and applied a new strategy relying on point counting to give an alternative proof of the Manin-Mumford conjecture, it gradually became clear that this new approach would lead to relevant progress towards the aforementioned conjectures. Indeed, in the last fifteen years the method has been applied quite successfully in basically all different contexts included in Pink's formulation, see the bibliographies of \cite{Zannier} and \cite{PilaUnpubl}.

For the present work, the article \cite{HP} of Habegger and Pila is of particular importance. Although their main aim was to show that o-minimality, together with functional transcendence results and Galois bounds that are still largely open as of today, is sufficient to obtain new cases of the Zilber-Pink conjecture for abelian varieties and products of modular curves, they also introduced new ideas (e.g., the defect condition) that we exploited in \cite{BD} to extend their result and prove the Zilber-Pink conjecture for curves in complex abelian varieties.

Going through the proofs in \cite{HP} and \cite{BD}, one realizes that many steps are purely formal and here we propose a categorical setting in which basic objects (e.g., special and weakly special subvarieties) can be defined and facts about them (e.g., the defect condition) can be proved formally.

We show that basically all the abovementioned examples, prominently (connected) mixed Shimura varieties, satisfy our axioms. On the other hand, unlikely intersections have also been studied in various other settings and for some of these, it is unclear whether they could be integrated into the framework of distinguished categories: We mention here (families of) Drinfeld modules (see \cite{GhiocaHsia}), the results for affine space over function fields in \cite{CGMM}, Klingler's mixed Hodge varieties in \cite{Klingler}, and Gao's enlarged mixed Shimura varieties in \cite{Gao19} as well as the growing field of unlikely intersections in arithmetic dynamics (see the survey \cite{ArDyn}). We further refer to Section 3 of \cite{GT} for various examples that show the difficulty of defining special subvarieties in the context of arithmetic dynamics. While a common method has been applied to studying unlikely intersections in our examples of distinguished categories, namely using the Pila-Zannier strategy to combine o-minimal point counting, Galois bounds, and functional transcendence results of Ax-Schanuel type (see \cite{Zannier}), the methods used in these other settings are sometimes very different. 

The paper is organized as follows. In Section \ref{sec:dc}, we define our main object of study, distinguished categories, and we see that many of the abovementioned examples form distinguished categories.

We devote Sections \ref{sec:msv}, \ref{sec:intersectionspecial}, and \ref{sec:axiomsformsv} to a detailed proof that the category of connected (mixed) Shimura varieties (of Kuga type) is distinguished. Unfortunately, slightly different definitions of connected mixed Shimura varieties appear in the literature. For this reason and since we have not been able to locate them in the literature, we decided to include proofs of some widely used and well-known facts about connected mixed Shimura varieties, for instance the fact that the irreducible components of an intersection of special subvarieties are special. We hope that this will make our text more accessible to readers who are not very familiar with this topic.

Many concepts from the field of unlikely intersections can be defined naturally in an arbitrary distinguished category: In Section \ref{sec:special}, we introduce the concept of (weakly) special subvarieties. Our definitions are inspired by Pink's in \cite{Pink}, with which they are formally identical for the special case of the distinguished category of connected mixed Shimura varieties, and they are equivalent to the usual definitions of (weakly) special subvarieties in the case of semiabelian varieties (see Remark \ref{rmk:equivalentdefinition}). In the same section, we prove basic facts about (weakly) special subvarieties (e.g., special subvarieties are weakly special, irreducible components of intersections of weakly special subvarieties are weakly special) that are well-known for commutative algebraic groups and for mixed Shimura varieties, although in the latter case we have not always been able to locate proofs in the literature. We also compare our approach with Ullmo's in \cite{Ullmo}.

The so-called defect condition, introduced in \cite{HP} and conjectured there to hold in any mixed Shimura variety, is often a useful technical tool when studying unlikely intersections: In Section \ref{sec:defcon}, we consider the concept of (weak) defect and prove the defect condition in an arbitrary distinguished category (special cases can be found in \cite{HP}, \cite{DawRen}, \cite{PilaTsimerman}, and \cite{Cass2}). The definition of defect is formally identical with the definition in \cite{PinkUnpubl}.

In Section \ref{sec:optimality}, we then consider the concept of (weak) optimality in an arbitrary distinguished category. The definition of optimality goes back to \cite{HP} while weak optimality for subvarieties of $\mathbb{G}^n_m$ already appears in \cite{Poizat} under the name of cd-maximality.

In Section \ref{sec:A5}, we formulate the axiom \ref{ax:5}, which informally speaking says that weakly optimal subvarieties for a given subvariety come in finitely many ``families". Given an extension $K \subset L$ of algebraically closed fields and a distinguished category over $K$, we can perform a base change to get a distinguished category over $L$. If \ref{ax:5} is satisfied in any base change of the original category with respect to an extension of algebraically closed fields of finite transcendence degree, we call the category very distinguished. In our main examples of very distinguished categories (abelian varieties, tori, semiabelian varieties, connected Shimura varieties, and connected mixed Shimura varieties of Kuga type over algebraically closed fields of finite transcendence degree over $\bar{\mathbb{Q}}$), the fact that they are very distinguished is usually deduced from a functional transcendence result of weak Ax-Schanuel type (called ``Weak Complex Ax" in \cite{HP}) together with the facts that the corresponding uniformization map is definable in an o-minimal structure and that a countable definable set is finite.

In Section \ref{sec:zpext}, we formulate a statement for arbitrary distinguished categories that corresponds to the Zilber-Pink conjecture. We then show in Theorem \ref{thm:fieldofdef} that the Zilber-Pink statement for the base change of a very distinguished category with respect to an extension of algebraically closed fields follows from the same statement for the original category.

This reduction of the transcendence degree of the ground field in the Zilber-Pink statement is similar to the main result of \cite{BD}, where we reduced the Zilber-Pink conjecture for abelian varieties from $\mathbb{C}$ to $\bar{\mathbb{Q}}$. Note however that we do not recover the main result of \cite{BD}: If the ambient abelian variety $A$ cannot be defined over $\bar{\mathbb{Q}}$, then the method we present here does not reduce the Zilber-Pink conjecture for $A$ to the Zilber-Pink conjecture for an abelian variety over $\bar{\mathbb{Q}}$. Our method however allows to reduce to the case where the subvariety is defined over the algebraic closure of the field of definition of the abelian variety. The analogous statement for powers of the multiplicative group has been proven by Bombieri, Masser and Zannier in \cite{BMZ08}. Our proof of Theorem \ref{thm:fieldofdef} takes its inspiration from the proof of Proposition 4.1 in \cite{BD}.

Recently, in \cite{PilaScanlon}, Pila and Scanlon proved an effective Zilber-Pink result over a transcendental field extension of $\mathbb{C}$ as well as a reduction of the transcendental to the algebraic case in this context, considering only varieties associated to certain quotient spaces, transcendental points, and strongly special subvarieties. In their reduction step, they have to assume a formulation of the conjecture over the ground field that is different from what they obtain over the field extension (but we will prove in this paper that it often follows from what they obtain over the field extension, see Section \ref{sec:zilberpink}). They suggest a method to remove this restriction over the ground field $\mathbb{C}$ without using our work. However, the dimension of the variety that they need to apply the conjecture to over the ground field can be bigger than the dimension of the variety they started with. This is a fundamental difference to our method, which yields more refined results.

Theorem \ref{thm:fieldofdef} also contains another statement that implies for example that the Zilber-Pink conjecture holds for any complex curve that cannot be defined over $\bar{\mathbb{Q}}$ inside the moduli space $\mathcal{A}_2$ of principally polarized abelian surfaces over $\bar{\mathbb{Q}}$. We denote the base change of a $K$-variety $V$ with respect to a field extension $K \subset L$ by $V_L$ (see Definition \ref{def:basechange}).

The following theorem is a special case of Corollary \ref{cor:uncondzp}.

\begin{thm}\label{thm:curveina2}
Let $K$ be an algebraically closed field. Let $C$ be a curve in $(\mathcal{A}_2)_K$ that is not the base change of a curve in $\mathcal{A}_2$ (so cannot be defined over $\bar{\mathbb{Q}}$) and is not contained in a proper special subvariety of $(\mathcal{A}_2)_K$. Then, $C$ contains at most finitely many points that lie on a special subvariety of $(\mathcal{A}_2)_K$ of codimension at least 2.
\end{thm}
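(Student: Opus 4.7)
The plan is to deduce Theorem \ref{thm:curveina2} from the general reduction result, Theorem \ref{thm:fieldofdef}, of which Corollary \ref{cor:uncondzp} is a consequence. The first step is to verify that $(\mathcal{A}_2)_K$ lives in a very distinguished category: the results of Sections \ref{sec:msv}--\ref{sec:axiomsformsv} show that the category of connected mixed Shimura varieties of Kuga type satisfies \ref{ax:1}--\ref{ax:4}, and \ref{ax:5} survives arbitrary base change of finite transcendence degree by a combination of Weak Complex Ax, definability of the uniformization in an o-minimal structure, and the countable-definable-implies-finite principle alluded to in the discussion of Section \ref{sec:A5}. In particular, $\mathcal{A}_2$ as a pure connected Shimura variety over $\bar{\mathbb{Q}}$ is a legitimate object, and so is $(\mathcal{A}_2)_K$ for any algebraically closed extension $K/\bar{\mathbb{Q}}$.

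The crucial feature one wants to exploit is that the field of definition of the ambient variety $\mathcal{A}_2$ is already $\bar{\mathbb{Q}}$, the smallest possible. Theorem \ref{thm:fieldofdef} then relates the Zilber-Pink statement for $(\mathcal{A}_2)_K$ to the same statement over $\bar{\mathbb{Q}}$, but only up to subvarieties that arise from base change. Explicitly, the intended consequence is the following dichotomy for any curve $C \subset (\mathcal{A}_2)_K$ with infinitely many points on special subvarieties of codimension $\geq 2$: either (i) $C$ is contained in a proper special subvariety of $(\mathcal{A}_2)_K$, or (ii) $C$ is the base change $(C_0)_K$ of a curve $C_0 \subset (\mathcal{A}_2)_{K_0}$ for a smaller algebraically closed subfield $K_0 \subset K$ (which, by iterating, can be taken to be $\bar{\mathbb{Q}}$). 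Under the hypotheses of the theorem, both alternatives are excluded, so no such curve $C$ exists and the finiteness follows.

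Concretely, the argument should proceed by contradiction: assuming infinitely many unlikely intersections on $C$, use the defect condition from Section \ref{sec:defcon} to organize them into finitely many ``optimal'' families coming from weakly optimal subvarieties containing $C$. Here axiom \ref{ax:5} for the base change to $K$ is what guarantees the relevant finiteness of families of weakly optimal subvarieties, which is the technical engine driving the reduction. One then descends to a finitely generated, and finally an algebraic, subfield of $K$ by spreading out and specializing, showing that the ``non-base-change'' property either persists (contradicting the finiteness of the families) or collapses into $C$ being a base change (contradicting the hypothesis).

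The main obstacle is the descent step: one must ensure that specialization preserves both the weakly optimal family structure and the property of not being contained in a proper special subvariety, while pushing the transcendence degree down to $0$. This is precisely what the machinery of Sections \ref{sec:optimality}--\ref{sec:zpext} is designed to handle formally, and it is the reason one needs \ref{ax:5} to hold in \emph{every} base change of finite transcendence degree rather than just in the original category --- so that the reduction can be carried out iteratively without losing the very-distinguished hypothesis along the way.
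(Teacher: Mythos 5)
You have correctly identified the overall architecture — Theorem \ref{thm:curveina2} follows from Corollary \ref{cor:uncondzp}, which in turn is deduced from Theorem \ref{thm:fieldofdef}(2) together with the fact that $\mathfrak{C}_{\mathrm{mSvK}}$ over $\bar{\mathbb{Q}}$ is very distinguished (Theorem \ref{thm:gaoweakfiniteness}). That much matches the paper.

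However, there is a significant gap: your proposal omits the essential \emph{unconditional arithmetic input}, namely the Andr\'e-Oort conjecture for $\mathcal{A}_2$ (and for the connected mixed Shimura varieties of Kuga type arising as Shimura subdata and quotients of $\mathcal{A}_2$, which populate the class $\mathcal{S}$). Theorem \ref{thm:fieldofdef}(2), applied here with $m=d=1$, does not close the argument on purely formal grounds: it requires as hypotheses that $\ZP(X',m-1,d)=\ZP(X',0,1)$ and $\ZP(X',m,d-1)=\ZP(X',1,0)$ hold for all $X' \in \mathcal{S}$. The first is trivial, but the second is precisely the Andr\'e-Oort conjecture for curves, which is the deep ingredient (Tsimerman, and Theorem 13.6 in \cite{G17}). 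It is then propagated through $\mathcal{S}$ via Lemmas \ref{lem:zilberpinkfiniteembedding}, \ref{lem:zilberpinkfinitecover}, and \ref{lem:aoquotient}. Your description of a ``dichotomy'' that either $C$ is special or $C$ is a base change reads as though the conclusion is formal; in fact, without Andr\'e-Oort for $\mathcal{A}_2$, Theorem \ref{thm:fieldofdef}(2) yields only a conditional reduction, not the unconditional finiteness asserted in Theorem \ref{thm:curveina2}.

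A secondary, smaller issue: your proposed ``spread out and specialize'' descent to a finitely generated and then algebraic subfield does not reflect the paper's mechanism. Theorem \ref{thm:fieldofdef} reduces to transcendence degree one over $K=\bar{\mathbb{Q}}$ and then runs a double induction on dimension and defect, using \ref{ax:5} to land weakly special subvarieties inside finitely many families $\phi(\psi^{-1}(z))$; the base cases that must ultimately be supplied over $\bar{\mathbb{Q}}$ are the $\ZP(X',1,0)$ and $\ZP(X',0,1)$ statements above, not a specialization argument. You also need to observe explicitly that since $C$ is not contained in a proper special subvariety, $\delta(C)=2$, so any proper optimal subvariety of $C$ has defect at most $1$; this is what makes $m=d=1$ the right choice and ensures that the points on codimension-$\geq 2$ special subvarieties are optimal singletons of defect at most $1$.
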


So far, only partial results for curves that are defined over $\bar{\mathbb{Q}}$ inside $\mathcal{A}_2$ were known (see for example \cite{DawOrr1} and \cite{DawOrr2}). Our proof does not use any information about $\mathcal{A}_2$ apart from the fact that it is a connected Shimura variety of dimension $3$ over $\bar{\mathbb{Q}}$ for which the Andr\'e-Oort conjecture is known to hold. Thus, we also recover Pila's Theorem 1.4 in \cite{PilaFermat}, which says that the Zilber-Pink conjecture holds for any complex curve that cannot be defined over $\bar{\mathbb{Q}}$ inside the cube of the moduli space of elliptic curves.
\medskip

Let us now see an example. We consider the hyperelliptic curve of genus two defined by the equation
$$
y^2=x(x-1)(x-\pi t)(x-t^2)(x-t^4)
$$
over the field $K(t)$, where $K=\overline{\mathbb{Q}(\pi)} \subset \mathbb{C}$ and $t$ is an indeterminate.
Let $J_t$ be its Jacobian.
This gives a curve $C$ in $(\mathcal{A}_2)_K$ that cannot be defined over $\bar{\mathbb{Q}}$. There exists a finite set $F$ such that, for each $t_0 \in \mathbb{C}\setminus F$, there is a specialized hyperelliptic curve and a specialized Jacobian $J_{t_0}$ over $\mathbb{C}$.

By the Theorem in \cite{Masser99}, specializing $\pi$ to 1, we know that  $J_t$ only has trivial endomorphisms, even over an algebraic closure of $K(t)$. This implies (see Table 1 on p.~11 of \cite{DawOrr1}) that $C$ is not contained in any proper special subvariety of $(\mathcal{A}_2)_K$. Theorem \ref{thm:curveina2} then implies that the intersection of $C$ with the union of all special subvarieties of $(\mathcal{A}_2)_K$ that have codimension at least 2 is finite. This in turn implies that there are at most finitely many complex numbers $t_0$ such that at least one of the following holds:
\begin{enumerate}
	\item $J_{t_0}$ is simple and its endomorphism ring is a $\mathbb{Z}$-module of rank four (i.e., $J_{t_0}$ has quaternionic or complex multiplication),
		\item $J_{t_0}$ is isogenous to the square of an elliptic curve, or
		\item $J_{t_0}$ is isogenous to the product of an elliptic curve and a CM elliptic curve.
\end{enumerate}

In Sections \ref{sec:redoptsing} and \ref{sec:zilberpink}, we prove (again assuming \ref{ax:5} most of the time) that the Zilber-Pink statement is equivalent to several seemingly weaker statements. In particular, while the statement is concerned with optimal subvarieties of arbitrary dimension, we show in Theorem \ref{thm:redoptsing} that it suffices to prove it for optimal singletons. This generalizes Theorem 8.3 in \cite{DawRen} (for Shimura varieties) and Theorem 6.1 in \cite{BD} (for abelian varieties); the latter theorem is an easy consequence of the results in \cite{HP}.

Furthermore, the Zilber-Pink statement we commonly use in this article generalizes Zilber's formulation of the conjecture. However, we show in Theorem \ref{thm:zilberequivpink} that it is equivalent to the corresponding generalization of Pink's formulation of the conjecture. This generalizes Theorem 1.9 in \cite{BD}, where we showed (the non-trivial direction of) this equivalence in the case of abelian varieties.
In the toric case, Ullmo had pointed out already earlier that the two formulations are equivalent (see p.~320 of \cite{BMZ08}) while Zannier mentioned that this equivalence can be proved by imposing additional multiplicative relations on the positive-dimensional atypical intersections (see p.~34 of \cite{Zannier}).

In both Theorem \ref{thm:redoptsing} and Theorem \ref{thm:zilberequivpink}, we have to assume that \ref{ax:5} is satisfied. 
Statements similar to Theorem \ref{thm:redoptsing} and Theorem \ref{thm:zilberequivpink} for connected mixed Shimura varieties of Kuga type can be found in the work of Cassani \cite{Cass3}.

In Section \ref{sec:legendre}, we apply our results to prove that the Zilber-Pink conjecture holds for a complex curve in a fibered power of the Legendre family of elliptic curves, which is a connected mixed Shimura variety of Kuga type. This was previously known only if the curve can be defined over $\bar{\mathbb{Q}}$, in which case it follows from combining the results of \cite{RemVia}, \cite{Viada2008}, \cite{galateau2010}, \cite{BC}, and \cite{Superbarro}. We use these results together with our Theorem \ref{thm:fieldofdef} and the fact that connected mixed Shimura varieties of Kuga type over $\bar{\mathbb{Q}}$ form a very distinguished category to deduce the $\mathbb{C}$-case.

The theorem below is a consequence of Theorem \ref{thm:legendre}.

\begin{thm}
	Let $K$ be an algebraically closed field and let $\mathcal{E}^g$ be the $g$-th fibered power of the Legendre family of elliptic curves over $\bar{\mathbb{Q}}$. Let $C$ be a curve in $\mathcal{E}^g_K$ that is not contained in a proper special subvariety of $\mathcal{E}^g_K$. Then, $C$ contains at most finitely many points that lie on a special subvariety of $\mathcal{E}^g_K$ of codimension at least 2.
\end{thm}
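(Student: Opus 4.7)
The strategy is to reduce the statement over $K$ to the Zilber-Pink conjecture for curves in $\mathcal{E}^g$ over $\bar{\mathbb{Q}}$ and then invoke the combined $\bar{\mathbb{Q}}$-results of \cite{RemVia, Viada2008, galateau2010, BC, Superbarro}. The key input is Theorem \ref{thm:fieldofdef}, which is available because the category of connected mixed Shimura varieties of Kuga type over $\bar{\mathbb{Q}}$ is very distinguished (Section \ref{sec:A5}) and $\mathcal{E}^g$ belongs to this category.

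First I would reduce to the case that $K$ has finite transcendence degree over $\bar{\mathbb{Q}}$. The curve $C$ is cut out in $\mathcal{E}^g_K$ by finitely many equations with coefficients in a finitely generated subfield of $K$, and letting $k\subset K$ denote the algebraic closure of such a subfield, we have that $k$ has finite transcendence degree over $\bar{\mathbb{Q}}$ and $C$ is the base change to $K$ of a curve $C_k\subset \mathcal{E}^g_k$. Since the special subvarieties of $\mathcal{E}^g_K$ (respectively $\mathcal{E}^g_k$) are precisely the base changes to $K$ (respectively $k$) of the special subvarieties of $\mathcal{E}^g_{\bar{\mathbb{Q}}}$, the hypothesis that $C$ is not contained in a proper special subvariety descends to $C_k$, and finiteness of the intersection of $C$ with the codimension-$\geq 2$ special locus over $K$ is equivalent to the same statement over $k$ (the relevant points are already defined over the algebraically closed field $k$).

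Next I would apply Theorem \ref{thm:fieldofdef} to the extension $\bar{\mathbb{Q}}\subset k$ in the very distinguished category of connected mixed Shimura varieties of Kuga type. This reduces the Zilber-Pink statement for $C_k\subset \mathcal{E}^g_k$ to the Zilber-Pink statement for curves in $\mathcal{E}^g$ over $\bar{\mathbb{Q}}$. The latter is precisely the setting treated in \cite{RemVia, Viada2008, galateau2010, BC, Superbarro}, whose combined results — on bounded height, Galois orbit lower bounds, and the Pila-Zannier machinery applied to the uniformization of the Legendre family — yield the desired finiteness.

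The main obstacle is the bookkeeping needed to match the output of Theorem \ref{thm:fieldofdef} with the formulation of the $\bar{\mathbb{Q}}$-case established in the literature: one must verify that the ``not contained in a proper special subvariety'' hypothesis translates correctly, and one must confirm that \ref{ax:5} genuinely holds for the very distinguished category one is working in (which is why we must restrict to Kuga type). A small additional subtlety is that the cited $\bar{\mathbb{Q}}$-references are phrased in terms of semiabelian schemes over a base, but the dictionary between these and connected mixed Shimura varieties of Kuga type makes this translation routine, so no substantive issue arises there.
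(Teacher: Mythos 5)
The proposal has a genuine gap in the application of Theorem \ref{thm:fieldofdef}. You write that this theorem ``reduces the Zilber-Pink statement for $C_k\subset\mathcal{E}^g_k$ to the Zilber-Pink statement for curves in $\mathcal{E}^g$ over $\bar{\mathbb{Q}}$.'' That is not what Theorem \ref{thm:fieldofdef}(1) provides. The hypothesis of that theorem is that $\ZP(X',m,d)$ holds for \emph{every} $X'$ in a class $\mathcal{S}$ which contains $X$ and is stable under the operation appearing in \ref{ax:5}: for every $X'\in\mathcal{S}$, every finite-transcendence-degree extension, and every subvariety of the base-changed $X'$, the finite set of pairs $(\phi,\psi)$ from \ref{ax:5} must have all $Y_\phi$ and $Z_\psi$ coming from $\mathcal{S}$. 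Taking $\mathcal{S}=\{\mathcal{E}^g\}$ is not allowed, because the weakly special subvarieties of $\mathcal{E}^g$ are parametrized via Shimura morphisms from and to other (typically smaller) connected mixed Shimura varieties of Kuga type. Taking $\mathcal{S}$ to be everything would require the full Zilber-Pink conjecture over $\bar{\mathbb{Q}}$, which is not available.

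The substantive content of the paper's proof, which your proposal skips entirely, is the computation of a small enough admissible class $\mathcal{S}$. Using the classification of Shimura subdata $(Q,Y^+)$ of $(\GL_{2,\mathbb{Q}}\ltimes\mathbb{Q}^{2g},X^+)$ (after \cite{GDiss}, \cite{G15}, and \cite{NPT}) and of quotients $(Q,Y^+)/N$ by normal subgroups $N$ with semisimple reductive part, one finds that every member of $\mathcal{S}$ is, up to isogeny and change of level structure, a point, a power $E^j$ of a CM elliptic curve with $1\le j\le g$, or a fibered power $\mathcal{E}^j$ with $1\le j\le g$. Only after this reduction does the citation of \cite{RemVia}, \cite{Viada2008}, \cite{galateau2010}, \cite{BC}, and \cite{Superbarro} become legitimate, since those references together cover exactly this list of $\bar{\mathbb{Q}}$-cases (and one also needs Lemma \ref{lem:zilberpinkfinitecover} to pass between different congruence subgroups). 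Without pinning down $\mathcal{S}$, the invocation of the $\bar{\mathbb{Q}}$-literature is unjustified; the ``bookkeeping'' you flag as a side issue is in fact the heart of the argument.

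A second, smaller gap: the theorem as stated (finitely many points on special subvarieties of codimension $\ge 2$) is the Pink-style formulation, and you still need to derive it from $\ZP(\mathcal{E}^g_K,1,d)$. Since $C$ is not contained in a proper special subvariety, $\langle C\rangle=\mathcal{E}^g_K$ and $\delta(C)=\dim\mathcal{E}^g-1$; a point $p\in C$ lying on a special subvariety of codimension $\ge 2$ has $\delta(\{p\})\le\dim\mathcal{E}^g-2<\delta(C)$, hence is contained in a proper optimal subvariety of $C$, which for a curve must be the point itself. So such points are optimal singletons of defect at most $\dim\mathcal{E}^g-2$, and $\ZP(\mathcal{E}^g_K,1,\dim\mathcal{E}^g-2)$ finishes the argument. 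This step is routine, but your write-up should not omit it.
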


In Section \ref{sec:semiabelian}, we show that the Zilber-Pink conjecture holds for a complex curve in a semiabelian variety, provided that the semiabelian variety can be defined over $\bar{\mathbb{Q}}$.

\begin{thm}
	Let $K$ be an algebraically closed field and let $G$ be a semiabelian variety over $\bar{\mathbb{Q}}$. Let $C$ be a curve in $G_K$ that is not contained in a proper algebraic subgroup of $G_K$. Then $C$ contains at most finitely many points that lie on an algebraic subgroup of $G_K$ of codimension at least 2.
\end{thm}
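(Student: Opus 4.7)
The plan is to apply Theorem~\ref{thm:fieldofdef} to the very distinguished category of semiabelian varieties over $\bar{\mathbb{Q}}$ in order to reduce the claim to the case $K = \bar{\mathbb{Q}}$, and then to invoke the already known Zilber--Pink conjecture for a curve in a semiabelian variety defined over $\bar{\mathbb{Q}}$. As noted in the introduction, semiabelian varieties (over algebraically closed fields of finite transcendence degree over $\bar{\mathbb{Q}}$) form a very distinguished category, so Theorem~\ref{thm:fieldofdef} applies to the extension $\bar{\mathbb{Q}} \subseteq K$ and reduces the Zilber--Pink statement for $C \subseteq G_K$ to the corresponding statement for a curve in $G$ itself. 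One must of course check that the abstract distinguished-category Zilber--Pink statement specializes to the concrete claim of the present theorem, matching the notions of special subvariety and of the non-degeneracy hypothesis to ``algebraic subgroup of codimension $\geq 2$'' and to ``not contained in a proper algebraic subgroup of $G_K$''; this dictionary is essentially provided by Remark~\ref{rmk:equivalentdefinition}.

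For the base case over $\bar{\mathbb{Q}}$, the Zilber--Pink conjecture for a curve in a semiabelian variety can be deduced from the known cases in the abelian and toric settings. Concretely, one combines Maurin's theorem for curves in tori over $\bar{\mathbb{Q}}$ with the Habegger--Pila / Barroero--Dill theorem for curves in abelian varieties over $\bar{\mathbb{Q}}$, using the canonical short exact sequence $0 \to T \to G \to A \to 0$ of $G$ into its toric and abelian parts. An algebraic subgroup of $G$ of codimension $\geq 2$ either projects to a proper algebraic subgroup of $T$ or has image of codimension $\geq 2$ in $A$, and a short induction on $\dim T$ then reduces everything to the two known extreme cases.

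The main obstacle I expect is not in the logical structure of this reduction, which is entirely formal given Theorem~\ref{thm:fieldofdef}, but rather in pinning down a clean citation for the base case over $\bar{\mathbb{Q}}$: the combination of Maurin's result with the abelian case via the toric-abelian exact sequence is not always explicitly recorded in the literature in exactly this form, and the torsion cosets that appear naturally when one chases algebraic subgroups through the projection $G \to A$ have to be handled with some care even though the final statement only involves algebraic subgroups and not their torsion cosets. Once the base case is established, however, the deduction via Theorem~\ref{thm:fieldofdef} is immediate.
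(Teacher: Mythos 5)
Your reduction step is exactly what the paper does: invoke Theorem~\ref{thm:kirbyweakfiniteness} to get that $\mathfrak{C}_{\mathrm{semiab}}$ over $\bar{\mathbb{Q}}$ is very distinguished, and apply Theorem~\ref{thm:fieldofdef}(1) to pass from $K$ to $\bar{\mathbb{Q}}$ (with the observation, which you also make, that special subvarieties are precisely torsion cosets, so the abstract $\ZP$ statement specializes to the concrete statement about algebraic subgroups).

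The genuine gap is in your treatment of the base case over $\bar{\mathbb{Q}}$. You claim it follows from the toric case (Maurin) and the abelian case (Habegger--Pila, Barroero--Dill) by a ``short induction on $\dim T$'' along the exact sequence $0 \to T \to G \to A \to 0$. This does not work. If $H \subset G$ is an algebraic subgroup of codimension $\geq 2$, the codimensions add: $\codim_G H = \codim_T (H \cap T) + \codim_A \pi(H)$, so besides the two ``extreme'' cases you also get the mixed case $\codim_T(H\cap T) = \codim_A \pi(H) = 1$. Points of $C$ lying on such mixed subgroups are not controlled by the toric Zilber--Pink for $C\cap T$-directions nor by the abelian one for $\pi(C)$: each of those only sees a codimension-one constraint, which is the \emph{likely} regime, and the finiteness coming out of the factors is simply not there. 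Handling these mixed intersections is precisely where the semiabelian problem is genuinely harder than the product of its parts (already at the level of bounded height, and even more so for Galois lower bounds), and it is the content of the cited work of Barroero, K\"uhne, and Schmidt \cite{BarKS}, which the paper uses directly for the $\bar{\mathbb{Q}}$ base case. Your own caveat that the dévissage ``is not always explicitly recorded in the literature'' understates the issue: it is not recorded because it is false as a reduction.

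In short: keep the reduction via Theorem~\ref{thm:fieldofdef}(1) exactly as you wrote it, but replace the dévissage argument for the base case with the citation to \cite{BarKS}.
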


This follows from combining our Theorem \ref{thm:fieldofdef} and the fact that semiabelian varieties over $\bar{\mathbb{Q}}$ form a very distinguished category with recent work \cite{BarKS} (see Theorem \ref{thm:semiabelian}) of the first-named author, K\"uhne, and Schmidt.

\section{Distinguished categories}\label{sec:dc}

Let $K$ be an algebraically closed field of characteristic $0$. Consider a category $\mathfrak{C}$ with objects $\mathfrak{V}$ and morphisms $\mathfrak{M}$ together with a covariant functor $\mathcal{F}$ from $\mathfrak{C}$ to the category of varieties over $K$. Typically, this functor will correspond to forgetting some additional information.

We formulate the following axioms that this category may or may not satisfy:

\begin{enumerate}[label={(A\arabic*)}]
\item \label{ax:1}\emph{Direct Products} - If $X,Y \in \mathfrak{V}$, then there exists $Z \in \mathfrak{V}$ and there exist morphisms $\pi_X:Z \to X$, $\pi_Y:Z \to Y$ in $\mathfrak{M}$ such that the morphism $\mathcal{F}(Z) \to \mathcal{F}(X) \times_K \mathcal{F}(Y)$ induced by the morphisms $\mathcal{F}(\pi_X)$ and $\mathcal{F}(\pi_Y)$ is an isomorphism. We identify $\mathcal{F}(Z)$ with $\mathcal{F}(X) \times_K \mathcal{F}(Y)$. Furthermore, if $\phi: W \to X$ and $\psi: W \to Y$ are morphisms in $\mathfrak{M}$, then there exists a unique morphism $\chi: W \to Z$ in $\mathfrak{M}$ such that $\pi_X\circ \chi=\phi$ and $\pi_Y\circ \chi=\psi$. We write $X \times Y$ or $X \times_K Y$ for $Z$ and $(\phi,\psi)$ for $\chi$. 
\item\label{ax:2} \emph{Fibered Products} - If $\phi: X \to Z$ and $\psi: Y \to Z$ are morphisms in $\mathfrak{M}$, then there exists $n \in \mathbb{Z}$, $n \geq 0$, and there exist $X_1,\hdots,X_n \in \mathfrak{V}$ and morphisms $\phi_i: X_i \to X \times Y$ in $\mathfrak{M}$ ($i=1,\hdots,n$) such that $\bigcup_{i=1}^{n}{\mathcal{F}(\phi_i)(\mathcal{F}(X_i))} = (\mathcal{F}(X) \times_{\mathcal{F}(Z)} \mathcal{F}(Y))_{\red} \subset \mathcal{F}(X) \times_K \mathcal{F}(Y)$.
\item\label{ax:3} \emph{Final Object} - The category has a final object that is mapped to $\Spec K$ by $\mathcal{F}$.
\item \label{ax:4} \emph{Fiber Dimension} - If $\phi: X \to Y$ is a morphism in $\mathfrak{M}$, then $\mathcal{F}(\phi)(\mathcal{F}(X))$ is closed in $\mathcal{F}(Y)$ and there exist morphisms $\phi_1: W \to X$, $\phi_2: W \to Z$, and $\phi_3: Z \to Y$ in $\mathfrak{M}$ such that $\phi \circ \phi_1 = \phi_3 \circ \phi_2$, $\mathcal{F}(\phi_1)$ is finite and surjective, $\mathcal{F}(\phi_3)$ has finite fibers, $\mathcal{F}(\phi_2)$ is surjective, $\mathcal{F}(\phi_2)^{-1}(z)$ is irreducible for all $z \in \mathcal{F}(Z)$, and
$\dim_w \mathcal{F}(\phi_2)^{-1}(\mathcal{F}(\phi_2)(w))$ is a constant function for all $w \in \mathcal{F}(W)$.
\end{enumerate}

In other words, \ref{ax:4} says that the image of $\mathcal{F}(\phi)$ is closed and that $\phi$ factorizes as in the following diagram:
\begin{equation} \label{diag}
\begin{tikzcd}
W \arrow[d,"\substack{\text{finite},\\ \text{surjective}}","\phi_1"'] \arrow[r,"\substack{\text{surjective,} \\ \text{fibers are irreducible} \\ \text{of constant dimension}}","\phi_2"'] &[7em] Z \arrow[d,"\text{finite fibers}","\phi_3"'] \\
X \arrow[r,"\phi"] & Y,
\end{tikzcd}
\end{equation}
where the properties in the diagram are properties of the images of the morphisms under $\mathcal{F}$. Note that \ref{ax:4} implies that $\dim_x \mathcal{F}(\phi)^{-1}(\mathcal{F}(\phi)(x))$ is a constant function for $x \in (\mathcal{F}(X))(K)$ and therefore for $x\in \mathcal{F}(X)$ (see Appendix E of \cite{GW}). By the same Appendix, it suffices to show that $\mathcal{F}(\phi_2)$ is surjective and that all fibers of $\mathcal{F}(\phi_2)$ over closed points of $\mathcal{F}(Z)$ are irreducible of constant dimension in order to verify the properties of $\mathcal{F}(\phi_2)$ in \ref{ax:4}.

\begin{defn}
If $\mathfrak{C}$ satisfies \ref{ax:1} to \ref{ax:4}, we call it a \emph{distinguished category}. The elements of $\mathfrak{V}$ (which we will often identify with their images under $\mathcal{F}$) will then be called \emph{distinguished varieties} while the elements of $\mathfrak{M}$ (which we will often similarly identify) will be called \emph{distinguished morphisms}.
\end{defn}

The domain and codomain of a distinguished morphism are automatically distinguished varieties.

\begin{rmk}
We are going to see later that these axioms are all necessary in order to define fundamental concepts and prove basic facts. For the reader who is already familiar with the topic, we summarize here how they will be used:
\begin{enumerate}
	\item[\ref{ax:1}] is needed to introduce \ref{ax:2}.
	\item[\ref{ax:2}] is equivalent to the fact that irreducible components of intersections of special subvarieties are special.
	\item[\ref{ax:3}] implies that special subvarieties are weakly special.
	\item[\ref{ax:4}] together with \ref{ax:2} implies that irreducible components of (pre-)images of (weakly) special subvarieties under distinguished morphisms are (weakly) special and irreducible components of intersections of weakly special subvarieties are weakly special.
	The fact that the image is closed is necessary in order to be able to define special subvarieties at all. The constant fiber dimension is fundamental in the proof of the defect condition.
\end{enumerate}
\end{rmk}

We are going to use the following notation throughout the article.

\begin{defn}\label{def:basechange}
	If $K \subset L$ is an arbitrary field extension and $V$ is a finite union of varieties over $K$, then $V_L = V\times_{K} L$ is called the base change of $V$ to $L$. We use analogous notation for the base change of morphisms between varieties.
\end{defn}

 We now give some examples of distinguished categories. If $\mathcal{F}$ is not mentioned, then it is to be understood that $\mathfrak{C}$ is a subcategory of the category of varieties over $K$ and $\mathcal{F}$ is just the inclusion functor.

A trivial but instructive example is the following: Let $X$ be an arbitrary fixed variety over $K$. The category $\mathfrak{C}_{\mathrm{triv}}(X)$ with objects $\mathfrak{V} = \{X^n; n \in \mathbb{Z}, n \geq 0\}$ and morphisms $\mathfrak{M} =\{ \mathrm{maps } \ \ X^m \to X^n,(x_1,\hdots,x_m) \mapsto (x_{i_1},\hdots,x_{i_n})\}$ is distinguished.

The category of connected commutative algebraic groups gives an interesting example of a distinguished category which we denote by $\mathfrak{C}_{\mathrm{comm}}$. To be more precise, if we take $\mathfrak{V}$ to be the connected commutative algebraic groups over $K$, $\mathfrak{M}$ to consist of homomorphisms of algebraic groups composed with translations by torsion points, and $\mathcal{F}$ to be the forgetful functor, we can easily see that axioms \ref{ax:1} to \ref{ax:4} follow from well-known properties of algebraic groups and homomorphisms between them.

In particular, also in view of \cite{BD}, one can consider the full subcategories $\mathfrak{C}_{\mathrm{ab}}$ and $\mathfrak{C}_{\mathrm{semiab}}$ of $\mathfrak{C}_{\mathrm{comm}}$, consisting of (semi-)abelian varieties over $K$, or even just $\mathfrak{C}_{\mathrm{tor}}$ given by $\mathfrak{V} = \{\mathbb{G}^n_{m,K}; n \in \mathbb{Z}, n \geq 0\}$ or $\mathfrak{C}_{\mathrm{add}}$ given by $\mathfrak{V} = \{\mathbb{G}^n_{a,K}; n \in \mathbb{Z}, n \geq 0\}$.

In the next section we are going to see further examples of distinguished categories. We end this section by noting that, given two distinguished categories $\mathfrak{C}$ and $\mathfrak{C}'$ over an algebraically closed field $K$, we can form a product category $\mathfrak{C} \times \mathfrak{C}'$ that is again distinguished: Its objects are pairs of objects of $\mathfrak{C}$ and $\mathfrak{C}'$, its morphisms pairs of morphisms of $\mathfrak{C}$ and $\mathfrak{C}'$, and its functor is defined by applying to each element of the pair of objects or morphisms the functor of the corresponding distinguished category and taking the direct product of the two resulting varieties or the morphism between direct products induced by the two resulting morphisms of varieties respectively.

\section{Connected mixed Shimura varieties}\label{sec:msv}
  
In this section together with the next two ones, we are going to prove that connected mixed Shimura varieties and the subcategories consisting of the pure and the Kuga type ones fit in our framework and satisfy axioms \ref{ax:1} to \ref{ax:4}.
 
 Different definitions of a (connected) mixed Shimura datum appear in the literature. We use the following. We denote the Deligne torus $\Res_{\mathbb{C}/\mathbb{R}}\mathbb{G}_{m,\mathbb{C}}$ by $\mathbb{S}$ and we identify $\mathbb{S}(\mathbb{R})$ with $\mathbb{C}^{\ast}$. We fix once and for all a complex square root of $-1$ that we denote by $\sqrt{-1}$. For the definition of a rational mixed Hodge structure, its type, and its weight filtration, we refer to Chapter 1 of \cite{PinkDiss}. The definition of a Cartan involution can be found on pp. 274--275 of \cite{MilneISV}.
 
 \begin{defn}\label{defn:msv}
 	A \emph{connected mixed Shimura datum} is a pair $(P,X^+)$, where $P$ is a connected linear algebraic group over $\mathbb{Q}$ with unipotent radical $W$ and another algebraic subgroup $U \subset W$ that is normal in $P$ and uniquely determined by $X^+$ and condition \ref{ax:MSDc} below, and $X^+ \subset \Hom(\mathbb{S}_{\mathbb{C}},P_{\mathbb{C}})$ is a connected component of an orbit under conjugation (from the left) by the subgroup $P(\mathbb{R})  U(\mathbb{C}) \subset P(\mathbb{C})$ such that for some (or equivalently for all) $x \in X^+$ we have that
 	\begin{enumerate}[label=(MSD.\alph*)]
 		\item \label{ax:MSDa}the composite homomorphism $\mathbb{S}_{\mathbb{C}} \stackrel{x}{\to} P_{\mathbb{C}} \to (P/U)_{\mathbb{C}}$ is defined over $\mathbb{R}$,
 		\item  \label{ax:MSDb}the adjoint representation induces on $\Lie P$ a rational mixed Hodge structure of type
 		\[ \{(-1,1),(0,0),(1,-1),(-1,0),(0,-1),(-1,-1)\},\]
 		\item  \label{ax:MSDc}the weight filtration on $\Lie P$ is given by
 		\[ W_n(\Lie P) = \left\{
 		\begin{array}{ll}
 		\{0\} & \mbox{ if $n < -2$,}\\
 		\Lie U & \mbox{ if $n = -2$,}\\
 		\Lie W & \mbox{ if $n = -1$,}\\
 		\Lie P & \mbox{ if $n \geq 0$},
 		\end{array}
 		\right.\]
 		\item  \label{ax:MSDd}the conjugation by $x(\sqrt{-1})$ induces a Cartan involution on $(P/W)^{\ad}_{\mathbb{R}}$ and $(P/W)^{\ad}$ possesses no non-trivial $\mathbb{Q}$-factor $H$ such that $H(\mathbb{R})$
 		is compact, and
 		\item  \label{ax:MSDe}$P/P^{\der}$ is an almost-direct product of a $\mathbb{Q}$-split torus with a torus of compact type defined over $\mathbb{Q}$.
 	\end{enumerate}
 	
 	If $U=\{1\}$, then $(P,X^+)$ is called \emph{connected mixed Shimura datum of Kuga type}, while if $W=\{1\}$ it is called \emph{connected pure Shimura datum} or \emph{connected Shimura datum}.
 \end{defn}
 
 In Definition 2.4(1) in \cite{G17}, $X^+$ is taken to be a $P(\mathbb{R})^+  U(\mathbb{C})$-orbit instead of a connected component of a $P(\mathbb{R})  U(\mathbb{C})$-orbit, where $P(\mathbb{R})^+$ denotes the identity component of $P(\mathbb{R})$. Definition \ref{defn:msv} above is equivalent to Definition 2.4(1) in \cite{G17}. Definition 2.1 in \cite{Pink} is more restrictive than our definition here as Pink additionally demands that $P$ possesses no proper normal subgroup $P'$, defined over $\mathbb{Q}$, such that $x$ factors through $P'_{\mathbb{C}} \subset P_{\mathbb{C}}$. Note that the second part of \ref{ax:MSDd} seems to be missing in Definition 2.1 in \cite{Pink}, but follows from Pink's additional condition (vi) together with the rest. Definition 2.1 in \cite{G18} is equal to Definition 2.1 in \cite{Pink} in the case of connected mixed Shimura data of Kuga type. Definition 2.1 in \cite{PinkDiss} is a slightly more general analogue of our definition for mixed Shimura data instead of connected mixed Shimura data.
 
 We now let $P(\mathbb{R})_{+}$ be the stabilizer of $X^+$ in $P(\mathbb{R})$ (it contains $P(\mathbb{R})^{+}$), and we let $\Gamma$ be a congruence subgroup of $P(\mathbb{Q})$ that is contained in $P(\mathbb{Q})_{+} := P(\mathbb{Q}) \cap P(\mathbb{R})_{+}$. By abuse of notation, we will say in this situation that $\Gamma$ is a congruence subgroup of $P(\mathbb{Q})_{+}$. By Proposition 2.2 in \cite{RohlfsSchwermer}, there always exists such a congruence subgroup. It follows that, for every congruence subgroup of $P(\mathbb{Q})$, its intersection with $P(\mathbb{Q})_{+}$ is a congruence subgroup of $P(\mathbb{Q})$ that is contained in $P(\mathbb{Q})_{+}$.

 By \cite{PinkDiss}, 1.18(a), there is a canonical complex structure on $X^+$. There exists an algebraic variety $V$ over $\bar{\mathbb{Q}}$ such that $\Gamma\backslash X^{+}$ is canonically isomorphic to the analytification of $V_{\mathbb{C}}$ as a complex analytic space (see \cite{PinkDiss}, Proposition 3.3, Proposition 9.24, Definition 9.25, and Theorem 11.18).
 
 The variety $V$ is called a \emph{connected mixed Shimura variety}. If $(P,X^+)$ is of Kuga type or pure, then $V$ is called \emph{connected mixed Shimura variety of Kuga type} or \emph{connected (pure) Shimura variety} respectively.
 
 Condition \ref{ax:MSDe} is usually dropped in the definition of a connected (pure) Shimura datum/variety. Nevertheless, in this article, connected (pure) Shimura data will satisfy \ref{ax:MSDe}. We will see that this does not cause any problems thanks to Remarks \ref{rmk:quotientshimuradatum} and \ref{rmk:shimurasubdatum}. If $(P,X^+)$ satisfies \ref{ax:MSDa} to \ref{ax:MSDd} with $W = \{1\}$, then we can consider its quotient by the center of $P$ (see Remark \ref{rmk:quotientshimuradatum}) and this quotient will satisfy \ref{ax:MSDe}, so this condition is not a big restriction in applications to unlikely intersections.
 
 \begin{defn}
 Let $(P,X^+)$ and $(P',X'^+)$ be connected mixed Shimura data. If we have a homomorphism $\phi: P \to P'$ of algebraic groups over $\mathbb{Q}$ which induces a map $X^+ \to X'^+$ through $x \mapsto \phi_{\mathbb{C}} \circ x$, we say that $\phi$ induces a \emph{Shimura morphism} $(P,X^+)\to(P',X'^+)$ of connected mixed Shimura data.
 \end{defn}

Suppose we are given a Shimura morphism $(P,X^+)\to(P',X'^+)$ induced by $\phi: P \to P'$ and congruence subgroups $\Gamma \subset P(\mathbb{Q})_{+}$ and $\Gamma' \subset P'(\mathbb{Q})_{+}$ with $\phi(\Gamma) \subset \Gamma'$. Then, this induces a holomorphic map $\Gamma\backslash X^+ \to \Gamma' \backslash X'^+$ between complex analytic spaces that in turn gives a morphism between the corresponding connected mixed Shimura varieties (see \cite{PinkDiss}, 3.4, Proposition 9.24, Proposition 11.10, and Theorem 11.18).
 
 We are now in place to define a distinguished category $\mathfrak{C}_{\mathrm{mSv}}$ over $\bar{\mathbb{Q}}$, where
 \begin{itemize}
 	\item $\mathfrak{V}_{\mathrm{mSv}}$ consists of triples $(P,X^{+},\Gamma)$ where $(P,X^+)$ is a connected mixed Shimura datum and $\Gamma$ is a congruence subgroup of $P(\mathbb{Q})_{+}$;
 	\item $\mathfrak{M}_{\mathrm{mSv}}$ consists of arrows $ (P,X^+,\Gamma) \to (P',X'^+,\Gamma')$ given by a Shimura morphism $(P,X^+)\to(P',X'^+)$ induced by a homomorphism of algebraic groups $\phi: P \to P'$ that satisfies $\phi(\Gamma) \subset \Gamma'$ and composition of arrows is given by composition of Shimura morphisms;
 	\item the functor $\mathcal{F}_{\mathrm{mSv}}$ sends an element $(P,X^{+},\Gamma)$ of $\mathfrak{V}_{\mathrm{mSv}}$ to the corresponding connected mixed Shimura variety $V$ over $\bar{\mathbb{Q}}$. Moreover, it sends an element $(P,X^+,\Gamma) \to (P',X'^+,\Gamma')$ of $\mathfrak{M}_{\mathrm{mSv}}$ to the induced morphism of connected mixed Shimura varieties.
 \end{itemize}
   
By restricting the above definition to connected mixed Shimura data of Kuga type or connected pure Shimura data, we can also define the full subcategories $\mathfrak{C}_{\mathrm{mSvK}}$ and $\mathfrak{C}_{\mathrm{pSv}}$ of $\mathfrak{C}_{\mathrm{mSv}}$. By abuse of notation, we also call the elements of $\mathfrak{M}_{\mathrm{mSv}}$, $\mathfrak{M}_{\mathrm{mSvK}}$, and $\mathfrak{M}_{\mathrm{pSv}}$ as well as their images under the respective functor Shimura morphisms.

We would like to remark that we cannot just take $\mathfrak{C}_{\mathrm{mSv}}$ to be a subcategory of the category of varieties over $\bar{\mathbb{Q}}$ and take for $\mathcal{F}_{\mathrm{mSv}}$ the inclusion functor because of the following: There exists a trivial connected Shimura datum that is associated to the trivial reductive group and admits Shimura morphisms from any other connected Shimura datum. Its associated connected Shimura variety consists only of a point. There are however also non-trivial connected Shimura data that are associated to certain (positive-dimensional) tori and do not admit Shimura morphisms from all other connected Shimura data, and in particular not from the trivial connected Shimura datum, but whose associated connected Shimura varieties nevertheless consist only of a point.

On the other hand, these non-trivial connected Shimura data can sometimes also admit Shimura morphisms to connected Shimura data with positive-dimensional associated connected Shimura varieties (leading to special points on these), which the trivial connected Shimura datum cannot. This means that the existence or non-existence of Shimura morphisms between connected Shimura varieties depends not only on the variety structure, but also on the associated connected Shimura datum and congruence subgroup (and non-isomorphic pairs of connected Shimura data and congruence subgroups might yield connected Shimura varieties that are isomorphic as varieties). All of this holds also for connected mixed Shimura varieties or connected mixed Shimura varieties of Kuga type instead of connected Shimura varieties.

In the next two sections we are going to prove the following proposition:

\begin{prop}\label{prop:axiomsformsv}
The categories $\mathfrak{C}_{\mathrm{mSv}}$, $\mathfrak{C}_{\mathrm{mSvK}}$, and $\mathfrak{C}_{\mathrm{pSv}}$ satisfy the axioms \ref{ax:1} to \ref{ax:4}.
\end{prop}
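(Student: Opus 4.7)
The plan is to verify each of the four axioms for $\mathfrak{C}_{\mathrm{mSv}}$ by explicit constructions at the level of triples $(P, X^+, \Gamma)$, checking at each step that the conditions $U = \{1\}$ (Kuga type) and $W = \{1\}$ (pure) are preserved, so that the same arguments also yield \ref{ax:1}--\ref{ax:4} for $\mathfrak{C}_{\mathrm{mSvK}}$ and $\mathfrak{C}_{\mathrm{pSv}}$.

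The easy axioms are \ref{ax:1} and \ref{ax:3}. For \ref{ax:1}, the candidate product is $(P \times P', X^+ \times X'^+, \Gamma \times \Gamma')$: one verifies \ref{ax:MSDa}--\ref{ax:MSDe} componentwise, observes that the associated variety is $V \times V'$ through the identification $(\Gamma \times \Gamma') \backslash (X^+ \times X'^+) = (\Gamma \backslash X^+) \times (\Gamma' \backslash X'^+)$, and derives the universal property on morphisms from the universal property of the product of algebraic groups combined with the product structure on symmetric spaces. For \ref{ax:3}, the trivial datum associated to the group $\{1\}$ with one-point $X^+$ and trivial $\Gamma$ is the final object, the unique Shimura morphism to it being induced by the trivial homomorphism.

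For \ref{ax:2}, given $\phi: (P,X^+,\Gamma) \to (P'',X''^+,\Gamma'')$ and $\psi: (P',X'^+,\Gamma') \to (P'',X''^+,\Gamma'')$, I would form $Q = P \times_{P''} P'$ as a linear algebraic group over $\mathbb{Q}$, with unipotent radical induced from those of $P$ and $P'$, together with the analytic fiber product $X^+ \times_{X''^+} X'^+$, which decomposes into finitely many connected components $X_i^+$ under the action of the appropriate subgroup of $Q(\mathbb{R})$ times the unipotent piece over $\mathbb{C}$. For each $i$, the triple $(Q, X_i^+, \Gamma_i)$, with $\Gamma_i$ a suitable congruence subgroup contained in $(\Gamma \times \Gamma') \cap Q(\mathbb{Q})$, should be an object of $\mathfrak{V}_{\mathrm{mSv}}$, and the finitely many resulting Shimura morphisms into $(P,X^+,\Gamma) \times (P',X'^+,\Gamma')$ cover the reduced fiber product of the underlying varieties. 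The main check is \ref{ax:MSDb} and \ref{ax:MSDd} for $(Q, X_i^+)$, using that $\Lie Q = \Lie P \times_{\Lie P''} \Lie P'$ inherits a mixed Hodge structure of the required type and that the adjoint quotient of $Q$ modulo its unipotent radical admits the required Cartan involution.

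For \ref{ax:4}, given $\phi: (P,X^+,\Gamma) \to (P',X'^+,\Gamma')$ induced by $\phi: P \to P'$, I would first factor through the image: $P'' := \phi(P)$ is a closed connected linear algebraic subgroup of $P'$, and together with $\phi_{\mathbb{C}} \circ X^+$ and a suitable congruence subgroup containing $\phi(\Gamma)$ it should form an object of $\mathfrak{V}_{\mathrm{mSv}}$ whose associated variety $Z$ embeds as a closed subvariety of $Y$; this produces $\phi_3$ in diagram \eqref{diag} with automatically finite fibers and simultaneously shows that $\mathcal{F}(\phi)(\mathcal{F}(X))$ is closed. To produce $\phi_2$ with irreducible fibers of constant dimension, I observe that the kernel $N = \ker \phi$ acts on $X^+$ with connected (hence irreducible) orbits of constant complex dimension; to transport this property to the quotients by congruence subgroups, I would replace $X$ by a finite cover $W$ associated to a normal, finite-index congruence subgroup $\Gamma_0 \subset \Gamma$ chosen so that the component groups of the fibers of $\Gamma \backslash X^+ \to \Gamma'' \backslash (\phi_{\mathbb{C}} \circ X^+)$ are killed in the cover. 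The hardest step, in my view, is this last one, together with the verification that the image datum $(P'', \phi_{\mathbb{C}} \circ X^+)$ genuinely satisfies all of \ref{ax:MSDa}--\ref{ax:MSDe} --- notably \ref{ax:MSDe} for $P''/(P'')^{\der}$, which is not automatic from the analogous property for $P/P^{\der}$.
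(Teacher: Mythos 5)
Your treatment of \ref{ax:1} and \ref{ax:3} matches the paper's, and your decision to factor through the image datum for \ref{ax:4} is also the right first move (your worry about \ref{ax:MSDe} for $P''/(P'')^{\der}$ is legitimate but resolved, since the category of $\mathbb{Q}$-tori up to isogeny is semisimple). However, the proposal has two genuine gaps in the two harder axioms.

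\textbf{Gap in \ref{ax:2}.} Forming $Q = P \times_{P''} P'$ together with $X^+ \times_{X''^+} X'^+$ does not capture the reduced fibered product of the varieties. On $\mathbb{C}$-points the fibered product is
\[
\{(x,x') \in X^+ \times X'^+ : \unif(\phi_{\mathbb{C}} x) = \unif(\psi_{\mathbb{C}} x')\}/(\Gamma \times \Gamma') = \bigcup_{\gamma \in \Gamma''} (\Gamma\times\Gamma')\backslash\{(x,x') : \psi_{\mathbb{C}} x' = \gamma\cdot\phi_{\mathbb{C}} x\},
\]
so you must handle the conjugates $H_1 \cap \gamma H_2\gamma^{-1}$ for all $\gamma$ in the (infinite) congruence group $\Gamma''$, not just the na\"\i ve group-theoretic fiber product, which only sees $\gamma = 1$. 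This is exactly where the real work lies: one needs a result of the form ``components of intersections of special subvarieties are special,'' which the paper proves (Theorem \ref{thm:intersectionspecial}) via a three-lemma structure theorem about $H(\mathbb{R})^+(U\cap H)(\mathbb{C})$-orbits and Mumford-Tate groups, and a countability argument. Your $Q$ also need not be connected, and even its identity component need not satisfy \ref{ax:MSDd}; the paper sidesteps all of this by not forming the fiber product of groups at all, but instead realizing $(V\times_W V')_{\red}$ as the projection of $G\cap D'$ (graph meets diagonal) and invoking Theorem \ref{thm:intersectionspecial}.

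\textbf{Gap in \ref{ax:4}.} After factoring $\phi$ as (quotient Shimura morphism onto image) followed by (Shimura immersion), the remaining task is to find a finite surjective cover on which the quotient map has irreducible fibers of constant dimension. Irreducibility of the fibers of $\tilde\Gamma\backslash\tilde X^+ \to \tilde\Delta\backslash\tilde Y^+$ requires not only that the kernel of $\tilde P \to \tilde Q$ be connected but also that $\phi_2(\tilde\Gamma) = \tilde\Delta$ with $\tilde\Delta$ a \emph{congruence} subgroup of $\tilde Q(\mathbb{Q})$. Passing to a finite-index normal $\Gamma_0 \subset \Gamma$ alone cannot arrange this: the obstruction is that $\phi(\Gamma_0)$ is merely an arithmetic subgroup of $Q(\mathbb{Q})$ and need not be a congruence subgroup --- this is the congruence subgroup problem. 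The paper resolves it by first replacing $Q$ by a Shimura covering $\tilde Q \to Q$ with $\tilde Q^{\der}$ essentially simply connected (Lemma \ref{lem:milneshih}, a Milne--Shih-style construction carefully modified to preserve \ref{ax:MSDe}), pulling back to $\tilde P = \tilde Q \times_Q P$, and then invoking Lubotzky--Venkataramana and Platonov--Rapinchuk. Without that covering construction your $\phi_2$ may not land in a congruence quotient, and diagram \eqref{diag} cannot be completed.

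Both gaps require substantial lemmas not present in your sketch, so the proposal as written does not prove the proposition.
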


Proposition \ref{prop:axiomsformsv} will allow us to forget the very technical definition of a connected (mixed) Shimura variety (of Kuga type) for a large part of this article and work only with the axioms \ref{ax:1} to \ref{ax:4}.

We conclude this section with some definitions and a proposition concerning Shimura morphisms.

\begin{defn}\label{def:shimuramorphisms} Let $\phi$ be a Shimura morphism $(P,X^+,\Gamma)\to (Q,Y^+,\Delta)$. By abuse of notation, we also denote the corresponding homomorphism of algebraic groups $P\to Q$ by $\phi$. We say that $\mathcal{F}_{\mathrm{mSv}}(\phi):\mathcal{F}_{\mathrm{mSv}}(P,X^+,\Gamma)\to \mathcal{F}_{\mathrm{mSv}}(Q,Y^+,\Delta)$ is a
	\begin{enumerate}
		\item \emph{Shimura submersion} if the image of $\phi $ contains $Q^{\der}$,
		\item \emph{quotient Shimura morphism} if $\phi: P \to Q$ is surjective and in this case we call $(Q,Y^+)$ a \emph{quotient Shimura datum} of $(P,X^+)$,
		\item \emph{Shimura immersion} if the identity component of $\ker \phi$ is a torus,
		\item \emph{Shimura embedding} if $\phi: P \to Q$ is injective and in this case we call $(P,X^+)$ a \emph{Shimura subdatum} of $(Q,Y^+)$, and
		\item \emph{Shimura covering} if it is a Shimura submersion and immersion.
	\end{enumerate}
\end{defn}

These definitions concern morphisms between connected mixed Shimura varieties but actually depend on the underlying homomorphisms of algebraic groups (and only on these). With a slight abuse of language, we also call Shimura submersions, quotient Shimura morphisms, Shimura immersions, embeddings, and coverings the corresponding Shimura morphisms between connected mixed Shimura data as well as the corresponding elements of $\mathfrak{M}_{\textrm{mSv}}$.

\begin{rmk}\label{rmk:quotientshimuradatum}
In Proposition 2.9 in \cite{PinkDiss}, the quotient of a mixed Shimura datum $(P,X)$ by a normal algebraic subgroup $K$ of $P$ is constructed; it satisfies a certain natural universal property. Thanks to Corollary 2.12 in \cite{PinkDiss}, this construction also yields quotients $(P,X^+)/K$ of connected mixed Shimura data $(P,X^+)$ by such subgroups $K$ that are again connected mixed Shimura data. One can check that the abovementioned slight difference between our definition and Pink's definition in \cite{PinkDiss} is preserved under taking such quotients. Furthermore, one can check that a Shimura morphism $(P,X^+) \to (Q,Y^+)$ is a quotient Shimura morphism if and only if it induces an isomorphism between $(P,X^+)/K$ and $(Q,Y^+)$, where $K = \ker(P \to Q)$.
\end{rmk}

We recall some results from \cite{Pink} that we will use when verifying that \ref{ax:4} holds for our three categories $\mathfrak{C}_{\mathrm{mSv}}$, $\mathfrak{C}_{\mathrm{mSvK}}$, and $\mathfrak{C}_{\mathrm{pSv}}$ and in Section \ref{sec:A5}. 

\begin{prop}[\cite{Pink}, Facts 2.6]\label{prop:pinkfacts} The following hold:
	\begin{enumerate}
		\item Shimura submersions are surjective.
		\item Shimura immersions are finite.
	\end{enumerate}
\end{prop}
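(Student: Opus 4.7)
The plan is to translate each statement into a question about the map $\phi: P \to Q$ of algebraic groups and the induced analytic map $X^{+} \to Y^{+}$, and then descend to the Shimura varieties $\Gamma\backslash X^{+} \to \Delta\backslash Y^{+}$ by congruence-subgroup arguments. Let $U \subset W \subset P$ and $V \subset W' \subset Q$ be the algebraic subgroups appearing in the respective mixed Shimura data.

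For (1), I would observe that $\phi$ sends $X^{+}$ into $Y^{+}$ as a $\phi(P)(\mathbb{R})\phi(U)(\mathbb{C})$-orbit inside the $Q(\mathbb{R})V(\mathbb{C})$-orbit $Y^{+}$. Surjectivity of $\mathcal{F}_{\mathrm{mSv}}(\phi)$ thus amounts to showing $\Delta\cdot\phi(X^{+})=Y^{+}$. The hypothesis $\phi(P)\supset Q^{\der}$ implies that $Q/\phi(P)$ is a quotient of the commutative group $Q/Q^{\der}$, which by axiom \ref{ax:MSDe} is an almost-direct product of a $\mathbb{Q}$-split torus and a torus of compact type. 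One absorbs the split-torus direction using real approximation for $\mathbb{Q}$-split tori together with arithmeticity of $\Delta$, and handles the compact-type direction by noting that compact orbits are already captured (modulo $\Delta$) by the connected component $Y^{+}$. The $V(\mathbb{C})$-part, which lies in $\phi(P)$ modulo further quotients, is dealt with similarly using that $W'/\phi(W)$ is a unipotent group on which $\Delta$ acts cocompactly on the real points.

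For (2), set $K:=\ker\phi$; by assumption $K^{0}$ is a torus. Since $K$ is normal in $P$, conjugation defines a homomorphism from the connected group $P$ into the discrete automorphism group $\Aut(K^{0})$, so $K^{0}$ is central in $P$. I would then factor $\phi$ through the quotient Shimura datum $(P,X^{+})/K^{0}$ of Remark \ref{rmk:quotientshimuradatum}. The first factor $(P,X^{+})\to(P,X^{+})/K^{0}$ induces a map on $X^{+}$ whose fibers are $K^{0}(\mathbb{R})$-orbits under conjugation; but $K^{0}$ is a central torus, and the type conditions \ref{ax:MSDa}--\ref{ax:MSDb} force the restriction of any $x\in X^{+}$ to $K^{0}_{\mathbb{C}}$ to be a fixed homomorphism, so conjugation acts trivially and the fibers on $X^{+}$ are singletons. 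The remaining factor $(P,X^{+})/K^{0}\to(Q,Y^{+})$ is induced by a homomorphism with finite kernel, and finite kernels yield finite fibers on the Shimura varieties after intersecting the preimage of $\Delta$ with the image of $\Gamma$, by a standard discreteness/commensurability argument.

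The main obstacle I anticipate is the mixed, as opposed to pure, case: one must carefully propagate through the unipotent normal subgroups $U\subset W$ and $V\subset W'$, which contribute complex (not real) $U(\mathbb{C})$- and $V(\mathbb{C})$-actions on $X^{+}$ and $Y^{+}$. Getting surjectivity right requires checking that $\phi(U)(\mathbb{C})\subset\phi(P)(\mathbb{R})\phi(U)(\mathbb{C})$ surjects (after combining with $\Delta$) onto the $V(\mathbb{C})$-part of the $Y^{+}$-action, which in turn uses the compatibility of the weight filtrations \ref{ax:MSDc} under $\phi$ and results on quotients of mixed Shimura data from \cite{PinkDiss}, Section 2. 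A secondary bookkeeping difficulty is that $X^{+}$ is only a \emph{connected component} of a $P(\mathbb{R})U(\mathbb{C})$-orbit, so one must verify throughout that the chosen connected components match and that $\Delta\subset Q(\mathbb{Q})_{+}$ is large enough to compensate for the failure of the analytic map to be literally surjective between the chosen components.
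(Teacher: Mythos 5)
Your plan for (1) takes a genuinely different route from the paper. You aim to show only the weaker statement $\Delta\cdot\phi(X^+)=Y^+$, using real approximation for $\mathbb{Q}$-split tori, arithmeticity of $\Delta$, and cocompactness of the unipotent action. The paper proves the stronger statement that the map $X^+ \to Y^+$ on uniformizing domains is already surjective, with no help from $\Delta$. The key is that one pushes to the pure quotient and uses that the center of $Q/W_Q$ acts trivially on the Hermitian domain, so only $Q^{\der}$-orbits matter; the crucial technical step is showing $\phi|_{U_P}:U_P\to U_Q$ is surjective via a Lie-algebra argument identifying $\Lie\phi^{-1}(U_Q)/\Lie\ker\phi$ with the $(-1,-1)$-piece, after which \cite{PinkDiss} 2.18 and a result of Orr finish the job. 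Your route would require verifying nontrivial real-approximation and arithmeticity facts that the paper's argument simply sidesteps; if you can close those checks it is not wrong, but it is harder and introduces extra moving parts. You also correctly flag the need to track connected components and the $U(\mathbb{C})$-action, which is indeed where care is needed in the mixed case.

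Your plan for (2) has a genuine gap: you only address set-theoretic finiteness of fibers, not \emph{finiteness as a morphism}. A morphism of complex analytic spaces (or of schemes) is finite iff it is proper, separated, and quasi-finite; a map with finite fibers need not be proper (e.g.\ an open immersion). The paper's proof explicitly states ``\emph{we want to show that the map $\Gamma\backslash X^+\to\Delta\backslash Y^+$ of complex analytic spaces is finite, i.e., proper and separated with finite fibers,}'' and then spends the entire second half of the proof establishing \emph{closedness}: a careful fibered-product argument using a compatible choice of Levi decompositions of $P$ and $Q$, neat congruence subgroups, the exact sequences $\Gamma_U\to\Gamma_W\to\Gamma_V$, and a reduction to proper maps between quotients of $V_P(\mathbb{R})$, $V_Q(\mathbb{R})$, $U_P(\mathbb{C})$, $U_Q(\mathbb{C})$. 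None of this is dispensable and none is a ``standard discreteness/commensurability argument''; your write-up would leave the properness of the map unproved. As a secondary point, your claim that the fibers of $X^+\to X^+/K^0$ are singletons (to dispose of the first factor) is not established by the reasoning you give — the condition $\pi_\mathbb{C}\circ x_1=\pi_\mathbb{C}\circ x_2$ does not by itself force $x_1=x_2$; you would need to argue with the explicit structure of $X^+$ as a $P(\mathbb{R})^+U(\mathbb{C})$-orbit and the centrality of $K^0$ — but in any case this factorization does not circumvent the properness issue, since you would then need to show both factors are closed maps at the level of $\Gamma\backslash X^+$, which is precisely what the paper's argument accomplishes.
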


As no proof of these facts is provided in \cite{Pink} and the definition of a connected mixed Shimura datum in \cite{Pink} is slightly different from our definition here, we give a proof. It relies on the proof of the proposition in the pure case that appears in \cite{OrrThesis}.

\begin{proof}
	Let $\phi: P \to Q$ be a homomorphism of connected linear algebraic groups over $\mathbb{Q}$ that induces a Shimura morphism of connected mixed Shimura data $(P,X^+) \to (Q,Y^+)$ and thereby induces a Shimura morphism $\mathcal{F}_{\textrm{mSv}}(P,X^+,\Gamma) \to \mathcal{F}_{\textrm{mSv}}(Q,Y^+,\Delta)$. We first note that surjectivity and finiteness may be checked after base change to $\mathbb{C}$. This is guaranteed by Propositions 14.48 and 14.51 in \cite{GW} (the morphism $\Spec \mathbb{C} \to \Spec \bar{\mathbb{Q}}$ is faithfully flat and quasi-compact, see \cite{GW}, Definition 10.1 and right after Definition 14.7).  One can check the surjectivity over $\mathbb{C}$ on $\mathbb{C}$-points thanks to Theorem 10.70 in \cite{GW}.
	
	We start with (1). Suppose that $Q^{\mathrm{der}} \subset \phi(P)$, which means that the Shimura morphism $\mathcal{F}_{\textrm{mSv}}(P,X^+,\Gamma) \to \mathcal{F}_{\textrm{mSv}}(Q,Y^+,\Delta)$ induced by $\phi$ is a Shimura submersion. By Proposition 2.9 in \cite{G17}, $\phi$ induces a Shimura morphism of connected pure Shimura data
	\[(G_P,X_G^+) := (P,X^+)/W_P \to (Q,Y^+)/W_Q =: (G_Q,Y_G^+),\]
	where $W_P$ and $W_Q$ are the unipotent radicals of $P$ and $Q$ respectively and $(P,X^+)/W_P$ and $(Q,Y^+)/W_Q$ are the induced quotient Shimura data. The derived subgroup $G_Q^{\mathrm{der}}$ is contained in the image of the induced homomorphism $G_P \to G_Q$ and so this Shimura morphism is also a Shimura submersion. Let $U_Q \subset W_Q$ and $U_P \subset W_P$ be the subgroups from the definition of a connected mixed Shimura datum. By Proposition 2.9 in \cite{G17}, we have $\phi(U_P) \subset U_Q$.
	
	By \ref{ax:MSDe} we have that the quotient $Q/Q^{\der}$ is of multiplicative type. Moreover, the image of $W_Q$ in such a quotient is unipotent and therefore trivial by \cite{MilneAG}, Proposition 14.16.	
	Therefore, we have $W_Q \subset Q^{\der}\subset\phi(P)$. 
	
	Now we claim that $\phi|_{U_P}: U_P \to U_Q$ is surjective. If this is the case, then it follows from Proposition 5.1 in \cite{MilneISV} that $W_Q(\mathbb{R})U_Q(\mathbb{C}) \subset\phi(P(\mathbb{R})_{+}U_P(\mathbb{C}))$.
	Moreover, by the proof of Theorem 2.4(3) in \cite{OrrThesis}, the map $X_G^+ \to Y_G^+$ is surjective and $X^+\to X_G^+$ is surjective by Proposition 5.1 in \cite{MilneISV}.
	Then \cite{PinkDiss}, 2.18, implies that the map $X^+ \to Y^+$ is surjective and therefore the induced map $\Gamma \backslash X^+ \to \Delta \backslash Y^+$ is surjective as well, from which (1) follows.

	We are left with proving that $\phi|_{U_P}: U_P \to U_Q$ is surjective. We argue at the level of Lie algebras. The torus $\mathbb{S}_{\mathbb{C}}$ acts on $\Lie P$ through the adjoint representation. We have $\mathbb{S}_\mathbb{C}$-invariant subspaces $\Lie \ker \phi \subset \Lie  \phi^{-1}(U_Q)$ and this gives an $\mathbb{S}_\mathbb{C}$-invariant subspace $\Lie \phi^{-1}(U_Q)/\Lie \ker \phi$ of $\Lie P/\Lie\ker \phi$. Now $\Lie \phi^{-1}(U_Q)/\Lie \ker \phi$ is isomorphic to $\Lie(U_Q \cap \phi(P))$ by Proposition 1.63 in \cite{MilneAG}, so the action of $\mathbb{S}_\mathbb{C}$ on it is pure of type $\{(-1,-1)\}$ (as defined in \cite{PinkDiss}, 1.3, p.~9). Hence, $\Lie \phi^{-1}(U_Q)/\Lie \ker \phi$ must be contained in the $(-1,-1)$-subspace of $\Lie P/\Lie \ker \phi$, which is equal to $\Lie (\ker \phi)U_P/\Lie \ker \phi$. It follows that $\Lie \phi^{-1}(U_Q)$ is contained in $\Lie (\ker \phi)U_P$. 
For the identity component $\phi^{-1}(U_Q)^0$, we have
	\[ \Lie(\phi^{-1}(U_Q)^0 \cap (\ker \phi)U_P) = \Lie \phi^{-1}(U_Q)^0 \cap \Lie(\ker \phi)U_P= \Lie\phi^{-1}(U_Q)^0,\]
	 see \cite{MilneAG}, Chapter 10, p. 190. By \cite{MilneAG}, Proposition 10.15, $\phi^{-1}(U_Q)^0$ is contained in $(\ker \phi)U_P$. But $\phi(\phi^{-1}(U_Q))$ is connected (being unipotent), so $\phi(\phi^{-1}(U_Q)) = \phi(\phi^{-1}(U_Q)^0)$ and hence $\phi^{-1}(U_Q) = (\ker \phi) \phi^{-1}(U_Q)^0\subset (\ker \phi)U_P$.
	Since $U_Q \subset W_Q \subset \phi(P)$, this implies that $\phi|_{U_P}: U_P \to U_Q$ is surjective and we are done with (1).
	
	\medskip
	We turn to (2), using the same notation as in (1), but now $\phi$ induces a Shimura immersion, i.e., the identity component of $\ker \phi$ is a torus. We want to show that the map $\Gamma\backslash X^+ \to \Delta\backslash Y^+$ of complex analytic spaces is finite, i.e., proper and separated with finite fibers (note that separatedness is automatic). By GAGA (see \cite{SGA1}, Expos\'{e} XII, Proposition 3.2(vi)), this implies that the corresponding morphism of algebraic varieties over $\mathbb{C}$ is also finite.
	
	By Proposition 2.9 in \cite{G17}, we have $\phi(W_P) \subset W_Q$. The kernel of $\phi|_{W_P}$ is a unipotent algebraic group whose identity component is a torus. Hence it is trivial, so $\phi|_{W_P}$ is injective. Again, there is an induced Shimura morphism $(G_P,X_G^+) \to (G_Q,Y_G^+)$. The kernel of the induced homomorphism $G_P \to G_Q$ is equal to $\phi^{-1}(W_Q)/W_P$. Since $\phi|_{W_P}$ is injective, the restriction of $P \to G_P$ to $\ker \phi$ is an isomorphism onto its image $H\subset \phi^{-1}(W_Q)/W_P$. We have that $(\phi^{-1}(W_Q)/W_P)/H \simeq (W_Q \cap \phi(P))/\phi(W_P)$ is unipotent and normal in $G_P/H$. It follows from Theorem 22.42 in \cite{MilneAG} that $G_P/H$ is reductive. Therefore, we must have $\phi^{-1}(W_Q)/W_P = H \simeq \ker \phi$. So the induced Shimura morphism $(G_P,X_G^+) \to (G_Q,Y_G^+)$ is a Shimura immersion as well. Together with Theorem 2.4(2) in \cite{OrrThesis}, the injectivity of $\phi|_{W_P}$, and \cite{PinkDiss}, 2.18, this implies that the map $\Gamma \backslash X^+ \to \Delta \backslash Y^+$ has countable and therefore finite fibers. Note that, by Lemma 2.2 in \cite{OrrThesis} (which is formulated for reductive groups, but holds with the same proof for arbitrary linear algebraic groups) one can find congruence subgroups of $G_P(\mathbb{Q})_+$ and $G_Q(\mathbb{Q})_+$ such that we can apply Theorem 2.4(2) in \cite{OrrThesis}.
	
	A continuous map with compact -- e.g., finite -- fibers is proper if and only if it is closed. It therefore remains to show that the continuous map $\Gamma \backslash X^+ \to \Delta \backslash Y^+$ is closed.
		
	We choose a Levi decomposition $P=G_P\ltimes W_P$ (see Theorem 2.3 in \cite{PlatonovRapinchuk}). Let $i_P: G_P \to P$ denote the induced homomorphism of algebraic groups and let $\pi_P: P \to G_P$ denote the canonical quotient homomorphism. Fix $x_G \in X_G^+$. It follows from the surjectivity of $X^+ \to X^+_G$ that there exists $\tilde{x} \in X^+$ such that $(\pi_P)_{\mathbb{C}} \circ \tilde{x} = x_G$. By Proposition 2.17 in \cite{PinkDiss}, $i_P$ induces a morphism between the associated mixed Shimura data. Proposition 2.18 in \cite{PinkDiss} then implies that $x = (i_P)_{\mathbb{C}} \circ x_G \in W_P(\mathbb{R})U_P(\mathbb{C})\tilde{x} \subset X^+$. If $\omega: \mathbb{G}_{m,\mathbb{R}} \to \mathbb{S}$ is defined by $t\in \mathbb{R}^{\ast}\mapsto t \in \mathbb{C}^{\ast} = \mathbb{S}(\mathbb{R})$, then the composition $x \circ \omega_{\mathbb{C}} = (i_P)_{\mathbb{C}} \circ x_G\circ \omega_{\mathbb{C}}$ is the base change of a homomorphism $x_\omega: \mathbb{G}_{m,\mathbb{Q}}\to P$ thanks to Remark 2.2(i) in \cite{G17} (note that $W_P \subset P^{\der}$ and so the center of $G_P$ is isogenous to $P/P^{\der} \simeq G_P/(G_P)^{\der}$).
	
	Let $Z_P(x_\omega)$ denote the centralizer of $x_\omega(\mathbb{G}_{m,\mathbb{Q}})$ in $P$. The proof of Lemma 1.8 in \cite{PinkDiss} shows that there is a Levi decomposition $P = G_P \ltimes W_P$ such that $G_P \ltimes \{0\} = Z_P(x_\omega)$ and we will assume that we have chosen this as the Levi decomposition of $P$. If $Z(x)\subset (Z_P(x_\omega))(\mathbb{R})^+$ denotes the intersection with $P(\mathbb{R})^+U_P(\mathbb{C})$ of the centralizer of $x(\mathbb{S}_{\mathbb{C}})$ in $P_{\mathbb{C}}$ and $Z(x_G)$ denotes the intersection with $G_P(\mathbb{R})^+$ of the centralizer of $x_G(\mathbb{S}_{\mathbb{C}})$ in $(G_P)_{\mathbb{C}}$, then this Levi decomposition induces a homeomorphism
\[ X^+ \simeq P(\mathbb{R})^+U_P(\mathbb{C})/Z(x) \to \left(G_P(\mathbb{R})^+/Z(x_G)\right) \times W_P(\mathbb{R})U_P(\mathbb{C}) \simeq X_G^+ \times W_P(\mathbb{R})U_P(\mathbb{C}).\]
The homeomorphism is $P(\mathbb{R})^+U_P(\mathbb{C})$-equivariant, where $P(\mathbb{R})^+U_P(\mathbb{C})$ operates on $W_P(\mathbb{R})U_P(\mathbb{C})$ by multiplication from the left and its operation on $X_G^+$ is the canonical one.

	Using $y = \phi_{\mathbb{C}} \circ x$, we get in the same way a Levi decomposition of $Q$ and a homeomorphism $Y^+ \to Y_G^+ \times W_Q(\mathbb{R})U_Q(\mathbb{C})$. Furthermore, we have $\phi(G_P \ltimes \{0\}) \subset G_Q \ltimes\{0\}$ for these Levi decompositions and the map $X_G^+ \times W_P(\mathbb{R})U_P(\mathbb{C}) \to Y_G^+ \times W_Q(\mathbb{R})U_Q(\mathbb{C})$ induced by these homeomorphisms is the canonical one.
	
	After maybe replacing $\Gamma$ and $\Delta$ by smaller congruence subgroups, using again (a more general version of) Lemma 2.2 in \cite{OrrThesis} in the process, we can assume without loss of generality that $\Gamma = \Gamma_G \ltimes \Gamma_W$, where $\Gamma_G$ is a neat congruence subgroup of $G_P(\mathbb{Q})_+$ (as defined in \cite{PinkDiss}, 0.5) and $\Gamma_W \subset W_P(\mathbb{Q})$ is a $\Gamma_G$-invariant congruence subgroup, and analogously $\Delta = \Delta_G \ltimes \Delta_W$.
	
	By Theorem 2.4(2) in \cite{OrrThesis}, the map $\Gamma_G \backslash X_G^+ \to \Delta_G \backslash Y_G^+$ is proper. It follows that the same holds for the map
	\[(\Gamma_G \backslash X_G^+) \times_{\Delta_G \backslash Y_G^+} (\Delta\backslash Y^+) \to \Delta \backslash Y^+.\]
	
	Consider now the map
	\[ \Gamma \backslash X^+ \to (\Gamma_G \backslash X_G^+) \times_{\Delta_G \backslash Y_G^+} (\Delta\backslash Y^+).\]
	Thanks to the homeomorphisms constructed above as well as Proposition 3.3(b) in \cite{PinkDiss} with its proof together with the neatness of $\Gamma_G$ and $\Delta_G$, we can cover $\Gamma_G \backslash X_G^+$ with open sets $\mathcal{U}$ over which $\Gamma \backslash X^+$ is homeomorphic to $\mathcal{U} \times (\Gamma_W \backslash W_P(\mathbb{R})U_P(\mathbb{C}))$ while $(\Gamma_G \backslash X_G^+) \times_{\Delta_G \backslash Y_G^+} (\Delta\backslash Y^+)$ is homeomorphic to $\mathcal{U} \times (\Delta_W \backslash W_Q(\mathbb{R})U_Q(\mathbb{C}))$ (and the map between them is the canonical one). It then suffices to show that the map
	\[\Gamma_W \backslash W_P(\mathbb{R})U_P(\mathbb{C}) \to \Delta_W \backslash W_Q(\mathbb{R})U_Q(\mathbb{C})\]
	is proper.
	
	Set $V_P = W_P/U_P$ and $V_Q = W_Q/U_Q$. By \cite{PinkDiss}, 2.15, $U_P$ and $V_P$ are abelian and $W_P$ is isomorphic to $U_P \times_{\mathbb{Q}} V_P$ as a $\mathbb{Q}$-variety in such a way that the induced morphisms $U_P \to U_P \times_{\mathbb{Q}} V_P \to V_P$ are the canonical ones; analogous statements hold for $W_Q$. Arguing as above for $\phi^{-1}(U_Q) \subset (\ker \phi)U_P$ in case (1) and using Proposition 2.9 in \cite{G17}, we find that $\phi^{-1}(U_Q) \cap W_P = U_P$. The exact sequence $U_P \to W_P \to V_P$ induces an exact sequence of congruence subgroups $\Gamma_U \to \Gamma_W \to \Gamma_V$. Analogously, we get an exact sequence of congruence subgroups $\Delta_U \to \Delta_W \to \Delta_V$.
	
	As $\phi^{-1}(U_Q) \cap W_P = U_P$, the induced homomorphism $\phi_V: V_P \to V_Q$ is injective. Since any basis of $\Delta_V \cap \phi_V(V_P(\mathbb{Q}))$ can be completed to a basis of $\Delta_V$, the map $\phi_V^{-1}(\Delta_V)\backslash V_P(\mathbb{R}) \to \Delta_V \backslash V_Q(\mathbb{R})$ is a closed embedding. Furthermore, the map $\Gamma_V\backslash V_P(\mathbb{R}) \to \phi_V^{-1}(\Delta_V)\backslash V_P(\mathbb{R})$ is proper since $\Gamma_V$ has finite index in $\phi_V^{-1}(\Delta_V)$. Hence, the map $\Gamma_V\backslash V_P(\mathbb{R}) \to \Delta_V \backslash V_Q(\mathbb{R})$ is proper and therefore the map
	\[(\Gamma_V\backslash V_P(\mathbb{R})) \times_{\Delta_V \backslash V_Q(\mathbb{R})} (\Delta_W \backslash W_Q(\mathbb{R})U_Q(\mathbb{C})) \to \Delta_W \backslash W_Q(\mathbb{R})U_Q(\mathbb{C})\]
	is proper as well.
	
	Using that $W_P(\mathbb{R})U_P(\mathbb{C})$ and $W_Q(\mathbb{R})U_Q(\mathbb{C})$ are homeomorphic to $V_P(\mathbb{R}) \times U_P(\mathbb{C})$ and $V_Q(\mathbb{R}) \times U_Q(\mathbb{C})$ respectively and that the various quotient homomorphisms are covering maps in order to argue as above, we are reduced to showing that the map $\Gamma_U \backslash U_P(\mathbb{C}) \to \Delta_U \backslash U_Q(\mathbb{C})$ is proper. But this map is the composition of $\Gamma_U \backslash U_P(\mathbb{C}) \to \phi^{-1}(\Delta_U) \backslash U_P(\mathbb{C})$, which is proper since $\Gamma_U$ has finite index in $\phi^{-1}(\Delta_U)$, and the closed embedding $\phi^{-1}(\Delta_U) \backslash U_P(\mathbb{C}) \to \Delta_U \backslash U_Q(\mathbb{C})$ (recall that $\phi|_{W_P}$ is injective), hence proper.
\end{proof}

\begin{rmk}\label{rmk:imageofu}
Proposition 2.9 in \cite{G17} and the argument with Lie algebras in the proof of Proposition \ref{prop:pinkfacts}(1) show that $\phi(P) \cap U_Q = \phi(\phi^{-1}(U_Q)) = \phi(U_P)$ for any Shimura morphism $(P,X^+) \to (Q,Y^+)$ induced by a homomorphism of algebraic groups $\phi: P \to Q$, where $U_P \subset P$ and $U_Q \subset Q$ denote the respective subgroups from the definition of a connected mixed Shimura datum. Combined with Proposition 2.9 in \cite{PinkDiss} and its proof, this shows that being pure or of Kuga type is preserved under taking quotients. We will use these facts throughout the article.
\end{rmk}

\section{Intersections of special subvarieties of connected mixed Shimura varieties}\label{sec:intersectionspecial}

In this section and in the next one, we indicate by $\mathfrak{V}$, $\mathfrak{M}$, and $\mathcal{F}$ respectively the class of objects, the class of morphisms, and the functor corresponding to any of the three categories we are considering in Proposition \ref{prop:axiomsformsv}. 

In order to prove \ref{ax:2} we need to show that, in our context, irreducible components of intersections of special subvarieties are special. This is a well-known, widely used fact but we were unable to find a detailed proof of it in the literature and our definition of a connected mixed Shimura datum slightly differs from the ones in \cite{PinkDiss} and \cite{Pink}. For these reasons we include a proof here.

Before stating the theorem, we note that, for any $\phi\in \mathfrak{M}$, the image of $\mathcal{F}(\phi)$ is closed because of Remark 5.5 in \cite{G17} and Proposition 4.2 in \cite{Pink} (which also holds with our definition).
We can then call the image of a Shimura morphism a \emph{special subvariety} (see Definition \ref{defn:special} below and Definition 5.9(1) in \cite{G17}).

\begin{thm}\label{thm:intersectionspecial}
	Irreducible components of intersections of special subvarieties of connected (mixed) Shimura varieties (of Kuga type) are special.
\end{thm}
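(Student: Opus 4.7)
The plan is to reduce to two basic facts: (a) \emph{images of special subvarieties under Shimura morphisms are special}, and (b) \emph{every irreducible component of the preimage of a special subvariety under a Shimura morphism is special}. By induction it suffices to treat the intersection of two special subvarieties $V_i = \mathcal{F}(\phi_i)(\mathcal{F}(P_i,X_i^+,\Gamma_i))$ for $i=1,2$ inside $S = \mathcal{F}(P,X^+,\Gamma)$. The product of two connected mixed Shimura data is a connected mixed Shimura datum (the axioms \ref{ax:MSDa}--\ref{ax:MSDe} pass to direct products factor by factor), giving $(P_1\times P_2, X_1^+\times X_2^+,\Gamma_1\times\Gamma_2)$, and the morphism $\phi_1\times\phi_2$ of algebraic groups induces a Shimura morphism $\Phi$ to $(P\times P, X^+\times X^+,\Gamma\times\Gamma)$. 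The diagonal homomorphism $\delta: P\to P\times P$, $p\mapsto(p,p)$, induces a Shimura morphism since each of \ref{ax:MSDa}--\ref{ax:MSDe} is inherited by the diagonal subdatum, and its image $\Delta_S\subset S\times S$ is therefore a special subvariety. The intersection $V_1\cap V_2$ equals the image of $\mathcal{F}(\Phi)^{-1}(\Delta_S)$ under either of the obvious Shimura morphisms to $S$, so (a) and (b) together yield the theorem.

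For (b), given a Shimura morphism induced by $\alpha:Q\to Q'$ and a special subvariety of the target defined by a Shimura morphism induced by $\beta:R\to Q'$, form the closed algebraic subgroup $N := Q\times_{Q'}R\subset Q\times R$ and let $N^0$ be its identity component. For each pair $(x_Q,x_R)\in X_Q^+\times Z^+$ with $\alpha_{\mathbb{C}}\circ x_Q=\beta_{\mathbb{C}}\circ x_R$, the universal property of the fiber product gives a homomorphism $x_N:\mathbb{S}_{\mathbb{C}}\to N^0_{\mathbb{C}}$; take $X_N^+$ to be the connected component of the $N^0(\mathbb{R})U_{N^0}(\mathbb{C})$-orbit of $x_N$, where $U_{N^0}$ is the subgroup determined by condition \ref{ax:MSDc}. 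Axioms \ref{ax:MSDa}--\ref{ax:MSDc} for $(N^0,X_N^+)$ follow by functoriality from those for $(Q,X_Q^+)$ and $(R,Z^+)$, using that $\Lie N^0$ is the equalizer of $\Lie\alpha$ and $\Lie\beta$ inside $\Lie Q\oplus\Lie R$; axioms \ref{ax:MSDd} and \ref{ax:MSDe} descend from the corresponding axioms for $(Q\times R, X_Q^+\times Z^+)$ because $(N^0/W_{N^0})^{\ad}$ is a $\mathbb{Q}$-subquotient of $((Q/W_Q)\times(R/W_R))^{\ad}$ and $N^0/(N^0)^{\der}$ is a subquotient of $(Q/Q^{\der})\times(R/R^{\der})$. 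After selecting a congruence subgroup $\Gamma_{N^0}\subset N^0(\mathbb{Q})_+$ that maps into both chosen congruence subgroups, one obtains a Shimura morphism $\mathcal{F}(N^0,X_N^+,\Gamma_{N^0})\to\mathcal{F}(Q,X_Q^+,\Gamma_Q)$ whose image is a special subvariety sitting inside the preimage. Letting $(x_Q,x_R)$ range over representatives of the finitely many relevant $\Gamma_{Q'}$-orbits (finiteness coming from the commensurability of $\alpha(\Gamma_Q)$ and $\beta(\Lambda)$ in $\Gamma_{Q'}$) produces finitely many such Shimura morphisms whose images, by a direct analytic check on the uniformization via Proposition 3.3 of \cite{PinkDiss}, cover the whole preimage; every irreducible component of the preimage is therefore the image of one of them and hence special.

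The main obstacle I anticipate is the verification of \ref{ax:MSDd} and \ref{ax:MSDe} for $(N^0,X_N^+)$: axioms \ref{ax:MSDa}--\ref{ax:MSDc} are essentially formal once $N^0$ is in hand, but the Cartan involution and cocenter conditions require controlling the subquotient structure of $(N^0/W_{N^0})^{\ad}$ and $N^0/(N^0)^{\der}$ and ruling out any pathological $\mathbb{Q}$-factors arising from the fiber product that were not already present in $Q$ or $R$. A secondary subtlety is ensuring that the finite family of candidate Shimura morphisms really does exhaust the preimage on the uniformizing side, which depends on standard finiteness of double cosets for commensurable congruence subgroups together with the transcendental description of Shimura morphisms.
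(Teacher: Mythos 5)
Your reduction to preimages of a diagonal under a Shimura morphism is sound, and the idea of looking at the identity component $N^0$ of a fiber-product group is close in spirit to the paper's study of $H_\gamma = (H_1 \cap \gamma H_2 \gamma^{-1})^0$. But the central claim that \ref{ax:MSDd} and \ref{ax:MSDe} for $(N^0,X_N^+)$ ``descend'' because $(N^0/W_{N^0})^{\ad}$ is a $\mathbb{Q}$-subquotient is exactly where the argument breaks. Neither half of \ref{ax:MSDd} is inherited by arbitrary connected algebraic subgroups: the Cartan involution on the ambient adjoint group does not trivially restrict (the paper has to go through Proposition~1.20 of \cite{MilneISV} and a polarization argument), and, more seriously, the absence of a $\mathbb{Q}$-factor $H'$ with $H'(\mathbb{R})$ compact simply fails for subgroups in general. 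In the paper this failure is exactly what Lemma~\ref{lem:intersectionfundamentallemma2} is for: it proves \ref{ax:MSDd}--\ref{ax:MSDe} \emph{only} under the additional hypothesis that $H$ is the Mumford--Tate group of a point of $X_H^+$, and the proof of the no-compact-factor part uses that hypothesis essentially (via the argument that $\rho'$ must be trivial). Your $N^0$ is not a Mumford--Tate group, so the premise is missing and the conclusion you need does not follow. You flag this as ``the main obstacle I anticipate'' but do not overcome it; it is the heart of the theorem, not a detail.

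There is a second, independent gap. Even if one replaces $N^0$ by the Mumford--Tate group of a chosen $x_N$, one must cover the fiber $Y_1^+\cap\gamma Y_2^+$ by a controlled family of such orbits. Your proposed finiteness -- double-coset finiteness from commensurability of $\alpha(\Gamma_Q)$ and $\beta(\Lambda)$ -- only bounds the number of $\Gamma$-translates $\gamma$ to consider; it says nothing about the internal structure of a single $Y_1^+\cap\gamma Y_2^+$, which is the actual difficulty. The paper handles this with Lemma~\ref{lem:intersectionfundamentallemma} (that $X_H$ is a \emph{finite} union of $H(\mathbb{R})^+(U\cap H)(\mathbb{C})$-orbits, proved via Moonen's real-Weyl-group argument and the finiteness of conjugacy classes of maximal real tori) and Lemma~\ref{lem:intersectionfundamentallemma3} (the Baire-category descent to Mumford--Tate orbits). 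Your ``direct analytic check on the uniformization via Proposition~3.3 of \cite{PinkDiss}'' glosses over precisely these two finiteness statements. Without them you obtain a possibly wild union of analytic pieces and cannot conclude that each algebraic irreducible component is the image of a Shimura subdatum. The paper's final step -- countable union plus the fact that $\mathbb{C}$-points of a variety cannot be covered by countably many proper subvarieties -- is also needed and is absent from your sketch.
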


We recall that $\mathbb{S}$ denotes the Deligne torus.
In order to prove the above theorem we need several auxiliary facts.

\begin{lem}\label{lem:intersectionfundamentallemma}
	Let $(P,X^+)$ be a connected mixed Shimura datum and let $H \subset P$ be a connected algebraic subgroup. Let $U$ be the normal unipotent subgroup of $P$ from the definition of a connected mixed Shimura datum. Set $X_H = \{ x \in X^+;\mbox{ } x(\mathbb{S}_{\mathbb{C}}) \subset H_{\mathbb{C}}\}$. Then $X_H$ is a finite (possibly empty) union of $H(\mathbb{R})^+(U \cap H)(\mathbb{C})$-orbits.
\end{lem}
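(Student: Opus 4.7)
The plan is to prove two ingredients from which the lemma follows immediately: (i) $X_H$ has only finitely many connected components, and (ii) every $H(\mathbb{R})^+(U\cap H)(\mathbb{C})$-orbit in $X_H$ is open in $X_H$. Each such orbit is connected, being the continuous image of the connected real Lie group $H(\mathbb{R})^+ \times (U\cap H)(\mathbb{C})$, so (ii) implies that the orbits are clopen in $X_H$ and hence unions of connected components; combined with (i), this forces $X_H$ to be a finite union of orbits.

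For (i), I would argue that $X_H$ is a semi-algebraic subset of $X^+$ and invoke the standard fact that semi-algebraic sets have only finitely many connected components. Fixing a base point $x_0 \in X^+$, the conjugation orbit map $p \mapsto (s \mapsto p\,x_0(s)\,p^{-1})$ identifies $X^+$ with a connected component of $P(\mathbb{R})U(\mathbb{C})/Z(x_0)$ in the analytic topology, where $Z(x_0)$ is the centralizer of $x_0$ in $P(\mathbb{R})U(\mathbb{C})$. The ambient group $P(\mathbb{R})U(\mathbb{C})$ carries a natural semi-algebraic structure (its identity component is the image of a polynomial map from $P(\mathbb{R})^{+}\times U(\mathbb{C})$ into $P(\mathbb{C})$), and the condition $x(\mathbb{S}_\mathbb{C}) \subset H_\mathbb{C}$ translates, under the above parametrization, into the algebraic condition $p \cdot dx_0(\Lie\mathbb{S}_\mathbb{C})\cdot p^{-1} \subset \Lie(H)_\mathbb{C}$ on $p$. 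Hence $X_H$ is semi-algebraic in $X^+$.

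For (ii), the heart of the proof is a tangent space computation at a point $x \in X_H$. Writing $x_t = \exp(tA)\cdot x \cdot \exp(-tA)$ with $A \in \Lie(P)(\mathbb{R}) + \Lie(U)(\mathbb{C})$, one checks that $x_t(\mathbb{S}_\mathbb{C}) \subset H_\mathbb{C}$ to first order in $t$ if and only if $[A, dx(\Lie\mathbb{S}_\mathbb{C})] \subset \Lie(H)_\mathbb{C}$. Thus $T_x X_H$ is the image in $T_x X^+$ of the subspace
\[\mathfrak{a}_x \;=\; \{A \in \Lie(P)(\mathbb{R}) + \Lie(U)(\mathbb{C}) : [A, dx(\Lie\mathbb{S}_\mathbb{C})] \subset \Lie(H)_\mathbb{C}\},\]
while the tangent space to the orbit $H(\mathbb{R})^+(U\cap H)(\mathbb{C})\cdot x$ is the image of $\Lie(H)(\mathbb{R}) + \Lie(U\cap H)(\mathbb{C})$. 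I would decompose $\Lie(P)_\mathbb{C}$ into its Hodge pieces under the mixed Hodge structure induced by $x$ via axiom \ref{ax:MSDb}, use that the centralizer of $x(\mathbb{S}_\mathbb{C})$ in $\Lie(P)_\mathbb{C}$ is precisely the $(0,0)$-piece, and exploit the weight filtration \ref{ax:MSDc} together with the reality condition \ref{ax:MSDa} to show that any $A \in \mathfrak{a}_x$ can be written, modulo the centralizer, as a sum of an element of $\Lie(H)(\mathbb{R})$ (coming from the Hodge pieces of $\Lie(H)$ of weight $\geq -1$) and an element of $\Lie(U\cap H)(\mathbb{C})$ (from the weight $-2$ piece, which lies in $U$). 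By the implicit function theorem, the resulting equality $T_x(\text{orbit}) = T_x X_H$ implies that the orbit is open in $X_H$.

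The main obstacle will be the tangent space identification in step (ii): one must convert the infinitesimal normalizer condition $[A, dx(\Lie\mathbb{S}_\mathbb{C})] \subset \Lie(H)_\mathbb{C}$ into explicit membership in $\Lie(H)(\mathbb{R}) + \Lie(U\cap H)(\mathbb{C})$ modulo the centralizer. This requires juggling the Hodge-type restrictions on $\Lie(P)$, the constraint that the $U$-component of $A$ is complex-linear while the rest is defined over $\mathbb{R}$, and the compatibility of the Hodge pieces of $\Lie(H)$ and $\Lie(U\cap H)$ with those of $\Lie(P)$.
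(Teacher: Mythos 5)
Your approach is genuinely different from the paper's. The paper reduces to the pure group $G_H = H/(W\cap H)$, proves $G_H$ is reductive via a Cartan-involution argument, and then obtains finiteness by a global counting argument (Moonen's): finiteness of $G_H(\mathbb{R})$-conjugacy classes of maximal tori in $(G_H)_{\mathbb{R}}$, finiteness of the associated real Weyl groups, a well-defined class invariant separating conjugacy classes, and a separation-via-abelianization step; it then lifts back to $H$ using the conjugacy of Levi decompositions over $\mathbb{R}$ (for $W$) and over $\mathbb{C}$ (for $U$). Your route instead bounds the number of connected components of $X_H$ via semi-algebraicity and then shows each orbit is open by a tangent-space comparison. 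This is a legitimate alternative strategy; semi-algebraicity of $X^+$ (hence of $X_H$) does hold, since $P(\mathbb{R})U(\mathbb{C})$ is the group of $\mathbb{R}$-points of $P_{\mathbb{R}}\cdot\Res_{\mathbb{C}/\mathbb{R}}U_{\mathbb{C}}$ and the conjugation orbit of $x_0$ is the image of a real algebraic map. Your approach also sidesteps the reductivity of $G_H$ and the torus/Weyl-group machinery, which is appealing. One small wording issue: you should not claim that $T_xX_H$ is the image of $\mathfrak{a}_x$, since $X_H$ need not a priori be smooth; the clean statement is that $\mathfrak{a}_x/\mathfrak{z}$ (with $\mathfrak{z}$ the Lie algebra of the stabilizer of $x$) is the kernel of the Jacobian of the defining system, so $X_H$ is locally contained in a submanifold $M$ of that dimension, and the orbit — a manifold of the same dimension contained in $X_H\subset M$ — is then open in $M$, hence in $X_H$.

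The real gap, which you flag yourself, is the crux: the inclusion
\[\mathfrak{a}_x = \{A\in \Lie P(\mathbb{R})+\Lie U(\mathbb{C}): [A,dx(\Lie\mathbb{S}_{\mathbb{C}})]\subset\Lie H_{\mathbb{C}}\}\;\subset\;\Lie H(\mathbb{R})+\Lie(U\cap H)(\mathbb{C})+\mathfrak{z}.\]
In the pure case ($W=U=1$) this is clean: the bigrading is split over $\mathbb{R}$, so $A^{-1,1}+A^{1,-1}$ is real and lies in $\Lie H_{\mathbb{C}}$, hence in $\Lie H(\mathbb{R})$, and $A^{0,0}\in\mathfrak{z}$. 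In the mixed case, however, \ref{ax:MSDa} only makes $u\circ x$ real (not $x$), so the bigrading of $\Lie P_{\mathbb{C}}$ is preserved by complex conjugation only modulo $\Lie U_{\mathbb{C}}$; in particular $\mathfrak{p}^{0,0}$ need not be conjugation-stable, $\mathfrak{z}=\mathfrak{p}^{0,0}\cap(\Lie P(\mathbb{R})+\Lie U(\mathbb{C}))$ need not be a real form of $\mathfrak{p}^{0,0}$, and the correction terms that appear when you decompose a real $A$ into Hodge pieces land in $\Lie U_{\mathbb{C}}=\mathfrak{p}^{-1,-1}$ but not obviously in $\mathfrak{h}^{-1,-1}=\Lie(U\cap H)_{\mathbb{C}}$. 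You would need to control these corrections — most plausibly by first projecting to $\Lie(P/U)$, where \ref{ax:MSDa} makes the pure-case computation apply, and then lifting carefully, but the lifting of the $\mathfrak{z}$-part is exactly where the non-split structure bites. Until this is worked out, the proof is incomplete. The paper avoids this local linear algebra entirely by passing through $H/(W\cap H)$ and then resolving the lift using conjugacy of Levi decompositions over $\mathbb{R}$ and over $\mathbb{C}$ (Proposition 5.1 of \cite{MilneISV}, Theorem 2.3 of \cite{PlatonovRapinchuk}), and you may find that the cleanest fix for your approach borrows that same Levi trick.
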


\begin{proof}
	We can assume without loss of generality that $X_H \neq \emptyset$.
	
	Let $u: H \to H/(U\cap H)$ denote the quotient map, where we regard $H/(U \cap H)$ as an algebraic subgroup of $P/U$. Then, $u_{\mathbb{C}} \circ x$ can be defined over $\mathbb{R}$ for each $x \in X_H$. Let $W$ denote the unipotent radical of $P$ and let $\pi: P \to P/W$ denote the quotient map. Set $(G,X_G^+) = (P,X^+)/W$ and $G_H = H/(W \cap H) \subset G$. The elements of $X_G^+$ are defined over $\mathbb{R}$, so we view $X_G^+$ as a subset of $\Hom(\mathbb{S},G_{\mathbb{R}})$.
	
	Let $X_G$ be the full $G(\mathbb{R})$-conjugacy class of some element of $X_G^+$.
	
	We first show \underline{Claim 1}: $X_{G_H} = \{ x \in X_G;\mbox{ }x(\mathbb{S}) \subset (G_H)_{\mathbb{R}}\}$ is a finite union of $G_H(\mathbb{R})$-conjugacy classes.
	
	For this, we reproduce an argument by Moonen in \cite{MoonenLetter}.
	
	We start the proof of Claim 1 by proving \underline{Claim 2}: $X_H \neq \emptyset$ implies that $G_H$ is reductive. It suffices to show that the image $H'$ of $G_H$ in $G^{\ad}$ is reductive since the center $Z(G)$ of $G$ is of multiplicative type by Corollary 17.62(a) in \cite{MilneAG}. Let $x \in X_H$. By \ref{ax:MSDd}, conjugation by $x(\sqrt{-1})$ induces a Cartan involution $\theta$ on $G^{\ad}_{\mathbb{R}}$, which yields an inner form $G^{\ad,\theta}_{\mathbb{R}}$ of $G^{\ad}_{\mathbb{R}}$ such that $G^{\ad,\theta}_{\mathbb{R}}(\mathbb{R})$ is compact. Since $x \in X_H$, this involution restricts to an involution of $H'_{\mathbb{R}}$, which we also denote by $\theta$. It yields an inner form $H'^{,\theta}_{\mathbb{R}} \subset G^{\ad,\theta}_{\mathbb{R}}$ of $H'_{\mathbb{R}}$. We deduce that $H'^{,\theta}_{\mathbb{R}}(\mathbb{R})$ is compact as well, so $H'_{\mathbb{R}}$ has a compact inner form. It follows from Proposition 14.32 in \cite{MilneAG} that $H'$ must be reductive and therefore $G_H$ is reductive. Thus, we have established Claim 2.
	
	We introduce the following notation: Given an algebraic group $Q$ over $\mathbb{R}$ and a maximal torus $T \subset Q$, we write $RW_{Q}(T) = N_Q(T)(\mathbb{R})/Z_Q(T)(\mathbb{R})$ for the associated ``real Weyl group", where $N_Q(T)$ and $Z_Q(T)$ denote the normalizer and the centralizer of $T$ in $Q$ respectively.
	
	By Corollary 3 on p. 320 of \cite{PlatonovRapinchuk}, there exists a finite set $\{S_1, \hdots, S_k\}$ of representatives for the conjugacy classes of maximal tori in $(G_H)_{\mathbb{R}}$ under conjugation by $G_H(\mathbb{R})$. Given a $G_H(\mathbb{R})$-conjugacy class $X_{\alpha}\subset X_{G_H}$, we can find $i \in \{1,\hdots,k\}$ such that some element of $X_{\alpha}$ factors through $S_i$. For each $G_H(\mathbb{R})$-conjugacy class $X_{\alpha} \subset X_{G_H}$, we choose an index $i = i(\alpha)$ for which this holds. We also choose an element $x \in X_{\alpha}$ such that $x(\mathbb{S}) \subset S_i$.
	
Set $S'_i = \ad_{\mathbb{R}}(S_i)$, where $\ad: G \to G^{\ad}$, and write $x' = \ad_{\mathbb{R}} \circ x$. By Proposition 17.20 in \cite{MilneAG}, $S'_i$ is a maximal torus of $H'_{\mathbb{R}}$. We claim that the class of $x'$ in $RW_{H'_\mathbb{R}}(S'_i)\backslash \Hom(\mathbb{S},S'_i)$ depends only on $X_{\alpha}$ and the choice of $i$, but not on $x$. To see this, suppose we have another element $y \in X_{\alpha}$ such that $y$ factors through $S_i$ too. Choose $h \in G_H(\mathbb{R})$ such that $y = hx$. Let $M \subset (G_H)_{\mathbb{R}}$ be the centralizer of $x(\mathbb{S})$, which is a reductive subgroup of $(G_H)_{\mathbb{R}}$ by Corollary 17.59 in \cite{MilneAG}. Let $M'$ be the image of $M$ in ${H'_\mathbb{R}} \subset G^{\ad}_{\mathbb{R}}$. Since $S'_i$ is a maximal torus of $H'_{\mathbb{R}}$, it is also a maximal torus of $M'$. Since $h^{-1}S_ih$ is contained in the centralizer of $h^{-1}y = x$, we have that $T := \ad_{\mathbb{R}}(h)^{-1}S'_i\ad_{\mathbb{R}}(h)$ is also a maximal torus of $M'$.
	
	On the other hand, $M'(\mathbb{R})$ is compact because the Cartan involution of $G^{\ad}_{\mathbb{R}}$ that is induced by $x(\sqrt{-1})$ restricts to the identity on $M'$. Therefore, $S'_i(\mathbb{R})$ and $T(\mathbb{R})$ are conjugate in $M'(\mathbb{R})$ by Theorem 15 on p.~250 of \cite{OV}. By the classification of real tori up to isogeny, $S'_i(\mathbb{R})$ is Zariski dense in $S'_i$. Thus there exists $m \in M'(\mathbb{R})$ such that $\ad_{\mathbb{R}}(h)^{-1}S'_i\ad_{\mathbb{R}}(h) = m^{-1}S'_im$. Then $z = \ad_{\mathbb{R}}(h)m^{-1}$ is an element of $N_{H'_\mathbb{R}}(S'_i)(\mathbb{R})$, and because $\ad_{\mathbb{R}} \circ y = \ad_{\mathbb{R}} \circ hx = zx'$, we see that $\ad_{\mathbb{R}} \circ y$ gives the same class in $RW_{H'_\mathbb{R}}(S'_i)\backslash \Hom(\mathbb{S},S'_i)$ as $x'$. So we get a well-defined class $\cl(X_{\alpha}) \in RW_{H'_\mathbb{R}}(S'_i) \backslash \Hom(\mathbb{S},S'_i)$.
	
	Choose now a maximal torus $T_i \subset G_{\mathbb{R}}$ containing $S_i$, and write $T^{\ad}_i \subset G^{\ad}_{\mathbb{R}}$ for its image under $\ad_{\mathbb{R}}$. Applying the arguments of the previous paragraphs with $G$ in place of $G_H$, $X_G$ in place of $X_{G_H}$, $T_i$ in place of $S_i$, and $T^{\ad}_i$ in place of $S'_i$, i.e., for $H = P$, we find that the $G(\mathbb{R})$-conjugacy class $X_G$ gives rise to a well-defined element of $RW_{G^{\ad}_\mathbb{R}}(T^{\ad}_i) \backslash \Hom(\mathbb{S},T^{\ad}_i)$. Since $RW_{G^{\ad}_\mathbb{R}}(T^{\ad}_i)$ is finite by Corollary 17.39(a) in \cite{MilneAG}, this implies that, given an index $i$, there are only finitely many elements in $RW_{H'_\mathbb{R}}(S'_i)\backslash \Hom(\mathbb{S},S'_i)$ that can occur as $\cl(X_{\alpha})$.
	
	Suppose we have $G_H(\mathbb{R})$-conjugacy classes $X_{\alpha}$ and $X_{\beta}$ in $X_{G_H}$ such that $i(\alpha) = i(\beta)$ and $\cl(X_{\alpha}) = \cl(X_{\beta})$. We want to show that $X_{G_H}$ is a finite union of $G_H(\mathbb{R})$-conjugacy classes. Choose $x \in X_{\alpha}$ and $y \in X_{\beta}$ such that $x$ and $y$ both factor through $S_i$. Since the group $RW_{H'_\mathbb{R}}(S'_i)$ is finite, it suffices to show that $\ad_{\mathbb{R}} \circ x = \ad_{\mathbb{R}} \circ y$ implies $X_\alpha = X_\beta$. If $\ad_{\mathbb{R}} \circ x = \ad_{\mathbb{R}} \circ y$, then $x$ and $y$ differ by a homomorphism $\chi: \mathbb{S} \to Z(G)_{\mathbb{R}} \cap S_i$, i.e., $y_{\mathbb{C}}(s) = x_{\mathbb{C}}(s) \cdot \chi_{\mathbb{C}}(s)$ for all $s \in \mathbb{S}(\mathbb{C})$. Now we look at the abelianization map $\ab: G \to G/G^{\der}$; then $\ab_{\mathbb{R}} \circ \xi$ is independent of the choice of $\xi \in X_G$. Hence we find that the image of $\chi$ is contained in $(Z(G) \cap G^{\der})_{\mathbb{R}}$. But this is a finite group by Proposition 19.21(b) in \cite{MilneAG} and $\mathbb{S}$ is connected, so it follows that $\chi$ is trivial and therefore $X_{\alpha} = X_{\beta}$.
	
	It follows that $X_{G_H}$ is a finite union of $G_H(\mathbb{R})$-conjugacy classes and so Claim 1 holds.
	
	We deduce that there exists a finite subset $C_H \subset X_H$ such that, for any $x \in X_H$, the composition $\pi_{\mathbb{C}} \circ x$ is conjugate to $\pi_{\mathbb{C}} \circ x_0$ through $G_H(\mathbb{R})$ for some $x_0 \in C_H$. After maybe enlarging $C_H$, we can even assume that $\pi_{\mathbb{C}} \circ x$ and $\pi_{\mathbb{C}} \circ x_0$ are conjugate through the identity component $G_H(\mathbb{R})^+$.
	
	We want to show that every $x \in X_H$ lies in the $H(\mathbb{R})^+(U \cap H)(\mathbb{C})$-orbit of some $x_0 \in C_H$. By the above together with Proposition 5.1 in \cite{MilneISV}, we can assume that $\pi_{\mathbb{C}} \circ x = \pi_{\mathbb{C}} \circ x_0$ for some $x_0 \in C_H$ after replacing $x$ by an $H(\mathbb{R})^+$-conjugate.
	
	Let $\tilde{\pi}: P/U \to G$ be the canonical homomorphism and set $T = (\pi_{\mathbb{C}} \circ x_0)(\mathbb{S}_{\mathbb{C}})$. Both $(\tilde{\pi}_{\mathbb{C}})|_{(u_{\mathbb{C}} \circ x)(\mathbb{S}_{\mathbb{C}})}$ and $(\tilde{\pi}_{\mathbb{C}})|_{(u_{\mathbb{C}}\circ x_0)(\mathbb{S}_{\mathbb{C}})}$ are isomorphisms onto $T$. They each give a Levi decomposition
	\[ \left(\tilde{\pi}_{\mathbb{C}}\right)|_{(H/(U \cap H))_{\mathbb{C}}}^{-1}(T) \simeq T \ltimes ((W \cap H)/(U \cap H))_{\mathbb{C}}.\]
	Since everything is defined over $\mathbb{R}$ here, both Levi decompositions are defined over $\mathbb{R}$. By Theorem 2.3 in \cite{PlatonovRapinchuk}, they are conjugate through an element of $((W \cap H)/(U \cap H))(\mathbb{R})$. By Proposition 5.1 in \cite{MilneISV} and Proposition 14.32 in \cite{MilneAG}, we can assume that $u_{\mathbb{C}} \circ x = u_{\mathbb{C}} \circ x_0$ after replacing $x$ by an $H(\mathbb{R})^+$-conjugate. Applying a similar argument to $u: H \to H/(U \cap H)$ over $\mathbb{C}$ finally shows that $x \in X_H$ lies in the $H(\mathbb{R})^+(U \cap H)(\mathbb{C})$-orbit of $x_0 \in C_H$ as desired.
\end{proof}

\begin{lem}\label{lem:intersectionfundamentallemma2}
	Let $(P,X^+)$ be a connected mixed Shimura datum and let $H \subset P$ be a connected algebraic subgroup. Let $U$ be the normal unipotent subgroup of $P$ from the definition of a connected mixed Shimura datum. Let $X_H^+ \subset X^+$ be an $H(\mathbb{R})^+(U \cap H)(\mathbb{C})$-orbit and suppose that $H$ is the Mumford-Tate group of an element $x \in X_H^+$, i.e., the smallest algebraic subgroup of $P$ such that $x(\mathbb{S}_{\mathbb{C}}) \subset H_{\mathbb{C}}$. Then, $(H,X_H^+)$ is a Shimura subdatum of $(P,X^+)$.
\end{lem}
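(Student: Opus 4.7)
The plan is to verify the five axioms \ref{ax:MSDa}--\ref{ax:MSDe} for $(H, X_H^+)$, with the normal unipotent subgroup of $H$ taken to be $U_H := U \cap H$ and the unipotent radical of $H$ denoted $W_H$; then the inclusion $H \hookrightarrow P$ is an injective homomorphism of connected $\mathbb{Q}$-algebraic groups carrying $X_H^+$ into $X^+$, giving a Shimura embedding by definition. That $X_H^+$ is a connected component of an $H(\mathbb{R}) U_H(\mathbb{C})$-orbit in $\Hom(\mathbb{S}_{\mathbb{C}}, H_{\mathbb{C}})$ holds by hypothesis, since $X_H^+$ is a single $H(\mathbb{R})^+ U_H(\mathbb{C})$-orbit and $H(\mathbb{R})^+$ has finite index in $H(\mathbb{R})$.

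The first substantive step is to identify $W_H$ with $W \cap H$. Since $x(\mathbb{S}_{\mathbb{C}}) \subset H_{\mathbb{C}}$, the adjoint action of $x(\mathbb{S}_{\mathbb{C}})$ preserves $\Lie H$, and so $\Lie H$ inherits a sub-mixed-Hodge-structure from $\Lie P$ whose weight filtration is the restriction $W_n(\Lie H) = W_n(\Lie P) \cap \Lie H$; in particular $W_{-2}(\Lie H) = \Lie(U \cap H)$ and $W_{-1}(\Lie H) = \Lie(W \cap H)$. In characteristic zero, closed subgroups of connected unipotent groups are connected, so $W \cap H$ is a connected normal unipotent subgroup of $H$, and by the argument already used in the proof of Lemma \ref{lem:intersectionfundamentallemma} the quotient $H/(W \cap H)$ is reductive, so $W \cap H$ is indeed the unipotent radical of $H$. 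Axioms \ref{ax:MSDb} and \ref{ax:MSDc} for $(H, X_H^+)$ then follow, the former because a sub-MHS has types contained in those of the ambient MHS. For \ref{ax:MSDa}, the composition $\mathbb{S}_{\mathbb{C}} \to H_{\mathbb{C}} \to (H/U_H)_{\mathbb{C}}$ post-composed with the closed $\mathbb{Q}$-immersion $(H/U_H)_{\mathbb{C}} \hookrightarrow (P/U)_{\mathbb{C}}$ equals the composition $\mathbb{S}_{\mathbb{C}} \to P_{\mathbb{C}} \to (P/U)_{\mathbb{C}}$, which is defined over $\mathbb{R}$ by \ref{ax:MSDa} for $(P, X^+)$; hence the original composition is defined over $\mathbb{R}$ as well.

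The delicate parts are \ref{ax:MSDd} and \ref{ax:MSDe}, which crucially use the Mumford-Tate hypothesis. As $W \cap H = W_H$, the induced homomorphism $H/W_H \hookrightarrow P/W$ is injective; and since the image of $(H/W_H)_{\mathbb{R}}$ in $(P/W)^{\ad}_{\mathbb{R}}$ is preserved by conjugation by $x(\sqrt{-1})$ (as $x$ factors through $H$), the standard fact that a Cartan involution restricts to a Cartan involution on any reductive subgroup it preserves and descends along the adjoint quotient yields that conjugation by $x(\sqrt{-1})$ induces a Cartan involution on $(H/W_H)^{\ad}_{\mathbb{R}}$. For the non-existence of non-trivial compact $\mathbb{Q}$-factors of $(H/W_H)^{\ad}$, suppose $H'$ is such a factor, with projection $\pi: H \to H'$. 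The composition $\pi_{\mathbb{C}} \circ x$ is defined over $\mathbb{R}$ (it factors through $(P/W)_{\mathbb{C}}$, on which $x$ is already defined over $\mathbb{R}$), and the induced Cartan involution on $H'_{\mathbb{R}}$ must be trivial (since $H'(\mathbb{R})$ is compact); as $H'$ is adjoint, this forces $\pi_{\mathbb{C}}(x(\sqrt{-1})) = 1$, and a direct analysis of homomorphisms from $\mathbb{S}$ into compact adjoint real groups then shows $\pi_{\mathbb{C}} \circ x$ is trivial, so $x$ factors through the proper connected normal $\mathbb{Q}$-subgroup $\ker \pi \subsetneq H$, contradicting the Mumford-Tate minimality of $H$.

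Axiom \ref{ax:MSDe} is handled in the same spirit: the composite $\mathbb{S}_{\mathbb{C}} \to H_{\mathbb{C}} \to (H/H^{\der})_{\mathbb{C}}$ is compatible with the analogous composite for $(P, X^+)$ via the $\mathbb{Q}$-homomorphism $H/H^{\der} \to P/P^{\der}$, and any obstruction to $H/H^{\der}$ being an almost-direct product of a $\mathbb{Q}$-split torus with a torus of compact type would, by an argument analogous to the one for compact factors just given, produce a proper connected normal $\mathbb{Q}$-subgroup of $H$ through which $x$ still factors, again contradicting minimality. The main obstacle in this proof is the delicate Mumford-Tate minimality argument needed for \ref{ax:MSDd} and \ref{ax:MSDe}; by contrast, \ref{ax:MSDa}--\ref{ax:MSDc} follow from an essentially formal restriction of the corresponding axioms for $(P, X^+)$ combined with the identification of $W_H$ and $U_H$ with $W \cap H$ and $U \cap H$, respectively.
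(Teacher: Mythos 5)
Your proof follows the same overall architecture as the paper's: identify $U_H = U\cap H$ as the unipotent subgroup and $W_H = W\cap H$ as the unipotent radical, verify \ref{ax:MSDa}--\ref{ax:MSDc} as formal restrictions of the axioms for $(P,X^+)$, and use Mumford-Tate minimality to rule out compact $\mathbb{Q}$-factors in \ref{ax:MSDd}. The steps up to and including \ref{ax:MSDc} match the paper's. For the first half of \ref{ax:MSDd} you argue by directly restricting the Cartan involution of $G^{\ad}_{\mathbb{R}}$ to $\ad(G_H)_{\mathbb{R}}$ and descending to $(G_H)^{\ad}_{\mathbb{R}}$, whereas the paper routes through $C$-polarizations and the faithfulness of the adjoint representation of $\ad(G_H)/Z(\ad(G_H))$; your route is valid and arguably a bit shorter. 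For the second half, though, the phrase ``a direct analysis of homomorphisms from $\mathbb{S}$ into compact adjoint real groups then shows $\pi_{\mathbb{C}}\circ x$ is trivial'' is, as stated, false: a homomorphism $\mathbb{S} \to H'_{\mathbb{R}}$ into a compact adjoint group can kill $\sqrt{-1}$ without being trivial (e.g.\ $z\mapsto(z/\bar z)^2$). The deduction in fact needs the Hodge type $\{(0,0),(1,-1),(-1,1)\}$ of $\Lie(G_H)^{\ad}$ (which forces $L^{\pm1,\mp1}=0$ once $\sqrt{-1}$ acts trivially), which is exactly the extra input the paper's proof uses and which you do not invoke.

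The genuine gap is in \ref{ax:MSDe}. You claim that ``an argument analogous to the one for compact factors'' would produce a proper normal $\mathbb{Q}$-subgroup of $H$ through which $x$ factors, but the analogy does not transfer. In the compact-factors case the key was that a certain composition $\pi_{\mathbb{C}}\circ x$ was forced to be trivial; for the abelianization $\nu\colon H\to H/H^{\der}$ the Mumford-Tate hypothesis forces the composition $\nu_{\mathbb{C}}\circ x$ to have \emph{Zariski-dense} image, so no proper subgroup can appear this way. And the constraint ``$T := H/H^{\der}$ admits a dominant $\mathbb{R}$-homomorphism from $\mathbb{S}$'' by itself does not imply that $T$ is an almost-direct product of a $\mathbb{Q}$-split torus and a compact-type torus: the norm-one torus of a real quadratic field is a one-dimensional $\mathbb{Q}$-torus that is neither $\mathbb{Q}$-split nor of compact type, yet is the Mumford-Tate group of an $\mathbb{R}$-homomorphism from $\mathbb{S}$. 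Ruling out such factors requires the embedding of $Z(G_H)^0$ into a maximal $\mathbb{Q}$-torus $T$ of $G$, the decomposition $T = T^{\der}\cdot Z(G)^0$, the compactness of $T^{\der}(\mathbb{R})$ (which follows from the Cartan-involution condition, not from minimality), the hypothesis \ref{ax:MSDe} for $(P,X^+)$ applied to $Z(G)^0$, and semi-simplicity of the isogeny category of $\mathbb{Q}$-tori --- which is precisely the chain of ideas in the paper's proof and is absent from yours.
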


\begin{proof}
	We have to verify that $(H,X_H^+)$ satisfies the axioms \ref{ax:MSDa} to \ref{ax:MSDe}.
	
	The normal unipotent subgroup of $H$ from the definition of a connected mixed Shimura datum will of course be $U \cap H$. \ref{ax:MSDa} is then immediate.
	
	The composition of $x$ with the adjoint representation of $H_{\mathbb{C}}$ induces a representation of $\mathbb{S}_{\mathbb{C}}$ on $\Lie H_{\mathbb{C}}$ and thus a decomposition of $\Lie H_{\mathbb{C}}$ into character eigenspaces. This representation is a subrepresentation of the representation of $\mathbb{S}_{\mathbb{C}}$ on $\Lie P_{\mathbb{C}}$ induced by the composition of $x$ with the adjoint representation of $P_{\mathbb{C}}$. Let $W$ denote the unipotent radical of $P$. Since the vector subspaces $\Lie(U \cap H)$ and $\Lie(W \cap H)$ of $\Lie H$ are defined over $\mathbb{Q}$ and the composition of $x$ with $H_{\mathbb{C}} \to (H/(U \cap H))_{\mathbb{C}}$ is defined over $\mathbb{R}$, it follows from \ref{ax:MSDb} and \ref{ax:MSDc} for $(P,X^+)$ that the decomposition of $\Lie H_{\mathbb{C}}$ yields a rational mixed Hodge structure on $\Lie H$ with $W_{-2}(\Lie H) = \Lie(U \cap H)$ and $W_{-1}(\Lie H) = \Lie(W \cap H)$. So \ref{ax:MSDb} follows.
	
	By the proof of Lemma \ref{lem:intersectionfundamentallemma}, $G_H = H/(W \cap H)$ is reductive, so $W \cap H$ is the unipotent radical of $H$ and \ref{ax:MSDc} follows as well. 
	
	We now prove the first part of \ref{ax:MSDd}. Set $G = P/W$. Let $w: H \to G_H \subset G$ and $\ad: G \to G^{\ad}$ denote the respective quotient homomorphisms, and let $Z(G)$ denote the center of $G$. Proposition 1.20 in \cite{MilneISV} shows that \ref{ax:MSDd} for $(P,X^+)$ implies that the adjoint representation $(\Lie G^{\ad})_{\mathbb{R}}$ of $G^{\ad}_{\mathbb{R}}$ carries an $(\ad_{\mathbb{C}} \circ w_{\mathbb{C}} \circ x)(\sqrt{-1})$-polarization as defined in \cite{MilneISV}. Let $\ad_H$ denote the subrepresentation $\ad(G_H) \to \GL(\Lie (\ad(G_H)))$ of the adjoint representation $G^{\ad} \to \GL(\Lie G^{\ad})$. It is equal to the adjoint representation of $\ad(G_H)$ and factors through $(G_H)^{\ad}$.
	
	Since $G_H$ is reductive, it follows from Theorem 22.42 in \cite{MilneAG} that also $\ad(G_H) = G_H/(Z(G) \cap G_H)$ is reductive. Let $Z(\ad(G_H))$ denote the center of $\ad(G_H)$. By Proposition 14.23 in \cite{MilneAG}, $(\ker \ad_H)/Z(\ad(G_H))$ is unipotent (so in particular connected). But $\ad(G_H)/Z(\ad(G_H))$ is semisimple, so $(\ker \ad_H)/Z(\ad(G_H))$ is reductive by Corollary 21.53 in \cite{MilneAG}. It follows that $\ker \ad_H = Z(\ad(G_H))$, so we get a faithful representation of $\ad(G_H)/Z(\ad(G_H))$. There are canonical homomorphisms $$\ad(G_H) \to (G_H)^{\ad} \to \ad(G_H)/Z(\ad(G_H)). $$ 
	By Corollary 17.62(e) in \cite{MilneAG}, $(G_H)^{\ad}$ has trivial center, so we get an isomorphism $(G_H)^{\ad} \simeq \ad(G_H)/Z(\ad(G_H))$ and hence a faithful representation of $((G_H)^{\ad})_{\mathbb{R}}$. Proposition 1.20 in \cite{MilneISV}, applied the other way around, now shows that $x(\sqrt{-1})$ induces a Cartan involution on $((G_H)^{\ad})_{\mathbb{R}}$. We have established the first part of \ref{ax:MSDd}.
	
	We now go on to prove the second part of \ref{ax:MSDd}. Suppose that the group $(G_H)^{\ad}$ possesses a non-trivial $\mathbb{Q}$-factor $H'$ such that $H'(\mathbb{R})$ is compact. Let $\rho$ denote the adjoint representation $(G_H)^{\ad}\to \GL(\Lie (G_H)^{\ad})$. Since $H'$ is normal in $(G_H)^{\ad}$, the subspace $\Lie H' \subset \Lie (G_H)^{\ad}$ is $(G_H)^{\ad}$-invariant. Therefore, $\rho$ induces a representation $\rho': (G_H)^{\ad} \to \GL(\Lie H')$. Composing $x$ with first the quotient map $H_{\mathbb{C}} \to ((G_H)^{\ad})_{\mathbb{C}}$ and then $\rho'_{\mathbb{C}}$ and noting that the resulting homomorphism is defined over $\mathbb{R}$ by \ref{ax:MSDa}, we get a representation $\rho_{\mathbb{S}}: \mathbb{S} \to \GL(\Lie H')_{\mathbb{R}}$. Now conjugation by $x(\sqrt{-1})$ induces a Cartan involution on $((G_H)^{\ad})_{\mathbb{R}}$ and therefore on $H'_{\mathbb{R}}$. Since the identity is a Cartan involution on $H'_{\mathbb{R}}$, Corollary 4.3 in Chapter I of \cite{SatakeBook} implies that the image of $x(\sqrt{-1})$ in $(G_H)^{\ad}(\mathbb{R})$ centralizes $H'_{\mathbb{R}}$. We deduce that $\sqrt{-1} \in (\ker \rho_{\mathbb{S}})(\mathbb{R})$. As the pure Hodge structure on $\Lie (G_H)^{\ad}$ induced by $x$ is of type $\{(0,0),(1,-1),(-1,1)\}$, it follows from $\sqrt{-1} \in (\ker \rho_{\mathbb{S}})(\mathbb{R})$ that $\ker \rho_{\mathbb{S}} = \mathbb{S}$. Since $H$ is the Mumford-Tate group of $x$, this implies that the representation $\rho': (G_H)^{\ad} \to \GL(\Lie H')$ must be trivial. But the adjoint representation $H' \to \GL(\Lie H')$ of the semisimple algebraic group $H' \subset (G_H)^{\ad}$ factors through $\rho'$ and arguing as above shows that its kernel is the center of $H'$, which is trivial by Corollary 17.62(e) and Theorem 21.51 in \cite{MilneAG} since $H'$ is a $\mathbb{Q}$-factor of an adjoint reductive group. So we contradict the non-triviality of $H'$ and therefore there is no such factor $H'$ and the second part of \ref{ax:MSDd} follows.
	
	We are left with showing that \ref{ax:MSDe} holds. Set $W_H = W \cap H$, the unipotent radical of $H$. The algebraic group $H$ acts trivially on $W_H/(W_H \cap H^{\der})$ by conjugation, but $$\Lie W_H/(W_H \cap H^{\der}) \simeq \Lie W_H/\Lie(W_H \cap H^{\der})$$ and so it follows from \ref{ax:MSDc}, already proved, that $W_H/(W_H \cap H^{\der}) = \{0\}$, i.e., $W_H \subset H^{\der}$. Therefore, $H/H^{\der} \simeq G_H/(G_H)^{\der}$, which is isogenous to the identity component $Z(G_H)^0$ of the center of $G_H$.
	
	It therefore suffices to show that $Z(G_H)^0$ is a torus as in \ref{ax:MSDe}. Recall that $w: H \to G_H$ denotes the quotient map. By Lemma 13.3, Proposition 5.1, and Theorem 5.4 in \cite{MilneISV}, there exists $y \in X_H^+$ and a maximal torus $T_H$ in $G_H$ such that $z = w_{\mathbb{C}} \circ y$ factors through $(T_H)_{\mathbb{C}}$. We have $Z(G_H)^0 \subset T_H$ by the maximality of $T_H$. We can find a maximal torus $T$ in $G$ that contains $T_H$ and $Z(G)^0$. Since the category of $\mathbb{Q}$-tori up to isogeny is semi-simple (see Proposition 4.1 in \cite{Brion}), it suffices to show that $T$ is a torus as in \ref{ax:MSDe}.
	
	Recall that $G = G^{\der} Z(G)^{0}$. Therefore, $T = T^{\der}  Z(G)^0$, where $T^{\der}$ by abuse of notation denotes the identity component $(T \cap G^{\der})^0$ and is a maximal torus in $G^{\der}$. Since $Z(G)^0$ is isogenous to $G/G^{\der}$, \ref{ax:MSDe} for $(P,X^+)$ shows that it suffices to prove that $T^{\der}$ is a torus as in \ref{ax:MSDe}. Now $T^{\der}(\mathbb{R})$ fixes $z$ and so $T^{\der}(\mathbb{R})$ is contained in the stabilizer $K_z$ of $z$ in $G^{\der}(\mathbb{R})$.
	
	The stabilizer $K_z$ is a closed subset of $K_1(\mathbb{R})$, where $K_1$ denotes the algebraic subgroup of $G^{\der}_{\mathbb{R}}$ of elements commuting with $z(\sqrt{-1})$. Let $K_2$ denote the algebraic subgroup of $G^{\ad}_{\mathbb{R}}$ of elements commuting with the image of $z(\sqrt{-1})$ under the homomorphism $G_{\mathbb{R}} \to G^{\ad}_{\mathbb{R}}$. By \ref{ax:MSDd}, conjugation by $z(\sqrt{-1})$ induces a Cartan involution on $G^{\ad}_{\mathbb{R}}$, which implies that $K_2(\mathbb{R})$ is compact. Clearly $K_1$ is mapped into $K_2$ by the homomorphism $G^{\der}_{\mathbb{R}} \to G^{\ad}_{\mathbb{R}}$. As the topological covering $G^{\der}(\mathbb{C}) \to G^{\ad}(\mathbb{C})$ has finite fibers, this implies that $K_1(\mathbb{R})$ is compact. Hence, $K_z$ is a compact closed subgroup of $G^{\der}(\mathbb{R})$ and so $T^{\der}(\mathbb{R})$ is compact. This completes the proof.
\end{proof}

\begin{rmk}\label{rmk:shimurasubdatum}
By adapting the end of the proof of Lemma \ref{lem:intersectionfundamentallemma2}, we can also prove the following: Let $(P,X^+)$ be a connected mixed Shimura datum and let $H \subset P$ be a connected algebraic subgroup. Let $U$ be the normal unipotent subgroup of $P$ from the definition of a connected mixed Shimura datum and let $X_H^+ \subset X^+ \cap \mathrm{Hom}(\mathbb{S}_{\mathbb{C}},H_{\mathbb{C}})$ be an $H(\mathbb{R})^+(U \cap H)(\mathbb{C})$-orbit. If $W = \{1\}$ and $(H,X_H^+)$ satisfies \ref{ax:MSDa} to \ref{ax:MSDd}, then it also satisfies \ref{ax:MSDe}. This can also be proved if $W \neq \{1\}$, see Example 2.10 in \cite{PinkDiss}.
\end{rmk}

\begin{lem}\label{lem:intersectionfundamentallemma3}
	Let $(P,X^+)$ be a connected mixed Shimura datum and let $H \subset P$ be a connected algebraic subgroup. Let $U$ be the normal unipotent subgroup of $P$ from the definition of a connected mixed Shimura datum. Let $X_H^+ \subset X^+$ be an $H(\mathbb{R})^+(U \cap H)(\mathbb{C})$-orbit such that one (equivalently: any) $x \in X_H^+$ factors through $H_{\mathbb{C}}$. Then there exist finitely many $x_i \in X_H^+$ ($i = 1,\hdots,n$) such that $X_H^+$ is the disjoint union of the $M_{x_i}(\mathbb{R})^+(U\cap M_{x_i})(\mathbb{C})x_i$, where $M_{x_i} \subset H$ denotes the Mumford-Tate group of $x_i$ as defined in Lemma \ref{lem:intersectionfundamentallemma2}.
\end{lem}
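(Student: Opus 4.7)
My plan is to construct the required $x_i$ by a greedy procedure based on the dimension of the Mumford-Tate group, and to establish finiteness using Lemma~\ref{lem:intersectionfundamentallemma}. For any $x \in X_H^+$, write $O_x := M_x(\mathbb{R})^+(U \cap M_x)(\mathbb{C}) x \subseteq X_H^+$; by Lemma~\ref{lem:intersectionfundamentallemma2}, the pair $(M_x, O_x)$ is a Shimura subdatum of $(P, X^+)$. Two elementary observations will be crucial: if $y \in O_x$, so that $y = gxg^{-1}$ for some $g \in (M_x)(\mathbb{C})$, then $y(\mathbb{S}_{\mathbb{C}}) \subseteq (M_x)_{\mathbb{C}}$, hence $M_y \subseteq M_x$ and consequently $O_y \subseteq O_x$; and if moreover $M_y = M_x$, then $O_y = O_x$.

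First I would pick $x_1 \in X_H^+$ with $\dim M_{x_1}$ maximal among all $\dim M_y$ for $y \in X_H^+$, and set $O_1 := O_{x_1}$. If $O_1 = X_H^+$, we are done with $n=1$. Otherwise I pick $x_2 \in X_H^+ \setminus O_1$ with $\dim M_{x_2}$ maximal in the complement, and iterate, producing orbits $O_1, O_2, \ldots$. For disjointness, suppose $z \in O_i \cap O_j$ with $i < j$; then $z \in O_i$ forces $M_z \subseteq M_{x_i}$, and exploiting the greedy choice of $x_j$ in the complement of $O_1 \cup \cdots \cup O_{j-1}$, together with the observations above, I would argue that the Mumford-Tate orbit $O_j$ of $x_j$ cannot enter $O_i$ without $x_j$ itself lying in $O_i$, contradicting the choice of $x_j$.

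The finiteness of the greedy process is the core technical step. The dimensions $\dim M_{x_i}$ are weakly decreasing, so I would group the steps by common value of $\dim M_{x_i}$ and argue that within each such group only finitely many disjoint orbits can be produced. For this I plan to invoke Lemma~\ref{lem:intersectionfundamentallemma}: for any fixed $\mathbb{Q}$-subgroup $M \subseteq H$, the set $X_M$ is a finite union of $M(\mathbb{R})^+(U\cap M)(\mathbb{C})$-orbits, which caps the number of orbits $O_{x_i}$ with $M_{x_i} = M$. Combined with a finiteness statement for the $H(\mathbb{Q})$-conjugacy classes of subgroups of $H$ of bounded dimension that can actually appear as Mumford-Tate groups inside the single orbit $X_H^+$, this should suffice.

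The hardest part will be precisely this finiteness: a priori there are infinitely many distinct $\mathbb{Q}$-subgroups of $H$ that could arise as Mumford-Tate groups of elements of $X_H^+$, so one must exploit the constraint that such subgroups come from a single $H(\mathbb{R})^+(U\cap H)(\mathbb{C})$-orbit together with the Shimura-subdatum rigidity supplied by Lemma~\ref{lem:intersectionfundamentallemma2} to reduce to finitely many possibilities, at which point Lemma~\ref{lem:intersectionfundamentallemma} can be applied. The disjointness step also requires some care, since Mumford-Tate orbits $O_x$ for different $x$ can in general be nested (as in the case of CM points on modular curves), and only by insisting on the greedy maximal-dimension choice does one avoid such nesting among the selected $O_{x_i}$.
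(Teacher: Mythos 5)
The core missing ingredient is the paper's Baire-category step. The paper shows first that there is a single $x_1 \in X_H^+$ such that \emph{every} $x \in X_H^+$ factors through $(M_{x_1})_{\mathbb{C}}$, i.e.\ $M_{x_1}$ contains all Mumford--Tate groups that occur, not merely one of maximal dimension. This follows because $H(\mathbb{R})^+(U\cap H)(\mathbb{C})$ carries a canonical structure of connected real analytic manifold, the locus of $h$ such that $hx_0$ factors through a fixed $M_x$ is a closed real analytic subset, there are only countably many Mumford--Tate groups, and a connected real analytic manifold cannot be covered by countably many proper closed real analytic subsets (Baire category together with the identity theorem for real analytic functions). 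Once $M_{x_1}$ is known to be universal in this sense, Lemma~\ref{lem:intersectionfundamentallemma} decomposes $X_H^+$ into finitely many \emph{automatically disjoint} $M_{x_1}(\mathbb{R})^+(U\cap M_{x_1})(\mathbb{C})$-orbits, one of which is $O_{x_1}$, and the iteration descends strictly in dimension inside each remaining orbit.

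Your greedy choice gives only $\dim M_{x_1}\geq\dim M_y$ for all $y$, which is strictly weaker than the inclusion $M_y\subset M_{x_1}$, and both of your remaining gaps stem from this. The disjointness sketch fails because $z\in O_i\cap O_j$ with $\dim M_{x_j}\leq\dim M_{x_i}$ does not force $M_{x_j}\subset M_{x_i}$, so $x_j\in O_i$ does not follow; two orbits can meet at a point of strictly smaller Mumford--Tate dimension without either being contained in the other. The finiteness statement you flag as the hardest step, that only finitely many $H(\mathbb{Q})$-conjugacy classes of subgroups of bounded dimension occur as Mumford--Tate groups inside a single $X_H^+$, is simply false: already for $H=\GL_{2,\mathbb{Q}}$ and $X_H^+=\mathbb{H}$, the CM points realize infinitely many pairwise non-conjugate maximal $\mathbb{Q}$-tori. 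The greedy process still halts at $n=1$ there only because a generic point has $M_{x_1}=H$, which is exactly the universality your proposal never establishes; counting orbits per dimension without it cannot work.
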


\begin{proof}
	We first show that $H(\mathbb{R})^+(U \cap H)(\mathbb{C})$ is a connected real analytic manifold: Let $W$ denote the unipotent radical of $P$. It follows from the proof of Lemma \ref{lem:intersectionfundamentallemma} that $W_H := W \cap H$ is the unipotent radical of $H$. Set $G_H = H/W_H$ and $U_H = U \cap H$, and fix a Levi decomposition $H = G_H \ltimes W_H$. We get a homeomorphism
	\[ H(\mathbb{R})^+U_H(\mathbb{C}) \to G_H(\mathbb{R})^+ \times W_H(\mathbb{R})U_H(\mathbb{C}).\]
	Set $V_H = W_H/U_H$ and decompose $\Lie W_H = V \oplus \Lie U_H$ (as a vector space) such that the restriction of the differential $V \to \Lie V_H$ is an isomorphism. Since the exponential map of a unipotent group is an isomorphism of schemes by Proposition 14.32 in \cite{MilneAG}, we get an isomorphism of schemes
	\[ W_H \to \Lie W_H \to \Lie V_H \times_{\mathbb{Q}} \Lie U_H \to V_H \times_{\mathbb{Q}} U_H.\]
	By the commutative diagram before Proposition 14.32 on p.~289 in \cite{MilneAG}, this yields a homeomorphism $W_H(\mathbb{R})U_H(\mathbb{C}) \to V_H(\mathbb{R}) \times U_H(\mathbb{C})$. This gives a structure of a connected real analytic manifold on $H(\mathbb{R})^+U_H(\mathbb{C})$.
	
	Fix now some $x_0 \in X_H^+$. For each $x \in X_H^+$, the set of $h\in H(\mathbb{R})^+U_H(\mathbb{C})$ such that $hx_0$ factors through the Mumford-Tate group of $x$ is a closed real analytic subset of $H(\mathbb{R})^+U_H(\mathbb{C})$. Note that it suffices to check whether $(hx_0)(2)$ and $(hx_0)(\alpha)$ lie in the Mumford-Tate group of $x$ for some fixed $\alpha\in S^1 \subset \mathbb{C}^{\ast} = \mathbb{S}(\mathbb{R})$ of infinite order since $2$ and $\alpha$ then generate a Zariski dense subgroup of $\mathbb{S}$.
	
	Now, a connected real analytic manifold cannot be covered by countably many proper closed real analytic subsets by the Baire category theorem and the fact that any proper closed real analytic subset has empty interior by the identity theorem for real analytic functions. Since the set of possible Mumford-Tate groups of elements of $X^+_H$ is countable, there must be some $x_1 \in X_H^+$ such that all $x \in X_H^+$ factor through the Mumford-Tate group $M_{x_1} \subset H$ of $x_1$. It then follows from Lemma \ref{lem:intersectionfundamentallemma} that $X_H^+$ is a finite union of $M_{x_1}(\mathbb{R})^+(U \cap M_{x_1})(\mathbb{C})$-orbits, among them $M_{x_1}(\mathbb{R})^+(U \cap M_{x_1})(\mathbb{C})x_1$.
	
	If any such orbit, call it $\Omega$, does not contain any element whose Mumford-Tate group is $M_{x_1}$, then we iterate the previous step to find that it contains an element $x_2$ such that all elements of $\Omega$ factor through the Mumford-Tate group $M_{x_2}$ of $x_2$. Furthermore, we have $M_{x_2}\subsetneq M_{x_1}$ and so $\dim M_{x_1} > \dim M_{x_2}$ since Mumford-Tate groups are connected. The orbit $\Omega$ is a finite union of $M_{x_2}(\mathbb{R})^+(U \cap M_{x_2})(\mathbb{C})$-orbits by Lemma \ref{lem:intersectionfundamentallemma}. Continuing in this way, we eventually prove the lemma as the dimension of the Mumford-Tate group cannot drop below $0$.
\end{proof}

We are now able to prove Theorem \ref{thm:intersectionspecial}.

\begin{proof}[Proof of Theorem \ref{thm:intersectionspecial}]
	Let $S_1$ and $S_2$ be two special subvarieties of a connected (mixed) Shimura variety (of Kuga type) $S = \mathcal{F}(P,X^+,\Gamma)$. 
	
	Let $i \in \{1,2\}$. By definition, we have $S_i = \mathcal{F}(\phi_i)(\mathcal{F}(H_i,Y_i^+,\Gamma_i))$ for a Shimura morphism $\phi_i: (H_i,Y_i^+,\Gamma_i) \to (P,X^+,\Gamma)$. Let $K_i$ denote the kernel of the homomorphism $H_i \to P$. By Proposition 2.9 in \cite{PinkDiss}, there is a quotient Shimura datum $(H_i,Y_i^{+})/K_i$ and the Shimura morphism $(H_i,Y_i^{+}) \to (P,X^{+})$ factors as $(H_i,Y_i^{+}) \to (H_i,Y_i^{+})/K_i \to (P,X^{+})$. Thanks to Lemma 2.2 in \cite{OrrThesis} (which is formulated for reductive groups, but holds with the same proof for arbitrary linear algebraic groups), we can find a congruence subgroup $\Gamma'_i$ of $(H_i/K_i)(\mathbb{Q})_{+}$ that gives a factorization of $\phi_i$ as
	\[ (H_i,Y_i^{+},\Gamma_i) \to ((H_i,Y_i^{+})/K_i,\Gamma'_i) \to (P,X^{+}, \Gamma).\]
	Furthermore, being pure or of Kuga type is preserved under taking quotients by Remark \ref{rmk:imageofu}. This shows that we can assume without loss of generality that $(H_i,Y_i^+)$ is a Shimura subdatum of $(P,X^+)$.
	
	There is a canonical identification $S(\mathbb{C}) = \Gamma \backslash X^+$. We denote the map $X^+ \to \Gamma \backslash X^+$ by ``$\unif$''. Since $(H_i,Y_i^+)$ is a Shimura subdatum of $(P,X^+)$, we can identify $Y_i^+$ with its image in $X^+$ and it follows that $S_i(\mathbb{C}) = \unif(Y_i^+)$ after this identification ($i = 1,2$). We have
	\[(S_1 \cap S_2)(\mathbb{C}) = \unif(\unif^{-1}(S_1(\mathbb{C})) \cap \unif^{-1}(S_2(\mathbb{C}))) = \bigcup_{\gamma \in \Gamma}{\unif(Y_1^+ \cap \gamma Y_2^+)}.\]
	Note that $(\gamma H_i \gamma^{-1},\gamma Y_i^+)$ is a Shimura subdatum of $(P,X^+)$ for any $\gamma \in \Gamma$ and any $i \in \{1,2\}$.
	
	Let $U \subset P$ be the normal unipotent subgroup from the definition of a connected mixed Shimura datum. For $\gamma \in \Gamma$, let $H_{\gamma}$ denote the identity component of $H_1 \cap \gamma H_2 \gamma^{-1}$. We have $Y_1^+ \cap \gamma Y_2^+ \subset \{ x \in X^+; x(\mathbb{S}_{\mathbb{C}}) \subset (H_{\gamma})_{\mathbb{C}}\}$. It follows from Remark \ref{rmk:imageofu} applied to $(\gamma H_i \gamma^{-1},\gamma Y_i^+) \to (P,X^+)$ that $U \cap \gamma H_i \gamma^{-1}$ is the normal unipotent subgroup from the definition of a connected mixed Shimura datum for $(\gamma H_i \gamma^{-1},\gamma Y_i^+)$ for $i = 1,2$, $\gamma \in \Gamma$. Therefore, $Y_1^+ \cap \gamma Y_2^+$ is a union of $H_{\gamma}(\mathbb{R})^+(U \cap H_{\gamma})(\mathbb{C})$-orbits. By Lemma \ref{lem:intersectionfundamentallemma}, it is a finite such union.
	
	Combining Lemmas \ref{lem:intersectionfundamentallemma2} and \ref{lem:intersectionfundamentallemma3} shows that each such orbit is a finite union of sets $Y'^+$ such that $(H', Y'^+)$ is a Shimura subdatum of $(P,X^+)$ for a connected algebraic subgroup $H'$ of $P$. If $(P,X^+)$ is pure or of Kuga type, then $(H',Y'^+)$ is also pure or of Kuga type respectively by Proposition 2.9 in \cite{G17}.
		
	This implies that $(S_1 \cap S_2)(\mathbb{C})$ is a countable union of sets $S'(\mathbb{C})$ for special subvarieties $S' \subset S$. Since the $\mathbb{C}$-points of an algebraic variety over $\mathbb{C}$ cannot be covered by the $\mathbb{C}$-points of countably many proper subvarieties, each component of $S_1 \cap S_2$ must be special.
\end{proof}

\section{Proof of Proposition \ref{prop:axiomsformsv}}\label{sec:axiomsformsv}

As one might expect, axioms \ref{ax:1} and \ref{ax:3} are the easiest to verify. 

Axiom \ref{ax:1} holds with $(P,X^+,\Gamma) \times (P',X'^+,\Gamma') = (P\times_{\mathbb{Q}}P',X^+ \times X'^+,\Gamma \times \Gamma')$, where $X^+\times X'^+$ consists of the homomorphisms $(\sigma,\sigma'): \mathbb{S}_{\mathbb{C}} \to P_{\mathbb{C}} \times_{\mathbb{C}} P'_{\mathbb{C}}$ for $\sigma \in X^+$ and $\sigma' \in X'^+$ (cf. \cite{PinkDiss}, Definition 2.5).

Axiom \ref{ax:3} is satisfied with $P$ the trivial group, $X^+ = \Hom(\mathbb{S}_{\mathbb{C}},P_{\mathbb{C}})$, and $\Gamma = P(\mathbb{Q})$.

Let us now turn to \ref{ax:2}. We recall that a special subvariety is the image of a Shimura morphism.

Let $\phi: (P,X^+,\Gamma) \to (Q,Y^+,\Delta)$ and $\psi:  (P',X'^+,\Gamma')\to (Q,Y^+,\Delta)$ be Shimura morphisms and set $V=\mathcal{F}(P,X^+,\Gamma) $, $V'=\mathcal{F}(P',X'^+,\Gamma') $, and $W=\mathcal{F}(Q,Y^+,\Delta) $. We want to prove that $(V \times_{W} V')_{\red} \subset V \times_{\bar{\mathbb{Q}}} V'$ is a finite union of special subvarieties.

We let $U=V \times_{\bar{\mathbb{Q}}} V'\times_{\bar{\mathbb{Q}}} W \times_{\bar{\mathbb{Q}}} W$ and let $G \subset U$ be the graph of the Shimura morphism $\mathcal{F}(\phi)\times \mathcal{F}(\psi):  V \times_{\bar{\mathbb{Q}}} V'\to W \times_{\bar{\mathbb{Q}}} W$. We note that $U $ is a connected (mixed) Shimura variety (of Kuga type) and $G$ is a special subvariety of $U$ because it is the image of the Shimura morphism $(\id_{V \times_{\bar{\mathbb{Q}}} V'}, \mathcal{F}(\phi)\times \mathcal{F}(\psi)):V \times_{\bar{\mathbb{Q}}} V'\to U$. 

Let $D$ be the diagonal in $W \times_{\bar{\mathbb{Q}}} W$. It is special since it is the image of the Shimura morphism $(\id_W,\id_W):W \to W \times_{\bar{\mathbb{Q}}} W$. Hence, $D':=V \times_{\bar{\mathbb{Q}}} V'\times_{\bar{\mathbb{Q}}} D$ is a special subvariety of $U$.

We can apply Theorem \ref{thm:intersectionspecial} to $G$ and $D'$ and obtain that the irreducible components of $G\cap D'$ are special subvarieties of $U$.

Finally, if $\pi : U\to V \times_{\bar{\mathbb{Q}}} V'$ is the projection, which is a Shimura morphism, we have that $(V \times_{W} V')_{\red} $ is the image of $G\cap D'$ via $\pi$ and therefore a finite union of special subvarieties. We are done with \ref{ax:2}.

To prove \ref{ax:4} we need several further auxiliary lemmas.
 We start by showing that, under certain conditions, a surjective homomorphism of algebraic groups gives a Shimura covering.

\begin{lem}\label{lem:centext}
Suppose that $(Q,Y^{+})$ is a connected mixed Shimura datum and $\tilde{Q}$ is an algebraic group over $\mathbb{Q}$. Suppose that there is a surjective homomorphism $\phi: \tilde{Q} \to Q$ whose kernel is a central torus in $\tilde{Q}$ that is an almost-direct product of a $\mathbb{Q}$-split torus and a torus of compact type that is defined over $\mathbb{Q}$.

If some $y \in Y^+$ factors through $\phi_{\mathbb{C}}$, then there exist a connected mixed Shimura datum $(\tilde{Q},\tilde{Y}^{+})$ and a Shimura covering $(\tilde{Q},\tilde{Y}^{+}) \to (Q,Y^{+})$ associated to $\phi$. Moreover, if $(Q,Y^{+})$ is a connected pure Shimura datum or a connected mixed Shimura datum of Kuga type, then the same holds for $(\tilde{Q},\tilde{Y}^{+})$.
\end{lem}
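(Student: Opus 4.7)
The plan is to construct the pair $(\tilde{Q},\tilde{Y}^{+})$ directly by lifting the structure of $(Q,Y^{+})$ along $\phi$. First, since $\ker \phi$ is a torus, it contains no non-trivial unipotent element, so $\phi$ restricts to an isomorphism $W_{\tilde{Q}} \xrightarrow{\sim} W_Q$ between the unipotent radicals; I would then define $U_{\tilde{Q}} \subset W_{\tilde{Q}}$ as the preimage of $U_Q$ under this isomorphism, and centrality of $\ker \phi$ in $\tilde{Q}$ makes $U_{\tilde{Q}}$ normal in $\tilde{Q}$. If $(Q,Y^{+})$ is pure (respectively, of Kuga type), then $W_{\tilde{Q}}$ (respectively, $U_{\tilde{Q}}$) is trivial, establishing the last sentence of the statement.

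Next, let $\tilde{y}_0 \in \Hom(\mathbb{S}_{\mathbb{C}},\tilde{Q}_{\mathbb{C}})$ be the lift of $y$ provided by the hypothesis. I would modify $\tilde{y}_0$ by an appropriate character $\mathbb{S}_{\mathbb{C}} \to (\ker\phi)_{\mathbb{C}}$ to produce a lift $\tilde{y}$ whose composition with $\tilde{Q}_{\mathbb{C}} \to (\tilde{Q}/U_{\tilde{Q}})_{\mathbb{C}}$ is defined over $\mathbb{R}$, thereby securing \ref{ax:MSDa}. I would then set $\tilde{Y}^{+}$ to be the connected component of the $\tilde{Q}(\mathbb{R}) U_{\tilde{Q}}(\mathbb{C})$-orbit of $\tilde{y}$ containing $\tilde{y}$, and note that $\phi_{\mathbb{C}}$ carries $\tilde{Y}^{+}$ into $Y^{+}$ by continuity and the fact that $\phi_{\mathbb{C}}(\tilde{y}) = y \in Y^{+}$. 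The remaining axioms follow by transferring structure through the central torus extension: centrality of $\ker \phi$ makes the adjoint action of $\mathbb{S}_{\mathbb{C}}$ on $\Lie \ker \phi$ via $\tilde{y}$ trivial, so $\Lie \ker \phi$ contributes only $(0,0)$-pieces, and $\Lie \tilde{Q}/\Lie \ker \phi \simeq \Lie Q$ inherits the Hodge structure from $(Q,Y^{+})$; this yields \ref{ax:MSDb} and, combined with the matching of unipotent parts, also \ref{ax:MSDc}. The identification $(\tilde{Q}/W_{\tilde{Q}})^{\ad} \simeq (Q/W_Q)^{\ad}$ (the central $\ker \phi$ dies in the adjoint quotient) gives \ref{ax:MSDd}. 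For \ref{ax:MSDe}, the exact sequence
\[ 1 \to \ker\phi/(\ker\phi \cap \tilde{Q}^{\der}) \to \tilde{Q}/\tilde{Q}^{\der} \to Q/Q^{\der} \to 1 \]
realises $\tilde{Q}/\tilde{Q}^{\der}$ as an extension of two tori of the required almost-direct-product form, and this class is closed under extensions, as one sees using semisimplicity of the $\mathbb{Q}$-linear representation of the finite Galois quotient through which the action on the character lattice factors. Finally, the induced Shimura morphism $(\tilde{Q},\tilde{Y}^{+})\to (Q,Y^{+})$ is a Shimura submersion because $\phi$ is surjective and a Shimura immersion because $\ker \phi$ is itself a torus, hence a Shimura covering.

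The \emph{main obstacle} is the modification of $\tilde{y}_0$ in the second step. The obstruction lives in $H^{1}(\Gal(\mathbb{C}/\mathbb{R}), \Hom(\mathbb{S}_{\mathbb{C}}, (\ker\phi)_{\mathbb{C}}))$: two lifts modulo $U_{\tilde{Q}}$ of the real homomorphism $y \bmod U_Q$ differ by a character to the central torus $\ker \phi$, and the cocycle measuring the failure of $\tilde{y}_0 \bmod U_{\tilde{Q}}$ to be real is anti-invariant under complex conjugation. I would establish the vanishing of this $H^{1}$ by a direct character-lattice calculation, treating the $\mathbb{Q}$-split and the compact-type factors of $\ker \phi$ separately; on each factor complex conjugation acts by a specific involution on the relevant $\Hom$-lattice and one checks by hand that every anti-invariant class is a coboundary. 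This is exactly the point at which the assumed structural form of $\ker \phi$ enters in an essential way.
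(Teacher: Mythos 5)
Your overall architecture matches the paper's: identify the unipotent radicals, define $\tilde{U}$ as the preimage of $U_Q$, lift $y$ to $\tilde{y}$, adjust $\tilde{y}$ by a character to $(\ker\phi)_{\mathbb{C}}$ to secure \ref{ax:MSDa}, set $\tilde{Y}^+$ to be the appropriate connected orbit, and transfer \ref{ax:MSDb}--\ref{ax:MSDe} along the central torus extension, with $\ref{ax:MSDe}$ handled via semisimplicity of $\mathbb{Q}$-tori up to isogeny. Two points, however, need correcting.

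First, you treat the isomorphism $\phi|_{W_{\tilde{Q}}}: W_{\tilde{Q}} \to W_Q$ as an immediate consequence of ``$\ker\phi$ is a torus, hence meets $W_{\tilde{Q}}$ trivially.'' That only gives injectivity. Surjectivity is genuine content: you must show $\phi(W_{\tilde{Q}}) = W_Q$, not merely $\phi(W_{\tilde{Q}}) \subset W_Q$. The paper derives it by observing that $\phi^{-1}(W_Q)$ is a connected nilpotent group (a central extension of the unipotent $W_Q$ by the torus $\ker\phi$), hence by the structure theorem for connected nilpotent linear algebraic groups splits as a product $(\ker\phi)\times_{\mathbb{Q}} W_Q$, from which one reads off that $\tilde{Q}$ contains a normal unipotent subgroup isomorphic to $W_Q$ and then compares dimensions. (Alternatively: $\phi^{-1}(W_Q)/W_{\tilde{Q}}$ is a torus mapping onto the unipotent group $W_Q/\phi(W_{\tilde{Q}})$, forcing the latter to be trivial.) As written, your sketch has a gap here.

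Second, and more fundamentally, your localization of where the almost-direct-product hypothesis on $\ker\phi$ is essential is wrong. You place it in the $H^1$-vanishing step for \ref{ax:MSDa}, saying the structure of $\ker\phi$ enters ``in an essential way'' there. In fact $H^1\bigl(\Gal(\mathbb{C}/\mathbb{R}), \Hom_{\mathbb{Z}}(X^*(\ker\phi), X^*(\mathbb{S}))\bigr)$ vanishes for \emph{any} real torus $\ker\phi$: since $X^*(\mathbb{S}) \simeq \mathbb{Z}[\Gal(\mathbb{C}/\mathbb{R})]$ is free over the group ring, $\Hom_{\mathbb{Z}}(L, X^*(\mathbb{S}))$ is coinduced for every $\Gal(\mathbb{C}/\mathbb{R})$-module $L$ and has trivial positive-degree cohomology by Shapiro's lemma. (Concretely, with $\tau$ the nontrivial element, if $(1+\tau)(f_1,f_2) = 0$ then $(f_1,f_2) = (\tau-1)(-f_1,0)$.) This is exactly what the paper's explicit lift \eqref{eq:lift} is secretly doing, and it uses nothing about the decomposition of $\ker\phi$. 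The hypothesis that $\ker\phi$ is an almost-direct product of a $\mathbb{Q}$-split torus with a torus of compact type is invoked, both in your sketch and in the paper, only to establish \ref{ax:MSDe} for $(\tilde{Q},\tilde{Y}^+)$. Fixing the attribution matters: if one were hunting for what hypotheses on $\ker\phi$ could be relaxed, your account sends the reader to the wrong place.
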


\begin{proof}
First of all, $\tilde{Q}$ is a connected linear algebraic group by Proposition 8.1 in \cite{MilneAG}.

Let $y \in Y^+\subset \Hom(\mathbb{S}_{\mathbb{C}},Q_{\mathbb{C}})$ such that there exists $\tilde{y} \in \Hom(\mathbb{S}_{\mathbb{C}},\tilde{Q}_{\mathbb{C}})$ with $y = \phi_{\mathbb{C}} \circ \tilde{y}$.

Let $W$ and $\tilde{W}$ denote the unipotent radicals of $Q$ and $\tilde{Q}$ respectively. Let $U \subset W$ be the algebraic subgroup, normal in $Q$, from the definition of a connected mixed Shimura datum and set
\[\tilde{U} = \left(\phi|_{\tilde{W}}\right)^{-1}(U) \subset\tilde{W}.\]

This will be the algebraic subgroup of $\tilde{W}$ from the definition of a connected mixed Shimura datum. We start by verifying that $\phi|_{\tilde{W}}$ is an isomorphism onto $W$ and that $\tilde{U}$ is normal in $\tilde{Q}$.

Since $\phi$ is surjective, we have that $\phi(\tilde{W})$ is normal in $Q$. By Corollary 14.7 in \cite{MilneAG}, $\phi(\tilde{W})$ is also unipotent and hence $\phi(\tilde{W}) \subset W$. By Corollary 14.17 in \cite{MilneAG} we have that $\tilde{W} \cap \ker\phi$ is trivial and so $\phi|_{\tilde{W}}: \tilde{W}\to Q$ is injective.

 We have exact sequences
\[ 1 \to \ker\phi \to \phi^{-1}(W) \to W \to 1\]
and
\[1 \to \ker\phi \to \phi^{-1}(U) \to U \to 1.\]
The algebraic group $\phi^{-1}(W)$ is connected by Proposition 8.1 in \cite{MilneAG} and it admits a central subnormal series $\phi^{-1}(W) \supset\phi^{-1}(U) \supset \ker\phi \supset \{1\}$ because $U$ is central in $W$ and $W/U$ is commutative by \cite{PinkDiss}, 2.15. Hence $\phi^{-1}(W)$ is nilpotent as defined in Definition 6.34 in \cite{MilneAG}. By Corollary 16.48 in \cite{MilneAG}, $\phi^{-1}(W)$ is isomorphic to the product of a unipotent algebraic group and a torus. The first exact sequence above
 shows that $\phi^{-1}(W) \simeq (\ker \phi) \times_{\mathbb{Q}} W$. Therefore, $\tilde{Q}$ contains a normal unipotent subgroup that is isomorphic to $W$. Hence $\dim \tilde{W} \geq \dim W$ and the injective homomorphism $\phi|_{\tilde{W}}: \tilde{W}\to W$ must be an isomorphism and the same for $\phi|_{\tilde{U}}: \tilde{U} \to U$.

Any conjugate of $\tilde{U}$ in $\tilde{Q}$ is contained in $\phi^{-1}(U)$ and unipotent. It follows as above from the second exact sequence
that $\phi^{-1}(U) \simeq (\ker \phi) \times_{\mathbb{Q}} U \simeq (\ker \phi) \times_{\mathbb{Q}} \tilde{U}$ and hence $\tilde{U}$ must be normal in $\tilde{Q}$.

We might be tempted to take $\tilde{Y}^+$ to be the connected component of the orbit of $\tilde{y}$ under conjugation by $\tilde{Q}(\mathbb{R})\tilde{U}(\mathbb{C})$ that contains $\tilde{y}$, but this might not satisfy \ref{ax:MSDa}. To remedy this, we will multiply $\tilde{y} \in \Hom(\mathbb{S}_{\mathbb{C}},\tilde{Q}_{\mathbb{C}})$ by a suitable homomorphism $\mathbb{S}_{\mathbb{C}} \to (\ker\phi)_{\mathbb{C}}$ that we construct in the following. Let $\chi$ denote the composition of $y\in \Hom(\mathbb{S}_{\mathbb{C}},Q_{\mathbb{C}})$ with $Q_{\mathbb{C}} \to (Q/U)_{\mathbb{C}}$. Let $T$ denote the real torus in $(Q/U)_{\mathbb{R}}$ such that $T_{\mathbb{C}}$ is equal to $\chi(\mathbb{S}_{\mathbb{C}})$. From now on, we view $\chi$ as a homomorphism $\mathbb{S}\to T$. Let $\psi: \tilde{Q}/\tilde{U} \to Q/U$ be the homomorphism induced by $\phi$ and set $\tilde{T} = \psi_{\mathbb{R}}^{-1}(T)$. We have an exact sequence
\[ 1\to \ker\phi_{\mathbb{R}} \simeq \ker \psi_{\mathbb{R}} \to \tilde{T} \to T \to 1,\]
where the isomorphism between the two kernels is induced by the restriction of $\tilde{Q} \to \tilde{Q}/\tilde{U}$ to $\ker \phi$. By Proposition 8.1 and Theorem 15.39 in \cite{MilneAG}, $\tilde{T}$ is a torus as well.

In the next paragraphs, we will construct $\tilde{\chi}: \mathbb{S} \to \tilde{T}$ such that $\chi$ is equal to $\tilde{\chi}$ composed with $\tilde{T}\to T$. Let $\chi'$ denote the composition of $\tilde{y}$ with $\tilde{Q}_{\mathbb{C}} \to (\tilde{Q}/\tilde{U})_{\mathbb{C}}$; its image is contained in $\tilde{T}_{\mathbb{C}}$. Then $\tilde{\chi}_{\mathbb{C}}\chi'^{-1}$ canonically yields a homomorphism $\mathbb{S}_{\mathbb{C}} \to \ker \phi_{\mathbb{C}}$. After replacing $\tilde{y}$ by its product with that homomorphism, we can define $\tilde{Y}^+$ to be the connected component of the orbit of $\tilde{y}$ under conjugation by $\tilde{Q}(\mathbb{R})\tilde{U}(\mathbb{C})$ that contains $\tilde{y}$. Condition \ref{ax:MSDa} is then automatically satisfied.

It remains to find a lift $\tilde{\chi}$ of $\chi$ to $\tilde{T}$. Passing to the character modules and using Theorem 12.23 in \cite{MilneAG}, we get an exact sequence
\[ 0 \to X^{\ast}(T) \to X^{\ast}(\tilde{T}) \to X^{\ast}(\ker \psi_{\mathbb{R}}) \to 0\]
of $\Gal(\mathbb{C}/\mathbb{R})$-modules and a $\Gal(\mathbb{C}/\mathbb{R})$-equivariant homomorphism $\chi^{\ast} = (\chi^{\ast}_1,\chi^{\ast}_2): X^{\ast}(T) \to X^{\ast}(\mathbb{S}) \simeq \mathbb{Z}^2$ that we want to extend to a $\Gal(\mathbb{C}/\mathbb{R})$-equivariant homomorphism $X^{\ast}(\tilde{T}) \to \mathbb{Z}^2$ (where $\Gal(\mathbb{C}/\mathbb{R})$ acts on $\mathbb{Z}^2$ by swapping the factors). We can find a free subgroup $F \subset X^{\ast}(\tilde{T})$ (isomorphic to $X^{\ast}(\ker \psi_{\mathbb{R}})$ as an abelian group) such that $X^{\ast}(\tilde{T}) \simeq X^{\ast}(T)\oplus F$ as abelian groups. Let $p_1: X^{\ast}(\tilde{T}) \to X^{\ast}(T)$ and $p_2: X^{\ast}(\tilde{T}) \to F$ denote the two projections; by abuse of notation, we will also view them as homomorphisms from $X^{\ast}(\tilde{T})$ to itself. If $\tau$ denotes the non-trivial element of $\Gal(\mathbb{C}/\mathbb{R})$, which we also view as an automorphism of $X^{\ast}(\tilde{T})$, then we will show that an extension of $\chi^{\ast}$ is given by
\begin{equation}\label{eq:lift}
v\in X^{\ast}(\tilde{T}) \mapsto ((\chi^{\ast}_1 \circ p_1)(v),(\chi^{\ast}_2 \circ p_1)(v)+(\chi^{\ast}_1 \circ p_1 \circ \tau \circ p_2)(v)) \in \mathbb{Z}^2.
\end{equation}

Clearly, this is a homomorphism of abelian groups that extends $\chi^{\ast}$. We still have to verify that it is $\Gal(\mathbb{C}/\mathbb{R})$-equivariant.

Let $v\in X^{\ast}(\tilde{T})$. Since $\tau(X^{\ast}(T)) = X^{\ast}(T)$, $\chi^{\ast}$ is $\Gal(\mathbb{C}/\mathbb{R})$-equivariant, and $v = p_1(v)+p_2(v)$, we have
\[ (\chi^{\ast}_1\circ p_1)(\tau(v)) = \chi^{\ast}_1(\tau(p_1(v))+p_1(\tau(p_2(v)))) = (\chi^{\ast}_2 \circ p_1)(v) + (\chi^{\ast}_1 \circ p_1 \circ \tau \circ p_2)(v). \]
Furthermore, we have that
\begin{align*}
(\chi^{\ast}_2 \circ p_1)(\tau(v))+(\chi^{\ast}_1 \circ p_1 \circ \tau \circ p_2)(\tau(v)) =\\
(\chi^{\ast}_2 \circ p_1)(\tau(p_1(v)))+(\chi^{\ast}_2 \circ p_1)(\tau(p_2(v)))+(\chi^{\ast}_1 \circ p_1 \circ \tau \circ p_2)(\tau(v)) =\\
(\chi^{\ast}_1 \circ p_1)(v)+(\chi^{\ast}_2 \circ p_1)(\tau(p_2(v)))+(\chi^{\ast}_1 \circ p_1 \circ \tau \circ p_2)(\tau(v)) = \\
(\chi^{\ast}_1 \circ p_1)(v)+\chi^{\ast}_1((\tau \circ p_1\circ \tau \circ p_2)(v)+(p_1 \circ \tau \circ p_2 \circ \tau \circ p_2)(v)).
\end{align*}
We can now use that
\begin{align*}
\tau \circ p_1\circ \tau \circ p_2 + p_1 \circ \tau \circ p_2 \circ \tau \circ p_2 = \\
p_1 \circ \tau \circ p_1\circ \tau \circ p_2 + p_1 \circ \tau \circ p_2 \circ \tau \circ p_2 = \\
p_1 \circ \tau \circ \tau \circ p_2 = p_1 \circ p_2 = 0.
\end{align*}
So \eqref{eq:lift} is really the desired extension and we proved that condition \ref{ax:MSDa} is satisfied.

The composition of $\tilde{y}$ with the adjoint representation yields a decomposition $\Lie \tilde{Q}_{\mathbb{C}} = \bigoplus_{(m,n) \in \mathbb{Z}^2}{L^{m,n}}$ into character eigenspaces for the induced representation of $\mathbb{S}_{\mathbb{C}}$. As $\tilde{Q}$ is a central extension of $Q$ and we have an $\mathbb{S}_{\mathbb{C}}$-equivariant isomorphism $\Lie Q \simeq(\Lie \tilde{Q})/(\Lie \ker \phi)$, the numbers $\dim L^{m,n}$ are equal to the Hodge numbers $h^{m,n}$ (as defined in \cite{PinkDiss}, 1.1) of the rational mixed Hodge structure induced by $y$ on $\Lie Q$ apart from $\dim L^{0,0}$, which equals $h^{0,0} + \dim \ker \phi$.

Recall that $\tilde{U}$ is normal in $\tilde{Q}$ and $\phi|_{\tilde{U}}$ is an isomorphism onto $U$. This yields an $\mathbb{S}_{\mathbb{C}}$-equivariant isomorphism $\Lie \tilde{U}_{\mathbb{C}} \simeq \Lie U_{\mathbb{C}}$. Since $\dim L^{-1,-1} = h^{-1,-1}$ and $\Lie U = W_{-2}(\Lie Q)$, we must have $\Lie \tilde{U}_{\mathbb{C}} = L^{-1,-1}$. An analogous argument shows that $\Lie \tilde{W}_{\mathbb{C}} = L^{-1,-1} \oplus L^{0,-1} \oplus L^{-1,0}$. Finally, since the composition of $\tilde{y}$ with $\tilde{Q}_{\mathbb{C}} \to (\tilde{Q}/\tilde{U})_{\mathbb{C}} $ is defined over $\mathbb{R}$, we have that complex conjugation acts on $\Lie (\tilde{Q}/\tilde{U})_{\mathbb{C}} \simeq L^{0,-1} \oplus L^{-1,0} \oplus L^{1,-1} \oplus L^{0,0} \oplus L^{-1,1}$ by sending $L^{m,n}$ to $L^{n,m}$.

It follows that conditions \ref{ax:MSDb} and \ref{ax:MSDc} are satisfied.

As $\ker \phi$ is central in $\tilde{Q}$ and $\tilde{Q}/\tilde{W} \to Q/W$ is surjective, there are surjective homomorphisms
\[Q/W \simeq \tilde{Q}/((\ker \phi)\tilde{W}) \to (\tilde{Q}/\tilde{W})^{\ad} \to (Q/W)^{\ad}.\]
As the adjoint group of a reductive group has trivial center by Corollary 17.62(e) in \cite{MilneAG}, this shows that $(\tilde{Q}/\tilde{W})^{\ad}$ and $(Q/W)^{\ad}$ are canonically isomorphic. Condition \ref{ax:MSDd} follows.

We set $\tilde{G} = \tilde{Q}/\tilde{W}$ and $G = Q/W$ and fix Levi decompositions $\tilde{Q} = \tilde{G} \ltimes \tilde{W}$ and $Q = G \ltimes W$. By condition \ref{ax:MSDe} for $(Q,Y^+)$, we have $Q^{\der} = G^{\der} \ltimes W$. Furthermore, we have $\tilde{Q}^{\der} = \tilde{G}^{\der} \ltimes \tilde{V}$ for some algebraic subgroup $\tilde{V}\subset \tilde{W}$. As $\phi$ is surjective, so is $\phi|_{\tilde{Q}^{\der}}: \tilde{Q}^{\der}\to Q^{\der}$.

Arguing as in the proof that $\phi|_{\tilde{W}}: \tilde{W}\to W$ is an isomorphism with $\tilde{Q}^{\der}$, $Q^{\der}$, and the identity component $(\phi|_{\tilde{Q}^{\der}})^{-1}(W)^0$ of $(\phi|_{\tilde{Q}^{\der}})^{-1}(W)$ in place of $\tilde{Q}$, $Q$, and $\phi^{-1}(W)$ shows that $\phi|_{\tilde{V}}: \tilde{V} \to W$ is an isomorphism. It follows that $\tilde{V} = \tilde{W}$ and $\tilde{Q}^{\der} = \tilde{G}^{\der} \ltimes \tilde{W}$. We deduce that $\tilde{Q}/\tilde{Q}^{\der} \simeq \tilde{G}/\tilde{G}^{\der}$ and $Q/Q^{\der} \simeq G/G^{\der}$.

Now $\tilde{G}/\tilde{G}^{\der}$ is isogenous to the identity component $Z(\tilde{G})^{0}$ of the center $Z(\tilde{G})$ of $\tilde{G}$ and $G/G^{\der}$ is isogenous to $Z(G)^{0}$, see Example 19.25 in \cite{MilneAG}. Furthermore, $\phi$ induces a surjective homomorphism $\bar{\phi}: \tilde{G}\to G$, whose kernel is a central torus isomorphic to $\ker \phi$, and we have an exact sequence
\[ 1 \to \ker \bar{\phi} \to Z(\tilde{G})^{0} \to \bar{\phi}(Z(\tilde{G})^{0}) \to 1.\]
Since $\ker \bar{\phi}$ as well as $\bar{\phi}(Z(\tilde{G})^{0}) \subset Z(G)^{0}$ are almost-direct products of $\mathbb{Q}$-split tori with tori of compact type that are defined over $\mathbb{Q}$ and the category of $\mathbb{Q}$-tori up to isogeny is semi-simple (see Proposition 4.1 in \cite{Brion}), this establishes condition \ref{ax:MSDe} for $(\tilde{Q},\tilde{Y}^+)$.

Hence, $(\tilde{Q},\tilde{Y}^+)$ is a connected mixed Shimura datum and $\phi$ induces a Shimura morphism $(\tilde{Q},\tilde{Y}^+) \to (Q,Y^+)$. Since $\phi$ is surjective and its kernel is a torus, this Shimura morphism is a Shimura covering. Finally, if $W$ is trivial, so is $\tilde{W}$, and the same holds for $U$ and $\tilde{U}$. The lemma follows.
\end{proof}

We are going to use the following lemma to construct the right vertical arrow in the diagram \eqref{diag}.

\begin{lem}\label{lem:milneshih}
Suppose that $(Q,Y^{+})$ is a connected mixed Shimura datum. Then there is a connected mixed Shimura datum $(\tilde{Q},\tilde{Y}^{+})$ and a Shimura covering $(\tilde{Q},\tilde{Y}^{+}) \to (Q,Y^{+})$ such that the associated homomorphism $\tilde{Q} \to Q$ is surjective and its kernel is a central torus of compact type. Furthermore, the derived subgroup $\tilde{Q}^{\der}$ of $\tilde{Q}$ is essentially simply connected (as defined in \cite{LubVen}).

If $(Q,Y^{+})$ is a connected pure Shimura datum or a connected mixed Shimura datum of Kuga type, then the same holds for $(\tilde{Q},\tilde{Y}^{+})$.
\end{lem}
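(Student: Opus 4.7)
The plan is to apply Lemma \ref{lem:centext} to a central extension $\tilde{Q} \to Q$ obtained by a Milne--Shih style construction on the reductive quotient of $Q$, and then to verify the lifting condition and the essential simple connectedness.

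First, let $W$ denote the unipotent radical of $Q$ and set $G = Q/W$, which is reductive by \ref{ax:MSDc}, satisfies \ref{ax:MSDe}, and fits into a connected pure Shimura datum $(G,Y^+_G) = (Q,Y^+)/W$. Let $G^{\mathrm{sc}}$ be the simply connected cover of $G^{\der}$ with finite central kernel $N$. A standard construction (embedding $N$ into a compact $\mathbb{Q}$-torus $T_0$ and amalgamating $G^{\mathrm{sc}}$, $T_0$, and $Z(G)^0$ along their common finite central subgroups) yields a reductive group $\tilde{G}$ over $\mathbb{Q}$ together with a surjection $\tilde{G} \twoheadrightarrow G$ whose kernel $T_0$ is central in $\tilde{G}$, such that $\tilde{G}^{\der} \simeq G^{\mathrm{sc}}$ and $\tilde{G}/\tilde{G}^{\der}$ is still an almost-direct product of a $\mathbb{Q}$-split torus and a torus of compact type over $\mathbb{Q}$, so that \ref{ax:MSDe} holds for $\tilde{G}$.

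Next, fix a Levi decomposition $Q = G \ltimes W$ and define $\tilde{Q} = \tilde{G} \ltimes W$, where $\tilde{G}$ acts on $W$ through $\tilde{G} \to G$. The induced homomorphism $\phi: \tilde{Q} \twoheadrightarrow Q$ has kernel $T_0$, which is central in $\tilde{Q}$ because $T_0$ is central in $\tilde{G}$ and acts trivially on $W$. By the calculation at the end of the proof of Lemma \ref{lem:centext}, $\tilde{Q}^{\der} = \tilde{G}^{\der} \ltimes W \simeq G^{\mathrm{sc}} \ltimes W$, whose reductive quotient $G^{\mathrm{sc}}$ is simply connected; this gives essential simple connectedness in the sense of \cite{LubVen}. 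To invoke Lemma \ref{lem:centext} we must exhibit some $y \in Y^+$ that factors through $\phi_{\mathbb{C}}$. Since $\mathbb{S}_{\mathbb{C}}$ is diagonalizable, any $y \in Y^+$ has image a torus in $Q_{\mathbb{C}}$, so $y$ factors through a maximal torus $T_y \subset Q_{\mathbb{C}}$; its preimage $\tilde{T}_y \subset \tilde{Q}_{\mathbb{C}}$ sits in a short exact sequence of tori $1 \to T_{0,\mathbb{C}} \to \tilde{T}_y \to T_y \to 1$. Passing to character lattices gives a short exact sequence of free abelian groups, which splits; hence the cocharacter $\mathbb{S}_{\mathbb{C}} \to T_y$ corresponding to $y$ lifts to $\tilde{T}_y$, providing the required $\tilde{y}: \mathbb{S}_{\mathbb{C}} \to \tilde{Q}_{\mathbb{C}}$. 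Lemma \ref{lem:centext} then endows $\tilde{Q}$ with the structure of a connected mixed Shimura datum $(\tilde{Q},\tilde{Y}^+)$ and produces a Shimura covering $(\tilde{Q},\tilde{Y}^+) \to (Q,Y^+)$. Preservation of pureness (resp.\ Kuga type) is immediate since $\tilde{Q} = \tilde{G} \ltimes W$ has unipotent radical $W$ and, by Lemma \ref{lem:centext}, the subgroup $\tilde{U}$ is isomorphic to $U$.

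The main obstacle I anticipate is the precise construction of $\tilde{G}$: one must choose the compact $\mathbb{Q}$-torus $T_0$ so that simultaneously $\tilde{G}^{\der}$ becomes exactly $G^{\mathrm{sc}}$ (and not merely a group of finite fundamental group) while \ref{ax:MSDe} is maintained, which requires the classical embedding of a finite group of multiplicative type into a compact $\mathbb{Q}$-torus and careful bookkeeping of the amalgamation. A secondary subtlety lies in matching $\tilde{Q}^{\der} = G^{\mathrm{sc}} \ltimes W$ with the definition of \emph{essentially simply connected} from \cite{LubVen} in the non-reductive case, which should reduce to the simple connectedness of $G^{\mathrm{sc}}$ but requires unpacking that definition.
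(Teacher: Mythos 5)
Your high-level strategy matches the paper's: reduce to the reductive quotient $G = Q/W$, build a central cover $\tilde{G} \to G$ with $\tilde{G}^{\der}$ simply connected and compact kernel, set $\tilde{Q} = \tilde{G} \ltimes W$, and invoke Lemma \ref{lem:centext}. The lifting argument via maximal tori and splitting of the character-lattice extension is also fine. However, the step you flag as "a classical embedding of a finite group of multiplicative type into a compact $\mathbb{Q}$-torus" is precisely where the real difficulty lies, and that assertion is false in general: not every finite group of multiplicative type over $\mathbb{Q}$ embeds into a $\mathbb{Q}$-torus of compact type. If $T_0$ is of compact type then $X^*(T_0)^{\sigma} = 0$ for a complex conjugation $\sigma$, which forces $\sigma = -\mathrm{id}$ on $X^*(T_0)$; a Galois-equivariant surjection $X^*(T_0) \twoheadrightarrow X^*(N)$ then forces $X^*(N)$ to be killed by $2$ on the part where $\sigma$ acts trivially, so already $N = \mu_3$ (whose character group is $\mathbb{Z}/3$ with trivial Galois action) has no such embedding. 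This is exactly why the Milne--Shih construction as stated does not preserve \ref{ax:MSDe}, as the paper points out.

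What rescues the argument in the paper is the specific Shimura structure: using \ref{ax:MSDd} one produces a maximal $\mathbb{Q}$-torus $T \subset G$ with $x$ factoring through $T_{\mathbb{C}}$, shows that $T^{\der} := (T \cap G^{\der})^0$ has compact real points, and observes that $T' := \pi^{-1}(T^{\der})$ is a compact maximal $\mathbb{Q}$-torus of $G^{\mathrm{sc}}$ that contains $N = \ker\pi$ because $N$ is central. This is the nontrivial ingredient (attributed to Chris Daw in the paper) that lets one take the kernel of $\tilde{G} \to G$ to be exactly $T'$; one then has to run a careful Cartesian-square argument on cocharacter modules to build $\tilde{T}$ and $\tilde{G}$ with kernel genuinely isomorphic to $T'$ and $\tilde{G}^{\der} \simeq G^{\mathrm{sc}}$. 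Your proposal correctly identifies the bookkeeping as the obstacle but then elides it and, more seriously, mislabels the key existence statement as classical when it depends essentially on the Cartan involution condition in the Shimura datum. As written, the proposal therefore does not constitute a proof.
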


\begin{proof}
Let $W$ be the unipotent radical of $Q$ and set $G = Q/W$. There is an induced connected pure Shimura datum $(G,X^{+}) = (Q,Y^{+})/W$ (see \cite{PinkDiss}, Proposition 2.9). We want to use the construction in Proposition 3.1 and Application 3.4 in Chapter V of \cite{MilneShih} to get a connected pure Shimura datum $(\tilde{G},\tilde{X}^{+})$ and a Shimura morphism $(\tilde{G},\tilde{X}^{+}) \to (G,X^{+})$ such that $\tilde{G}^{\der}$ is simply connected (as defined in Definition 18.5 in \cite{MilneAG}), the induced homomorphism $\tilde{G} \to G$ is surjective, and its kernel is a central torus in $\tilde{G}$.

However, the construction in \cite{MilneShih} does not necessarily preserve condition \ref{ax:MSDe} in the definition of a connected mixed Shimura datum as the kernel of $\tilde{G} \to G$ might have a non-$\mathbb{Q}$-split factor of non-compact type (up to isogeny). The construction in \cite{MilneShih} therefore has to be modified.

The following argument was kindly provided to the authors by Chris Daw: By Lemma 13.3 and Theorem 5.4 in \cite{MilneISV}, there exists $x \in X^+$ and a maximal $\mathbb{Q}$-torus $T$ in $G$ such that $x$ factors through $T_{\mathbb{C}}$. Let $Z$ denote the center of $G$ and $Z^0$ its identity component. Recall that $G = G^{\der}  Z^0$ and $Z^0 \subset T$ by the maximality of $T$. Therefore, $T = T^{\der}  Z^0$, where $T^{\der}$ by abuse of notation denotes the identity component $(T \cap G^{\der})^0$ and is a maximal $\mathbb{Q}$-torus in $G^{\der}$. By Corollary 17.84 in \cite{MilneAG}, we must have $T^{\der} =  T \cap G^{\der}$. Since $Z^0 \subset T$, it follows that
\begin{equation}\label{eq:centercontained}
T^{\der} \cap Z^0 = G^{\der} \cap Z^0.
\end{equation}

We now show that $T^{\der}(\mathbb{R})$ is compact in the same way as done at the end of the proof of Lemma \ref{lem:intersectionfundamentallemma2}: To start with, $T^{\der}(\mathbb{R})$ fixes $x$ and so $T^{\der}(\mathbb{R})$ is contained in the stabilizer $K_x$ of $x$ in $G^{\der}(\mathbb{R})$. The stabilizer $K_x$ is a closed subset of $K_1(\mathbb{R})$, where $K_1$ denotes the algebraic subgroup of $G^{\der}_{\mathbb{R}}$ of elements commuting with $x(\sqrt{-1})$. Let $K_2$ denote the algebraic subgroup of $G^{\ad}_{\mathbb{R}}$ of elements commuting with the image of $x(\sqrt{-1})$ under the homomorphism $G_{\mathbb{R}} \to G^{\ad}_{\mathbb{R}}$. By \ref{ax:MSDd}, conjugation by $x(\sqrt{-1})$ induces a Cartan involution on $G^{\ad}_{\mathbb{R}}$, which implies that $K_2(\mathbb{R})$ is compact (see the definition in \cite{MilneISV}, pp. 274--275). Clearly $K_1$ is mapped into $K_2$ by the homomorphism $G^{\der}_{\mathbb{R}} \to G^{\ad}_{\mathbb{R}}$. As the topological covering $G^{\der}(\mathbb{C}) \to G^{\ad}(\mathbb{C})$ has finite fibers, this implies that $K_1(\mathbb{R})$ is compact. Hence, $K_x$ is a compact closed subgroup of $G^{\der}(\mathbb{R})$ and so $T^{\der}(\mathbb{R})$ is compact.

Now let $\pi: G' \to G^{\der}$ denote the universal covering of $G^{\der}$ as defined in Definition 18.7 in \cite{MilneAG} (see, e.g., Theorem 18.25 and Remark 18.27 in \cite{MilneAG} for the existence) so that $G'$ is a simply connected semisimple algebraic group. The kernel of $\pi$ is finite and of multiplicative type. By Theorem 15.39 in \cite{MilneAG}, $\pi^{-1}(T^{\der})$ is of multiplicative type as well. Let $T'$ denote the identity component $\pi^{-1}(T^{\der})^0$. Then $T'$ is a maximal $\mathbb{Q}$-torus in $G'$ of compact type. Again by Corollary 17.84 in \cite{MilneAG}, we have
\begin{equation}\label{eq:preimage}
T' = \pi^{-1}(T^{\der}).
\end{equation}
Since the homomorphism $\pi$ has finite kernel, the induced map between cocharacter modules
\[ X_{\ast}(T') \to X_{\ast}(T), \quad \chi \mapsto \pi_{\bar{\mathbb{Q}}} \circ \chi\]
is injective, and so we consider $X_{\ast}(T')$ as a submodule of $X_{\ast}(T)$.

In this proof we several times make use of the fact that cocharacter modules of tori are torsion-free as $\mathbb{G}_m$ is connected.

Now we consider the canonical maps $\ad: G\to G^{\ad}$ and $\nu: G \to C$, where $C = G/G^{\der}$, and we let $T^{\ad}$ by abuse of notation denote the maximal $\mathbb{Q}$-torus $\ad(T)$ of $G^{\ad}$. We obtain an isogeny $T \to T^{\ad}\times_{\mathbb{Q}} C$ (with kernel contained in $Z \cap G^{\der}$) and hence another inclusion
\[ X_{\ast}(T) \subset X_{\ast}(T^{\ad}) \oplus X_{\ast}(C), \quad \chi \mapsto (\ad_{\bar{\mathbb{Q}}} \circ \chi, \nu_{\bar{\mathbb{Q}}} \circ \chi).\]
By construction, $X_{\ast}(T')$ is contained in $X_{\ast}(T^{\ad}) = X_{\ast}(T^{\ad}) \oplus \{0\}$. Hence, we can form the following Cartesian square of Galois modules:

\begin{center}
\begin{tikzcd}
P_0 \arrow{r} \arrow{d} & F = X_{\ast}(T^{\ad}) \oplus X_{\ast}(C) \arrow{d} \\   
M = X_{\ast}(T)/X_{\ast}(T') \arrow[hookrightarrow]{r} &  (X_{\ast}(T^{\ad})/X_{\ast}(T')) \oplus X_{\ast}(C).
\end{tikzcd}
\end{center}

Furthermore, since $F$ is torsion-free and the lower horizontal arrow is injective, $P_0$ is also torsion-free. Since the right vertical arrow is surjective, also the left vertical arrow is surjective.

Now we proceed exactly as in \cite{MilneShih}. Let $P_1$ be the kernel of the left vertical arrow, and let $P_2$ be the fiber product of $P_0$ and $X_{\ast}(T)$ over $M$ (the map $X_{\ast}(T) \to M$ being the projection to the quotient). We obtain the following diagram, having exact rows and columns.

\begin{equation}\label{eq:bigdiagram}
\begin{tikzcd}
& & 0 \arrow{d} & 0 \arrow{d} & \\   
& & X_{\ast}(T') \arrow[equal]{r} \arrow{d} & X_{\ast}(T')\arrow{d} & \\
0 \arrow{r} & P_1 \arrow{r}\arrow[equal]{d} & P_2 \arrow{r} \arrow{d} & X_{\ast}(T) \arrow{r} \arrow{d} & 0 \\
0 \arrow{r} & P_1 \arrow{r} & P_0 \arrow{r}  \arrow{d} & M \arrow{r} \arrow{d} & 0 \\
& & 0 & 0 &
\end{tikzcd}
\end{equation}

First, note that $P_2$ is torsion-free, as it is a submodule of $P_0 \oplus X_{\ast}(T)$. Second, observe that $P_1$ is isomorphic to $X_{\ast}(T')$. To see this, suppose that $(0,f) \in P_1\subset M\oplus F$. Then $f = (t,0)$ such that $t \in X_{\ast}(T')$. Therefore, $P_1$ is contained in $\{0\} \oplus X_{\ast}(T') \oplus \{0\}$, and the reverse inclusion is immediate. Thus, $P_1$ is also torsion-free. Hence, the $\mathbb{Z}$-dual of the upper horizontal sequence is also exact. The category equivalence $X^{\ast}$ (see \cite{MilneAG}, Theorem 12.23) then yields a $\mathbb{Q}$-torus $\tilde{T}$ and an exact sequence of $\mathbb{Q}$-tori
\begin{equation}\label{eq:exseq}
1 \to T' \to \tilde{T} \to T \to 1
\end{equation}
such that the $\mathbb{Z}$-dual of the upper horizontal sequence corresponds to the induced exact sequence of character modules of these $\mathbb{Q}$-tori.

Pulling back \eqref{eq:exseq} along $Z^0 \subset T$ yields another exact sequence of tori
\[ 1 \to T' \to \tilde{Z} \stackrel{\eta}{\to} Z^0 \to 1.\]
The group $G$ is isomorphic to $(G' \times_{\mathbb{Q}} Z^0)/E$, where the finite group $E = \pi^{-1}(G^{\der} \cap Z^0)\subset G'$ embeds centrally into the product via $g \in E(\bar{\mathbb{Q}}) \mapsto (g,\pi(g)^{-1}) \in G'(\bar{\mathbb{Q}}) \times Z^0(\bar{\mathbb{Q}})$. The embedding is central since $E$ is mapped into the center of $G^{\der}$ by the homomorphism $\pi$, which has finite kernel.

The left column of \eqref{eq:bigdiagram} yields a homomorphism $T' \hookrightarrow \tilde{T}$ such that the composition $T' \to \tilde{T} \to T$ coincides with the morphism $\pi|_{T'}$, that we recall has finite kernel. Furthermore, we have $\tilde{Z} \subset \tilde{T}$ equal to the pre-image of $Z^0 \subset T$. Since $\pi(T') = T^{\der}$, $T = T^{\der}  Z^0$, and $T^{\der} \cap Z^0$ is finite, we get an isogeny $T' \times_{\mathbb{Q}} \tilde{Z} \to \tilde{T}$. Let $\tilde{E}$ be its kernel. It projects bijectively onto its image in either factor.

If $(g,z) \in \tilde{E}(\bar{\mathbb{Q}}) \subset G'(\bar{\mathbb{Q}}) \times \tilde{Z}(\bar{\mathbb{Q}})$, then we have $g \in E(\bar{\mathbb{Q}})$, hence $\tilde{E}$ is central in $G' \times_{\mathbb{Q}} \tilde{Z}$. We define $\tilde{G} = (G' \times_{\mathbb{Q}} \tilde{Z})/\tilde{E}$. The homomorphism $\id_{G'} \times \eta: G' \times_{\mathbb{Q}} \tilde{Z} \to G' \times_{\mathbb{Q}} Z^0$ induces a surjective homomorphism $\rho: \tilde{G} \to G$. By construction, there is an isomorphism $\tilde{G}^{\der} \simeq G'$.

Let $H \subset G' \times_{\mathbb{Q}} \tilde{Z}$ be the algebraic subgroup defined by
\[ H(\bar{\mathbb{Q}}) = \{ (g,z) \in G'(\bar{\mathbb{Q}}) \times \tilde{Z}(\bar{\mathbb{Q}}); g \in E(\bar{\mathbb{Q}}), \eta(z) = \pi(g)^{-1}\},\]
then $\tilde{E} \subset H$ and $\ker \rho = H/\tilde{E}$. Consider the image $E'$ of $\tilde{E}$ under projection to $T' \subset G'$. Thanks to \eqref{eq:centercontained} and \eqref{eq:preimage}, we have
\[E' = (\pi|_{T'})^{-1}(T^{\der} \cap Z^0) = \pi^{-1}(T^{\der} \cap Z^0) = \pi^{-1}(G^{\der} \cap Z^0) = E.\]
Since the map $\tilde{E} \to E'$ is bijective, this shows that $\ker \rho$ is isomorphic to $\ker \eta$, i.e., to $T'$, and central in $\tilde{G}$.

By construction, $\tilde{T} \simeq (T' \times_{\mathbb{Q}} \tilde{Z})/\tilde{E}$ embeds into $\tilde{G}$ as a maximal torus with $\rho(\tilde{T}) = T$ and $\ker \rho|_{\tilde{T}} \simeq \ker \rho \simeq T'$. Now $x \in \Hom(\mathbb{S}_{\mathbb{C}},G_{\mathbb{C}})$ is induced by $x \in \Hom(\mathbb{S}_{\mathbb{C}},T_{\mathbb{C}})$. Thanks to the exact sequence $1 \to \ker \rho \to \tilde{T} \to T \to 1$ of tori, the latter lifts to $\tilde{x} \in \Hom(S_{\mathbb{C}},\tilde{T}_{\mathbb{C}})$ and hence to $\tilde{x} \in \Hom(\mathbb{S}_{\mathbb{C}},\tilde{G}_{\mathbb{C}})$. We then define $\tilde{Q} = \tilde{G} \times_G Q \simeq \tilde{G} \ltimes W$. The kernel of the surjective homomorphism $\tilde{Q} \to Q$ is isomorphic to $\ker \rho$, which is isomorphic to the $\mathbb{Q}$-torus $T'$ of compact type, and is central in $\tilde{Q}$. If $y \in Y^+$ maps to $x \in X^+$, then $y$ lifts to $\tilde{y} \in \Hom(\mathbb{S}_{\mathbb{C}},\tilde{Q}_{\mathbb{C}})$. Most of Lemma \ref{lem:milneshih} now follows from Lemma \ref{lem:centext}. That $\tilde{Q}^{\der}$ is essentially simply connected follows from the isomorphism $\tilde{Q}^{\der} \simeq \tilde{G}^{\der} \ltimes W$ and the simply connectedness of $\tilde{G}^{\der} \simeq G'$.
\end{proof}

The following lemma gives almost all we need to deduce \ref{ax:4}.

\begin{lem}\label{lem:axiomfourmsv}
	
	Let $\phi:(P,X^{+},\Gamma) \to (Q,Y^{+}, \Delta)\in \mathfrak{M}$. There exist morphisms $ \phi_1:(\tilde{P},\tilde{X}^{+},\tilde{\Gamma}) \to (P,X^{+},\Gamma)$, $\phi_2: (\tilde{P},\tilde{X}^{+},\tilde{\Gamma}) \to(\tilde{Q},\tilde{Y}^{+},\tilde{\Delta}) $, $\phi_3:(\tilde{Q},\tilde{Y}^{+},\tilde{\Delta}) \to (Q,Y^{+}, \Delta)\in \mathfrak{M}$ such that $\phi \circ \phi_1 = \phi_3 \circ \phi_2$ and $\mathcal{F}(\phi_1)$ is a Shimura covering, $\mathcal{F}(\phi_2)$ is a Shimura submersion, and $\mathcal{F}(\phi_3)$ is a Shimura immersion. Moreover, $\phi_2:\tilde{P}\to \tilde{Q}$ is surjective, its kernel is connected, and $\phi_2(\tilde{\Gamma}) = \tilde{\Delta}$.
\end{lem}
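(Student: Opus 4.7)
\noindent\emph{Proof plan.}
The key observation is that $\phi$ factors canonically through the quotient $(P,X^+)/N$ where $N := (\ker \phi)^0$ is the \emph{identity component} of $\ker \phi$; once this is noticed, one may take $\tilde{P} = P$ and put all the nontrivial structure into $\phi_2$ and $\phi_3$. This avoids any need to cover $P$ by a bigger Shimura datum.

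First I would set $\tilde{P} := P$, $\tilde{X}^+ := X^+$, and $\phi_1 := \id$, viewed as a Shimura morphism $(P,X^+) \to (P,X^+)$. The underlying map of algebraic groups is the identity: its kernel is $\{1\}$ (a torus) and its image is $P$, which certainly contains $P^{\der}$, so $\phi_1$ is trivially a Shimura covering.

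Next, let $N := (\ker\phi)^0$, a connected normal algebraic subgroup of $P$. Applying Proposition 2.9 of \cite{PinkDiss} (together with Corollary 2.12 there, cf.\ Remark \ref{rmk:quotientshimuradatum}) I form the quotient connected mixed Shimura datum $(\tilde{Q},\tilde{Y}^+) := (P,X^+)/N$; by Remark \ref{rmk:imageofu}, being pure or of Kuga type is preserved, so $(\tilde{Q},\tilde{Y}^+)$ stays in the appropriate subcategory. The natural projection provides a quotient Shimura morphism $\phi_2: (P,X^+) \to (\tilde{Q},\tilde{Y}^+)$, which is surjective on underlying groups (hence a Shimura submersion since its image trivially contains $\tilde{Q}^{\der}$) with kernel equal to $N$, connected by construction. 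Because $N \subseteq \ker\phi$, the universal property of the quotient yields a unique Shimura morphism $\phi_3: (\tilde{Q},\tilde{Y}^+) \to (Q,Y^+)$ with $\phi_3 \circ \phi_2 = \phi$. Its underlying homomorphism has kernel $\ker\phi/N$, which is finite (isomorphic to the component group $\pi_0(\ker\phi)$); its identity component is therefore the trivial torus, so $\phi_3$ is a Shimura immersion. The relation $\phi \circ \phi_1 = \phi = \phi_3 \circ \phi_2$ is then immediate.

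Finally, for the congruence subgroups, I would pick $\tilde{\Gamma} \subseteq \Gamma$ to be a congruence subgroup sufficiently small that $\phi_2(\tilde{\Gamma})$ is a congruence subgroup of $\tilde{Q}(\mathbb{Q})_+$. This is arranged by pulling back a principal congruence subgroup of $\tilde{Q}$ through $\phi_2$ and intersecting with $\Gamma$, invoking Lemma 2.2 of \cite{OrrThesis} to descend to neat congruence subgroups as needed. Setting $\tilde{\Delta} := \phi_2(\tilde{\Gamma})$ then gives $\phi_2(\tilde{\Gamma}) = \tilde{\Delta}$ by definition, $\phi_1(\tilde{\Gamma}) = \tilde{\Gamma} \subseteq \Gamma$, and $\phi_3(\tilde{\Delta}) = \phi(\tilde{\Gamma}) \subseteq \phi(\Gamma) \subseteq \Delta$, so all three morphisms lie in $\mathfrak{M}$.

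\noindent\emph{Main obstacle.} The algebraic-geometric setup (the first three steps) is essentially direct once one recognizes $N = (\ker\phi)^0$ as exactly the subgroup to quotient by: quotienting by $(\ker\phi)^0$ rather than by $\ker\phi$ itself is what makes $\phi_2$ have connected kernel while leaving only a \emph{finite} discrepancy $\ker\phi/N$ for $\phi_3$ to absorb as a Shimura immersion. The technical point requiring care is the final step: the image of a congruence subgroup under a surjective $\mathbb{Q}$-morphism of algebraic groups is in general only arithmetic, not congruence. The connectedness of the kernel $N$, combined with appropriately shrinking $\tilde{\Gamma}$, is what allows one to secure the exact equality $\phi_2(\tilde{\Gamma}) = \tilde{\Delta}$ with $\tilde{\Delta}$ a genuine congruence subgroup.
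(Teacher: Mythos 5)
Your approach diverges from the paper's exactly at the point you flag as ``the technical point requiring care,'' and the hand-wave there does not close the gap. After the reduction to the case where $\phi : P \to Q$ is surjective with connected kernel (a reduction both you and the paper perform), the paper does \emph{not} set $\tilde{P} = P$: it applies Lemma \ref{lem:milneshih} to replace $Q$ by a Shimura covering $\tilde{Q}$ whose derived subgroup $\tilde{Q}^{\der}$ is essentially simply connected, and then takes $\tilde{P} = \tilde{Q} \times_Q P$. That extra covering is precisely what makes the congruence-subgroup bookkeeping go through: the paper proves that $\tilde{\Delta} := \phi_2(\tilde{\Gamma})$ is a genuine congruence subgroup by first shrinking $\tilde{\Gamma}$ into $\tilde{P}^{\der}(\mathbb{Q})$ and then invoking Proposition 0.1(ii) of \cite{LubVen} together with Theorem 4.1 of \cite{PlatonovRapinchuk} for the surjection $\tilde{P}^{\der} \to \tilde{Q}^{\der}$, a step that \emph{uses} the essential simple connectedness of $\tilde{Q}^{\der}$.

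Your claim that ``the connectedness of the kernel $N$, combined with appropriately shrinking $\tilde{\Gamma}$, is what allows one to secure the exact equality $\phi_2(\tilde{\Gamma}) = \tilde{\Delta}$ with $\tilde{\Delta}$ a genuine congruence subgroup'' is unsubstantiated. As you yourself note, the image of a congruence subgroup under a surjective $\mathbb{Q}$-homomorphism is a priori only arithmetic. Connectedness of the kernel by itself does not give the required openness of the image in the congruence topology: whether $\phi_2(\tilde{\Gamma})$ contains a congruence subgroup of $\tilde{Q}(\mathbb{Q})$ depends on an adelic surjectivity statement (governed by the Galois cohomology of the kernel) and ultimately on a congruence subgroup property for the derived group of the target. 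Setting $\tilde{\Gamma} = \phi_2^{-1}(\Delta_0) \cap \Gamma$ for a principal congruence subgroup $\Delta_0$ guarantees only that $\phi_2(\tilde{\Gamma}) \subseteq \Delta_0$, not that the image is itself congruence. Since $(P/(\ker\phi)^0)^{\der}$ is in general not essentially simply connected, the result from \cite{LubVen} that the paper leans on is simply not available in your setup, and the claim that one ``avoids any need to cover $P$ by a bigger Shimura datum'' is exactly where the proof breaks down.
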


\begin{proof}
Let $K$ be the identity component of $\ker \phi$. By Proposition 2.9 in \cite{PinkDiss}, there is a quotient Shimura datum $(P,X^{+})/K$ and the Shimura morphism $(P,X^{+}) \to (Q,Y^{+})$ factors as $(P,X^{+}) \to (P,X^{+})/K \to (Q,Y^{+})$, where the second Shimura morphism is a Shimura immersion as the homomorphism $P/K \to Q$ has finite kernel. Thanks to Lemma 2.2 in \cite{OrrThesis} (which is formulated for reductive groups, but holds with the same proof for arbitrary linear algebraic groups), we can find a congruence subgroup $\Gamma_K$ of $(P/K)(\mathbb{Q})_{+}$ that gives a factorization of $\phi$ as
\[ (P,X^{+},\Gamma) \to ((P,X^{+})/K,\Gamma_K) \to (Q,Y^{+}, \Delta).\]
Here, we use that the intersection of a congruence subgroup of $(P/K)(\mathbb{Q})$ with $(P/K)(\mathbb{Q})_{+}$ is again a congruence subgroup of $(P/K)(\mathbb{Q})$ (see Section \ref{sec:msv} of this article and Proposition 2.2 in \cite{RohlfsSchwermer}).

It suffices to prove the lemma for $(Q,Y^+) = (P,X^{+})/K$. If $(P,X^+)$ is pure or of Kuga type, then it follows from Remark \ref{rmk:imageofu} that the same holds for $(P,X^{+})/K$. We then have by construction that the Shimura morphism $(P,X^{+}) \to (Q,Y^{+})$ is a Shimura submersion, $\phi: P \to Q$ is surjective, and the kernel of $\phi$ is connected. 

From Lemma \ref{lem:milneshih}, we get a connected mixed Shimura datum $(\tilde{Q},\tilde{Y}^+)$ and a Shimura covering $(\tilde{Q},\tilde{Y}^+) \to (Q,Y^+)$ with surjective associated homomorphism $\phi_3:\tilde{Q} \to Q$ and $\ker \phi_3$ a central torus of compact type. Furthermore, $\tilde{Q}^{\der}$ is essentially simply connected (as defined in \cite{LubVen}). We define $\tilde{P} = \tilde{Q} \times_Q P$. We automatically get two homomorphisms $\phi_1:\tilde{P} \to P$ and $\phi_2: \tilde{P} \to \tilde{Q}$ such that $\phi \circ \phi_1 = \phi_3 \circ \phi_2$.

Note that the maps $\tilde{Y}^+ \to Y^+$ and $X^+ \to Y^+$ are surjective because of Proposition 5.1 in \cite{MilneISV}. This implies that, given $y \in Y^+$, we can choose pre-images of $y$ in $\tilde{Y}^+$ and $X^+$ and thus find $\tilde{x} \in\Hom(\mathbb{S}_{\mathbb{C}},\tilde{P}_{\mathbb{C}})$ such that $y =\phi_{\mathbb{C}} \circ (\phi_1)_{\mathbb{C}} \circ\tilde{x}=(\phi_3)_{\mathbb{C}}\circ (\phi_2)_{\mathbb{C}} \circ \tilde{x}$. 

Now, $\ker \phi_1$ and $\ker \phi_3$ are isomorphic. Furthermore, $\phi_1$ is surjective and $\ker \phi_1$ is central in $\tilde{P}$. Hence, $\phi_1$ satisfies the hypotheses of Lemma \ref{lem:centext}. We then get a connected mixed Shimura datum $(\tilde{P},\tilde{X}^+)$ and a Shimura covering $(\tilde{P},\tilde{X}^+) \to (P,X^+)$.

For $H \in \{\tilde{P},P,\tilde{Q},Q\}$, let $W_H$ denote the unipotent radical of $H$, let $U_H$ denote the unipotent subgroup from the definition of a connected mixed Shimura datum, and let $u_H: H \to H/U_H$ denote the corresponding quotient map. By construction, both $(u_P \circ \phi_1)_{\mathbb{C}} \circ \tilde{x}$ and $(u_{\tilde{Q}}\circ \phi_2)_{\mathbb{C}} \circ \tilde{x}$ are defined over $\mathbb{R}$. It follows from Remark \ref{rmk:imageofu}, from the fact that $W_{\tilde{P}} \simeq W_{\tilde{Q}} \times_{W_Q} W_P$, and from the proof of Lemma \ref{lem:centext} that we can assume that $U_{\tilde{P}} \simeq W_{\tilde{Q}} \times_{W_Q} U_P \simeq U_{\tilde{Q}} \times_{U_Q} U_P$ and that $U_{\tilde{Q}} \to U_Q$ is an isomorphism. We deduce that $\tilde{P}/U_{\tilde{P}} \simeq (\tilde{Q}/U_{\tilde{Q}}) \times_{Q/U_Q} (P/U_P)$. This implies that $(u_{\tilde{P}})_{\mathbb{C}} \circ \tilde{x}$ is also defined over $\mathbb{R}$. This means that in the proof of Lemma \ref{lem:centext} we do not have to replace $\tilde{x}$ for \ref{ax:MSDa} to be satisfied and so we can assume that $\tilde{x} \in \tilde{X}^{+}$. We therefore also obtain a Shimura morphism $(\tilde{P},\tilde{X}^{+}) \to (\tilde{Q},\tilde{Y}^{+})$ such that the obvious diagram commutes.

Both $(\tilde{P},\tilde{X}^+)$ and $(\tilde{Q},\tilde{Y}^+)$ are pure or of Kuga type if $(P,X^+)$ and $(Q,Y^+)$ both are.

Furthermore, $\phi_2$ is surjective and its kernel is isomorphic to the kernel of $\phi$, hence connected. In particular, $(\tilde{P},\tilde{X}^+) \to (\tilde{Q},\tilde{Y}^+)$ is a Shimura submersion.

We still have to find the congruence subgroups $\tilde{\Gamma}$ and $\tilde{\Delta}$.

We fix a Levi decomposition $\tilde{Q} = \tilde{G} \ltimes W_{\tilde{Q}}$. Then we have $\tilde{Q}^{\der} = \tilde{G}^{\der} \ltimes W_{\tilde{Q}}$ because of \ref{ax:MSDe}. Now $(\tilde{P}/\tilde{P}^{\der})(\mathbb{Q}) \subset (\tilde{P}/\tilde{P}^{\der})(\mathbb{A}_f)$ is discrete by \ref{ax:MSDe} and Theorem 5.26 in \cite{MilneISV}. By choosing a sufficiently small congruence subgroup $\tilde{\Gamma}$ of $\tilde{P}(\mathbb{Q})_{+}$ inside the pre-image of $\Gamma$ under $\phi_1$, we can hence suppose that $\tilde{\Gamma} \subset \tilde{P}^{\der}(\mathbb{Q})$ and therefore $\tilde{\Delta} := \phi_2(\tilde{\Gamma}) \subset \tilde{Q}^{\der}(\mathbb{Q})$. We now apply Proposition 0.1(ii) in \cite{LubVen} and Theorem 4.1 in \cite{PlatonovRapinchuk} to the induced (surjective) homomorphism $\tilde{P}^{\der} \to \tilde{Q}^{\der}$ to deduce that $\tilde{\Delta}$ is a congruence subgroup of $\tilde{Q}^{\der}(\mathbb{Q})$, using that $\tilde{Q}^{\der}$ is essentially simply connected (as defined in \cite{LubVen}) by construction. Furthermore, $\tilde{\Delta}$ is an arithmetic subgroup of $\tilde{Q}(\mathbb{Q})$ by Theorem 4.1 in \cite{PlatonovRapinchuk}. We have $\tilde{\Delta} \subset \tilde{Q}(\mathbb{Q})_{+}$ since $\tilde{\Gamma} \subset \tilde{P}(\mathbb{Q})_{+}$.

Since $\tilde{Q}^{\der} \subset \tilde{Q}$ is a closed algebraic subgroup, it follows that $\tilde{Q}^{\der}(\mathbb{Q}) \subset \tilde{Q}(\mathbb{Q})$ is closed in the congruence topology. Since the congruence topology on $\tilde{Q}^{\der}(\mathbb{Q})$ is induced by the congruence topology on $\tilde{Q}(\mathbb{Q})$, this implies that $\tilde{\Delta}$ is also closed with respect to the congruence topology in $\tilde{Q}(\mathbb{Q})$, so a congruence subgroup of $\tilde{Q}(\mathbb{Q})$. 
\end{proof}

We are now ready to verify \ref{ax:4}. Let $(P,X^+,\Gamma)$ and $(Q,Y^+,\Delta)$ be two elements of $\mathfrak{V}$ and let $\phi \in\mathfrak{M}$ be a morphism between them, induced by $\phi: P \to Q$. As already mentioned, the image of $\mathcal{F}(\phi)$ is closed by Remark 5.5 in \cite{G17}.

We now apply Lemma \ref{lem:axiomfourmsv} to our situation. This gives distinguished morphisms $\phi_1, \phi_2, \phi_3$ with $\phi \circ \phi_1 = \phi_3 \circ \phi_2$ and such that
 $\mathcal{F}(\phi_1)$ is a Shimura covering, $\mathcal{F}(\phi_2)$ is a Shimura submersion, and $\mathcal{F}(\phi_3)$ is a Shimura immersion. By Proposition \ref{prop:pinkfacts}, the morphism $\mathcal{F}(\phi_1)$ is finite and surjective, $\mathcal{F}(\phi_2)$ is surjective, and $\mathcal{F}(\phi_3)$ has finite fibers.

 It remains to check that all fibers of the morphism $\mathcal{F}(\phi_2)$ are irreducible of the same dimension. This is equivalent to showing that the same holds for the corresponding morphism of complex analytic spaces $\tilde{\Gamma} \backslash \tilde{X}^+ \to \tilde{\Delta} \backslash \tilde{Y}^+$. By Lemma \ref{lem:axiomfourmsv}, the kernel of $\tilde{P} \to \tilde{Q}$ is connected. This implies that the fibers of the induced map $\tilde{X}^+ \to \tilde{Y}^+$ are irreducible as complex analytic spaces (cf. Remark 5.3(2) in \cite{G17}). Since all fibers of $\tilde{X}^+ \to \tilde{Y}^+$ are isomorphic as complex analytic spaces and $\phi_2(\tilde{\Gamma}) = \tilde{\Delta}$, \ref{ax:4} now follows. 

\section{Special and weakly special subvarieties}\label{sec:special}

In this section as well as in the two following ones, we fix an algebraically closed field $K$ of characteristic 0 and a distinguished category $\mathfrak{C}$ with objects $\mathfrak{V}$, morphisms $\mathfrak{M}$, and functor $\mathcal{F}$ to the category of algebraic varieties over $K$.
From now on, we will drop the distinction between objects and morphisms in $\mathfrak{C}$ and their images under $\mathcal{F}$ in order to avoid excessive notational baggage. The reader is advised to check that all our constructions exist and make sense in $\mathfrak{C}$ when necessary, and not only in the category of varieties over $K$.

We start by noticing that, by \ref{ax:4}, images of distinguished morphisms are closed subvarieties. Therefore, it makes sense to give the following definition.

\begin{defn}\label{defn:special}
A subvariety of a distinguished variety $X$ is called a \emph{special subvariety} of $X$ if it is the image of a distinguished morphism.
\end{defn}

We will define weakly special subvarieties to be irreducible components of images of pre-images of $K$-rational points via distinguished morphisms. With the following lemma, we see that such images are actually finite unions of subvarieties.

\begin{lem}\label{lem:specialclosed}
 If $\psi: Z \to Y$ and $\phi: Z \to X$ are distinguished morphisms and $y \in Y(K)$, then $\phi(\psi^{-1}(y))$ is closed in $X$.
\end{lem}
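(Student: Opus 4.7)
My plan is to reduce the statement to the fact, already guaranteed by \ref{ax:4}, that images of distinguished morphisms are closed. The key observation is that while the point $y$ itself need not come from a distinguished morphism (e.g., for tori only torsion points do), we do not need it to: we can package $\phi$ and $\psi$ together and then slice.

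First, by the second half of \ref{ax:1} applied to the pair $\phi: Z \to X$ and $\psi: Z \to Y$, the morphism
\[
(\phi, \psi) : Z \longrightarrow X \times Y
\]
lies in $\mathfrak{M}$. By \ref{ax:4}, its image $(\phi, \psi)(Z)$ is a closed subset of $X \times Y$. Next, $\{y\}$ is closed in $Y$, so $X \times \{y\}$ is closed in $X \times Y$, and hence so is the intersection
\[
(\phi, \psi)(Z) \cap \bigl(X \times \{y\}\bigr).
\]

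Now I compute this intersection set-theoretically: a point $(x, y') \in X \times Y$ lies in it iff $y' = y$ and there exists $z \in Z$ with $\phi(z) = x$ and $\psi(z) = y$, i.e., iff $y' = y$ and $x \in \phi(\psi^{-1}(y))$. Thus the intersection equals $\phi(\psi^{-1}(y)) \times \{y\}$. Since the first projection restricts to an isomorphism $X \times \{y\} \xrightarrow{\sim} X$ of varieties, and this isomorphism sends the closed set $\phi(\psi^{-1}(y)) \times \{y\}$ onto $\phi(\psi^{-1}(y))$, we conclude that $\phi(\psi^{-1}(y))$ is closed in $X$.

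There is no real obstacle here; the only subtlety worth noting is that one should not attempt to realize $y$ directly as a distinguished morphism from the final object of \ref{ax:3}, which in general it is not. Forming $(\phi,\psi)$ sidesteps this issue entirely by letting \ref{ax:4} do all the work of producing closedness.
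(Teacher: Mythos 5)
Your proof is correct and follows essentially the same route as the paper: form the distinguished morphism $(\phi,\psi)$ (the paper uses $(\psi,\phi)$), use \ref{ax:4} to see that its image is closed, and then slice along $y$ — the paper phrases this as pulling back under the (not necessarily distinguished) closed immersion $x\mapsto (y,x)$, while you phrase it as intersecting with $X\times\{y\}$ and projecting through the isomorphism $X\times\{y\}\cong X$, which is the same thing.
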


\begin{proof}
We consider the distinguished morphism $(\psi,\phi): Z \to Y \times_K X$ and the special subvariety $S = (\psi,\phi)(Z) \subset Y \times_K X$. We have $\phi(\psi^{-1}(y)) = q^{-1}(S)$, where $q$ is the (not necessarily distinguished) morphism $X \to Y \times_K X$ induced by the constant map $X \to \{y\}$ and the identity on $X$. Therefore $\phi(\psi^{-1}(y))$ is closed.
\end{proof}

\begin{defn}\label{defn:weaklyspecial}
A subvariety $W$ of a distinguished variety $X$ is called \emph{weakly special} if there exist distinguished morphisms $\psi: Z \to Y$, $\phi: Z \to X$ and a point $y \in Y(K)$ such that $W$ is an irreducible component of $\phi(\psi^{-1}(y))$.
\end{defn}

\begin{rmk}\label{rmk:equivalentdefinition}
	Note that in the category $\mathfrak{C}_{\mathrm{mSv}}$ our definitions of special and weakly special subvarieties coincide with those in \cite{Pink} (but recall that the definition of a connected mixed Shimura datum there is different from ours) and in \cite{G17}.
	
	In the category $\mathfrak{C}_{\mathrm{pSv}}$, they also coincide with the more usual definition as in \cite{OrrThesis}, where one does not assume \ref{ax:MSDe}. One can show this using Remarks \ref{rmk:quotientshimuradatum} and \ref{rmk:shimurasubdatum} as well as the diagram in the proof of Proposition 2.6 at the bottom of p. 48 of \cite{OrrThesis}.
	
	Moreover, in the category of connected commutative algebraic groups $\mathfrak{C}_{\mathrm{comm}}$, one can easily check that weakly special subvarieties of a connected commutative algebraic group $X$ are cosets $xY$, where $Y$ is a connected algebraic subgroup of $X$ and $x\in X(K)$. The weakly special subvariety $xY$ is special if and only if it contains a torsion point, i.e., if $x$ can be chosen to have finite order.
\end{rmk}

The reader might not be very surprised by the following statement, which follows from \ref{ax:3}.

\begin{lem}
Special subvarieties are weakly special.
\end{lem}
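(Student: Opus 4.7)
The plan is to apply axiom \ref{ax:3} and take $Y$ to be the final object, which trivializes the fiber condition in Definition \ref{defn:weaklyspecial}. Explicitly, suppose $S \subset X$ is a special subvariety, so by Definition \ref{defn:special} there is a distinguished morphism $\phi: Z \to X$ with $S = \phi(Z)$. By \ref{ax:3}, the category $\mathfrak{C}$ has a final object, which I will call $\mathbf{1}$, with $\mathcal{F}(\mathbf{1}) = \Spec K$; by finality there is a unique morphism $\psi: Z \to \mathbf{1}$ in $\mathfrak{M}$. Letting $y \in (\Spec K)(K)$ be the unique $K$-point of $\mathcal{F}(\mathbf{1})$, we have $\mathcal{F}(\psi)^{-1}(y) = \mathcal{F}(Z)$, and so $\mathcal{F}(\phi)(\mathcal{F}(\psi)^{-1}(y)) = \mathcal{F}(\phi)(\mathcal{F}(Z)) = S$.

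It remains only to note that $S$ is itself irreducible, so that it equals its single irreducible component. This is immediate from the convention (stated in the introduction) that all algebraic varieties in this article are irreducible: since $\mathcal{F}(Z)$ is a variety, it is irreducible, and the image of an irreducible topological space under a continuous map is irreducible, so $S$ is irreducible. Hence $S$ is (the) irreducible component of $\phi(\psi^{-1}(y))$, i.e., weakly special per Definition \ref{defn:weaklyspecial}. There is no real obstacle here: this lemma is precisely the formal content that the remark following the definition of a distinguished category attributes to \ref{ax:3}, and the proof uses nothing beyond that axiom.
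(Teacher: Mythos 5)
Your proof is correct and is precisely the argument that the paper compresses into its one-line proof ``This follows from \ref{ax:3}'': take the unique distinguished morphism to the final object, so that its fiber over the unique $K$-point of $\Spec K$ is all of $Z$, and note that the image of a variety is irreducible. Nothing to add.
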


We are going to see below two fundamental lemmas saying that images via distinguished morphisms, components of pre-images via distinguished morphisms, and components of intersections of (weakly) special subvarieties are (weakly) special.
First we need the following lemma that is also going to be useful later.

\begin{lem}\label{lem:wsnocomp}
Let $W$ be a weakly special subvariety of a distinguished variety $X$. Then, there exist distinguished morphisms $\psi: Z \to Y$, $\phi: Z \to X$ and a point $y \in Y(K)$ such that $W = \phi(\psi^{-1}(y))$.
\end{lem}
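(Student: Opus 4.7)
The plan is to use the axiom \ref{ax:4} applied to $\psi\colon Z \to Y$ to replace $Y$ by a distinguished variety whose fibers over $K$-points are already irreducible, so that no irreducible component has to be extracted by hand. Concretely, by \ref{ax:4} I would choose distinguished morphisms $\psi_1 \colon Z' \to Z$, $\psi_2 \colon Z' \to Y'$ and $\psi_3 \colon Y' \to Y$ with $\psi \circ \psi_1 = \psi_3 \circ \psi_2$, where $\psi_1$ is finite and surjective, $\psi_3$ has finite fibers, and each non-empty fiber of $\psi_2$ is irreducible of constant dimension.

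Next I would compute the set $\phi(\psi^{-1}(y))$ in terms of $\psi_2, \psi_3, \psi_1$. Since $\psi_1$ is surjective we have
\[
\phi(\psi^{-1}(y)) \;=\; (\phi\circ\psi_1)\bigl(\psi_1^{-1}(\psi^{-1}(y))\bigr) \;=\; (\phi\circ\psi_1)\bigl(\psi_2^{-1}(\psi_3^{-1}(y))\bigr),
\]
and because $\psi_3$ has finite fibers, the set $\psi_3^{-1}(y)$ consists of finitely many $K$-points $y'_1,\dots,y'_n$ (recall $K$ is algebraically closed, so the closed points of $Y'$ lying over $y$ are $K$-rational). Thus
\[
\phi(\psi^{-1}(y)) \;=\; \bigcup_{i=1}^{n} (\phi\circ\psi_1)\bigl(\psi_2^{-1}(y'_i)\bigr).
\]
For each $i$ the fiber $\psi_2^{-1}(y'_i)$ is either empty or irreducible by \ref{ax:4}, so its image under $\phi\circ\psi_1$ is either empty or irreducible. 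Moreover, $\phi\circ\psi_1$ and $\psi_2$ are distinguished morphisms with common source $Z'$, so Lemma~\ref{lem:specialclosed} applied to them guarantees that each $(\phi\circ\psi_1)(\psi_2^{-1}(y'_i))$ is closed in $X$.

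Hence $\phi(\psi^{-1}(y))$ is a finite union of irreducible closed subvarieties of $X$, and its irreducible components are exactly the maximal ones among them. In particular $W$ must coincide with one such piece, say $W = (\phi\circ\psi_1)(\psi_2^{-1}(y'_i))$. Setting $\psi' = \psi_2$, $\phi' = \phi \circ \psi_1$ (both distinguished), and $y' = y'_i \in Y'(K)$, we obtain the desired presentation $W = \phi'(\psi'^{-1}(y'))$.

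I do not anticipate a real obstacle here; the whole argument is a direct unpacking of the factorization in \ref{ax:4} combined with Lemma~\ref{lem:specialclosed}. The only subtlety to be careful about is that \ref{ax:4} only guarantees irreducibility of \emph{non-empty} fibers of $\psi_2$, which is harmless because we only need to describe the non-empty pieces making up $\phi(\psi^{-1}(y))$, and to note that the relevant points $y'_i$ are $K$-rational, which follows from $K$ being algebraically closed and $\psi_3^{-1}(y)$ being a finite scheme over $K$.
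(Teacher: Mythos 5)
Your proof is correct and follows essentially the same route as the paper: apply \ref{ax:4} to the morphism $\psi$ from the definition of weakly special, decompose $\phi(\psi^{-1}(y))$ into the finitely many sets $(\phi\circ\psi_1)(\psi_2^{-1}(y'_i))$, and use Lemma~\ref{lem:specialclosed} plus the irreducibility of fibers of $\psi_2$ to conclude that each piece is a subvariety and $W$ must be one of them. The only cosmetic difference is that the paper first passes through $\psi'^{-1}(y') = \bigcup \psi'_1((\psi'_2)^{-1}(v'))$ before applying $\phi'$, whereas you compose with $\phi$ from the start; the substance is identical.
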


\begin{proof}
By definition, there exist distinguished morphisms $\psi': Z' \to Y'$, $\phi': Z' \to X$ and a point $y' \in Y'(K)$ such that $W$ is an irreducible component of $\phi'(\psi'^{-1}(y'))$.

We apply \ref{ax:4} to get distinguished morphisms $\psi'_1: U' \to Z'$, $\psi'_2: U' \to V'$, and $\psi'_3: V' \to Y'$ such that $\psi' \circ \psi'_1 = \psi'_3 \circ \psi'_2$, $\psi'_1$ is surjective, $\psi'_3$ has finite fibers, $\psi_2'$ is surjective, and $(\psi_2')^{-1}(v')$ is irreducible for all $v' \in V'(K)$.

It follows that
\[ \psi'^{-1}(y') = \psi'_1((\psi'_3 \circ \psi'_2)^{-1}(y')) = \bigcup_{v' \in (\psi'_3)^{-1}(y')}{\psi'_1((\psi'_2)^{-1}(v'))},\]
and hence
\[ \phi'(\psi'^{-1}(y')) = \bigcup_{v' \in (\psi'_3)^{-1}(y')}{(\phi' \circ \psi'_1)((\psi'_2)^{-1}(v'))}.\]

Since $\psi'_3$ has finite fibers and $(\psi_2')^{-1}(v')$ is irreducible for all $v' \in V'(K)$, there are at most finitely many $v' \in (\psi'_3)^{-1}(y')$ and each $(\phi' \circ \psi'_1)((\psi'_2)^{-1}(v'))$ is a subvariety of $X$ by Lemma \ref{lem:specialclosed}. We deduce that
\[ W = (\phi' \circ \psi'_1)((\psi'_2)^{-1}(v'))\]
for some $v' \in (\psi'_3)^{-1}(y')$ and the lemma follows.
\end{proof}

\begin{lem}\label{lem:(pre)image}
Images and irreducible components of pre-images of (weakly) special subvarieties under distinguished morphisms are (weakly) special.
\end{lem}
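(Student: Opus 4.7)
The plan is to reduce the statement to four sub-claims, parametrized by (image vs.\ preimage) and (special vs.\ weakly special), and to dispatch each using \ref{ax:1}, \ref{ax:2}, \ref{ax:4}, and the already-established Lemmas \ref{lem:specialclosed} and \ref{lem:wsnocomp}. Fix a distinguished morphism $f: X \to X'$.

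For images, I would argue as follows. If $S = g(Z) \subset X$ is special with $g: Z \to X$ distinguished, then $f(S) = (f\circ g)(Z)$ is the image of a distinguished morphism, so special. If $W \subset X$ is weakly special, Lemma \ref{lem:wsnocomp} lets me write $W = \phi(\psi^{-1}(y))$ with $\phi: Z \to X$ and $\psi: Z \to Y$ distinguished and $y \in Y(K)$. Then $f(W) = (f\circ \phi)(\psi^{-1}(y))$ is closed by Lemma \ref{lem:specialclosed}, and it is irreducible because $W$ is; hence it coincides with its unique irreducible component, and is weakly special.

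For preimages, I would use \ref{ax:2}. Suppose $S' = g(Z') \subset X'$ is special with $g: Z' \to X'$ distinguished. Applying \ref{ax:1} to form $X \times Z'$ with its distinguished projections $\pi_X, \pi_{Z'}$, and \ref{ax:2} to $f$ and $g$, I obtain distinguished $X_1,\dots,X_n$ and distinguished $\chi_i: X_i \to X\times Z'$ with $\bigcup_i \chi_i(X_i) = (X\times_{X'} Z')_{\red}$. Checking on $K$-points, $f^{-1}(S') = \pi_X((X\times_{X'} Z')_{\red}) = \bigcup_i (\pi_X \circ \chi_i)(X_i)$, which is a finite union of special subvarieties. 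Since any irreducible component of a finite union of irreducible closed sets equals one of those sets (maximal under inclusion), every irreducible component of $f^{-1}(S')$ is special. For the weakly special case, write $W' = \phi(\psi^{-1}(y))$ via Lemma \ref{lem:wsnocomp} with $\phi: Z \to X'$, $\psi: Z \to Y$, and apply \ref{ax:2} to $f$ and $\phi$, obtaining distinguished $\chi_i: X_i \to X\times Z$ covering $(X\times_{X'} Z)_{\red}$. A $K$-point $x$ lies in $f^{-1}(W')$ exactly when there is $z \in Z(K)$ with $f(x)=\phi(z)$ and $\psi(z)=y$, so
\[
f^{-1}(W') \;=\; \bigcup_{i=1}^{n} (\pi_X \circ \chi_i)\bigl((\psi \circ \pi_Z \circ \chi_i)^{-1}(y)\bigr).
\]
Each summand is of the form $\alpha(\beta^{-1}(y))$ for distinguished $\alpha,\beta$ out of $X_i$, hence closed by Lemma \ref{lem:specialclosed} and a finite union of weakly special subvarieties by definition. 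The same irreducible-components argument then finishes the proof.

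I do not anticipate a real obstacle: the content of the lemma is essentially a bookkeeping exercise translating set-theoretic (pre)image identities into statements about images of distinguished morphisms, and the axioms \ref{ax:1}--\ref{ax:2} together with Lemma \ref{lem:wsnocomp} are engineered precisely so that these manipulations stay inside the distinguished world. The only subtlety worth double-checking is the passage from $K$-point identities to identities of underlying topological spaces, which is harmless because $K$ is algebraically closed and all varieties in sight are reduced.
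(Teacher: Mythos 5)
Your proof is correct and takes essentially the same route as the paper: for images it reduces to composition of distinguished morphisms via Lemma~\ref{lem:wsnocomp} (together with Lemma~\ref{lem:specialclosed} to see closedness), and for pre-images it forms the fiber product, applies~\ref{ax:2} to cover the reduced fiber product by finitely many distinguished images, and then projects. The only cosmetic difference is that you carry the fiber over $y$ via $\psi\circ\pi_Z\circ\chi_i$ where the paper packages it as $\Psi\circ\phi_i$ with $\Psi=\psi\circ\pi_Y$; these are the same map.
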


\begin{proof}
For images, this follows directly from Lemma \ref{lem:wsnocomp} and the fact that the composition of distinguished morphisms is distinguished.

We now turn to pre-images: Let $\psi: Y \to Z$, $\phi: U \to X$, and $\chi: Y \to X$ be distinguished morphisms. Thanks to Lemma \ref{lem:wsnocomp}, it suffices to prove that every irreducible component of $\phi^{-1}(\chi(Y))$ is special and that every irreducible component of $\phi^{-1}(\chi(\psi^{-1}(z)))$ is weakly special for every choice of $z \in Z(K)$.

Now, $\phi^{-1}(\chi(Y))$ equals the projection of $(U \times_X Y)_{\red} \subset U \times_K Y$ onto $U$. Therefore, all its irreducible components are special by \ref{ax:2}.

For the weakly special case, we note that $\phi^{-1}(\chi(\psi^{-1}(z)))$ equals the projection of $\Psi^{-1}(z) \cap (U \times_X Y)_{\red} \subset U \times_K Y$ onto $U$, where $\Psi: U \times_K Y \to Z$ is the composition of the canonical projection with $\psi$ (and as such a distinguished morphism).

By \ref{ax:2}, there exist distinguished varieties $X_1,\hdots,X_n$ and distinguished morphisms $\phi_i: X_i \to U \times_K Y$ ($i=1,\hdots,n$) such that $\bigcup_{i=1}^{n}{\phi_i(X_i)} = (U \times_X Y)_{\red} \subset U \times_K Y$. It follows that every irreducible component of
\[ \Psi^{-1}(z) \cap (U \times_X Y)_{\red} = \bigcup_{i=1}^{n}{\phi_i((\Psi \circ \phi_i)^{-1}(z))}\]
is weakly special. The same follows for every irreducible component of $\phi^{-1}(\chi(\psi^{-1}(z)))$.
\end{proof}

\begin{lem}\label{lem:intersection}
Every irreducible component of an intersection of two (weakly) special subvarieties is (weakly) special.
\end{lem}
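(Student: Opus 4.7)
The plan is to treat both cases in parallel by combining Lemma \ref{lem:wsnocomp} (which writes any weakly special subvariety as a full image $\phi(\psi^{-1}(y))$, not only an irreducible component thereof) with axiom \ref{ax:2}. For the special case one just omits the fiber data.

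More precisely, suppose $W_1, W_2 \subset X$ are weakly special. By Lemma \ref{lem:wsnocomp}, choose distinguished morphisms $\phi_i: Z_i \to X$ and $\psi_i: Z_i \to Y_i$ together with points $y_i \in Y_i(K)$ such that $W_i = \phi_i(\psi_i^{-1}(y_i))$ for $i = 1, 2$. Form the direct product $Z_1 \times_K Z_2$ (using \ref{ax:1}) with its projections $\pi_1, \pi_2$, and apply \ref{ax:2} to the morphisms $\phi_1 \circ \pi_1, \phi_2 \circ \pi_2: Z_1 \times_K Z_2 \to X$ to obtain finitely many distinguished morphisms $\chi_j: V_j \to Z_1 \times_K Z_2$ whose images cover $(Z_1 \times_X Z_2)_{\red}$.

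Next, I would observe set-theoretically that
\[ W_1 \cap W_2 = \bigcup_j (\phi_1 \circ \pi_1 \circ \chi_j)\bigl(((\psi_1 \circ \pi_1, \psi_2 \circ \pi_2) \circ \chi_j)^{-1}(y_1, y_2)\bigr), \]
because a point $x \in W_1 \cap W_2$ corresponds precisely to a pair $(z_1, z_2) \in Z_1 \times_K Z_2$ with $\phi_1(z_1) = \phi_2(z_2) = x$ and $\psi_i(z_i) = y_i$. Each term on the right-hand side is, by definition, a finite union of weakly special subvarieties of $X$ (the irreducible components of $\phi(\psi^{-1}(y))$ for appropriate distinguished $\phi, \psi$ and point $y$), and is closed by Lemma \ref{lem:specialclosed}. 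Hence $W_1 \cap W_2$ is itself a finite union of weakly special subvarieties, and since each weakly special subvariety is irreducible, every irreducible component of $W_1 \cap W_2$ coincides with one of them, proving the weakly special case.

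For the special case one runs the same argument without the $\psi_i$ and $y_i$: if $S_i = \phi_i(Z_i)$ with $\phi_i: Z_i \to X$ distinguished, then $S_1 \cap S_2 = \bigcup_j (\phi_1 \circ \pi_1 \circ \chi_j)(V_j)$ is a finite union of special subvarieties (by Lemma \ref{lem:(pre)image}, or directly by composition of distinguished morphisms), and its irreducible components are therefore special. I do not expect any real obstacle here; the only thing to be a bit careful with is the bookkeeping, namely that the fibered product is not an object of $\mathfrak{C}$ and must be replaced by the cover provided by \ref{ax:2}, and that ``weakly special'' demands irreducibility so that taking irreducible components of a finite union returns elements of the list rather than something new.
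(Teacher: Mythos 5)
Your argument is essentially correct and is a more direct, symmetric variant of the paper's proof: the paper first proves Lemma \ref{lem:(pre)image} and then invokes it twice (reducing to showing that $W' \times_K \{z\}$ is weakly special), while you unwind that reduction and exhibit the intersection explicitly as the union of images coming from a cover of the fibered product $(Z_1 \times_X Z_2)_{\red}$. Both routes rest on exactly the same ingredients — Lemma \ref{lem:wsnocomp} to rewrite each weakly special subvariety as a full image $\phi(\psi^{-1}(y))$, and \ref{ax:2} to replace the fibered product by distinguished covers — so neither is substantially shorter, but yours avoids the detour through Lemma \ref{lem:(pre)image} and makes the set-theoretic bookkeeping more visible. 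Your closing observation (a finite union of irreducible closed sets has its irreducible components among the members of the union) is the right way to pass from ``finite union of weakly specials'' to ``every component is weakly special.''

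One slip: you say to apply \ref{ax:2} to $\phi_1 \circ \pi_1, \phi_2 \circ \pi_2 : Z_1 \times_K Z_2 \to X$. As stated, \ref{ax:2} applied to these two maps would produce covers of $\bigl((Z_1 \times_K Z_2) \times_X (Z_1 \times_K Z_2)\bigr)_{\red}$ inside $(Z_1 \times_K Z_2) \times_K (Z_1 \times_K Z_2)$, which is not what you then use. What you want — and what the rest of your argument correctly assumes — is to apply \ref{ax:2} directly to $\phi_1 : Z_1 \to X$ and $\phi_2 : Z_2 \to X$, which yields distinguished morphisms $\chi_j : V_j \to Z_1 \times_K Z_2$ whose images cover $(Z_1 \times_X Z_2)_{\red}$. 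With that correction the set-theoretic identity and the conclusion go through as you describe, for both the special and the weakly special cases.
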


\begin{proof}
Let $X$ be a distinguished variety and $S, S'$ two special subvarieties. There exists a distinguished morphism $\phi: Y \to X$ such that $S = \phi(Y)$. By Lemma \ref{lem:(pre)image}, every irreducible component of $\phi^{-1}(S')$ is special. Applying Lemma \ref{lem:(pre)image} again shows that the same holds for every irreducible component of $\phi(\phi^{-1}(S')) = S \cap S'$.

Let now $W,W'$ be two weakly special subvarieties of $X$. By Lemma \ref{lem:wsnocomp}, there exist distinguished morphisms $\phi: Y \to X$ and $\psi: Y \to Z$ as well as a point $z \in Z(K)$ such that $W = \phi(\psi^{-1}(z))$. We have $W \cap W' = \phi(\phi^{-1}(W') \cap \psi^{-1}(z))$. By Lemma \ref{lem:(pre)image}, it suffices to show that every irreducible component of $\phi^{-1}(W') \cap \psi^{-1}(z) = (\phi,\psi)^{-1}(W' \times_K \{z\})$ is weakly special. Applying Lemma \ref{lem:(pre)image} again, we see that it suffices to show that $W' \times_K \{z\} \subset X \times_K Z$ is weakly special.

By Lemma \ref{lem:wsnocomp}, there exist distinguished morphisms $\phi': Y' \to X$ and $\psi': Y' \to Z'$ and a point $z' \in Z'(K)$ such that $W' = \phi'(\psi'^{-1}(z'))$. We then have that $W' \times_K \{z\} = (\phi' \times \id_{Z})((\psi' \times \id_{Z})^{-1}(z',z))$ is weakly special.
\end{proof}

\begin{rmk}
While \ref{ax:2} might seem slightly unnatural, it is unavoidable (given \ref{ax:1}) if one wants Lemmas \ref{lem:(pre)image} and \ref{lem:intersection} to hold (cf. the proof of \ref{ax:2} in Section \ref{sec:axiomsformsv} above).
\end{rmk}

We conclude this section by comparing our approach with the concept of a special structure on a positive-dimensional quasi-projective algebraic variety $X$ over $\mathbb{C}$ as defined by Ullmo in D\'efinition 2.1 in \cite{Ullmo}. Such a special structure is a collection of sets $\Sigma_i(X^n)$ ($n \in \mathbb{N} = \{1,2,\hdots\}$, $i = 0,\hdots,n\dim X$) such that
\begin{enumerate}[label=(\alph*)]
\item the elements of $\Sigma_i(X^n)$ are subvarieties of $X^n$ of dimension $i$ (called special subvarieties of $X^n$ of dimension $i$);
\item $X^n \in \Sigma_{n \dim X}(X^n)$;
\item if $A \in \Sigma_i(X^n)$ and $B \in \Sigma_j(X^n)$, then every irreducible component of $A \cap B$ belongs to some $\Sigma_k(X^n)$;
\item $\Sigma(X^n) := \bigcup_{i=0}^{n\dim X}{\Sigma_i(X^n)}$ is countable;
\item every $Z \in \Sigma(X^n)$ contains a Zariski dense set of points that lie in $\Sigma_0(X^n)$;
\item $\Sigma_0(X^n) = \Sigma_0(X)^n$;
\item for every $n$-tuple of integers $(a_1,\hdots,a_n) \in \{0,\hdots,\dim X\}^n$, we have
\[ \prod_{i=1}^{n}{\Sigma_{a_i}(X)} \subset \Sigma_{a_1+\cdots+a_n}(X^n),\]
where the inclusion map is the natural one;
\item the set $\Sigma_{\dim X}(X^2)$ contains infinitely many subvarieties of $X^2$ on which the projection to either factor restricts to a surjective and finite morphism;
\item for every $Y \in \Sigma(X) \backslash \Sigma_0(X)$, the sets
\[ \Sigma_i(Y^n) := \{ S \in \Sigma_i(X^n); S \subset Y^n\} \quad (n \in \mathbb{N},\mbox{ } i = 0,\hdots,n\dim Y) \]
satisfy Properties (f) to (h) as well (with $Y$ instead of $X$).
\end{enumerate}

If we take $K = \mathbb{C}$ and we take $X$ to be an arbitrary distinguished variety in our setting, then all $X^n$ ($n \in \mathbb{N}$) are distinguished varieties by \ref{ax:1} and we can define $\Sigma_i(X^n)$ to be the set of special subvarieties of $X^n$ (in our sense) of dimension $i$ for $n \in \mathbb{N}$, $i = 0,\hdots,n\dim X$. Properties (a) and (b) are then immediate. Property (c) follows from Lemma \ref{lem:intersection}. Properties (f) and (g) follow directly from \ref{ax:1}. Properties (e) and (h) are not satisfied in general; $\mathfrak{C}_{\mathrm{triv}}(Y)$ provides a counterexample if $\dim Y > 0$ (with $X = Y$). Property (d) is also not satisfied in general; $\mathfrak{C}_{\mathrm{add}}$ provides a counterexample (with $X = \mathbb{G}_{a,\mathbb{C}}$) since $\mathbb{G}^2_{a,\mathbb{C}}$ contains uncountably many distinct lines through the origin.

Conversely, given a special structure on some $X$ as defined by Ullmo, it is unclear how to obtain a distinguished category in our sense. So the two approaches have some overlap, which is to be expected as they attempt to formalize the same phenomenon, but they are not equivalent.

\section{Defect and defect condition}\label{sec:defcon}

Lemma \ref{lem:intersection} shows that the following definition makes sense:

\begin{defn}
	For a subvariety $V$ of a distinguished variety $X$, let $\langle V \rangle$ denote the smallest special subvariety of $X$ that contains $V$ and let $\langle V \rangle_{\ws}$ denote the smallest weakly special subvariety of $X$ that contains $V$. We call $\delta(V) = \dim \langle V \rangle - \dim V$ the \emph{defect} of $V$ and $\delta_{\ws}(V) = \dim \langle V \rangle_{\ws} - \dim V$ the \emph{weak defect} of $V$.
\end{defn}

As already mentioned in the introduction, the concept of defect is not new as it appears already in \cite{PinkUnpubl}. In \cite{HP}, the authors introduce as geodesic defect what we call here weak defect.

\begin{thm}[Defect Condition]\label{thm:defectcondition}
	Let $X$ be a distinguished variety and $V' \subset V \subset X$ a sequence of nested subvarieties. Then
	\[ \dim \langle V \rangle - \dim \langle V \rangle_{\ws} \leq \dim \langle V' \rangle - \dim \langle V' \rangle_{\ws},\]
	or, equivalently,
	$$
	\delta(V) - \delta_{\ws}(V)\leq \delta(V') - \delta_{\ws}(V').
	$$
\end{thm}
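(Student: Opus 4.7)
The plan is to realize $\langle V\rangle_{\ws}$ as the image under a distinguished morphism of a fiber, and then, by restricting the domain to a special subvariety mapping onto $\langle V'\rangle$, to translate the desired inequality into a comparison of fiber dimensions; the constant fiber dimension clause of \ref{ax:4} is the main tool throughout.

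First I would apply Lemma \ref{lem:wsnocomp} and then \ref{ax:4} to $\psi$ to obtain distinguished morphisms $\phi:Z\to X$ and $\psi:Z\to Y$ together with a point $y\in Y(K)$ such that $\langle V\rangle_{\ws}=\phi(\psi^{-1}(y))$ and such that $\psi$ has irreducible non-empty fibers of constant dimension (picking, by irreducibility of $\langle V\rangle_{\ws}$, the irreducible component of the factoring cover whose $\phi$-image is $\langle V\rangle_{\ws}$). Since $\phi(Z)$ is special and contains $V$ it also contains $\langle V\rangle$, so applying \ref{ax:2} to $\phi$ together with a distinguished morphism whose image is $\langle V\rangle$, and picking by irreducibility of $\langle V\rangle$ an irreducible component $\alpha:W\to Z$ of $\phi^{-1}(\langle V\rangle)$ with $(\phi\circ\alpha)(W)=\langle V\rangle$, I set $\Phi=\phi\circ\alpha$ and $\Psi=\psi\circ\alpha$. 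Reapplying \ref{ax:4} to both $\Phi$ and $\Psi$ (and, if necessary, a further use of \ref{ax:2} to replace $W$ by a special subvariety of a fibered product so that $(\Phi,\Psi):W\to X\times Y$ becomes generically finite), the set-up becomes: $\Phi(W)=\langle V\rangle$, $\Psi^{-1}(y)$ is irreducible with $\Phi(\Psi^{-1}(y))=\langle V\rangle_{\ws}$, both $\Phi$ and $\Psi$ have irreducible non-empty fibers of constant dimension, and $(\Phi,\Psi)$ is generically finite onto its image.

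Under these reductions the key identity
\[
\dim\Psi(W)=\dim\langle V\rangle-\dim\langle V\rangle_{\ws}
\]
follows from a dimension count in the image $S^{*}=(\Phi,\Psi)(W)\subset\langle V\rangle\times\Psi(W)$: the constant fiber dimension of $\Psi$ forces the family $\{\Phi(\Psi^{-1}(y')):y'\in\Psi(W)\}$ to be equidimensional of dimension $\dim\langle V\rangle_{\ws}$, whence $\dim S^{*}=\dim\Psi(W)+\dim\langle V\rangle_{\ws}$, while generic finiteness of $(\Phi,\Psi)$ gives $\dim S^{*}=\dim\langle V\rangle$. Establishing this identity cleanly, and in particular verifying that the successive reductions via \ref{ax:2} and \ref{ax:4} can be orchestrated to yield the required generic finiteness of $(\Phi,\Psi)$ inside the distinguished category framework, is the main technical obstacle of the proof.

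Finally I repeat the construction with $V'$ in place of $V$: applying \ref{ax:2} to $\Phi$ and a distinguished morphism with image $\langle V'\rangle$ I decompose $\Phi^{-1}(\langle V'\rangle)$ as a finite union of special subvarieties of $W$, and by irreducibility of $V'$ together with $V'\subset V\subset\Phi(\Psi^{-1}(y))$, some component $T'=\alpha'(W')$ satisfies both $\Phi(T')=\langle V'\rangle$ and $V'\subset\Phi(T'\cap\Psi^{-1}(y))$. Setting $\Phi'=\Phi\circ\alpha'$, $\Psi'=\Psi\circ\alpha'$ and reapplying \ref{ax:4} (arranging also that $\dim W'=\dim\langle V'\rangle$), the analogous dimension identity, now run through $\langle V\rangle$ rather than $\langle V'\rangle$, gives $\dim\Psi'(W')=\dim\langle V\rangle-\dim\langle V\rangle_{\ws}$. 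By the constant fiber dimension of $\Psi'$, the fiber $\Psi'^{-1}(y)$ is irreducible of dimension $\dim\langle V'\rangle-(\dim\langle V\rangle-\dim\langle V\rangle_{\ws})$, so its image $\Phi'(\Psi'^{-1}(y))$, which by Lemma \ref{lem:wsnocomp} is a weakly special subvariety contained in $\langle V'\rangle\cap\langle V\rangle_{\ws}$ and containing $V'$, has dimension at most $\dim\langle V'\rangle-(\dim\langle V\rangle-\dim\langle V\rangle_{\ws})$. Since $\langle V'\rangle_{\ws}$ is contained in this weakly special subvariety, this yields $\dim\langle V'\rangle_{\ws}\leq\dim\langle V'\rangle-(\dim\langle V\rangle-\dim\langle V\rangle_{\ws})$, which rearranges to the defect condition.
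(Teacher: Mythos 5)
There is a genuine gap, and it sits exactly where you flag the "main technical obstacle" — but the problem is worse than you suggest, because even granting the reduction you ask for, the deduction is wrong. You argue that generic finiteness of $(\Phi,\Psi)$ gives $\dim S^{*}=\dim\langle V\rangle$. It does not: generic finiteness of $(\Phi,\Psi)$ gives $\dim S^{*}=\dim W$, and $\dim W$ need not equal $\dim\langle V\rangle$. For a concrete failure, take $\mathfrak{C}_{\mathrm{tor}}$, $X=\mathbb{G}_m^2$, $V=\{(c,c')\}$ a very generic point, and realize $\langle V\rangle_{\ws}=\{(c,c')\}$ via $Z=Y=\mathbb{G}_m^4$, $\psi=\mathrm{id}$, $\phi$ the projection to the first two coordinates, $y=(c,c',1,1)$; then $W=Z$, $(\Phi,\Psi)$ is injective (so certainly generically finite), yet $\dim S^{*}=4\neq 2=\dim\langle V\rangle$. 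What your count really needs is generic finiteness of $\Phi$ itself onto $\langle V\rangle$, i.e.\ $\dim W=\dim\langle V\rangle$. Since $W$ is a component of $\phi^{-1}(\langle V\rangle)$, this is equivalent to asking that the realizing morphism $\phi$ from Lemma \ref{lem:wsnocomp} have finite fibers. Nothing in \ref{ax:1}--\ref{ax:4} guarantees such a realization: Lemma \ref{lem:wsnocomp} is agnostic about the fibers of $\phi$, and one can always inflate them (replace $Z$ by $Z\times_K U$). It happens to be arrangeable in $\mathfrak{C}_{\mathrm{comm}}$ (take $\phi=\mathrm{id}$) and in the Shimura categories (Proposition 5.4 of \cite{G17}), but the whole point of the theorem is to prove it from the axioms alone. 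Your second ``analogous identity'' $\dim\Psi'(W')=\dim\langle V\rangle-\dim\langle V\rangle_{\ws}$ is likewise asserted without justification and does not follow from the reductions performed.

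The paper's proof starts from the same picture — realize $\langle W\rangle_{\ws}$ as $\phi(\psi^{-1}(y'))$ and exploit constant fiber dimensions — but it never tries to kill the excess $\dim W-\dim\langle V\rangle$. Instead it first reduces (via $\langle V\rangle=\langle\langle V\rangle_{\ws}\rangle$) to nested \emph{weakly special} $W'\subset W$, lifts to $\hat W\subset\psi^{-1}(y')$ with $\phi(\hat W)=W$, and works with $\langle\hat W\rangle$ and the component $\tilde W$ of $\phi|_{\langle\hat W\rangle}^{-1}(W)$ containing $\hat W$. The decisive observation, which your proposal is missing, is equation \eqref{eq:four}: because $\langle\hat W\rangle$ and $\tilde W$ both contain a full component of the same fiber $\psi^{-1}(y')$, Lemmas \ref{lem:dimspecial} and \ref{lem:dimweaklyspecial} give them the \emph{same} fiber dimension under $\psi$, so $\dim\langle\hat W\rangle-\dim\tilde W=\dim\psi(\langle\hat W\rangle)-\dim\psi(\tilde W)$; the uncontrolled excess cancels in the difference. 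The same device is then run again for $\tilde S'$ and $\tilde W'$ built from $\langle W'\rangle$ and $W'$, and the final inequality comes from $\tilde W'\subset\tilde S'$ plus another fiber-dimension comparison. I'd suggest abandoning the attempt to force $\Phi$ generically finite and instead setting up the argument entirely in terms of such cancelling differences, using the constant-fiber-dimension Lemmas \ref{lem:dimspecial} and \ref{lem:dimweaklyspecial} (which you will need to prove first — ``reapplying \ref{ax:4}'' does not directly give constant fiber dimension of a distinguished morphism restricted to a special subvariety).
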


Thanks to Proposition \ref{prop:axiomsformsv}, this proves Conjecture 4.4 in \cite{HP}, at least for an ambient connected mixed Shimura variety or an ambient semiabelian variety. Note that, in the case of connected pure Shimura varieties, this is essentially Proposition 4.4 in \cite{DawRen}. Other special cases of Theorem \ref{thm:defectcondition} were established in \cite{HP} itself as well as in \cite{PilaTsimerman}. Recent work of Cassani \cite{Cass2} deals with the defect condition for connected mixed Shimura varieties.

Before proving Theorem \ref{thm:defectcondition}, we collect some preliminary results that are going to be useful here and in the next section.

\begin{thm}[Fiber Dimension Theorem]\label{thm:fdt}
Let $f: V \to V'$ be a dominant morphism of varieties over an algebraically closed field. The following hold:
\begin{enumerate}
\item For every $v' \in V'$, every irreducible component of $f^{-1}(v')$ has dimension at least $\dim V - \dim V'$.
\item There exists $U \subset f(V)$ open and dense in $V'$ such that for every $v'\in U$, we have $\dim f^{-1}(v') = \dim V - \dim V'$.
\end{enumerate}
\end{thm}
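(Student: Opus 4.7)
The Fiber Dimension Theorem is classical, and my plan is to follow the standard proof, as found for example in Chapter 5 of G\"ortz--Wedhorn. The two parts use different ingredients: Krull's principal ideal theorem for the lower bound in part (1), and the transcendence-degree computation for the generic fiber together with upper semicontinuity of fiber dimension for part (2).

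For part (1), the plan is to reduce to the affine case and induct on $n := \dim V'$. The case $n = 0$ is trivial because then $V' = \{v'\}$ and $f^{-1}(v') = V$. For $n \geq 1$, fix $v' \in V'$ and work in an affine neighborhood. Choose a regular function $g$ on $V'$ that vanishes at $v'$ but not identically on the image of $f$; such $g$ exists because $v'$ has positive codimension in $V'$ and $f$ is dominant. Let $H = V(g) \subsetneq V'$. By Krull's principal ideal theorem applied to $g \circ f \in \mathcal{O}(V)$, every irreducible component of $f^{-1}(H)$ has dimension at least $\dim V - 1$, while each irreducible component of $H$ has dimension $n-1$. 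Restrict $f$ to any irreducible component $W$ of $f^{-1}(H)$ that dominates an irreducible component of $H$ containing $v'$, and apply the inductive hypothesis: every irreducible component of the fiber of $f|_W$ over $v'$ has dimension at least $\dim W - (n-1) \geq \dim V - n$. Taking the union over such $W$ accounts for every irreducible component of $f^{-1}(v')$.

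For part (2), the plan is to study the generic fiber. Since $f$ is dominant, the function field extension $K(V') \hookrightarrow K(V)$ has transcendence degree $d := \dim V - \dim V'$, and hence the generic fiber $V \times_{V'} \Spec K(V')$ has dimension $d$. By the upper semicontinuity of fiber dimension, the set $U_0 = \{v' \in V' : \dim f^{-1}(v') \leq d\}$ is open in $V'$ and, containing the generic point, is dense. Moreover, by Chevalley's theorem, $f(V)$ is constructible and dense in $V'$, hence contains an open dense subset of $V'$. Intersecting these two gives the desired open dense $U \subset f(V)$; on $U$, the upper bound $\dim f^{-1}(v') \leq d$ combines with part (1) to yield the equality.

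The main challenge here is bookkeeping rather than new ideas: in part (1) one must verify that irreducible components of $f^{-1}(H)$ dominating the right components of $H$ actually exist (whenever the fiber is non-empty and the inductive step must produce components of $f^{-1}(v')$), and in part (2) one must invoke semicontinuity and constructibility cleanly. Since these are entirely standard, I expect the proof can be written up very compactly, or even replaced by a citation to G\"ortz--Wedhorn.
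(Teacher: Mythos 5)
Your sketch is correct, and the paper in fact does exactly what you anticipate at the end: its entire proof is the one-line citation ``This follows from Corollary 14.116 and Remark 14.117 in \cite{GW} and \cite{GWErr}.'' The argument you outline (Krull's Hauptidealsatz plus induction for the lower bound, and generic-fibre dimension plus spreading out/constructibility for the dense-open upper bound) is precisely what those references encapsulate, so the two approaches coincide.
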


\begin{proof}
This follows from Corollary 14.116 and Remark 14.117 in \cite{GW} and \cite{GWErr}.
\end{proof}

The following two lemmas both assert that a certain function is constant on all (possibly non-closed) points of a given variety. By constructibility (see Appendix E of \cite{GW}), it suffices to show that the respective function is constant on the closed points of the variety and we will prove the lemmas in this modified form.

\begin{lem}\label{lem:dimspecial}
Let $\phi: X \to Y$ be a distinguished morphism and $S \subset X$ a special subvariety. Then the function $\dim_s \phi|_S^{-1}(\phi(s))$ is constant on $S$.
\end{lem}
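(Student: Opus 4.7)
The plan is to relate the (a priori non-distinguished) restriction $\phi|_S$ to distinguished morphisms. Since $S$ is special, there is a distinguished morphism $\psi\colon Z\to X$ with $\psi(Z)=S$, and then $\phi\circ\psi\colon Z\to Y$ is distinguished too. By the remark immediately following \ref{ax:4}, the two functions $z\mapsto\dim_z\psi^{-1}(\psi(z))$ and $z\mapsto\dim_z(\phi\circ\psi)^{-1}((\phi\circ\psi)(z))$ are constant on $Z$; call these constants $d$ and $e$ respectively.

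Given $s\in S$, I would choose $z\in\psi^{-1}(s)$ (which is non-empty since $s\in\psi(Z)$) and set $T:=\phi|_S^{-1}(\phi(s))$. Since $\psi^{-1}(T)=(\phi\circ\psi)^{-1}(\phi(s))$ has local dimension $e$ at $z$, the lemma will follow once one establishes the intermediate equality
\begin{equation}\label{eq:keydimsp}
\dim_z\psi^{-1}(T) \;=\; \dim_s T + d,
\end{equation}
because then $\dim_s\phi|_S^{-1}(\phi(s))=\dim_s T=e-d$, an expression independent of $s$. To prove \eqref{eq:keydimsp}, I would pass to the factorization $\psi\circ\psi_1=\psi_3\circ\psi_2$ from \ref{ax:4}: $\psi_1$ is finite and surjective and $\psi_3$ has finite fibres, so both preserve local dimensions of preimages, and the problem thereby reduces to the analogous statement for $\psi_2$, whose non-empty fibres are irreducible and all of dimension $d$.

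For $\psi_2$, I would pick an irreducible component $T_0$ of $\psi_3^{-1}(T)\cap\psi_2(W)$ through the image of a preimage $w$ of $z$ that realises the local dimension at $\psi_2(w)$, apply Theorem \ref{thm:fdt} to get the inequality $\leq$ in \eqref{eq:keydimsp}, and then use the irreducibility and constant dimension of every non-empty fibre of $\psi_2$ to conclude that each irreducible component of $\psi_2^{-1}(T_0)$ through $w$ maps onto $T_0$, yielding the reverse inequality $\geq$. The main obstacle is this last step: without the irreducibility and equidimensionality of the fibres of $\psi_2$, the Fiber Dimension Theorem only guarantees the formula on a dense open subset of the base, which is insufficient for a pointwise statement; it is exactly the refined content of \ref{ax:4} that rules out semicontinuous jumps in the fibre dimension and produces the pointwise equality at every $z$.
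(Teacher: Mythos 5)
Your proposal is a genuinely different route from the paper's: you try to establish a \emph{pointwise pullback formula} $\dim_z\psi^{-1}(T)=\dim_s T+d$ valid at an arbitrary $z$, and deduce the lemma from the two constancies ($d$ for $\psi$, $e$ for $\phi\circ\psi$). The paper instead argues directly with $\phi|_S$ and $\psi$: it lower-bounds each component $F$ of $\phi|_S^{-1}(y)$ by the Fiber Dimension Theorem, and then, assuming $\dim F$ too large, picks a \emph{generic} point $x\in F$ lying only in the components of $(\phi\circ\psi)^{-1}(y)$ that dominate $F$; at such a generic $x$ the FDT gives equality rather than an inequality, and one gets $\dim\psi^{-1}(x)<\dim Z-\dim S$, contradicting the constancy of the fiber dimension of $\psi$. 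The whole point of working at a generic $x$ is to sidestep exactly the pointwise going-down issues that your argument runs into.

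The gap in your proposal is concentrated in the ``$\geq$'' direction of \eqref{eq:keydimsp}. First, the assertion that $\psi_1$ (finite surjective) and $\psi_3$ (finite fibres) ``preserve local dimensions of preimages'' is false without extra hypotheses: finiteness gives $\dim_w\psi_1^{-1}(A)\leq\dim_{\psi_1(w)}A$, but the reverse inequality is a going-down statement that can fail for finite surjective morphisms over non-normal bases (take $Z'$ the product of a nodal cubic with $\mathbb{A}^1$, $\psi_1$ its normalization, and $A$ a curve through a point of the singular locus: $\psi_1^{-1}(A)$ can have an isolated point lying over a smooth point of $A$), and fails even more dramatically for merely quasi-finite $\psi_3$. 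Second, the claim that every (or even some) irreducible component of $\psi_2^{-1}(T_0)$ through $w$ dominates $T_0$ is not a consequence of the fibres of $\psi_2$ being irreducible of constant dimension: it would follow from flatness of $\psi_2$ (e.g.\ via miracle flatness if $W$ were Cohen--Macaulay), but \ref{ax:4} guarantees neither flatness nor Cohen--Macaulayness. Without these, $\psi_2^{-1}(T_0)$ may have a non-dominating component through $w$ of strictly smaller dimension, and the reverse inequality does not follow. The paper's generic-point contradiction avoids both issues.
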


\begin{proof}
We have $S = \psi(Z)$ for a distinguished morphism $\psi: Z \to X$. Let $F$ be an irreducible component of $\phi|_S^{-1}(y)$ for some $y \in (\phi(S))(K)$. Note that $\phi(S) = (\phi \circ \psi)(Z)$ is a subvariety of $Y$ by \ref{ax:4}. By the Fiber Dimension Theorem, we have $\dim F \geq \dim S - \dim \phi(S)$; we want to show that $\dim F = \dim S - \dim \phi(S)$.

All irreducible components of $(\phi \circ \psi)^{-1}(y) = (\phi|_S \circ \psi)^{-1}(y)$ are weakly special of dimension
\[ \dim Z - \dim (\phi \circ \psi)(Z) = \dim Z - \dim \phi(S)\]
by \ref{ax:4} and by the Fiber Dimension Theorem. By Lemma \ref{lem:(pre)image}, their images under $\psi$ are closed in $X$.

We consider the set $\mathcal{I}$ of all such irreducible components that are mapped onto $F$ by $\psi$. Since $S = \psi(Z)$, we have $\mathcal{I} \neq \emptyset$.

We now assume that $\dim F > \dim S - \dim \phi(S)$ and aim for a contradiction. By the Fiber Dimension Theorem applied to $\psi|_V: V \to F$ for each $V \in \mathcal{I}$, there exists a point $x \in F(K)$ that is not contained in the image of any irreducible component of $(\phi \circ \psi)^{-1}(y)$ not belonging to $\mathcal{I}$ such that
\[ \dim \psi|_V^{-1}(x) = \dim V - \dim F < \dim V - \dim S + \dim \phi(S) = \dim Z - \dim S\]
for all $V \in \mathcal{I}$. Since $x$ is not contained in the image of any irreducible component of $(\phi \circ \psi)^{-1}(y)$ not belonging to $\mathcal{I}$, we have
\[ \psi^{-1}(x) = \bigcup_{V \in \mathcal{I}}{\psi|_V^{-1}(x)},\]
and therefore $\dim \psi^{-1}(x) < \dim Z - \dim S = \dim Z - \dim \psi(Z)$, contradicting the Fiber Dimension Theorem. The lemma follows.
\end{proof}

\begin{lem}\label{lem:dimweaklyspecial}
Let $\phi: X \to Y$ be a distinguished morphism and let $W \subset X$ be a weakly special subvariety. Then, the function $\dim_w \phi|_W^{-1}(\phi(w))$ is constant on $W$.
\end{lem}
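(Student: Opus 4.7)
My plan is to adapt the approach of the proof of Lemma \ref{lem:dimspecial} by exploiting the two distinguished morphisms naturally associated to a presentation $W = \alpha(\beta^{-1}(y))$ of the weakly special subvariety $W$.

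First I would invoke Lemma \ref{lem:wsnocomp} together with the factorization argument in its proof (which applies \ref{ax:4} to $\beta$) to reduce to the case where $W = \alpha(\beta^{-1}(y))$ for distinguished morphisms $\alpha: Z \to X$ and $\beta: Z \to Y'$ with $\beta^{-1}(y)$ irreducible. By \ref{ax:1}, both $(\alpha, \beta): Z \to X \times_K Y'$ and $\chi := (\phi \circ \alpha, \beta): Z \to Y \times_K Y'$ are distinguished, so the consequence of \ref{ax:4} noted after its formulation provides integers $e, d \geq 0$ with $\dim_z (\alpha, \beta)^{-1}((\alpha, \beta)(z)) = e$ and $\dim_z \chi^{-1}(\chi(z)) = d$ for all $z \in Z$. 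Constancy of local dimension then forces every non-empty fiber of $(\alpha,\beta)$ (resp. $\chi$) to be equidimensional of dimension $e$ (resp. $d$). Applying Theorem \ref{thm:fdt} to the surjective maps $\alpha|_{\beta^{-1}(y)}: \beta^{-1}(y) \to W$ and $(\phi \circ \alpha)|_{\beta^{-1}(y)}: \beta^{-1}(y) \to \phi(W)$ between irreducible varieties, whose fibers coincide with fibers of $(\alpha,\beta)$ and $\chi$ over $\{y\}$, yields $\dim W = \dim \beta^{-1}(y) - e$ and $\dim \phi(W) = \dim \beta^{-1}(y) - d$. Hence the generic fiber of $\phi|_W$ has dimension $d - e$, and Theorem \ref{thm:fdt}(1) forces every irreducible component of every fiber of $\phi|_W$ to have dimension at least $d - e$.

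The main obstacle is the matching upper bound: every irreducible component of $A_w := \phi|_W^{-1}(\phi(w))$ has dimension at most $d - e$ for every $w \in W$. I would argue by contradiction in the spirit of Lemma \ref{lem:dimspecial}. Suppose that some component $A' \subset A_w$ satisfies $\dim A' > d - e$, and set $F_w := \chi^{-1}(\phi(w), y)$, which is equidimensional of dimension $d$. I would pick $w' \in A'$ that lies in no other component of $A_w$, that is outside $\alpha(C)$ for every component $C$ of $F_w$ with $\alpha(C) \subsetneq A'$, and that lies in the dense open of $A'$ on which Theorem \ref{thm:fdt}(2) gives $\dim \alpha|_C^{-1}(w') = \dim C - \dim A' = d - \dim A'$ for each component $C$ of $F_w$ with $\alpha(C) = A'$. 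These are non-empty open conditions on the irreducible variety $A'$, so such a $w'$ exists.

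For this $w'$, any point of $(\alpha,\beta)^{-1}(w',y)$ lies in some component $C$ of $F_w$; since $\alpha(C)$ is an irreducible closed subset of $A_w$ containing $w'$, the first two conditions force $\alpha(C) = A'$. Hence $(\alpha, \beta)^{-1}(w', y)$ equals the union of the $\alpha|_C^{-1}(w')$ over the finitely many components $C$ of $F_w$ with $\alpha(C) = A'$, and the third condition yields $\dim (\alpha, \beta)^{-1}(w', y) \leq d - \dim A' < e$. This contradicts the fact that $(\alpha, \beta)^{-1}(w', y)$ is non-empty (because $w' \in W = \alpha(\beta^{-1}(y))$) and equidimensional of dimension $e$. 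Therefore every component of $A_w$ has dimension exactly $d - e$, and in particular $\dim_w A_w = d - e$ is constant on $W$.
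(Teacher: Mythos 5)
Your proof is correct, but it takes a genuinely different and considerably longer route than the paper. The paper's own proof is a short reduction to Lemma~\ref{lem:dimspecial}: writing $W = \psi(\chi^{-1}(z))$, it observes that the diagonal $\Delta \subset U \times_K U$ is special, hence $(\psi \times \chi)(\Delta) \subset X \times_K Z$ is a special subvariety by Lemma~\ref{lem:(pre)image}, and the fibers of $\phi|_W$ (times $\{z\}$) coincide with the fibers of $\phi \times \id_Z$ restricted to this special subvariety; Lemma~\ref{lem:dimspecial} then gives constancy directly in four lines. You instead re-derive the result from scratch by adapting the \emph{internal mechanism} of Lemma~\ref{lem:dimspecial}'s proof: reduce via the proof of Lemma~\ref{lem:wsnocomp} to $W = \alpha(\beta^{-1}(y))$ with $\beta^{-1}(y)$ irreducible, introduce the two distinguished morphisms $(\alpha,\beta)$ and $(\phi\circ\alpha,\beta)$, extract constant fiber dimensions $e$ and $d$ from \ref{ax:4}, compute $\dim W$ and $\dim\phi(W)$ via the Fiber Dimension Theorem, and then run a generic-point contradiction argument mirroring the one in Lemma~\ref{lem:dimspecial}. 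Both routes are sound; the paper's is more economical once Lemma~\ref{lem:dimspecial} is available, while yours is self-contained and makes the role of the constant fiber dimension in \ref{ax:4} explicit. One small point worth adding: you assert that $\alpha(C)$ is a \emph{closed} subset of $A_w$ for $C$ a component of $F_w$, but for a general morphism images of closed irreducible sets need not be closed. This does hold here because $C$ is a weakly special subvariety of $Z$ (it is a component of a fiber of the distinguished morphism $\chi$, with $\id_Z$ as the other morphism in Definition~\ref{defn:weaklyspecial}) and Lemma~\ref{lem:(pre)image} then says $\alpha(C)$ is weakly special, hence closed; alternatively one can run your argument with $\overline{\alpha(C)}$ throughout and nothing changes. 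Either fix is routine, but the step deserves the word.
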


\begin{proof}
By Lemma \ref{lem:wsnocomp}, we can write $W = \psi(\chi^{-1}(z))$ for distinguished morphisms $\psi: U \to X$, $\chi: U \to Z$ and $z \in Z(K)$. Let $\Delta$ be the diagonal in $U \times_K U$.

Let $F$ be an irreducible component of $\phi|_W^{-1}(y)$ for some $y \in (\phi(W))(K)$. Then $F \times_K \{z\}$ is an irreducible component of
\[ F' = (\phi^{-1}(y) \cap \psi(\chi^{-1}(z))) \times_K \{z\} = (\phi \times \id_Z)|_{(\psi \times \chi)(\Delta)}^{-1}(y,z),\]
where $\phi \times \id_Z: X \times_K Z \to Y \times_K Z$ is a distinguished morphism.

By Lemma \ref{lem:(pre)image}, $(\psi \times \chi)(\Delta)$ is special since $\Delta$ is special as it is the image of the distinguished morphism $(\id_U,\id_U)$. It follows from Lemma \ref{lem:dimspecial} that every irreducible component of $F'$ has the same dimension, independently of the choice of $y$. The lemma follows.
\end{proof}

\begin{lem}\label{lem:dimconstant}
Let $f: V \to V'$ be a dominant morphism of varieties over an algebraically closed field such that $\dim_v f^{-1}(f(v))$ is constant on $V$. Let $V_1' \subset V'$ be a subvariety of $V'$ and suppose that $V_1$ is an irreducible component of $f^{-1}(V_1')$ that dominates $V_1'$. Then $\dim V - \dim V' = \dim V_1 - \dim V_1'$.
\end{lem}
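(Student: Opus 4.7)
The plan is to prove the equivalent statement $\dim V_1 - \dim V_1' = d$, where $d := \dim V - \dim V'$, by establishing two inequalities. First, I would observe that Theorem~\ref{thm:fdt}(2) applied to the dominant morphism $f$ shows that the generic fiber of $f$ has dimension $d$; combined with the hypothesis that $\dim_v f^{-1}(f(v))$ is constant on $V$, this forces every non-empty fiber of $f$ to be equidimensional of dimension exactly $d$.

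For the upper bound $\dim V_1 - \dim V_1' \leq d$, I would apply Theorem~\ref{thm:fdt}(2) to the restriction $f|_{V_1} : V_1 \to V_1'$, which is dominant by hypothesis. This yields a dense open $U \subset V_1'$ such that $\dim (f|_{V_1})^{-1}(v_1') = \dim V_1 - \dim V_1'$ for every $v_1' \in U$. Since $(f|_{V_1})^{-1}(v_1') \subset f^{-1}(v_1')$ and the latter has dimension $d$ by the previous paragraph, the desired inequality follows.

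The lower bound $\dim V_1 - \dim V_1' \geq d$ will follow from Krull's height theorem and does not use the constant fiber dimension hypothesis. Since $\mathrm{char}\, K = 0$, the variety $V'$ is smooth at the generic point of $V_1'$, so on some affine open neighbourhood $U' \subset V'$ meeting $V_1'$, the subvariety $V_1' \cap U'$ will be cut out by $r := \dim V' - \dim V_1'$ regular functions $g_1, \ldots, g_r$. Their pullbacks to the affine open $U := f^{-1}(U')$ then cut out $f^{-1}(V_1') \cap U$. Since $V_1$ dominates $V_1'$, it meets $U$, and $V_1 \cap U$ is an irreducible component of $f^{-1}(V_1') \cap U$; Krull's height theorem then gives that this component has codimension at most $r$ in $U$, so
\[ \dim V_1 = \dim(V_1 \cap U) \geq \dim U - r = \dim V - \dim V' + \dim V_1' = \dim V_1' + d. \]
Combining the two inequalities yields the desired equality. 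No serious obstacle is anticipated: the main point is to invoke the correct theorems from dimension theory, with the mild subtlety that one must work in a neighbourhood where $V'$ is smooth at the generic point of $V_1'$ in order to reduce $V_1'$ locally to $r$ equations before applying Krull's height theorem.
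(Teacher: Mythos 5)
Your two-inequality decomposition differs from the paper's proof, which passes to the fiber over the generic point $\eta'$ of $V_1'$ and uses the bijection in EGA I (2.1.8) between irreducible components of $f^{-1}(V_1')$ dominating $V_1'$ and irreducible components of $f^{-1}(\eta')$. Your first paragraph and the upper bound are fine. The lower bound, however, rests on the claim that ``since $\mathrm{char}\, K = 0$, the variety $V'$ is smooth at the generic point of $V_1'$,'' and this is false. Characteristic $0$ only ensures that $V'$ is smooth on a dense open subset; $V_1'$ may well be contained entirely in the singular locus, e.g.\ $V' = \{y^2 = x^3\} \subset \mathbb{A}^2_K$ with $V_1'$ the cusp. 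In the paper's applications (the proof of Theorem~\ref{thm:defectcondition}), $V'$ ranges over various weakly special subvarieties, which are certainly allowed to be singular, so this hypothesis cannot be granted.

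The inequality $\dim V_1 - \dim V_1' \geq \dim V - \dim V'$ is nonetheless true without any smoothness assumption and, as you observe, without the constant fiber dimension hypothesis. The quickest route is the one the paper takes: Theorem~\ref{thm:fdt}(1) applies to \emph{every} point $v' \in V'$, in particular to the non-closed point $\eta'$, and the EGA I (2.1.8) bijection identifies $V_1$ with an irreducible component of $f^{-1}(\eta')$ of dimension $\dim V_1 - \dim V_1'$, giving the bound immediately. If you insist on a Krull-type argument, you should replace the claim that $V_1' \cap U'$ is cut out by $r$ regular functions by the weaker claim that $V_1' \cap U'$ is an irreducible component of the common vanishing locus of a system of parameters $g_1, \ldots, g_r$ at the generic point of $V_1'$ (this needs no regularity), and then verify separately that $V_1 \cap U$ remains an irreducible component of the vanishing locus of the pullbacks $f^\# g_1, \ldots, f^\# g_r$; Krull's height theorem then applies as before.
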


\begin{proof}
The set of irreducible components of $f^{-1}(V_1')$ that dominate $V_1'$ is in bijection with the set of irreducible components of the generic fiber of $f|_{f^{-1}(V_1')}: f^{-1}(V_1') \to V_1'$, see (2.1.8) in \cite{EGA_I}. By hypothesis and the Fiber Dimension Theorem, the irreducible components of this generic fiber all have the same dimension $\dim V - \dim V'$. It follows from the Fiber Dimension Theorem that all irreducible components of $f^{-1}(V_1')$ that dominate $V_1'$, so in particular $V_1$, are of dimension $\dim V_1' + \dim V - \dim V'$.
\end{proof}

\begin{proof}[Proof of Theorem \ref{thm:defectcondition}]
Note that $\langle V \rangle = \langle \langle V \rangle_{\ws}\rangle$. Therefore it is enough to show that, if $W' \subset W \subset X$ is a sequence of nested subvarieties, with $W,W'$ weakly special, then
\[ \dim \langle W \rangle - \dim W \leq \dim \langle W' \rangle - \dim W'.\]

We can assume without loss of generality that $W'$ is an irreducible component of $\langle W' \rangle \cap W$.

We will choose the notation such that $\tilde{S}'$ denotes a special subvariety while all letters derived from $W$ denote weakly special subvarieties. They will be defined by taking irreducible components of intersections or pre-images under distinguished morphisms of (weakly) special subvarieties and we tacitly use Lemmas \ref{lem:(pre)image} and \ref{lem:intersection} to see that they are (weakly) special.

By Lemma \ref{lem:wsnocomp}, there exist distinguished morphisms $\phi: X' \to X$, $\psi: X' \to Y'$ and a point $y' \in Y'(K)$ such that $W = \phi(\psi^{-1}(y'))$.

For reference, the following diagram shows most of the weakly special subvarieties that will appear.
\begin{equation*}
\begin{tikzcd}
\{y'\} \arrow[r,symbol=\subset] & \psi(\tilde{W}) & \arrow[l,symbol=\subset] \psi(\tilde{W}')\\
\hat{W} \arrow[r,symbol=\subset] \arrow[u] \arrow[d] & \tilde{W}  \arrow[u] \arrow[ld] & \arrow[l,symbol=\subset] \arrow[ld] \tilde{W}' = \overline{W} \arrow[u] \\
W & \arrow[l,symbol=\subset] W' &
\end{tikzcd}
\end{equation*}
Here, the varieties in the top row are contained in $Y'$, those in the middle row in $X'$, and those in the bottom row in $X$. Furthermore, all maps are surjective and induced by either $\phi$ (middle row to bottom row) or $\psi$ (middle row to top row).

Let $\hat{W}$ be an irreducible component of $\psi^{-1}(y')$ such that $\phi(\hat{W}) = W$.
As $W \subset \phi(\langle \hat{W} \rangle)$ and $\phi(\langle \hat{W} \rangle)$ is special by Lemma \ref{lem:(pre)image}, we must have $\langle W \rangle \subset \phi(\langle \hat{W} \rangle)$. On the other hand, $\langle \hat{W} \rangle$ is contained in $\phi^{-1}(\langle W \rangle )$ by Lemma \ref{lem:(pre)image}, so it follows that $\phi(\langle \hat{W} \rangle ) = \langle W \rangle $.

Let $\tilde{W}$ be an irreducible component of $\phi|_{\langle \hat{W} \rangle}^{-1}(W) = \phi^{-1}(W) \cap \langle \hat{W} \rangle$ that contains $\hat{W}$; we deduce that $\phi(\tilde{W}) = W$. By Lemmas \ref{lem:dimspecial} and \ref{lem:dimconstant}, applied to $\phi|_{\langle \hat{W} \rangle}$, $W$, and $\tilde{W}$, we have
\begin{equation}\label{eq:zero}
\dim \tilde{W} - \dim W = \dim\langle \hat{W} \rangle -\dim \langle W \rangle,
\end{equation}
or equivalently
\begin{equation}\label{eq:three}
\dim \langle W \rangle - \dim W = \dim \langle \hat{W} \rangle - \dim \tilde{W}.
\end{equation}

Applying Lemmas \ref{lem:dimspecial} and \ref{lem:dimweaklyspecial} together with the Fiber Dimension Theorem to $\psi|_{\langle \hat{W} \rangle}$ and $\psi|_{\tilde{W}}$ respectively and noting that $\langle \hat{W} \rangle$ as well as $\tilde{W}$ contain an irreducible component of $\psi^{-1}(y')$, we find that
$$
\dim \langle \hat{W} \rangle - \dim \psi(\langle \hat{W} \rangle)=\dim  \psi^{-1}(y') = \dim \tilde{W} - \dim \psi(\tilde{W}),
$$
which implies
\begin{equation}\label{eq:four}
\dim \langle \hat{W} \rangle - \dim \tilde{W} = \dim \psi(\langle \hat{W} \rangle) - \dim \psi(\tilde{W}).
\end{equation}

There exists an irreducible component $\overline{W}$ of $\phi|_{\tilde{W}}^{-1}(W')$ that contains an irreducible component of $\phi|_{\hat{W}}^{-1}(W')$ that surjects onto $W'$. Let $\tilde{S}'$ be an irreducible component of $\phi|_{\langle \hat{W} \rangle}^{-1}(\langle W' \rangle)=\phi^{-1}(\langle W' \rangle)\cap \langle \hat{W} \rangle$ that contains $\overline{W}$. Since $W' \subset \phi(\tilde{S}') \subset \langle W' \rangle $ and $\phi(\tilde{S}')$ is special by Lemma \ref{lem:(pre)image}, we must have $\phi(\tilde{S}') = \langle W' \rangle $.

It follows from Lemmas \ref{lem:dimspecial} and \ref{lem:dimconstant}, applied to $\phi|_{\langle \hat{W} \rangle}$, $\langle W' \rangle$, and $\tilde{S}'$, that
\begin{equation}\label{eq:one}
\dim \tilde{S}' - \dim \langle W' \rangle = \dim\langle \hat{W} \rangle - \dim \langle W \rangle.
\end{equation}

Let $\tilde{W}'$ be an irreducible component of $\tilde{S}' \cap \tilde{W}$ that contains $\overline{W}$. Since $W'$ is an irreducible component of $\langle W' \rangle \cap W$, $\phi(\overline{W}) =W'$, and $\phi(\tilde{W}') \subset \phi(\tilde{S}') \cap \phi(\tilde{W}) = \langle W' \rangle \cap W$, we must then have that $\phi(\tilde{W}') = W'$. We deduce that $\tilde{W}' \subset \phi|_{\tilde{W}}^{-1}(W')$ and so $\tilde{W}' = \overline{W}$ and $\tilde{W}'$ is an irreducible component of $\phi|_{\tilde{W}}^{-1}(W')$ that surjects onto $W'$. It follows from Lemmas \ref{lem:dimweaklyspecial} and \ref{lem:dimconstant}, applied to $\phi|_{\tilde{W}}$, $W'$, and $\tilde{W}'$, that
\[ \dim \tilde{W}' - \dim W' = \dim \tilde{W} - \dim W,\]
and therefore, by \eqref{eq:zero},
\begin{equation}\label{eq:two}
\dim \tilde{W}' - \dim W' = \dim\langle \hat{W} \rangle - \dim \langle W\rangle.
\end{equation}
Combining \eqref{eq:one} and \eqref{eq:two}, we find that
\begin{equation}\label{eq:five}
\dim \tilde{S}' - \dim \tilde{W}' = \dim \langle W' \rangle - \dim W'.
\end{equation}

We have $\hat{W} \cap \tilde{S}' \neq \emptyset$ and therefore $y' \in \psi(\tilde{S}')$. We deduce that $\hat{W} \subset \psi^{-1}(\psi(\tilde{S}'))$ and so $\langle \hat{W} \rangle \subset \psi^{-1}(\psi(\tilde{S}'))$ by Lemma \ref{lem:(pre)image}. Since $\tilde{S}' \subset \langle \hat{W} \rangle$, it follows that we must have $\psi(\tilde{S}') = \psi(\langle \hat{W} \rangle)$. As $\tilde{W}' \subset \tilde{W}$, we deduce that
\[ \dim \psi(\langle \hat{W} \rangle) - \dim \psi(\tilde{W}) \leq \dim \psi(\tilde{S}') - \dim \psi(\tilde{W}'),\]
which by \eqref{eq:three} and \eqref{eq:four} implies that 
\begin{equation}\label{eq:six}
\dim \langle W\rangle - \dim W \leq \dim \psi(\tilde{S}') - \dim \psi(\tilde{W}').
\end{equation}

Thanks to Lemmas \ref{lem:dimspecial} and \ref{lem:dimweaklyspecial}, applied to $\psi|_{\tilde{S}'}$ and $\psi|_{\tilde{W}'}$ respectively, together with the Fiber Dimension Theorem and $\tilde{W}' \subset \tilde{S}'$, it follows that
\[ \dim \tilde{W}' - \dim \psi(\tilde{W}') \leq \dim \tilde{S}' - \dim \psi(\tilde{S}'),\]
and hence
\[ \dim \psi(\tilde{S}') - \dim \psi(\tilde{W}') \leq \dim \tilde{S}' - \dim \tilde{W}'.\]
This, in combination with \eqref{eq:five} and \eqref{eq:six}, yields the theorem.
\end{proof}

\section{Optimality}\label{sec:optimality}

In this section, we introduce (weak) optimality. Like the defect $\delta$, this is not a new concept. In \cite{HP}, weak optimality is called geodesic-optimality while in \cite{Poizat} it is called cd-maximality.

\begin{defn}
Let $V$ be a subvariety of a distinguished variety $X$. A subvariety $W \subset V$ is called \emph{optimal for $V$ in $X$} if $\delta(U) > \delta(W)$ for every subvariety $U$ of $X$ such that $W \subsetneq U \subset V$. It is called \emph{weakly optimal for $V$ in $X$} if it satisfies the same property with $\delta_{\ws}$ in place of $\delta$.
\end{defn}

As in \cite{HP}, we can use the defect condition to prove that optimal subvarieties are also weakly optimal.

\begin{prop}\label{prop:oimplieswo}
Let $V$ be a subvariety of a distinguished variety $X$. If $W \subset V$ is optimal for $V$ in $X$, then it is weakly optimal for $V$ in $X$.
\end{prop}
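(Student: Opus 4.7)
The plan is to deduce the proposition directly from the defect condition (Theorem \ref{thm:defectcondition}), which relates the defect and weak defect of a pair of nested subvarieties.

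Assume $W \subset V$ is optimal for $V$ in $X$, and let $U$ be any subvariety with $W \subsetneq U \subset V$. I want to show $\delta_{\ws}(U) > \delta_{\ws}(W)$. By the definition of optimality, we already know
\[ \delta(U) > \delta(W). \]
Applying the defect condition to the nested pair $W \subset U$ yields
\[ \delta(U) - \delta_{\ws}(U) \leq \delta(W) - \delta_{\ws}(W), \]
which I rearrange to
\[ \delta_{\ws}(U) - \delta_{\ws}(W) \geq \delta(U) - \delta(W) > 0. \]
Hence $\delta_{\ws}(U) > \delta_{\ws}(W)$, as required.

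The proof is essentially a one-line consequence of Theorem \ref{thm:defectcondition}, so there is no real obstacle beyond invoking the defect condition correctly; the point is simply that the gap $\delta - \delta_{\ws}$ is monotone under inclusion, so any strict increase in $\delta$ forces at least as large a strict increase in $\delta_{\ws}$.
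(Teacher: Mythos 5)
Your proof is correct and takes essentially the same route as the paper: both apply the defect condition (Theorem \ref{thm:defectcondition}) to the nested pair $W \subset U$. Your version is the direct contrapositive of the paper's (the paper assumes $\delta_{\ws}(U) \leq \delta_{\ws}(W)$ with $U$ weakly optimal and derives $U = W$, whereas you show the strict inequality $\delta_{\ws}(U) > \delta_{\ws}(W)$ directly from $\delta(U) > \delta(W)$), which is a minor streamlining but not a genuinely different argument.
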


\begin{proof}
This can be deduced in a purely formal way from Theorem \ref{thm:defectcondition} as in the proof of Proposition 4.5 in \cite{HP}. For the reader's convenience, we reproduce the proof here: Let $U$ be a subvariety of $X$ such that $W \subset U \subset V$, $\delta_{\ws}(U) \leq \delta_{\ws}(W)$, and $U$ is weakly optimal for $V$ in $X$. Then it follows from Theorem \ref{thm:defectcondition}, applied to $W \subset U$, that $\delta(U) - \delta_{\ws}(U) \leq \delta(W) - \delta_{\ws}(W)$ and so
\[ \delta(U) = \delta_{\ws}(U) + \delta(U) - \delta_{\ws}(U) \leq \delta_{\ws}(W) + \delta(W) - \delta_{\ws}(W) = \delta(W).\]
Since $W$ is optimal for $V$ in $X$, we deduce that $U = W$ and so $W$ is weakly optimal for $V$ in $X$.
\end{proof}

Part (1) of the following lemma is a weaker analogue of Lemma 2.6(1) in \cite{BD} (see Lemma \ref{lem:zilberpinkfiniteembedding} for the analogue of Lemma 2.6(2) in \cite{BD}).

\begin{lem}\label{lem:imageisoptimal}
Let $\psi: X \rightarrow X'$ be a distinguished morphism. Let $W \subset V \subset X$ be a sequence of nested subvarieties. Let $V'$ be the closure of $\psi(V)$ and let $W'$ be the closure of $\psi(W)$. Then, the following hold:

\begin{enumerate}
\item Suppose that $V_0 \subset V$ is an open and dense subset of $V$ such that $\psi(V_0)$ is an open and dense subset of $V'$ and $\psi|_{V_0}: V_0 \to \psi(V_0)$ is smooth. If $W$ is optimal for $V$ in $X$, $W$ intersects $V_0$, and $\langle W \rangle$ contains an irreducible component of a fiber of $\psi$, then $W'$ has defect at most $\delta(W)$ and is optimal for $V'$ in $X'$.
\item If $\psi$ has finite fibers, then $\delta(W') = \delta(W)$.
\end{enumerate}
\end{lem}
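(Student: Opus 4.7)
The crucial tool for both parts is the identity $\psi(\langle W \rangle) = \langle W' \rangle$, valid for every subvariety $W$ of the domain of a distinguished morphism $\psi$ with $W' = \overline{\psi(W)}$. The inclusion $\supset$ holds because $\psi(\langle W \rangle)$ is special by Lemma \ref{lem:(pre)image} and closed by \ref{ax:4}, hence contains $\overline{\psi(W)} = W'$. For $\subset$, given any special $S' \supset W'$, an irreducible component of $\psi^{-1}(S')$ containing $W$ is special by Lemma \ref{lem:(pre)image} and therefore contains $\langle W \rangle$, so $\psi(\langle W \rangle) \subset S'$; applying this with $S' = \langle W' \rangle$ yields the claim. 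Part (2) is then immediate: finite fibers of $\psi$ give $\dim \langle W' \rangle = \dim \psi(\langle W \rangle) = \dim \langle W \rangle$ and $\dim W' = \dim W$, so $\delta(W') = \delta(W)$.

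For part (1), let $F \subset \langle W \rangle$ be the given irreducible component of a fiber of $\psi$. Applying Lemma \ref{lem:dimspecial} to $\psi$ and the special subvariety $\langle W \rangle$, the function $s \mapsto \dim_s \psi|_{\langle W \rangle}^{-1}(\psi(s))$ is constant; its generic value equals $\dim \langle W \rangle - \dim \psi(\langle W \rangle)$ by Theorem \ref{thm:fdt}, while its value at a point of $F$ chosen away from the other components of $\psi|_{\langle W \rangle}^{-1}(\psi(F))$ equals $\dim F$. Combining with the key identity, $\dim \langle W' \rangle = \dim \langle W \rangle - \dim F$. Because the generic fiber of $\psi|_W$ sits inside $\psi^{-1}(u) \cap \langle W \rangle$, which has dimension $\dim F$, one also has $\dim W - \dim W' \leq \dim F$, and subtracting yields $\delta(W') \leq \delta(W)$.

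For optimality, suppose $W' \subsetneq U' \subset V'$ and let $U$ be an irreducible component of $\psi^{-1}(U') \cap V$ containing $W$. The smoothness of $\psi|_{V_0}$ implies flatness, so $\psi^{-1}(U') \cap V_0$ and $\psi^{-1}(W') \cap V_0$ have pure dimensions $\dim U' + \dim V - \dim V'$ and $\dim W' + \dim V - \dim V'$ respectively (non-emptiness of the intersection $U' \cap \psi(V_0)$ being forced by $W \cap V_0 \neq \emptyset$). Since $W \cap V_0 \neq \emptyset$, also $U \cap V_0 \neq \emptyset$, and $U \cap V_0$ is a component of $\psi^{-1}(U') \cap V_0$, giving $\dim U = \dim U' + \dim V - \dim V'$, whereas $W \cap V_0 \subset \psi^{-1}(W') \cap V_0$ forces $\dim W \leq \dim W' + \dim V - \dim V'$. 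If $U = W$, subtracting would give $\dim U' \leq \dim W'$, contradicting $U' \supsetneq W'$; hence $U \supsetneq W$, and the optimality of $W$ for $V$ in $X$ yields $\delta(U) > \delta(W)$. A parallel fiber-dimension argument, comparing the generic fiber of $\psi|_U$ at a point of $U \cap V_0$ with the smooth fibers of $\psi|_{V_0}$, shows that $\overline{\psi(U)} = U'$. Since $\langle U \rangle \supset \langle W \rangle \supset F$, the second paragraph applied to $U$ in place of $W$ gives $\langle U' \rangle = \psi(\langle U \rangle)$ and $\dim \langle U' \rangle = \dim \langle U \rangle - \dim F$, so
\[ \dim \langle U' \rangle - \dim \langle W' \rangle = \dim \langle U \rangle - \dim \langle W \rangle. \]
Combined with $\dim U' - \dim W' \leq \dim U - \dim W$ from the estimates above, one obtains
\[ \delta(U') - \delta(W') \geq \delta(U) - \delta(W) > 0. \]
The main obstacle will be the careful use of the smoothness hypothesis to establish both $U \supsetneq W$ and $\overline{\psi(U)} = U'$; once these dimension inputs are in place, the rest of the argument is bookkeeping with the key identity together with Lemma \ref{lem:dimspecial}.
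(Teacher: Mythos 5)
Your proof is correct and follows essentially the same line as the paper's, hinging on the same two ingredients: the fiber-component hypothesis together with Lemma \ref{lem:dimspecial} to pin down $\dim \langle W \rangle - \dim \psi(\langle W \rangle)$, and the smoothness of $\psi|_{V_0}$ to compute $\dim U = \dim U' + n$ and bound $\dim W \leq \dim W' + n$. The one organizational difference worth noting: you establish and use the general identity $\psi(\langle W \rangle) = \langle W' \rangle$ up front, which gives the exact equality $\dim \langle U' \rangle - \dim \langle W' \rangle = \dim \langle U \rangle - \dim \langle W \rangle$ in the optimality step, at the cost of also needing $\overline{\psi(U)} = U'$ (which you supply via a short fiber-dimension argument). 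The paper sidesteps that step by working only with the inclusion $\langle W' \rangle \subset \psi(\langle W \rangle)$ and the one-sided bound $\dim \langle U \rangle \leq \dim \langle U' \rangle + (\dim X - \dim \psi(X))$, then arguing by contradiction from the supposition $\delta(U') \leq \delta(W')$. Both routes are sound, and yours has the small advantage that the defect bound $\delta(W') \leq \delta(W)$ falls out of the fiber-component hypothesis alone (comparing $\dim W - \dim W'$ with $\dim F$), whereas the paper threads the smoothness estimate through that step as well; the final inequality $\delta(U') - \delta(W') \geq \delta(U) - \delta(W) > 0$ also makes the conclusion a little more transparent than the paper's contradiction chain.
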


\begin{proof}
	For (1), let $W$ be optimal for $V$ in $X$ such that $W\cap V_0 \neq \emptyset$, where $\psi|_{V_0}: V_0 \to \psi(V_0)$ is smooth of relative dimension $n$. Let $U'$ be a subvariety of $X'$ such that $W' \subset U' \subset V'$ and $\delta(U') \leq \delta(W')$. Since $\langle W \rangle$ contains an irreducible component of a fiber of $\psi$, it follows from Lemma \ref{lem:dimspecial} that
	\[ \dim \langle W \rangle - \dim \psi(\langle W \rangle) =  \dim X - \dim \psi(X)=:m.\]
	
	Since $\psi(\langle W \rangle)$ contains $\langle W'\rangle$ thanks to Lemma \ref{lem:(pre)image}, we obtain that
	\begin{align*}
	\delta(U')\leq \delta(W') \leq \dim \psi( \langle W \rangle) - \dim W'  = \dim \langle W \rangle - \dim W' - m.
	\end{align*}
	
	 Let now $U$ be an irreducible component of $\psi^{-1}(U') \cap V = \psi|_V^{-1}(U')$ containing $W$. We have $U \cap V_0 \neq \emptyset$ since it contains $W \cap V_0$. Furthermore, $U \cap V_0$ is an irreducible component of $\psi|_{V_0}^{-1}(U' \cap \psi(V_0))$. Since $\psi|_{V_0}: V_0 \to \psi(V_0)$ is smooth of relative dimension $n$, it follows that $\dim U = \dim(U \cap V_0) = \dim(U' \cap \psi(V_0)) + n = \dim U' + n$. Taking $U' = W'$ and using that $W \subset U$, we have that $\dim W \leq \dim W' + n$. If $U'$ is again an arbitrary subvariety of $X'$ as above, we deduce that
	\begin{equation}\label{eq:defect}
	\delta(U')\leq \delta(W')\leq \dim \langle W \rangle - m- \dim W +n.
	\end{equation}
	Since $\psi$ is a distinguished morphism, all of its non-empty fibers are equidimensional of the same dimension, which, by the Fiber Dimension Theorem, must equal $m = \dim X - \dim \psi(X)$. It follows that $n$, which is the relative dimension of the smooth morphism $\psi|_{V_0}$, is at most $m$ and so \eqref{eq:defect} implies that the defect of $W'$ is at most the defect of $W$.
	
	We now want to prove that $W'$ is optimal for $V'$ in $X'$. Assume this is not the case; it follows that we can choose $U'$ as above with $W' \subsetneq U'$. Since $\dim U' > \dim W'$, we have 
	\[\dim U = \dim U' + n > \dim W' + n \geq \dim W.\]
	It follows that $W \subsetneq U$ and so $\delta(U) > \delta(W)$ by the optimality of $W$. Since any irreducible component of $\psi^{-1}(\langle U' \rangle)$ is special by Lemma \ref{lem:(pre)image}, we have $\dim \langle U\rangle \leq \dim \langle U'\rangle + (\dim X - \dim \psi(X))$. It follows that $\delta(U) \leq \delta(U') + (\dim X -\dim \psi(X)) - n$ because $\dim U = \dim U' + n$.
	
	Using \eqref{eq:defect}, we deduce that
\begin{align*}
	\delta(U')\leq \delta(W) - m  +n < \delta(U)- m  +n \leq \delta(U'),
\end{align*}
	which yields a contradiction and finishes the proof of (1). 
	
	For (2), we note that $\dim W' = \dim W$. Furthermore, we have $\langle W' \rangle \subset \psi(\langle W \rangle) \subset \psi(X)$ and $\langle W \rangle \subset \psi^{-1}(\langle W' \rangle)$. It follows that $\dim \langle W' \rangle \leq \dim \psi(\langle W \rangle)$ and
	\[ \dim \psi(\langle W \rangle) = \dim \langle W \rangle \leq \dim \psi^{-1}(\langle W' \rangle) = \dim \langle W' \rangle.\]
	Hence, we have that $\langle W' \rangle = \psi(\langle W \rangle)$ and $\delta(W') = \delta(W)$.
\end{proof}

\section{Weak finiteness and very distinguished categories} \label{sec:A5}

We can now introduce a fifth axiom that a distinguished category over $K$ can satisfy:

\begin{enumerate}[label={(A\arabic*)}]
\setcounter{enumi}{4}
\item \label{ax:5} \emph{Weak Finiteness} - If $X$ is a distinguished variety and $V \subset X$ is a subvariety, then there exists a finite set of pairs $(\phi,\psi)$ of distinguished morphisms $\phi: Y_\phi \to X$ and $\psi: Y_\phi \to Z_\psi$ such that for every subvariety $W \subset V$ that is weakly optimal for $V$ in $X$ there exist a pair $(\phi,\psi)$ in this set and $z \in Z_\psi(K)$ such that $\phi$ has finite fibers and $\langle W \rangle_{\ws}$ is an irreducible component of $\phi(\psi^{-1}(z))$.
\end{enumerate}

Axiom \ref{ax:5} is prominently not satisfied by $\mathfrak{C}_{\mathrm{add}}$ (or $\mathfrak{C}_{\mathrm{comm}}$): Consider $X = \mathbb{G}^3_{a,K}$ with affine coordinates $x,y,z$ and let $V \subset X$ be defined by $x^2 + y^2 = z^2$. Then $V$ is covered by lines, but it is not a plane, so every line it contains is weakly optimal for $V$ in $X$. If \ref{ax:5} were satisfied, then the set of the slopes of all these lines would be finite, but it is not.

Unlike \ref{ax:1} to \ref{ax:4}, Axiom \ref{ax:5} is not preserved when taking products of distinguished categories as defined at the end of Section \ref{sec:dc}: We will see later that $\mathfrak{C}_{\mathrm{ab}}$ over $\bar{\mathbb{Q}}$ satisfies \ref{ax:5}. Let $E$ be an elliptic curve over $\bar{\mathbb{Q}}$ and consider the object $(E^2,E^2)$ of $\mathfrak{C}_{\mathrm{ab}} \times \mathfrak{C}_{\mathrm{ab}}$. It is sent to $E^2 \times_{\bar{\mathbb{Q}}} E^2$ by the functor associated to $\mathfrak{C}_{\mathrm{ab}} \times \mathfrak{C}_{\mathrm{ab}}$. Let $\Delta \subset E^2 \times_{\bar{\mathbb{Q}}} E^2$ denote the diagonal. For every one-dimensional abelian subvariety $A \subset E^2$, we have that $W = (A \times_{\bar{\mathbb{Q}}} A) \cap \Delta$ is weakly optimal for $\Delta$ in $E^2 \times_{\bar{\mathbb{Q}}} E^2$ with $\langle W \rangle_{\ws} =  A \times_{\bar{\mathbb{Q}}} A$ and there are infinitely many such $A$.

Let $K\subset L$ be an extension of algebraically closed fields. Given a distinguished category $\mathfrak{C}$ over $K$ with associated functor $\mathcal{F}$, one can check that \ref{ax:1} to \ref{ax:4} are preserved under base change (see Table 1 on pp.~306--307 of \cite{MR3729254}) and we naturally obtain a distinguished category $\mathfrak{C}_L$ over $L$ by composing $\mathcal{F}$ with the base change functor. Distinguished varieties and morphisms in $\mathfrak{C}_L$ are then base changes of distinguished varieties and morphisms in $\mathfrak{C}$. Note that special subvarieties in $\mathfrak{C}_L$ are base changes of special subvarieties in $\mathfrak{C}$ while the same fact clearly does not hold for weakly special subvarieties.

We can now introduce a subclass of distinguished categories that will be used in the following.

\begin{defn}
Let $K$ be an algebraically closed field. A \emph{very distinguished category} over $K$ is a distinguished category $\mathfrak{C}$ over $K$ such that, for every extension of algebraically closed fields $K \subset L$ of finite transcendence degree, the base change $\mathfrak{C}_L$ satisfies \ref{ax:5}.
\end{defn}

As $\mathfrak{C}_{\mathrm{add}}$ and $\mathfrak{C}_{\mathrm{comm}}$ do not satisfy \ref{ax:5} as noted above, they certainly are not very distinguished. On the other hand, it is trivial that $\mathfrak{C}_{\mathrm{triv}}(X)$ is very distinguished. We do not know if there are distinguished categories that satisfy \ref{ax:5}, but are not very distinguished (the question was raised by Orr). Similarly, we do not know whether weak optimality is preserved under base change in an arbitrary distinguished category.

The categories $\mathfrak{C}_{\mathrm{ab}}$, $\mathfrak{C}_{\mathrm{tor}}$, and $\mathfrak{C}_{\mathrm{semiab}}$ are very distinguished as well: For $\mathfrak{C}_{\mathrm{ab}}$, see \cite{Remond} and \cite{HP}. For $\mathfrak{C}_{\mathrm{tor}}$, see Corollaire 3.7 in \cite{Poizat} (a similar finiteness statement is proven in \cite{BMZ07}; see also Corollary 3 in \cite{Zilber}). These results are over $\mathbb{C}$, but they imply the same statement over an arbitrary algebraically closed field (see the proof of Theorem \ref{thm:kirbyweakfiniteness} below). This also justifies the step in the proof of Proposition 4.1 on p. 276 of \cite{BD} where the results of \cite{Remond} are applied to an abelian variety over an arbitrary algebraically closed field of characteristic $0$.

In Theorem \ref{thm:kirbyweakfiniteness}, we will show that $\mathfrak{C}_{\mathrm{semiab}}$ over an algebraically closed field is very distinguished, using a result due to Kirby. This directly implies the same for $\mathfrak{C}_{\mathrm{ab}}$ and $\mathfrak{C}_{\mathrm{tor}}$. We first prove the following theorem.

\begin{thm}\label{thm:structuresemiab}
Let $X$ be a semiabelian variety over $\mathbb{C}$ and let $V \subset X$ be a subvariety. There is a finite set $\Sigma$ of semiabelian subvarieties of $X$, depending only on $X$ and $V$, such that for every $W \subset V$ that is weakly optimal for $V$ in $X$ there exist $Y \in \Sigma$ and $x \in W(\mathbb{C})$ with $\langle W \rangle_{\ws} = xY$.
\end{thm}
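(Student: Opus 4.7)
The plan is to combine the weak Ax--Schanuel theorem for semiabelian varieties (``Weak Complex Ax'', due to Ax in the algebraic-group case and extended to the semiabelian setting by Kirby) with o-minimality of the exponential restricted to a fundamental domain. By Remark~\ref{rmk:equivalentdefinition}, every weakly special subvariety of a semiabelian variety has the form $xY$ with $Y$ a connected algebraic subgroup of $X$ and $x \in X(\mathbb{C})$, so the content of the theorem reduces to showing that, as $W$ ranges over weakly optimal subvarieties of $V$, only finitely many subgroups $Y \subseteq X$ occur as the directional part of $\langle W \rangle_{\ws}$.

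First I would set up the uniformization. Let $\pi \colon \mathbb{C}^n \to X$ be the exponential of $X$ viewed as a complex Lie group, and fix a bounded semialgebraic fundamental domain $\mathcal{D} \subset \mathbb{C}^n$ on which $\pi|_{\mathcal{D}}$ is definable in $\mathbb{R}_{\mathrm{an},\exp}$; then $\tilde V := \pi^{-1}(V) \cap \mathcal{D}$ is a definable analytic subset. Given a weakly optimal $W \subseteq V$, I would pick an irreducible analytic component $\tilde W$ of $\pi^{-1}(W) \cap \mathcal{D}$ surjecting onto $W$, choose $\tilde x \in \tilde W$, and let $L_W \subseteq \mathbb{C}^n$ be the $\mathbb{C}$-linear subspace such that $\tilde x + L_W$ is the smallest complex affine subspace containing $\tilde W$. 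Weak Complex Ax then identifies $L_W$ with $\Lie(Y_W)_{\mathbb{C}}$ for a unique connected algebraic subgroup $Y_W \subseteq X$ and forces $\pi(\tilde x) Y_W = \langle W \rangle_{\ws}$. Moreover, the weak optimality of $W$ transcribes into the following minimality of $(L_W,\tilde x)$: no $\mathbb{C}$-linear subspace $L' \subsetneq L_W$ admits a component of $(\tilde x + L') \cap \tilde V$ through $\tilde x$ of dimension $\dim L' - \delta_{\ws}(W)$.

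The conclusion would follow from an o-minimal ``definable plus countable implies finite'' argument. The condition on a pair $(L, \tilde x) \in \mathrm{Gr}(\mathbb{C}^n) \times \mathcal{D}$ that $\tilde x + L$ contains an analytic component of $\tilde V$ through $\tilde x$ of the expected dimension, and that no proper subspace does the same, is definable in $\mathbb{R}_{\mathrm{an},\exp}$; this uses the Peterzil--Starchenko fact that analytic components of a definable analytic set can be picked out definably. Projection to $\mathrm{Gr}(\mathbb{C}^n)$ yields a definable set of candidate subspaces. By Weak Complex Ax each such subspace must equal $\Lie(Y)_{\mathbb{C}}$ for an algebraic subgroup $Y \subseteq X$, so the set is countable; a countable definable set in an o-minimal structure is finite. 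The corresponding finite list of subgroups $Y$ provides $\Sigma$, which by construction depends only on $\tilde V$, hence only on $X$ and $V$.

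The main obstacle will be the expressibility of the minimality condition above in purely linear-algebraic terms: weak optimality of $W$ is a priori a condition involving all weakly special subvarieties of $X$, and Weak Complex Ax is exactly what is needed to collapse it to the stated minimality of $L_W$ among complex affine subspaces in $\mathbb{C}^n$, which is a definable condition. Once this translation is in place, the remaining steps --- verifying definability of the relevant analytic-component conditions and invoking finiteness of countable definable sets --- are standard applications of o-minimality in the Pila--Zannier framework.
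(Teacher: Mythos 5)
Your plan follows the Pila--Zannier / Habegger--Pila strategy (Weak Complex Ax plus o-minimality and the ``definable and countable implies finite'' argument), which is genuinely different from what the paper does. The paper instead invokes Kirby's theorem (via Chambert-Loir, Th\'eor\`eme 2.3) as a black box --- this already gives a finite set $\Phi$ of semiabelian subvarieties such that $W \subset xY$ for some $Y \in \Phi$ with $\dim Y + \dim W = \dim \langle W\rangle_{\ws} + \dim(V \cap xY)$ --- and then closes the gap to $\langle W\rangle_{\ws} = xY$ by a projection-to-$X/Y$ Fiber Dimension argument together with an induction on $\dim V$. Both routes are viable: the paper's is shorter because it leans on an existing structure theorem; yours would be essentially a self-contained reproof of Kirby's finiteness, which is the kind of thing that has been carried out for abelian varieties (Habegger--Pila) and Shimura varieties (Daw--Ren).

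However, there is a genuine gap in the crucial translation step. You claim that weak optimality of $W$ transcribes into: ``no $\mathbb{C}$-linear subspace $L' \subsetneq L_W$ admits a component of $(\tilde{x}+L') \cap \tilde{V}$ through $\tilde{x}$ of dimension $\dim L' - \delta_{\ws}(W)$.'' This condition is essentially never satisfied. Take $L' = \{0\}$: the singleton $\{\tilde{x}\}$ is a component of $(\tilde{x}+L')\cap\tilde{V}$ through $\tilde{x}$ of dimension $0 = \dim L' - 0$, which violates the condition already when $\delta_{\ws}(W) = 0$ and $L_W \neq \{0\}$. When $\delta_{\ws}(W) > 0$, take any $L'$ of dimension $\delta_{\ws}(W)$; for generic such $L'$ (whenever $\dim L' + \dim\tilde{V} \le n$ near $\tilde{x}$) the point $\tilde{x}$ is isolated in $(\tilde{x}+L')\cap\tilde{V}$, i.e.\ a zero-dimensional component, again violating the condition. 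Weak optimality compares $W$ with \emph{larger} subvarieties $U$ satisfying $W \subsetneq U \subset V$ and $\delta_{\ws}(U) \le \delta_{\ws}(W)$; the correct definable translation therefore must be a \emph{maximality} condition involving larger affine subspaces $L' \supsetneq L_W$ together with analytic components that actually \emph{contain} $\tilde{W}$ (not arbitrary components through the single point $\tilde{x}$). Without that correction, the definable set you project to the Grassmannian does not capture the weakly optimal subvarieties, and the remaining steps do not yield the theorem.
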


\begin{proof}
We induct on the dimension of $V$, the case $\dim V = 0$ being trivial. Let $W \subset V$ be weakly optimal for $V$ in $X$. We can assume without loss of generality that $W \neq V$. This implies that $W$ is an atypical component of the intersection $\langle W \rangle_{\ws} \cap V$ as defined in D\'efinition 2.1 in \cite{Chambert-Loir}.

By Th\'eor\`eme 2.3 in \cite{Chambert-Loir}, due to Kirby in \cite{Kirby}, there is a finite set $\Phi$ of semiabelian subvarieties of $X$, depending only on $X$ and $V$, such that $W \subset xY$ for some $Y \in \Phi$ and $x \in X(\mathbb{C})$ and furthermore
\[\dim Y + \dim W = \dim \langle W \rangle_{\ws} + \dim(V \cap xY).\]
This is equivalent to
\begin{equation}\label{eq:weakdefectofw}
\delta_{\ws}(W) = \dim Y - \dim(V\cap xY).
\end{equation}
We can assume without loss of generality that $x \in W(\mathbb{C})$.
We want to prove that $\langle W \rangle_{\ws} = xY$.

Consider the projection $\pi: V \to VY/Y \subset X/Y$. We have $\pi^{-1}(\pi(v)) = V \cap vY$ for $v \in V(\mathbb{C})$. It follows from the Fiber Dimension Theorem that there exists a closed subset $V_1 \subsetneq V$, the preimage of the complement of the open subset of $VY/Y$ from the theorem, such that $\pi^{-1}(\pi(V_1)) = V_1$ and $\pi^{-1}(\pi(v))$ is equidimensional for $v \in (V\backslash V_1)(\mathbb{C})$. Note that $W$ is contained in a fiber of $\pi$.

If $W \subset V_1$ (or equivalently: if $x \in V_1(\mathbb{C})$), then $W$ is weakly optimal for a component of $V_1$ in $X$ and we are done by induction. Otherwise, let $\tilde{W}$ be an irreducible component of $V \cap xY$ that contains $W$. As $V \cap xY = \pi^{-1}(\pi(x))$ is equidimensional since $x \not\in V_1(\mathbb{C})$, it follows from \eqref{eq:weakdefectofw} that
\[ \delta_{\ws}(W) = \dim Y - \dim(V\cap xY) = \dim Y - \dim \tilde{W} \geq \delta_{\ws}(\tilde{W}).\]
It follows from the weak optimality of $W$ that $W = \tilde{W}$, so $\dim Y = \dim \langle W \rangle_{\ws}$. Since $\langle W \rangle_{\ws} \subset xY$, we have $\langle W \rangle_{\ws} = xY$.
\end{proof}

\begin{thm}\label{thm:kirbyweakfiniteness}
The distinguished category of semiabelian varieties over an algebraically closed field $K$ is very distinguished.
\end{thm}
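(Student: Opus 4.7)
The plan is to verify \ref{ax:5} for $\mathfrak{C}_{\mathrm{semiab},L}$ via a translation to structural finiteness over $L$, and then to produce that structural finiteness by repeating the argument of Theorem \ref{thm:structuresemiab} with $\mathbb{C}$ replaced by $L$. To begin, fix a distinguished variety $X = X_0 \otimes_K L$ in $\mathfrak{C}_{\mathrm{semiab},L}$ and a subvariety $V \subset X$. By Remark \ref{rmk:equivalentdefinition}, weakly special subvarieties of $X$ are cosets $xY$ with $Y$ a semiabelian subvariety of $X$; since connected algebraic subgroups of a semiabelian variety over an algebraically closed field descend bijectively to any algebraically closed subfield of definition of the semiabelian variety, each such $Y$ comes from $X_0$ and the pair of distinguished morphisms $(\phi,\psi) = (\id_X, X \to X/Y)$ belongs to the base change category. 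It therefore suffices to exhibit a finite set $\Sigma$ of semiabelian subvarieties of $X$ such that every weakly optimal $W \subset V$ in $X$ has $\langle W \rangle_{\ws} = xY$ for some $Y \in \Sigma$ and some $x \in X(L)$, because then taking $z = [x] \in (X/Y)(L)$ realizes $\langle W \rangle_{\ws}$ as $\phi(\psi^{-1}(z))$ as required by \ref{ax:5}.

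Such a $\Sigma$ is produced by inducting on $\dim V$ following the proof of Theorem \ref{thm:structuresemiab}: a weakly optimal $W \neq V$ is an atypical component of $\langle W \rangle_{\ws} \cap V$; Kirby's finiteness theorem (Th\'eor\`eme 2.3 in \cite{Chambert-Loir}) supplies a finite set $\Phi(V,X)$ of semiabelian subvarieties together with the dimension identity $\dim Y + \dim W = \dim \langle W \rangle_{\ws} + \dim(V \cap (x+Y))$; the Fiber Dimension Theorem applied to $V \to X/Y$ yields a proper closed subset $V_1 \subsetneq V$ on which one inducts in the case $W \subset V_1$, while in the complementary case weak optimality forces $\langle W \rangle_{\ws} = x+Y$ itself. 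All three ingredients used here -- the coset description of weakly special subvarieties, the Fiber Dimension Theorem, and Kirby's theorem -- are algebraic statements that hold over every algebraically closed field of characteristic zero, so the argument transfers from $\mathbb{C}$ to $L$ with essentially no change.

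The only point that genuinely requires attention, and is the main (mild) obstacle in the plan, is that Kirby's theorem, stated in \cite{Chambert-Loir} over $\mathbb{C}$, indeed applies over an arbitrary algebraically closed field $L$ of characteristic zero. A short Lefschetz-type descent suffices: the pair $(X,V)$ is defined over a countable algebraically closed subfield $\tilde L \subset L$ which embeds into $\mathbb{C}$, producing a finite set $\Phi_\mathbb{C}$ over $\mathbb{C}$; any atypical component $W$ of $V$ over $L$ is defined over a countable algebraic closure $\tilde L_W \subset L$ admitting a compatible embedding into $\mathbb{C}$, and so $W_\mathbb{C}$ is atypical in $V_\mathbb{C}$. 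Kirby over $\mathbb{C}$ gives $Y$ in the descent of $\Phi_\mathbb{C}$ to $L$ together with a coset $x+Y$ containing $W_\mathbb{C}$; since $Y$ and the image of $W$ in $X/Y$ both descend to $L$, the coset itself descends, yielding $x \in X(L)$ with $W \subset x+Y$ as needed.
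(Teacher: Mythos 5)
Your proposal is correct and takes essentially the same route as the paper: both reduce to Kirby's finiteness theorem (via Theorem~\ref{thm:structuresemiab}) over $\mathbb{C}$, descending through countable algebraically closed subfields of $L$ embeddable in $\mathbb{C}$, and both rely on the rigidity of semiabelian subvarieties (all defined over the base field, via density of torsion) together with the observation that weak optimality is preserved under base change. The only cosmetic difference is that you rephrase \ref{ax:5} in terms of a finite set $\Sigma$ of subgroups first and then rerun the induction of Theorem~\ref{thm:structuresemiab} over $L$, whereas the paper applies Theorem~\ref{thm:structuresemiab} directly over $\mathbb{C}$ and transfers the resulting finite set; the underlying descent is identical.
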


\begin{proof}
Let $K \subset L$ be an extension of algebraically closed fields and let $X$ be a semiabelian variety over $K$. There exist an algebraically closed subfield $K_0 \subset K$ and a semiabelian variety $X_0$ over $K_0$ such that $X = (X_0)_{K}$ and $K_0$ embeds into $\mathbb{C}$. We fix one such embedding. Let $V \subset X_L$ be a subvariety. Again, there exist an algebraically closed subfield $L_0 \subset L$, containing $K_0$, and a subvariety $V_0$ of $(X_0)_{L_0}$ such that $V = (V_0)_{L}$ and there is an embedding of $L_0$ into $\mathbb{C}$ that extends the one of $K_0$. We also fix one such embedding. The claim of the theorem now follows from Theorem \ref{thm:structuresemiab}, applied to $(V_0)_{\mathbb{C}}\subset (X_0)_{\mathbb{C}}$, together with the two following facts:

\begin{enumerate}
\item All semiabelian subvarieties of $(X_0)_{\mathbb{C}}$ are base changes of semiabelian subvarieties of $X_0$ since the set of torsion points of a semiabelian subvariety $Y$ of $(X_0)_{\mathbb{C}}$ is dense in $Y$ and this set is defined over $K_0$, and
\item if $W \subset V$ is a subvariety, then there exists an algebraically closed subfield $L_W \subset L$, containing $L_0$, and a subvariety $W_0$ of $(V_0)_{L_W}$ such that $W = (W_0)_{L}$ and there is an embedding of $L_W$ into $\mathbb{C}$ that extends the one of $L_0$. Furthermore, if $(W_0)_{\mathbb{C}} \subset x(Y_0)_{\mathbb{C}}$ for some $x \in X_0(\mathbb{C})$ and some semiabelian subvariety $Y_0$ of $X_0$, then $x$ can be chosen as the base change of an $L_W$-point of $W_0$. In particular, weak optimality is preserved under base change. \qedhere
\end{enumerate}

\end{proof}

We now would like to apply a theorem of Gao in \cite{G18} to see that the distinguished categories of connected pure Shimura varieties and connected mixed Shimura varieties of Kuga type over $\bar{\mathbb{Q}}$ are very distinguished. Unfortunately, the definition of a connected mixed Shimura datum of Kuga type and the definition of a connected pure Shimura datum in \cite{G18} do not coincide with our definitions here (which are the same as in \cite{G17}). In fact, the definitions in \cite{G18} give rise to full subcategories $\mathfrak{C}^0_{\textrm{mSvK}}$ and $\mathfrak{C}^0_{\textrm{pSv}}$ of $\mathfrak{C}_{\textrm{mSvK}}$ and $\mathfrak{C}_{\textrm{pSv}}$ respectively, where we consider only those mixed Shimura data that have generic Mumford-Tate group as defined in Definition 2.5 in \cite{G17}. By analogy, we also get a full subcategory $\mathfrak{C}^0_{\textrm{mSv}}$ of $\mathfrak{C}_{\textrm{mSv}}$. We do not know if these subcategories of distinguished categories are distinguished themselves, but we can still apply Definitions \ref{defn:special} and \ref{defn:weaklyspecial} to define (weakly) special subvarieties in them.

The following two lemmas will solve all arising technical issues. We continue identifying objects of $\mathfrak{C}_{\textrm{mSv}}$ (or of one of its subcategories) with their images under $\mathcal{F}_{\textrm{mSv}}$.

\begin{lem}\label{lem:differentdefinitions}
Let $K$ be an algebraically closed field and let $\mathfrak{C}, \mathfrak{C}'$ be two categories among $(\mathfrak{C}_{\textrm{mSv}})_K$, $(\mathfrak{C}_{\textrm{mSvK}})_K$, $(\mathfrak{C}_{\textrm{pSv}})_K$, $(\mathfrak{C}^0_{\textrm{mSv}})_K$, $(\mathfrak{C}^0_{\textrm{mSvK}})_K$, $(\mathfrak{C}^0_{\textrm{pSv}})_K$. Let $\mathfrak{V}, \mathfrak{V}'$ denote their respective classes of objects and suppose that $\mathfrak{V}'\subset\mathfrak{V}$. Let $X \in \mathfrak{V}'$; then, the (weakly) special subvarieties of $X$ as an object of $\mathfrak{C}'$ are the same as the (weakly) special subvarieties of $X$ as an object of $\mathfrak{C}$.
\end{lem}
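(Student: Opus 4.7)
My plan is to handle the forward implication by a direct inclusion argument, and the backward implication by decomposing $\mathfrak{V}' \subseteq \mathfrak{V}$ into at most two basic reductions: a \emph{type reduction} (from mixed to Kuga type, or Kuga to pure) and a \emph{Mumford--Tate reduction} (to data with generic Mumford--Tate group). For the forward direction, since $\mathfrak{C}'$ is a full subcategory of $\mathfrak{C}$, every distinguished morphism in $\mathfrak{C}'$ is equally a distinguished morphism in $\mathfrak{C}$, and any witness of (weak) specialness in $\mathfrak{C}'$ is already a witness in $\mathfrak{C}$.

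For the \textbf{special} case of the backward direction, I would start from a distinguished morphism $\phi \colon (H, Y^+, \Gamma_H) \to (P, X^+, \Gamma)$ in $\mathfrak{C}_{\mathrm{mSv}}$ with $(P, X^+) \in \mathfrak{V}'$. For a type reduction with $(P, X^+)$ of Kuga type, Remark \ref{rmk:imageofu} gives $\phi(U_H) = \phi(H) \cap U_P = \{1\}$, hence $U_H \subseteq \ker \phi$; I would then factor $\phi$ through the quotient Shimura datum $(H, Y^+)/U_H$ (which is Kuga type again by Remark \ref{rmk:imageofu}), adjusting the congruence subgroup via Lemma~2.2 of \cite{OrrThesis}. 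The pure case is entirely analogous, with $W_H$ in place of $U_H$. For a Mumford--Tate reduction with $(P, X^+)$ having generic Mumford--Tate group $P$, I would apply Lemma \ref{lem:intersectionfundamentallemma3} to decompose $Y^+$ into finitely many orbits $M_{x_i}(\mathbb{R})^+ (U_H \cap M_{x_i})(\mathbb{C}) x_i$, each carrying a Shimura subdatum $(M_{x_i}, Y_{M_{x_i}}^+)$ of $(H, Y^+)$ by Lemma \ref{lem:intersectionfundamentallemma2}, on which $M_{x_i}$ is the generic Mumford--Tate group. Since $\phi(Y^+)$ is irreducible and equals the union of the closed subvarieties $\phi(Y_{M_{x_i}}^+)$, some index $i$ satisfies $\phi(Y_{M_{x_i}}^+) = \phi(Y^+)$, yielding the desired morphism in $\mathfrak{C}^0_{\mathrm{mSv}}$.

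For the \textbf{weakly special} case, I would invoke Remark \ref{rmk:equivalentdefinition} to use Pink's intrinsic description of a weakly special subvariety $W \subseteq X = \mathcal{F}(P, X^+, \Gamma)$: it arises as $\phi'(\psi'^{-1}(y'))$ where $\phi' \colon (P', X'^+) \hookrightarrow (P, X^+)$ is the inclusion of a Shimura subdatum and $\psi' \colon (P', X'^+) \twoheadrightarrow (P'', X''^+)$ is a quotient Shimura morphism. Since both Shimura subdata and quotients preserve being Kuga type or pure (by Remark \ref{rmk:imageofu} and the analogous property for the unipotent radical), the type reductions apply directly to $\phi'$ and $\psi'$. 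For the Mumford--Tate reduction, I would further apply Lemma \ref{lem:intersectionfundamentallemma3} to $(P', X'^+)$ to replace it by a subdatum with generic Mumford--Tate group; the quotient $(P'', X''^+)$ then inherits this property automatically.

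\textbf{The hard part} is the weakly special case of the type reduction. Applying the unipotent-quotient construction directly to the pair $(\phi, \psi)$ produced by Lemma \ref{lem:wsnocomp} does not work: while $\phi(U_Z) \subseteq U_X = \{1\}$ forces $U_Z \subseteq \ker \phi$, there is no reason for $\psi$ to kill $U_Z$ as well, so one cannot naively quotient $Z$ by $U_Z$ while preserving the fiber $\psi^{-1}(y)$. My workaround will be to discard the original pair entirely and rebuild $W$ intrinsically from the Shimura datum $(P, X^+)$ via Pink's characterization, so that the defining pair of morphisms lives in $\mathfrak{C}'$ by construction.
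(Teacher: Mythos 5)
Your overall architecture (forward inclusion; backward via type reduction and Mumford--Tate reduction) mirrors the paper's, and your handling of the forward direction and of special subvarieties is sound. For the special case your MT reduction via Lemma \ref{lem:intersectionfundamentallemma3} plus the irreducibility/closed-image argument is a clean alternative to the paper's use of Proposition 2.6(1) of \cite{G17}, and your type reduction via quotienting the source by $U_H$ (resp.~$W_H$) is a valid alternative to the paper's citation of Lemma 5.11 in \cite{G17}. You also correctly diagnose the obstruction to running the type reduction naively on the output of Lemma \ref{lem:wsnocomp} and correctly route through the Shimura-embedding-plus-quotient description.

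However, the weakly special case has a genuine gap in the Mumford--Tate reduction. You write that you would ``apply Lemma \ref{lem:intersectionfundamentallemma3} to $(P', X'^+)$ to replace it by a subdatum with generic Mumford--Tate group; the quotient $(P'', X''^+)$ then inherits this property automatically.'' As stated this is incorrect: the quotient datum $(P'',X''^+)$ is unchanged by shrinking the source, so it does not ``inherit'' generic Mumford--Tate group merely because you shrank $(P',X'^+)$. What you actually need to do is (i) observe that, among the finitely many $(M_i,Y_i^+)$, at least one yields a \emph{surjective} Shimura embedding $V_{i_0}\to V'$ (by closedness of images of Shimura morphisms and irreducibility of $V'$); (ii) factor $\psi'|_{V_{i_0}}$ through the image subdatum $(\psi'(M_{i_0}),\psi'(Y_{i_0}^+))\subset (P'',X''^+)$, which does have generic Mumford--Tate group $\psi'(M_{i_0})$, via a quotient Shimura morphism $\psi''$ followed by a finite Shimura embedding $\iota$ into $V''$; and then (iii) write $\psi'^{-1}(y')=\bigcup_{z\in\iota^{-1}(y')}(\text{image of }(\psi'')^{-1}(z))$ — a finite union because $\iota$ is finite — so that $W$ is an irreducible component of some $\phi''((\psi'')^{-1}(z))$ with both $\phi''$ and $\psi''$ morphisms of $\mathfrak{C}^0$. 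None of this is automatic and your proposal omits it entirely. (The paper instead runs the MT reduction directly on the pair of morphisms from Definition \ref{defn:weaklyspecial}, using surjective Shimura embeddings from Proposition 2.6(1) of \cite{G17} on \emph{both} $Y$ and $Z$ together with the compatibility $\psi\circ\phi_Y=\psi_Z\circ\psi^0$, which automatically handles the fiber decomposition.)

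There is a second, smaller gap: Proposition 5.4 in \cite{G17}, which underlies the ``Shimura embedding plus quotient'' description you invoke, is a statement over $\mathbb{C}$, so the point $y'$ it produces is a priori a $\mathbb{C}$-point. To conclude that $W$ is weakly special in the $K$-category you must show $y'$ descends to a $K$-point. The paper does this by first replacing $K$ by a subfield of finite transcendence degree over $\bar{\mathbb{Q}}$ and then using that the Shimura embedding is finite to pin $y'$ down as $\psi'_{\mathbb{C}}(\tilde{W})$ for a component $\tilde{W}$ of $(\phi'_{\mathbb{C}})^{-1}(W_{\mathbb{C}})$, which is defined over $K$. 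Your proposal should include this descent step.
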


\begin{proof}
As every morphism of $\mathfrak{C}'$ is also a morphism of $\mathfrak{C}$, it is clear that all (weakly) special subvarieties of $X$ as an object of $\mathfrak{C}'$ are also (weakly) special subvarieties of $X$ as an object of $\mathfrak{C}$. What remains to be proved is the converse direction. Let $X \in \mathfrak{V}'$, let $\phi: Y \to X, \psi: Y \to Z$ be morphisms of $\mathfrak{C}$, and let $z \in Z(K)$. We want to show that $\phi(Y)$ is a special subvariety of $X$ as an object of $\mathfrak{C}'$ and that every irreducible component $W$ of $\phi(\psi^{-1}(z))$ is a weakly special subvariety of $X$ as an object of $\mathfrak{C}'$.

Suppose first that $\mathfrak{C}$ is among the categories $(\mathfrak{C}_{\textrm{mSv}})_K$, $(\mathfrak{C}_{\textrm{mSvK}})_K$, $(\mathfrak{C}_{\textrm{pSv}})_K$ and $\mathfrak{C}' = \mathfrak{C}^0$ is the corresponding subcategory as defined above. Then, Proposition 2.6(1) in \cite{G17}, Proposition 2.2 in \cite{RohlfsSchwermer}, Lemma 2.2 in \cite{OrrThesis} (which is formulated for reductive groups, but holds with the same proof for arbitrary linear algebraic groups), and the definition of having generic Mumford-Tate group yield (base changes of) surjective Shimura embeddings $\phi_Y: Y^0 \to Y,\psi_Z: Z^0 \to Z$ in $\mathfrak{C}$ and a morphism $\psi^0: Y^0 \to Z^0$ of $\mathfrak{C}'$ such that $Y^0, Z^0 \in \mathfrak{V}'$ and $\psi \circ \phi_Y = \psi_Z \circ\psi^0$. Hence, $\phi \circ \phi_Y$ is a morphism of $\mathfrak{C}'$ and $\phi(Y) = (\phi \circ \phi_Y)(Y^0)$ is a special subvariety of $X$ as an object of $\mathfrak{C}'$. Furthermore, we have
\[\phi(\psi^{-1}(z)) = (\phi \circ \phi_Y)((\psi \circ \phi_Y)^{-1}(z)) = \bigcup_{z^0 \in \psi_Z^{-1}(z)}(\phi \circ\phi_Y)((\psi^0)^{-1}(z^0)).\]
Since $\psi_Z$ is a (base change of a) Shimura embedding, $\psi_Z^{-1}(z)$ is finite by Proposition \ref{prop:pinkfacts} and the desired claim follows.

Suppose now that $\mathfrak{C}, \mathfrak{C}'$ are both among the categories $(\mathfrak{C}_{\textrm{mSv}})_K$, $(\mathfrak{C}_{\textrm{mSvK}})_K$, $(\mathfrak{C}_{\textrm{pSv}})_K$. Note that, by Proposition 2.9 in \cite{G17}, being pure and being of Kuga type are inherited by Shimura subdata. These properties are also inherited by quotient Shimura data by Remark \ref{rmk:imageofu}. The desired claim for special subvarieties then follows from Lemma 5.11 in \cite{G17}.

After replacing $K$ by an algebraically closed subfield of finite transcendence degree over $\bar{\mathbb{Q}}$ such that $z$ is the base change of a point defined over that subfield, we can embed $K$ into $\mathbb{C}$. By Proposition 5.4 in \cite{G17}, we have that $W_{\mathbb{C}}$ is an irreducible component of $\phi'_{\mathbb{C}}((\psi'_{\mathbb{C}})^{-1}(z'))$, where $\phi': Y' \to X$ and $\psi': Y' \to Z'$ are (base changes of) a Shimura embedding and a quotient Shimura morphism respectively (in particular, $\phi',\psi'$ are morphisms of $\mathfrak{C}'$ and $\phi'$ is finite by Proposition \ref{prop:pinkfacts}) and $z' \in Z'(\mathbb{C})$. It follows from the finiteness of $\phi'$ that $\{z'\} = \psi'_{\mathbb{C}}(\tilde{W})$ for some irreducible component $\tilde{W}$ of $(\phi'_{\mathbb{C}})^{-1}(W_{\mathbb{C}})$. We therefore deduce that $z'$ is the base change of a $K$-point of $Z'$ and the desired claim follows.

Combining the two cases considered above now yields the lemma.
\end{proof}

\begin{lem}\label{lem:coverbygenericmumfordtategroup}
Let $K$ be an algebraically closed field, let $\mathfrak{C}$ be one of the categories $(\mathfrak{C}_{\textrm{mSv}})_K$, $(\mathfrak{C}_{\textrm{mSvK}})_K$, $(\mathfrak{C}_{\textrm{pSv}})_K$, and let $\mathfrak{C}^0$ be the corresponding subcategory as defined above. Let $\mathfrak{V}, \mathfrak{V}^0$ denote the respective classes of objects of $\mathfrak{C}, \mathfrak{C}^0$. Let $X \in \mathfrak{V}$. Then, there exists a (base change of a) surjective Shimura embedding $\phi: X^0 \to X$ such that $X^0 \in \mathfrak{V}^0$. If $V$ is a subvariety of $X$ and $W$ is weakly optimal for $V$ in $X$, then there exist irreducible components $V^0, W^0$ of $\phi^{-1}(V), \phi^{-1}(W)$ respectively such that $W^0 \subset V^0$, $W^0$ is weakly optimal for $V^0$ in $X^0$, and $\langle W \rangle_{\ws} = \phi(\langle W^0 \rangle_{\ws})$.
\end{lem}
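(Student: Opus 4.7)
The assertion splits naturally into the construction of the morphism $\phi$ and the transport of weak optimality along it.

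For the existence of $\phi$, I would imitate the construction used in Case 1 of the proof of Lemma \ref{lem:differentdefinitions}. Reduce first to $K=\bar{\mathbb{Q}}$ by base change, and let $(P,X^+,\Gamma)$ be the triple corresponding to $X$. If $(P,X^+)$ already has generic Mumford-Tate group, take $\phi=\id$. Otherwise, let $P^0\subset P$ be the generic Mumford-Tate group of $(P,X^+)$, which yields (via Proposition 2.6(1) of \cite{G17}, Proposition 2.2 of \cite{RohlfsSchwermer}, and Lemma 2.2 of \cite{OrrThesis}) a Shimura subdatum $(P^0,X^{0,+})$ in $\mathfrak{V}^0$ together with a congruence subgroup $\Gamma^0\subset P^0(\mathbb{Q})_+$ such that the resulting Shimura embedding $(P^0,X^{0,+},\Gamma^0)\to(P,X^+,\Gamma)$ induces a surjective morphism of Shimura varieties. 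This gives the desired $\phi:X^0\to X$. The same argument applies in the Kuga and mixed cases since the subcategories $\mathfrak{C}^0$ are defined analogously.

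For the second part, I would exploit the fact that $\phi$ is a (surjective) Shimura embedding, which by Proposition \ref{prop:pinkfacts}(2) makes it finite; in particular $\phi$ is equidimensional and each irreducible component of the preimage of a subvariety dominates that subvariety. Pick $V^0$ to be any irreducible component of $\phi^{-1}(V)$ (so $\dim V^0=\dim V$ and $\phi(V^0)=V$) and $W^0$ an irreducible component of $\phi|_{V^0}^{-1}(W)$ (again $\dim W^0=\dim W$ and $\phi(W^0)=W$). The key intermediate claim is: for every subvariety $S\subset X$ and every irreducible component $S^0$ of $\phi^{-1}(S)$ dominating $S$, one has $\phi(\langle S^0\rangle_{\ws})=\langle S\rangle_{\ws}$ and $\delta_{\ws}(S^0)=\delta_{\ws}(S)$. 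This follows from Lemma \ref{lem:(pre)image}: on the one hand, $\phi(\langle S^0\rangle_{\ws})$ is a weakly special subvariety containing $S$, hence contains $\langle S\rangle_{\ws}$; on the other hand, the irreducible components of $\phi^{-1}(\langle S\rangle_{\ws})$ are weakly special, and an irreducible component $C$ containing $S^0$ satisfies $\langle S^0\rangle_{\ws}\subset C$. A dimension count using the finite fibers of $\phi$ forces $\langle S^0\rangle_{\ws}=C$ and yields equality of the relevant dimensions.

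Applying the claim to $(W,W^0)$ gives at once $\langle W\rangle_{\ws}=\phi(\langle W^0\rangle_{\ws})$. To conclude weak optimality of $W^0$ for $V^0$ in $X^0$, suppose for contradiction that there exists $U^0$ with $W^0\subsetneq U^0\subset V^0$ and $\delta_{\ws}(U^0)\leq\delta_{\ws}(W^0)$. Set $U=\phi(U^0)\subset V$; finiteness of $\phi$ gives $\dim U=\dim U^0>\dim W^0=\dim W$, so $W\subsetneq U$, and applying the intermediate claim to $U^0$ (it dominates $U$ since $\phi$ is finite) yields $\delta_{\ws}(U)=\delta_{\ws}(U^0)\leq\delta_{\ws}(W^0)=\delta_{\ws}(W)$, contradicting the weak optimality of $W$ for $V$ in $X$.

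\textbf{Main obstacle.} The only delicate step is Part 1: producing a surjective Shimura embedding from a generic-Mumford-Tate Shimura subdatum requires results specific to the theory of connected (mixed) Shimura varieties and does not follow from the axioms \ref{ax:1}--\ref{ax:4}. Part 2 is a routine consequence of Lemma \ref{lem:(pre)image} once one knows $\phi$ is finite.
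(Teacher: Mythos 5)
Your proposal is correct and follows essentially the same route as the paper: the existence of $\phi$ is obtained from Proposition 2.6(1) in \cite{G17} (the paper cites only that, not the auxiliary lemmas you mention, but your elaboration is harmless), and the transport of $\langle\cdot\rangle_{\ws}$ and weak optimality is deduced exactly as you do — using Lemma \ref{lem:(pre)image} to compare $\langle\cdot\rangle_{\ws}$ across $\phi$ and the finiteness of $\phi$ (from Proposition \ref{prop:pinkfacts}) to preserve dimensions, then arguing by contradiction for weak optimality. The only inessential difference is that the paper phrases its intermediate claim as $\langle\phi(U^0)\rangle_{\ws}=\phi(\langle U^0\rangle_{\ws})$ for an arbitrary $U^0\subset X^0$ rather than starting from $S\subset X$, and it does not need your extra observation that $\langle S^0\rangle_{\ws}$ equals a full component $C$ of $\phi^{-1}(\langle S\rangle_{\ws})$ (the containment plus finiteness already gives the required dimension equality).
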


We remark that, thanks to Lemma \ref{lem:differentdefinitions}, the operator $\langle \cdot \rangle_{\ws}$ associated to $X^0$ as well as the concept of weak optimality for subvarieties of $X^0$ here do not depend on whether we regard $X^0$ as an object of $\mathfrak{C}^0$ or as an object of $\mathfrak{C}$.

\begin{proof}
The existence of $\phi$ follows from Proposition 2.6(1) in \cite{G17}. If $U^0 \subset X^0$ is any subvariety, then Lemma \ref{lem:(pre)image} implies that $\langle \phi(U^0) \rangle_{\ws} \subset \phi(\langle U^0 \rangle_{\ws})$ as well as $\langle U^0 \rangle_{\ws} \subset\phi^{-1}(\langle \phi(U^0) \rangle_{\ws})$. Since $\phi$ is surjective, we deduce that $\langle \phi(U^0) \rangle_{\ws} = \phi(\langle U^0 \rangle_{\ws})$, and since $\phi$ is also finite by Proposition \ref{prop:pinkfacts}, this implies that $\delta_{\ws}(\phi(U^0)) = \delta_{\ws}(U^0)$.

Using again the surjectivity and the finiteness of $\phi$, we now choose an irreducible component $V^0$ of $\phi^{-1}(V)$ such that $\phi(V^0) = V$ and an irreducible component $W^0$ of $\phi|_{V^0}^{-1}(W)$ such that $\phi(W^0) = W$. Since $\phi$ is finite, $W^0$ must be an irreducible component of $\phi^{-1}(W)$ as well. Together with the finiteness of $\phi$, the previous paragraph implies that $W^0$ is weakly optimal for $V^0$ in $X^0$ and that $\langle W \rangle_{\ws} = \langle \phi(W^0) \rangle_{\ws} = \phi(\langle W^0 \rangle_{\ws})$.
\end{proof}

We can now prove the following theorem, which is essentially due to Gao:

\begin{thm}[Gao]\label{thm:gaoweakfiniteness}
The distinguished categories of connected pure Shimura varieties and connected mixed Shimura varieties of Kuga type over $\bar{\mathbb{Q}}$ are very distinguished.
\end{thm}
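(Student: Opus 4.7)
The strategy is to reduce to the case of a connected mixed Shimura variety with generic Mumford--Tate group over $\mathbb{C}$, apply the weak finiteness statement proved by Gao in \cite{G18} for that subcategory, and then descend back. Let $\mathfrak{C}$ denote either $\mathfrak{C}_{\textrm{mSvK}}$ or $\mathfrak{C}_{\textrm{pSv}}$, let $L\supset\bar{\mathbb{Q}}$ be algebraically closed of finite transcendence degree, let $X\in\mathfrak{V}_L$ be distinguished (necessarily of the form $(X_{\bar{\mathbb{Q}}})_L$), and let $V\subset X$ be a subvariety. By Lemma \ref{lem:coverbygenericmumfordtategroup} there is a finite surjective Shimura embedding $\phi\colon X^0\to X$, defined over $\bar{\mathbb{Q}}$ and base-changed to $L$, with $X^0$ in the corresponding subcategory $\mathfrak{C}^0$. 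Using surjectivity of $\phi$, fix an irreducible component $V^0\subset\phi^{-1}(V)$ with $\phi(V^0)=V$. The same lemma shows that every $W$ weakly optimal for $V$ in $X$ admits a weakly optimal lift $W^0$ for $V^0$ in $X^0$ with $\langle W\rangle_{\ws}=\phi(\langle W^0\rangle_{\ws})$. Since $\phi$ has finite fibers, composing each pair witnessing \ref{ax:5} for $V^0\subset X^0$ with $\phi$ yields the required pairs for $V\subset X$, and Lemma \ref{lem:differentdefinitions} lets us freely identify (weakly) special subvarieties of $X^0$ computed in $\mathfrak{C}^0$ and in $\mathfrak{C}$. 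Thus it suffices to verify \ref{ax:5} for $V^0\subset X^0$ inside $(\mathfrak{C}^0)_L$.

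Next I would fix an embedding $\iota\colon L\hookrightarrow\mathbb{C}$, which exists because $\mathrm{trdeg}(L/\bar{\mathbb{Q}})<\infty$, and consider $V^0_{\mathbb{C}}\subset X^0_{\mathbb{C}}$. Gao's main theorem in \cite{G18}, which combines the Weak Complex Ax property of \cite{HP} with o-minimal definability of the uniformization and the principle that a countable definable set is finite, supplies a finite set of Shimura morphisms $\phi_i\colon Y_i\to X^0$ and $\psi_i\colon Y_i\to Z_i$ in $\mathfrak{C}^0$ over $\bar{\mathbb{Q}}$---available over $\bar{\mathbb{Q}}$ because Shimura data come from $\mathbb{Q}$-algebraic groups and connected mixed Shimura varieties admit canonical models over $\bar{\mathbb{Q}}$---such that the $(\phi_i)_{\mathbb{C}}$ have finite fibers and for every weakly optimal $W_\ast$ for $V^0_{\mathbb{C}}$ in $X^0_{\mathbb{C}}$ the weakly special hull $\langle W_\ast\rangle_{\ws}$ is an irreducible component of $(\phi_i)_{\mathbb{C}}((\psi_i)_{\mathbb{C}}^{-1}(z))$ for some $i$ and some $z\in Z_i(\mathbb{C})$.

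I would then verify that the base changes of these pairs to $L$ witness \ref{ax:5} for $V^0\subset X^0$ over $L$. Given $W$ weakly optimal for $V^0$ in $X^0$ over $L$, $W_{\mathbb{C}}$ remains irreducible (since $L$ is algebraically closed in $\mathbb{C}$) and is contained in a unique irreducible component of $V^0_{\mathbb{C}}$; a short argument using that weakly special subvarieties in the mixed Shimura setting are parametrized by algebraic subgroups of the underlying $\mathbb{Q}$-group shows that weak optimality is preserved by this base change and that $\langle W_{\mathbb{C}}\rangle_{\ws}=(\langle W\rangle_{\ws})_{\mathbb{C}}$. Hence there exist $i$ and $z\in Z_i(\mathbb{C})$ with $(\langle W\rangle_{\ws})_{\mathbb{C}}$ an irreducible component of $(\phi_i)_{\mathbb{C}}((\psi_i)_{\mathbb{C}}^{-1}(z))$. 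Since $\langle W\rangle_{\ws}$ itself is defined over $L$ and the fiber of $\psi_i$ over any given weakly special hull of the given type is controlled by the corresponding quotient Shimura datum, one can replace $z$ by a point of $Z_i(L)$ producing the same irreducible component, yielding the desired pair in $\mathfrak{M}_L$.

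The main obstacle is precisely this final descent step: Gao's statement over $\mathbb{C}$ a priori only provides a $\mathbb{C}$-rational $z$, while \ref{ax:5} demands $z\in Z_{\psi}(L)$. The resolution is the rigidity of weakly special subvarieties in mixed Shimura varieties of Kuga type: those sharing a common ``type'' form an algebraic family parametrized by the $L$-points of a Shimura subquotient, so that any weakly special subvariety defined over $L$ arises from an $L$-rational parameter---analogous in spirit to the argument for semiabelian varieties in Theorem \ref{thm:kirbyweakfiniteness}, where torsion points and $L$-points of the ambient group played the corresponding role. Once this descent is in place, the theorem follows.
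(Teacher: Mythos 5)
Your proposal follows the same route as the paper's proof: reduce to the generic Mumford--Tate case via Lemma \ref{lem:coverbygenericmumfordtategroup} (and Lemma \ref{lem:differentdefinitions}), invoke Theorem 8.2 of \cite{G18} over $\mathbb{C}$, and then descend the parameter $z$ to $L$. The one place where you leave a substantive gap is the descent itself: you assert that $\langle W_{\mathbb{C}}\rangle_{\ws}=(\langle W\rangle_{\ws})_{\mathbb{C}}$ and that weak optimality is preserved under base change, appealing to a vague ``rigidity'' of weakly special subvarieties, but you never say \emph{why} the $\mathbb{C}$-point $z$ produced by Gao's theorem actually lies in $Z(L)$ when $W$ is an $L$-variety. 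The paper's mechanism is precise and worth internalizing: by Proposition 5.4 of \cite{G17}, any weakly special $S$ containing $U$ is an irreducible component of $\phi_{\mathbb{C}}(\psi_{\mathbb{C}}^{-1}(z))$ with $\phi$ a finite Shimura embedding and $\psi$ a quotient Shimura morphism, both defined over $\bar{\mathbb{Q}}$; since $\phi$ is finite, some irreducible component $\tilde U$ of $\phi_{\mathbb{C}}^{-1}(U)$ satisfies $\psi_{\mathbb{C}}(\tilde U)=\{z\}$, and if $U$ is defined over $L$ then $\phi^{-1}(U)$ and all its components are defined over $L$ (as $L$ is algebraically closed), whence $z\in Z(L)$. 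This at once shows that every weakly special subvariety of $X_{\mathbb{C}}$ containing an $L$-variety is itself defined over $L$, from which both preservation of weak optimality under base change and the compatibility $\langle W_{\mathbb{C}}\rangle_{\ws}=(\langle W\rangle_{\ws})_{\mathbb{C}}$ follow. Your analogy with the semiabelian case in Theorem \ref{thm:kirbyweakfiniteness} is apt, but the precise replacement for ``torsion points are dense and algebraic'' is this finiteness-of-$\phi$ argument, and your write-up should spell it out rather than gesture at it.
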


For pure Shimura varieties, this was essentially already established in \cite{DawRen}, later refined in \cite{BinyaminiDaw}. With the new results obtained independently by Chiu \cite{Chiu} and Gao-Klingler \cite{GK}, we expect that the proof of Theorem \ref{thm:gaoweakfiniteness} can be extended to show the same for the distinguished category of connected mixed Shimura varieties.

\begin{proof}
Note that, by Proposition 2.9 in \cite{G17}, being pure and being of Kuga type are inherited by Shimura subdata. These properties are also inherited by quotient Shimura data by Remark \ref{rmk:imageofu}. The required finiteness statement over $\mathbb{C}$ then follows from Theorem 8.2 in \cite{G18} together with Remark 5.3 and Lemma 5.6 in \cite{G17}, as well as Lemma \ref{lem:coverbygenericmumfordtategroup}.

We can go from $\mathbb{C}$ to an arbitrary algebraically closed field of finite transcendence degree over $\bar{\mathbb{Q}}$ thanks to the following: Let $X$ be a connected mixed Shimura variety. Let $U \subset W \subset X_{\mathbb{C}}$ be a chain of subvarieties, where $W$ is weakly special. By Proposition 5.4 in \cite{G17}, we have that $W$ is an irreducible component of $\phi_{\mathbb{C}}(\psi_{\mathbb{C}}^{-1}(z))$, where $\phi: Y \to X$ and $\psi: Y \to Z$ are a Shimura embedding and a quotient Shimura morphism respectively (in particular, $\phi$ is finite by Proposition \ref{prop:pinkfacts}) and $z \in Z(\mathbb{C})$. It follows from the finiteness of $\phi$ that $\{z\} = \psi_{\mathbb{C}}(\tilde{U})$ for some irreducible component $\tilde{U}$ of $\phi_{\mathbb{C}}^{-1}(U)$. If $U$ can be defined over some algebraically closed subfield of $\mathbb{C}$, the same holds for $z$. This implies that weak optimality is preserved under base change. The theorem then also follows since Theorem 8.2 in \cite{G18} yields precisely such pairs of Shimura embeddings and quotient Shimura morphisms.
\end{proof}

\section{Zilber-Pink and extensions of algebraically closed fields}\label{sec:zpext}

We now introduce a notation that allows us to easily express the fact that the analogue of the Zilber-Pink conjecture holds in a certain setting. It generalizes a notation from \cite{BD}.

\begin{defn}\label{defn:zp}
Let $K$ be an algebraically closed field and let $\mathfrak{C}$ be a distinguished category over $K$. Let $m, d$ be non-negative integers and let $X$ be a distinguished variety. We say that $\ZP(X,m,d)$ holds if every subvariety of $X$ of dimension at most $m$ contains at most finitely many optimal subvarieties of defect at most $d$.
\end{defn}

The distinguished category $\mathfrak{C}_{\mathrm{triv}}(Y)$ (for a fixed variety $Y$ over $K$) trivially satisfies $\ZP(X,m,d)$ for all non-negative integers $m$ and $d$ and all distinguished varieties $X$ since every distinguished variety in $\mathfrak{C}_{\mathrm{triv}}(Y)$ has only finitely many special subvarieties and an optimal subvariety for a subvariety $V$ in $X$ must be an irreducible component of the intersection of $V$ and some special subvariety of $X$.

We collect some auxiliary results that are going to be useful later.
 
 \begin{lem}[Lemma 2.2 in \cite{BD}]\label{lem:ausefullemma}
 	Let $K \subset L$ be an extension of algebraically closed fields such that $L$ has transcendence degree $1$ over $K$. Let $V$ be a variety over $K$ and let $W$ be a subvariety of $V_L$. Then, there exists a subvariety $W'$ of $V$ with $\dim W' \leq \dim W + 1$ such that $W \subset W'_L$.
 \end{lem}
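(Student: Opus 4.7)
The plan is to define $W'$ as the closure in $V$ of the image under the first projection $\pi \colon V_L \to V$ of the generic point of $W$. More precisely, I would let $\eta$ be the generic point of $W$ (viewed as an integral closed subscheme of $V_L$), set $\xi = \pi(\eta) \in V$, and take $W' = \overline{\{\xi\}}$ with its reduced induced scheme structure; this is automatically an irreducible closed subvariety of $V$ defined over $K$, as required.

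For the dimension bound, I would argue purely via transcendence degrees. Since $W$ is an integral $L$-scheme of finite type with generic point $\eta$, one has $\operatorname{trdeg}_L k(\eta) = \dim W$, and combined with the hypothesis $\operatorname{trdeg}_K L = 1$ this yields $\operatorname{trdeg}_K k(\eta) = \dim W + 1$. The morphism $\pi$ induces an injection of residue fields $k(\xi) \hookrightarrow k(\eta)$, hence
\[
\dim W' \;=\; \operatorname{trdeg}_K k(\xi) \;\leq\; \operatorname{trdeg}_K k(\eta) \;=\; \dim W + 1.
\]

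For the inclusion $W \subset W'_L$, I would use continuity of $\pi$: every point of $W$ is a specialization of $\eta$, so its image under $\pi$ is a specialization of $\xi$ and thus lies in $W' = \overline{\{\xi\}}$. The morphism $W \to V$ therefore factors set-theoretically through the closed immersion $W' \hookrightarrow V$, and since $W$ is reduced, this factorization is automatically scheme-theoretic; the universal property of the fiber product $W'_L = W' \times_V V_L$ then gives the desired factorization $W \hookrightarrow W'_L$ inside $V_L$. I do not expect any serious obstacle here: once the correct definition of $W'$ via the generic point is chosen, the entire proof is essentially formal. The only mild subtlety is resisting the temptation to follow a more classical ``spreading out'' route in which one picks a finitely generated $K$-subalgebra $R \subset L$ carrying the coefficients of defining equations for $W$ and works with a model $W_R \subset V \times_K \operatorname{Spec} R$; the generic-point approach bypasses the bookkeeping about irreducibility and field of definition that such a model would require.
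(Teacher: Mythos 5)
Your proof is correct. The paper itself does not prove Lemma \ref{lem:ausefullemma} but cites Lemma 2.2 of \cite{BD}, so there is no internal argument to compare against; every step of your argument is sound, however. Writing $\eta$ for the generic point of $W$ and $\xi = \pi(\eta)$: morphisms of schemes preserve specialization, so $\pi(W) \subset \overline{\{\xi\}} = W'$; the set-theoretic factorization of $W \to V$ through the reduced closed subscheme $W'$ is scheme-theoretic because $W$ is reduced; and since $V_L = V \times_K \Spec L$, one has $W'_L = W' \times_K \Spec L = W' \times_V V_L$, so the universal property of the fiber product gives the desired closed immersion $W \hookrightarrow W'_L$. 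The dimension bound follows correctly from the injection of residue fields $k(\xi) \hookrightarrow k(\eta)$ together with the additivity $\operatorname{trdeg}_K k(\eta) = \operatorname{trdeg}_L k(\eta) + \operatorname{trdeg}_K L = \dim W + 1$. Your generic-point phrasing is, in substance, a clean repackaging of the spreading-out route you mention avoiding: choosing a finitely generated $K$-subalgebra $R \subset L$ carrying a model $\mathcal{W} \subset V \times_K \Spec R$ of $W$ and taking the closure in $V$ of the image of $\mathcal{W}$ under the projection $V \times_K \Spec R \to V$ produces exactly the same $W' = \overline{\{\xi\}}$, so nothing is lost by skipping the model; your version merely bypasses the bookkeeping.
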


\begin{lem}[Lemma 2.5 in \cite{BD}]\label{lem:smooth}
Let $K$ be an algebraically closed field and let $f: V \to W$ be a dominant morphism of algebraic varieties, defined over $K$. Then, there exists $V_0 \subset V$ open and dense such that $f(V_0)$ is open and dense in $W$ and $f|_{V_0}: V_0 \to f(V_0)$ is smooth.
\end{lem}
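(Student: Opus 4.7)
The plan is to reduce to a setting where standard generic smoothness applies and then use Chevalley's theorem on constructible images to also control the image of $V_0$.

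First I would replace $V$ and $W$ by their smooth loci $V^{\mathrm{sm}}$ and $W^{\mathrm{sm}}$. Since $K$ has characteristic $0$ and is algebraically closed, both smooth loci are open and dense in the respective varieties (the non-smooth locus is a proper closed subvariety). After intersecting $V^{\mathrm{sm}}$ with $f^{-1}(W^{\mathrm{sm}})$, we get an open dense subset of $V$ on which $f$ restricts to a dominant morphism between smooth $K$-varieties; we may assume without loss of generality that $V$ and $W$ are already smooth.

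Next I would invoke generic smoothness in characteristic $0$: for a dominant morphism $f: V \to W$ of smooth varieties over a field of characteristic $0$, there exists a non-empty open subset $V_1 \subset V$ on which $f$ is smooth. This is a classical consequence of the fact that the generic fibre is geometrically reduced in characteristic $0$, combined with the Jacobian criterion (see, e.g., \cite{GW}, Corollary 18.78 or the equivalent statement in terms of the relative cotangent sheaf). Since $f$ is dominant, Chevalley's theorem ensures that $f(V)$ contains a non-empty open subset $U \subset W$. Setting $V_0 = V_1 \cap f^{-1}(U)$, the restriction $f|_{V_0}: V_0 \to U$ is smooth and surjective, and $V_0$ is open and dense in $V$ since $V_1$ and $f^{-1}(U)$ are both non-empty open subsets of the irreducible variety $V$.

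Finally, to guarantee that $f(V_0)$ itself (rather than merely its closure) is open and dense in $W$, I would use that smooth morphisms are open (see \cite{GW}, Theorem 14.33), so $f(V_0)$ is open in $W$; since it is contained in the irreducible variety $W$ and non-empty (as $f$ is dominant and $V_0$ is dense in $V$), it is automatically dense.

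There is essentially no obstacle here — the statement is a repackaging of generic smoothness in characteristic $0$ together with the openness of smooth morphisms. The only subtlety is making sure that the final open set $V_0$ is arranged so that its image is open and not merely constructible, which is handled by invoking openness of smooth morphisms after the generic smoothness step.
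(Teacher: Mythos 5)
Your proof is correct. The paper does not actually supply a proof of this lemma---it is stated with a citation to Lemma 2.5 of \cite{BD}---so there is no in-paper argument to compare against; your route (reduce to smooth source and target, invoke generic smoothness in characteristic $0$, then use openness of smooth morphisms to control the image) is the standard one. Two small remarks: the intermediate auxiliary open $U$ from Chevalley is superfluous---once you have the open $V_1$ on which $f$ is smooth, you can take $V_0 = V_1$ directly, and openness of smooth morphisms already gives that $f(V_0)$ is a nonempty open (hence dense) subset of $W$; and the claim that $f|_{V_0}: V_0 \to U$ is surjective is not quite right as stated (one only gets $f(V_0) = f(V_1)\cap U$), but this does not affect the conclusion since your final paragraph correctly switches to the actual image $f(V_0)$ as target.
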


The following lemma is the analogue of Lemma 2.11 in \cite{BD} in our setting.

\begin{lem}\label{lem:basechange}
	Let $K \subset L$ be an extension of algebraically closed fields. Let $\mathfrak{C}$ be a distinguished category over $K$, let $X$ be a distinguished variety, and let $V$ be a subvariety of $X$. If $W$ is an optimal subvariety for $V_L$ in $X_L$, then there exists an optimal subvariety $W'$ for $V$ in $X$ such that $W = (W')_L$ and $\delta(W) = \delta(W')$.
\end{lem}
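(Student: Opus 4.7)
The plan is to follow the strategy used for Lemma 2.11 in \cite{BD}, which adapts cleanly to our axiomatic setting. The key input, noted just before the definition of a very distinguished category in the excerpt, is that in $\mathfrak{C}_L$ every special subvariety of $X_L$ is the base change of a special subvariety of $X$. Consequently I may write $\langle W \rangle = S_L$ for a unique special subvariety $S$ of $X$.

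First I would show that $W$ is itself an irreducible component of $V_L \cap \langle W \rangle$. Letting $W^{\ast}$ be the component of this intersection containing $W$, the inclusions $W^{\ast}\subset \langle W\rangle$ and $W\subset W^{\ast}\subset V_L$ yield $\langle W^{\ast}\rangle \subset \langle W\rangle$ and therefore $\delta(W^{\ast})\leq \dim\langle W\rangle - \dim W^{\ast}\leq \delta(W)$; the optimality of $W$ for $V_L$ in $X_L$ then forces $W = W^{\ast}$.

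Because $K$ is algebraically closed, every subvariety of $X$ is geometrically integral, so the irreducible components of $(V\cap S)_L = V_L \cap \langle W \rangle$ are precisely the base changes of the irreducible components of $V\cap S$. Hence $W = (W')_L$ for some irreducible component $W'$ of $V\cap S$, and in particular $\dim W' = \dim W$. I would then check that $\langle W'\rangle = S$: the inclusion $\subset$ comes from $W'\subset S$, while $(\langle W'\rangle)_L$ is a special subvariety of $X_L$ containing $W$, hence contains $\langle W\rangle = S_L$, so descending via faithful flatness gives $S\subset \langle W'\rangle$. This already delivers $\delta(W') = \delta(W)$.

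Finally, optimality of $W'$ for $V$ in $X$ would be verified by contradiction: any $W'\subsetneq U \subset V$ with $\delta(U)\leq \delta(W')$ would produce $W = (W')_L \subsetneq U_L \subset V_L$ with $\delta(U_L)\leq \delta(U)\leq \delta(W')=\delta(W)$, since $(\langle U\rangle)_L$ is a special subvariety of $X_L$ containing $U_L$; this would contradict the optimality of $W$. The step I expect to be most delicate is the first one, guaranteeing that $W$ is genuinely a component of $V_L\cap \langle W\rangle$ rather than merely contained in one; everything else amounts to bookkeeping with base change and with the compatibility between $\delta$ computed in $\mathfrak{C}$ and in $\mathfrak{C}_L$.
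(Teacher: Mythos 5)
Your proof is correct and follows the same route as the paper's (which is very terse and leaves the verification to the reader): observe that special subvarieties of $X_L$ are base changes of special subvarieties of $X$, deduce that $W$ is a component of $V_L \cap \langle W\rangle = (V\cap S)_L$, descend to a component $W'$ of $V\cap S$ over $K$, and transfer defect and optimality back and forth via base change. Your write-up simply fills in the details that the paper compresses into one sentence.
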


\begin{proof}
	Since a special subvariety is the image of a distinguished morphism, we have that any special subvariety of $X_L$ is the base change of a special subvariety of $X$. Therefore, if $V$ is a subvariety of $X$, any optimal subvariety for $V_L$ in $X_L$ is an irreducible component of an intersection $V_L\cap S_L$ for some special subvariety $S$ of $X$ and is then the base change of a subvariety $W'\subset V$ that must be optimal for $V$ in $X$ and of the same defect.
\end{proof}

The following lemma is an analogue of Lemma 2.6(2) in \cite{BD}.

\begin{lem}\label{lem:zilberpinkfiniteembedding}
Let $m$ and $d$ be non-negative integers. Let $K$ be an algebraically closed field and let $\mathfrak{C}$ be a distinguished category over $K$. Let $\phi: X \to Y$ be a distinguished morphism with finite fibers. Then $\ZP(Y,m,d) \Rightarrow \ZP(X,m,d)$.
\end{lem}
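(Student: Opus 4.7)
The plan is to induct on $\dim V$, where $V \subset X$ ranges over subvarieties of dimension at most $m$. The base case $\dim V = 0$ is trivial. For the inductive step, set $V' = \overline{\phi(V)} \subset Y$; since $\phi$ has finite fibers, $\dim V' = \dim V \leq m$, so $\ZP(Y,m,d)$ yields a finite set $\mathcal{W}'$ of optimal subvarieties of $V'$ in $Y$ with defect at most $d$. Applying Lemma \ref{lem:smooth} to the dominant morphism $\phi|_V : V \to V'$, I would fix an open dense $V_0 \subset V$ such that $\phi(V_0)$ is open and dense in $V'$ and $\phi|_{V_0} : V_0 \to \phi(V_0)$ is smooth.

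Given any optimal $W \subset V$ in $X$ with $\delta(W) \leq d$, I would split into two cases depending on whether $W$ meets $V_0$. In Case A, $W \cap V_0 \neq \emptyset$: since the fibers of $\phi$ are finite, their irreducible components are points, so the hypothesis of Lemma \ref{lem:imageisoptimal}(1) that $\langle W \rangle$ contain an irreducible component of a fiber of $\phi$ is automatic (any point of $W \subset \langle W \rangle$ will do). That lemma then yields that $\overline{\phi(W)}$ is optimal for $V'$ in $Y$ with defect at most $\delta(W) \leq d$, hence $\overline{\phi(W)} \in \mathcal{W}'$. Moreover, the assignment $W \mapsto \overline{\phi(W)}$ is finite-to-one on irreducible subvarieties of $V$: if $\overline{\phi(W_j)} = W'$ for $j = 1, 2$, then $\dim W_j = \dim W'$ (by the finite-fiber property and denseness of $\phi(W_j)$ in $W'$), and since $\phi|_V$ is generically finite the intersection $\phi^{-1}(W') \cap V$ has dimension $\dim W'$, so each $W_j$ must be an irreducible component of this intersection, of which there are finitely many. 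Together with the finiteness of $\mathcal{W}'$, this bounds Case A.

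In Case B, $W \subset V \setminus V_0$, which is a proper closed subvariety of $V$; its irreducible components $V_1, \ldots, V_r$ all have dimension strictly less than $\dim V$. Since $W$ is irreducible, $W \subset V_i$ for some $i$, and $W$ remains optimal for $V_i$ in $X$ (because any chain $W \subsetneq U \subset V_i$ is also a chain inside $V$). The inductive hypothesis applied to each $V_i$ then gives only finitely many such $W$. Combining Cases A and B yields $\ZP(X, m, d)$. The main technical obstacle is precisely the smoothing step: one cannot apply Lemma \ref{lem:imageisoptimal}(1) when $W$ falls inside the non-smooth locus of $\phi|_V$, and this is exactly what forces the induction on $\dim V$.
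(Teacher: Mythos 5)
Your proof is correct and takes essentially the same route as the paper: the same induction (on the dimension of the subvariety, which is equivalent to the paper's induction on $m$), the same use of Lemma \ref{lem:smooth} to find the smooth locus $V_0$, the same case split on whether $W$ meets $V_0$, and the same invocation of Lemma \ref{lem:imageisoptimal}(1) in the generic case. Your additional remarks — that the fiber-component hypothesis of Lemma \ref{lem:imageisoptimal}(1) is automatic when $\phi$ has finite fibers, and that $W$ is recovered as an irreducible component of $\phi^{-1}(W')$ (the paper) or of $\phi^{-1}(W') \cap V$ (yours) — are both valid and merely spell out steps the paper leaves implicit.
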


\begin{proof}
We induct on $m$, the case $m = 0$ being trivial. Let $V \subset X$ be a subvariety of dimension $m$ and let $V'$ be the closure of $\phi(V)$. By Lemma \ref{lem:smooth} and the Fiber Dimension Theorem, we can find $V_0 \subset V$ open and dense such that $\phi(V_0)$ is open and dense in $V'$ and $\phi|_{V_0}: V_0 \to \phi(V_0)$ is smooth of relative dimension $\dim V_0 - \dim \phi(V_0)$, which must be 0 as $\phi$ has finite fibers.

Let now $W \subset V$ be an optimal subvariety for $V$ in $X$ whose defect is at most $d$. If $W \cap V_0 = \emptyset$, then $W$ is optimal for a component of $V \backslash V_0$ and we are done by the inductive hypothesis. Otherwise, we can apply Lemma \ref{lem:imageisoptimal}(1) to deduce that the closure $W'$ of $\phi(W)$ is optimal for $V'$ in $Y$ and the defect of $W'$ is at most $d$. It follows from $\ZP(Y,m,d)$ that there are at most finitely many possibilities for $W'$ and hence for $W$, which is an irreducible component of $\phi^{-1}(W')$.
\end{proof}

If the distinguished morphism in Lemma \ref{lem:zilberpinkfiniteembedding} is also assumed to be surjective, then one can obtain an equivalence instead of just an implication.

\begin{lem}\label{lem:zilberpinkfinitecover}
Let $m$ and $d$ be non-negative integers. Let $K$ be an algebraically closed field and let $\mathfrak{C}$ be a distinguished category over $K$. Let $\phi: X \to Y$ be a surjective distinguished morphism with finite fibers. Then $\ZP(X,m,d) \Leftrightarrow \ZP(Y,m,d)$.
\end{lem}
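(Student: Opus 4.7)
The plan is to handle the two directions separately. The implication $\ZP(Y,m,d) \Rightarrow \ZP(X,m,d)$ is already covered by Lemma~\ref{lem:zilberpinkfiniteembedding} (which in fact does not need surjectivity), so all the new content lies in the converse $\ZP(X,m,d) \Rightarrow \ZP(Y,m,d)$. My strategy there is to \emph{lift} each optimal subvariety of a given $V \subset Y$ to an optimal subvariety of an irreducible component of $\phi^{-1}(V) \subset X$ and then invoke $\ZP(X,m,d)$.

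Concretely, given $V \subset Y$ of dimension at most $m$, the finite-fiber hypothesis ensures that every irreducible component of $\phi^{-1}(V)$ has dimension $\dim V \leq m$, and there are only finitely many such components $V'_1,\dots,V'_s$. For an optimal $W \subset V$ with $\delta(W) \leq d$, surjectivity of $\phi$ lets me pick an irreducible component $W'$ of $\phi^{-1}(W)$ mapping onto $W$; it then lies in some $V'_i$. Lemma~\ref{lem:imageisoptimal}(2) immediately gives $\delta(W') = \delta(W) \leq d$, so the only real thing to check is that $W'$ is optimal for $V'_i$ in $X$. Suppose instead that some $U'$ with $W' \subsetneq U' \subset V'_i$ satisfies $\delta(U') \leq \delta(W')$. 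Let $U$ be the Zariski closure of $\phi(U')$ in $Y$; then $W \subset U \subset V$, and since $\phi$ has finite fibers one has $\dim U = \dim U' > \dim W' = \dim W$, so $W \subsetneq U$. Optimality of $W$ forces $\delta(U) > \delta(W)$, while a second application of Lemma~\ref{lem:imageisoptimal}(2) gives $\delta(U) = \delta(U') \leq \delta(W') = \delta(W)$, a contradiction.

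Once this optimality-lifting claim is in hand, applying $\ZP(X,m,d)$ separately to each $V'_i$ produces finitely many candidates for $W'$, hence finitely many for $W = \phi(W')$, and we are done. The only delicate point in the argument is keeping the bookkeeping straight when passing between $\phi(U')$ and its closure $U$, and using the finite-fiber hypothesis at the right moments to conclude both $\dim W' = \dim W$ and the strictness $W \subsetneq U$; beyond this I do not expect any genuine obstacle, as everything is driven formally by Lemma~\ref{lem:imageisoptimal}(2) and the previously established Lemma~\ref{lem:zilberpinkfiniteembedding}.
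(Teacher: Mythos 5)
Your proof is correct, and it takes a genuinely different route from the paper's. The paper proves $\ZP(X,m,d)\Rightarrow\ZP(Y,m,d)$ by induction on $m$: it fixes one component $\tilde V$ of $\phi^{-1}(V)$ dominating $V$, invokes Lemma~\ref{lem:smooth} to produce a smooth open locus $\tilde V_0$, handles separately the $W$'s that miss $\phi(\tilde V_0)$ (these fall into lower-dimensional subvarieties of $V$ and are dispatched by the inductive hypothesis), and for the rest lifts $W$ to $\tilde W$ and considers an optimal $\tilde U\supset\tilde W$ of no larger defect; it then needs Lemma~\ref{lem:imageisoptimal}(1) (which is exactly why the smooth-locus setup was built) to push $\tilde U$ back down to an optimal $U\subset V$ and conclude $\tilde U=\tilde W$. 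You bypass all of this: you lift $W$ to a component $W'$ of $\phi^{-1}(W)$ dominating $W$, and prove directly by contradiction that $W'$ is optimal for the ambient component $V_i'$, using only the defect-preservation statement Lemma~\ref{lem:imageisoptimal}(2) (applied to both $W'$ and the hypothetical $U'$) plus the elementary fact that finite fibers preserve dimension, so that $W'\subsetneq U'$ forces $W\subsetneq U$. This eliminates both the induction on $m$ and the smooth-locus argument, at the cost of nothing — Lemma~\ref{lem:imageisoptimal}(2) and the Fiber Dimension Theorem are all you need. One small terminological slip: your $W'$ \emph{dominates} $W$ rather than maps onto it, and the identity $W=\phi(W')$ should read $W=\overline{\phi(W')}$, but this does not affect the argument.
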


\begin{proof}
Thanks to Lemma \ref{lem:zilberpinkfiniteembedding}, it suffices to prove that $\ZP(X,m,d)$ implies $\ZP(Y,m,d)$.

We induct on $m$, the case $m = 0$ being trivial. Let $V \subset Y$ be a subvariety of dimension $m$. Let $\tilde{V}$ be an irreducible component of $\phi^{-1}(V)$ that dominates $V$; we have $\dim \tilde{V} = \dim V \leq m$. By Lemma \ref{lem:smooth} and the Fiber Dimension Theorem, we can find $\tilde{V}_0 \subset \tilde{V}$ open and dense such that $\phi(\tilde{V}_0)$ is open and dense in $V$ and $\phi|_{\tilde{V}_0}: \tilde{V}_0 \to \phi(\tilde{V}_0)$ is smooth of relative dimension $\dim \tilde{V}_0 - \dim \phi(\tilde{V}_0) = 0$.

Let now $W \subset V$ be an optimal subvariety for $V$ in $Y$ whose defect is at most $d$. If $W \cap \phi(\tilde{V}_0) = \emptyset$, then $W$ is optimal for a component of $V \backslash\phi(\tilde{V}_0)$ and we are done by induction on $m$. Otherwise $W \cap \phi(\tilde{V}_0)$ is dense in $W$. Let $\tilde{W}$ be the closure in $\tilde{V}$ of a component of $\phi|_{\tilde{V}_0}^{-1}(W)$ that dominates $W$. By Lemma \ref{lem:imageisoptimal}(2), we have $\delta(\tilde{W}) = \delta(W) \leq d$.

Let $\tilde{U}$ be a subvariety of $\tilde{V}$ such that $\tilde{W} \subset \tilde{U} \subset \tilde{V}$, $\tilde{U}$ is optimal for $\tilde{V}$ in $X$, and $\delta(\tilde{U}) \leq \delta(\tilde{W})$. We have $\tilde{U} \cap \tilde{V}_0 \neq \emptyset$ since $\tilde{W} \cap \tilde{V}_0 \neq \emptyset$. By Lemma \ref{lem:imageisoptimal}, the closure $U$ of $\phi(\tilde{U})$ is optimal for $V$ in $Y$ of defect
\[\delta(U) = \delta(\tilde{U})\leq \delta(\tilde{W}) = \delta(W).\]
Furthermore, we have $W \subset U \subset V$. Since $W$ is optimal for $V$ in $Y$, it follows that $U = W$, hence $\tilde{W} = \tilde{U}$ is optimal for $\tilde{V}$ in $X$ and its defect is at most $d$. By $\ZP(X,m,d)$, there are at most finitely many possibilities for $\tilde{W}$ and hence also for $W$.
\end{proof}

The following theorem, which is one of our main results, is an analogue of Proposition 4.1 in \cite{BD}. In its statement, we use the notation from Definition \ref{defn:zp}.

\begin{thm}\label{thm:fieldofdef}
	Let $m$ and $d$ be non-negative integers. Let $K\subset L$ be an extension of algebraically closed fields and let $\mathfrak{C}$ be a very distinguished category over $K$. Let $X$ be a distinguished variety. Let $\mathcal{S}$ be a class of distinguished varieties such that $X \in \mathcal{S}$ and for every $X' \in \mathcal{S}$, every field extension $K \subset K'$ of finite transcendence degree, and every subvariety $V$ of $X'_{K'}$, the set in \ref{ax:5} can be chosen with all $Y_\phi,Z_\psi$ equal to base changes of distinguished varieties belonging to $\mathcal{S}$. Then, the following hold:
	\begin{enumerate}
	\item Suppose that $\ZP (X',m,d)$ holds for all $X' \in \mathcal{S}$. Then $\ZP (X_L,m,d)$ holds as well.
	\item Suppose that $\ZP (X',m-1,d)$ and $\ZP(X',m,d-1)$ hold for all $X' \in \mathcal{S}$. Let $V$ be a subvariety of $X_L$ of dimension at most $m$ which is not the base change of a subvariety of $X$ (in particular $K \neq L$). Then, $V$ contains at most finitely many optimal subvarieties of defect at most $d$.
	\end{enumerate}
\end{thm}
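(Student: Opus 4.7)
I would prove both parts by simultaneous induction on the transcendence degree $\tau$ of $L/K$, after first reducing to the case $\tau < \infty$: any subvariety $V \subset X_L$ is defined over a finitely generated extension $L_0$ of $K$, and by Lemma \ref{lem:basechange} optimal subvarieties of $V$ in $X_L$ correspond bijectively to those of its descent to the algebraic closure of $L_0$ in $L$, preserving defects; for part (2) the condition ``not defined over $K$'' is preserved. The base case $\tau = 0$ is immediate. Since the hypothesis of part (1) strictly implies those of part (2), part (1) at transcendence degree $\tau$ reduces to part (2) at transcendence degree $\tau$ together with the inductive hypothesis for part (1) at smaller $\tau$: letting $L_0 \subset L$ be the minimal algebraically closed field of definition of $V$ and writing $V = (V_0)_L$, the case $\mathrm{tr.deg.}(L_0/K) < \tau$ is handled by the inductive hypothesis, while if $L_0 = L$ then $V$ is not defined over $K$ and part (2) applies. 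The same descent combined with the inductive hypothesis for part (2) lets us assume in the proof of part (2) that $V$ is not defined over any proper algebraically closed subfield of $L$ containing $K$.

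\textbf{Reduction to transcendence degree one.} If $\tau \geq 2$, pick an algebraically closed intermediate field $K \subset L' \subsetneq L$ with $L/L'$ of transcendence degree one. The inductive hypothesis for part (1) applied to the extension $K \subset L'$ upgrades the hypotheses $\ZP(X', m-1, d)$ and $\ZP(X', m, d-1)$ of part (2) to the analogous statements over $L'$. The category $\mathfrak{C}_{L'}$ is still very distinguished, and the class $\mathcal{S}_{L'} := \{X'_{L'} : X' \in \mathcal{S}\}$ inherits the hypothesis of the theorem over $L'$ (any $L' \subset L''$ of finite transcendence degree over $L'$ also has finite transcendence degree over $K$, and base changes from $\mathcal{S}$ factor through $L'$). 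Hence one may replace $K$ by $L'$ and reduce part (2) to the case $\tau = 1$.

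\textbf{Trdeg one case of part (2).} By Lemma \ref{lem:ausefullemma} there is a $K$-subvariety $V' \subset X$ with $V \subset V'_L$ and $\dim V' \leq m + 1$; since $V$ is not defined over $K$ and $V'_L$ is irreducible, $\dim V' = m + 1$ exactly. For an optimal $W \subset V$ of defect at most $d$, one has $\langle W \rangle = S_L$ for a $K$-special $S \subset X$ of dimension at most $m + d$ (distinguished morphisms of $\mathfrak{C}_L$ are base changes of distinguished morphisms of $\mathfrak{C}$, hence so are the special subvarieties). Thus $W \subset (V' \cap S)_L$, and choosing an irreducible $K$-component $\tilde W$ of $V' \cap S$ with $W \subset \tilde W_L$, one has $\tilde W \subset V'$, $\dim \tilde W \geq \dim W$, and $\delta(\tilde W) \leq \dim S - \dim \tilde W \leq d - (\dim \tilde W - \dim W)$. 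In the case $\dim \tilde W > \dim W$ (Case B), $\delta(\tilde W) \leq d - 1$ and one passes to the optimal $\tilde W^* \supset \tilde W$ for $V'$ in $X$; since $\tilde W^*$ remains optimal for any $K$-subvariety of $V'$ containing it, one can apply $\ZP(X, m, d-1)$ to a well-chosen $K$-subvariety of dimension at most $m$ (for instance a component of $V' \cap \langle \tilde W^* \rangle$, whose dimension is controlled by the defect bound) to bound $\tilde W^*$ finitely, and then each $\tilde W^*$ contributes only finitely many components of $V \cap \tilde W^*_L$ as candidate $W$'s.

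\textbf{Main obstacle.} The delicate part is Case A, where $\dim \tilde W = \dim W$ so that $W = \tilde W_L$ is itself defined over $K$. One must then exploit that $V$ is not defined over $K$ to produce a strict constraint on $\tilde W$. The key claim is that the locus in $V'$ swept out by irreducible $K$-subvarieties of dimension $\dim W$ whose base change to $L$ lies in $V$ is a \emph{proper} $K$-subvariety $V'_0 \subsetneq V'$: otherwise, by spreading $V$ out as a $K$-family over a base $B$ with generic fiber $V$ and running through a density argument on fibers, one would force $V'_L \subset V$, contradicting $V \subsetneq V'_L$. Hence $\dim V'_0 \leq m$, and $\tilde W$ lies in a component of $V'_0$ of dimension at most $m$. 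Applying axiom \ref{ax:5} to this component inside $X$ over $K$, together with the hypothesis $\ZP(X, m-1, d)$ (or $\ZP(X, m, d-1)$ after passing to an optimal closure of $\tilde W$) and Lemma \ref{lem:imageisoptimal}, then gives finiteness of the $\tilde W$ and hence of $W = \tilde W_L$. Making the dimension drop in $V'_0$ precise — in particular, justifying via a spreading-out argument that the $K$-subvarieties whose base changes lie in $V$ cannot exhaust $V'$ — and correctly transporting optimality between the two ambient varieties, are the technically subtle points of the proof.
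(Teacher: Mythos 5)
Your overall structure — reduce to finite transcendence degree, then to transcendence degree one by induction, take the minimal $K$-envelope $V' \supset V$ of dimension $m+1$, and split into a case where the optimal $W$ is defined over $K$ and a case where it is not — matches the paper. But the execution inverts where the real work lies, and the hard case is not handled.

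In what you call Case A (the $K$-rational case), you do not need Weak Finiteness at all, and your claimed bound is the wrong one. The union $V'_0$ of all $K$-subvarieties whose base change lands inside $V$ satisfies $\dim V'_0 \leq m-1$, not merely $\leq m$: if it had an $m$-dimensional component $Z''$ with $Z''_L \subset V$, irreducibility and equality of dimensions would force $Z''_L = V$, contradicting that $V$ is not defined over $K$. No spreading-out argument is needed, and with the correct bound one simply observes that the descent of $W$ is optimal (of defect $\leq d$) for a $K$-component of $V'_0$ of dimension $\leq m-1$ and applies $\ZP(X,m-1,d)$ over $K$. Your invocation of \ref{ax:5} and Lemma \ref{lem:imageisoptimal} here is a detour, and with the bound $\leq m$ it would in any case require $\ZP(X,m,d)$, which is not among the hypotheses of part (2).

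The genuine gap is in Case B, which is exactly where the paper's proof needs Weak Finiteness, and your sketch bypasses it. After passing to an optimal closure $\tilde W^* \subsetneq V'$ of defect $\leq d-1$, you propose to bound $\tilde W^*$ by applying $\ZP(X,m,d-1)$ to a component of $V' \cap \langle \tilde W^* \rangle$. But that intersection can have dimension $m+1$: $\dim \langle \tilde W^* \rangle = \dim \tilde W^* + \delta(\tilde W^*)$ can exceed $m$, and nothing forces $V' \cap \langle \tilde W^* \rangle$ to be a proper subvariety of $V'$ (indeed $V' \subset \langle \tilde W^* \rangle$ is possible). Since the hypotheses only give ZP up to dimension $m$ over $K$, this step fails. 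The paper's argument at this point uses \ref{ax:5} to write $\langle U' \rangle_{\ws}$ as a component of $\phi(\psi^{-1}(z))$ and then transports optimality along $\phi$ and $\psi$ so that the relevant singleton $\{z\}$ becomes optimal for a variety $V''$ in $Z_\psi$ of dimension $\leq m$; only then does $\ZP(Z_\psi,m,d-1)$ apply. Your final sentence in Case B is also incorrect as stated: $W$ need not be a component of $V \cap \tilde W^*_L$ (it is merely contained in it), so bounding $\tilde W^*$ does not directly bound $W$. The paper handles this by noting that $W$ is optimal for the component of $U'_L \cap V$ containing it, a variety of dimension $\leq m-1$ over $L$, and invoking $\ZP(X_L,m-1,d)$ from a second induction on $m$ (carried out over the extension field $L$), an inductive layer your proposal does not set up.
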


It is of course possible to take the whole class of objects of $\mathfrak{C}$ as $\mathcal{S}$, but for applications, it can be useful to take $\mathcal{S}$ smaller as we will see later.

\begin{proof}	
	We prove (1) and (2) in parallel: Let $V$ be a subvariety of $X_L$. We can find an algebraically closed subfield $L_1$ of $L$ that has finite transcendence degree over $K$ and a subvariety $V_1$ of $X_{L_1}$ such that $V = (V_1)_{L}$. If $W$ is any optimal subvariety for $V$ in $X_L$, then by Lemma \ref{lem:basechange} it is equal to $(W_1)_L$ for an optimal subvariety $W_1$ for $V_1$ in $X_{L_1}$ such that $\delta(W_1) = \delta(W)$. Hence, it suffices to prove the theorem under the assumption that $L$ has finite transcendence degree over $K$.

	We will actually prove the stronger statement that the conclusions of (1) and (2) hold with any $X' \in \mathcal{S}$ in place of $X$. Arguing by induction on the transcendence degree of $L$ over $K$, one can see that it is enough to prove (1) and (2) when $L$ has transcendence degree 1 over $K$.
	
	We proceed by induction on $m$. Clearly, $\ZP(X_L,0,d)$ holds for all $d$. Let $V$ be a subvariety of $X_L$ of dimension $m > 0$. We will deduce that $V$ contains at most finitely many optimal subvarieties of defect at most $d$ from either of the following hypotheses:
	\begin{enumerate}
	\item $\ZP (X'_L, m-1,d)$ and $\ZP (X',m,d)$ hold for all $X' \in \mathcal{S}$, or
	\item $\ZP (X'_L,m-1,d)$ and $\ZP(X',m,d-1)$ hold for all $X' \in \mathcal{S}$ and $V$ is not the base change of a subvariety of $X$.
	\end{enumerate}
	
	As all $X' \in \mathcal{S}$ satisfy the same hypotheses as $X$, it will follow that the conclusions of (1) and (2) hold for all $X' \in \mathcal{S}$ as desired.
	
	\medskip
	
	 If $V=V'_L$ for some $V'\subset X$, then we must be in case (1) and we are done by Lemma \ref{lem:basechange} and $\ZP(X,m,d)$. We will then assume that this is not the case.
	
	Let $V'_L$ be the smallest subvariety of $X_L$ that is the base change of some $V'\subset X$ and contains $V$. It exists and has dimension $m$ or $m+1$ by Lemma \ref{lem:ausefullemma} but the first case is not possible because it would imply that $V=V'_L$.
	
	Let $W \subset V$ be an optimal subvariety for $V$ in $X_L$ that has defect at most $d$. We can assume without loss of generality that $W \neq V$.
	
	We let $W'_L$ be the smallest subvariety of $X_L$ that is the base change of some $W'\subset X$ and contains $W$. By Lemma \ref{lem:ausefullemma}, we have either $W = W'_L$ or $\dim W'_L = \dim W + 1$.
	
	If $W = W_L'$, then $W$ is contained in $Z'_L \subset V$ for $Z'\subset X$ maximal among all finite unions of subvarieties $Z'' \subset X$ with $Z''_L \subset V$ (and equal to the closure of the union of all such $Z''$). Since $V\neq V'_L$, the dimension of $Z'_L$ is at most $\dim V-1$. Of course, $W$ is also optimal for the component of $Z'_L$ that contains it. It follows that $W$ lies in a finite set because $\ZP(X_L, m-1,d)$ holds. We can therefore assume that $W \subsetneq W'_L$ and so $\dim W'_L = \dim W + 1$.
	
	
	Recall that, by Lemma \ref{lem:basechange}, an optimal subvariety for $V'_L$ in $X_L$ is the base change of an optimal subvariety for $V'$ in $X$. Let $U'_L$ be such an optimal subvariety for $V'_L$ in $X_L$ that contains $W'_L$ and satisfies $\delta(U'_L) \leq \delta(W'_L)$. Note that $\langle V \rangle = \langle V'_L \rangle$ and $\langle W \rangle = \langle W'_L\rangle$ because, for instance, $V'_L\subset\langle V \rangle \cap V'_L $ by definition. It follows that $\delta(W'_L) = \delta(W)-1$, so $\delta(U'_L) \leq d-1$.
	
	We claim that $U'_L \neq V'_L$. If not, we could deduce that $\delta(V'_L) = \delta(U'_L) \leq \delta(W'_L)$. It would then follow that
	\[ \delta(V) = \delta(V'_L)+1 \leq \delta(W'_L)+1 = \delta(W),\]
	which contradicts the optimality of $W \subsetneq V$ for $V$. 
	
	We deduce that $U'_L \subsetneq V'_L$ and hence $U'_L \cap V \subsetneq V$, otherwise $U'_L\supset V$ would contradict the minimality of $V'_L$. Since $W \subset U'_L \cap V$ and $W$ has defect at most $d$ and is optimal for a component of $U'_L \cap V$ in $X_L$, it suffices to show that $U'$, and therefore $U'_L$, belongs to a finite set. Then we are done as any component of $U'_L \cap V$ that contains $W$ has dimension at most $m-1$ and $\ZP(X_L,m-1,d)$ holds.
				
	By the optimality of $U'$ for $V'$ in $X$ and Proposition \ref{prop:oimplieswo}, we have that $U'$ is also weakly optimal for $V'$ in $X$. It follows from \ref{ax:5} that there exists a finite set of pairs $(\phi,\psi)$ of distinguished morphisms $\phi: Y_\phi \to X$ and $\psi: Y_\phi \to Z_\psi$, depending only on $V'$, such that there exists a pair $(\phi,\psi)$ in this set and $z \in Z_\psi(K)$ such that $\phi$ has finite fibers and $\langle U' \rangle_{\ws}$ is an irreducible component of $\phi(\psi^{-1}(z))$. By our hypothesis on $\mathcal{S}$, we can assume that $Y_\phi,Z_\psi \in \mathcal{S}$. Since $\phi$ and $\psi$ vary in a finite set, we can assume them fixed.
		
	We are now going to use $\phi$ and $\psi$ to move the problem to the two distinguished varieties $Y_\phi$ and $Z_\psi$, where we will apply $\ZP(Y_\phi,m,d-1)$ and $\ZP(Z_\psi,m,d-1)$.
	
	Let $\hat{U}$ be an irreducible component of $\psi^{-1}(z) \cap \phi^{-1}(U')$ such that $\phi(\hat{U})$ is dense in $U'$. Since $\phi$ has finite fibers, we deduce that $\dim \hat{U} = \dim U'$ and therefore $\hat{U}$ must be an irreducible component of $\phi^{-1}(U')$. Let $\hat{V}$ be an irreducible component of $\phi^{-1}(V')$ that contains $\hat{U}$. As $\phi$ has finite fibers, it follows that $\dim \hat{V} \leq \dim V' \leq m+1$.
	
	Furthermore, $\hat{U}$ is optimal for $\hat{V}$: Otherwise, we could find $\tilde{U}$ such that $\hat{U} \subsetneq \tilde{U} \subset \hat{V}$ and $\delta(\tilde{U}) \leq \delta(\hat{U})$. Applying $\phi$ and taking closures yields a contradiction to the optimality of $U'$ for $V'$ thanks to Lemma \ref{lem:imageisoptimal}(2). The same lemma shows that $\delta(\hat{U}) = \delta(U')$.
	
	We have $\hat{U} \subset \psi^{-1}(z) \cap \hat{V}$ by construction. We want to show that $\hat{U}$ is an irreducible component of $\psi^{-1}(z) \cap \hat{V}$. If this were not the case, then we could find a subvariety $\tilde{U}$ such that $\hat{U} \subsetneq \tilde{U} \subset \psi^{-1}(z) \cap \hat{V}$. Since $\phi$ has finite fibers, it would follow that the closure of $\phi(\tilde{U})$ strictly contains $U'$, while still being contained in $\phi(\psi^{-1}(z))$ as well as in $V'$.
	
	But, since $\phi$ has finite fibers, \ref{ax:4} implies that every irreducible component of $\phi(\psi^{-1}(z))$ has the same dimension. Hence, the above contradicts $U'$ being weakly optimal for $V'$. It follows that $\hat{U}$ is an irreducible component of $\psi^{-1}(z) \cap \hat{V}$.

	We let $V''$ denote the closure of $\psi(\hat{V})$ in $Z_\psi$; this is a subvariety of $Z_\psi$. We know that $\{z\} = \psi(\hat{U})$.
	
	By Lemma \ref{lem:smooth} and the Fiber Dimension Theorem, we can find $\hat{V}_0 \subset \hat{V}$ open and dense such that $\psi(\hat{V}_0)$ is open and dense in $V''$ and $\psi|_{\hat{V}_0}: \hat{V}_0 \to \psi(\hat{V}_0)$ is smooth of relative dimension
	\[ n = \dim \hat{V}_0 - \dim \psi(\hat{V}_0) = \dim \hat{V} - \dim V''.\]
	
	We distinguish two cases: First, if $\hat{U}$ is contained in $\hat{V} \backslash \hat{V}_0$, then $\hat{U}$ is contained in one of finitely many subvarieties of $\hat{V}$ of dimension at most $m$. As $\hat{U}$ is optimal for $\hat{V}$, it is also optimal for that subvariety. We have
	\[ \delta(\hat{U}) = \delta(U') \leq \delta(W') = \delta(W)-1 \leq d-1.\]
	Since $Y_\phi \in \mathcal{S}$, we have that $\ZP(Y_\phi,m,d-1)$ holds. It follows that there are then only finitely many possibilities for $\hat{U}$ and hence for $U'$.
	
	Let us now assume that $\hat{U} \cap \hat{V}_0 \neq \emptyset$. Since $\hat{U}$ is an irreducible component of $\psi^{-1}(z) \cap \hat{V} = \psi|_{\hat{V}}^{-1}(z)$, we then have that $\hat{U} \cap \hat{V}_0$ is an irreducible component of $\psi|_{\hat{V}_0}^{-1}(z)$. It follows that $n = \dim(\hat{U} \cap \hat{V}_0) = \dim \hat{U}$.
	
	Since we have assumed that $W\neq W'_L$, we have $\dim W' > 0$ and hence $n = \dim \hat{U} = \dim U' > 0$.
	
	The special subvariety $\langle \hat{U} \rangle$ contains $\langle \hat{U} \rangle_{\ws}$. By construction, $\langle \hat{U} \rangle_{\ws}$ is contained in an irreducible component of $\psi^{-1}(z)$. As $\langle U' \rangle_{\ws}$ is equal to an irreducible component of $\phi(\psi^{-1}(z))$, $\langle \hat{U} \rangle_{\ws}$ must actually be equal to a component of $\psi^{-1}(z)$: Otherwise, it would follow for dimensional reasons that $U' \subset \phi(\langle \hat{U} \rangle_{\ws}) \cap \langle U' \rangle_{\ws} \subsetneq \langle U' \rangle_{\ws}$ since $\phi$ has finite fibers and all components of $\psi^{-1}(z)$ have the same dimension, a contradiction with Lemmas \ref{lem:(pre)image} and \ref{lem:intersection}.
	
	Since $\hat{U}$ is optimal for $\hat{V}$ and has defect at most $d-1$, it then follows from Lemma \ref{lem:imageisoptimal}(1) that $\{z\} = \psi(\hat{U})$ is optimal for $V''$ in $Z_\psi$ and has defect at most $d-1$. Furthermore, $V''$ is a subvariety of $Z_\psi$ of dimension
	\[ \dim \hat{V} - n \leq \dim \hat{V} - 1 \leq \dim V' -1 = m.\]
	
	As $Z_\psi$ lies in $\mathcal{S}$, we have that $\ZP(Z_\psi,m,d-1)$ holds, thus $z$ lies in a finite set. As $\hat{U}$ is a component of $\psi^{-1}(z) \cap \hat{V}$, it lies in a finite set as well and therefore the same holds for $U'$.
	\end{proof}
	
	We will now apply Theorem \ref{thm:fieldofdef}(2) to certain connected mixed Shimura varieties of Kuga type. For this, we first prove a lemma:
	
	\begin{lem}\label{lem:aoquotient}
	Let $K$ be an algebraically closed field and let $\mathfrak{C}$ be a distinguished category over $K$. Let $X$ be a distinguished variety and let $\phi: X \to Y$ be a surjective distinguished morphism. If $\ZP(X,m,0)$ holds for all non-negative integers $m$, then the same holds for $Y$.
	\end{lem}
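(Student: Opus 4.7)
The plan is to lift each optimal defect-$0$ (i.e.\ maximal special) subvariety $W' \subset V' \subset Y$ to an optimal defect-$0$ subvariety $\tilde{W} \subset \tilde{V} \subset X$, where $\tilde{V}$ is a component of $\phi^{-1}(V')$ dominating $V'$, and then apply $\ZP(X, m+r, 0)$, where $r = \dim X - \dim Y$. The key preliminary observation is that $X$ is itself a special subvariety of itself (as the image of the distinguished morphism $\id_X$), so Lemma \ref{lem:dimspecial} applies to $\phi: X \to Y$ and gives that every fiber of $\phi$ has the same dimension $r$. In particular, if $\tilde{V}$ is any irreducible component of $\phi^{-1}(V')$ dominating $V'$, Lemma \ref{lem:dimconstant} yields $\dim \tilde{V} = \dim V' + r \leq m + r$; surjectivity of $\phi$ guarantees at least one such $\tilde{V}$ exists, and there are only finitely many.

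Now fix a maximal special subvariety $W'$ of $V'$, and pick a component $\tilde{V}$ of $\phi^{-1}(V')$ dominating $V'$ together with a component $\tilde{W}$ of $\phi|_{\tilde{V}}^{-1}(W')$ that dominates $W'$ (this exists by surjectivity of $\phi|_{\tilde{V}} : \tilde{V} \to V'$). I claim first that $\tilde{W}$ is special: any component $T$ of $\phi^{-1}(W')$ containing $\tilde{W}$ also dominates $W'$, so the constant-fiber-dimension argument (via Lemma \ref{lem:dimconstant}) gives $\dim T = \dim W' + r = \dim \tilde{W}$, hence $T = \tilde{W}$; since $W'$ is special, Lemma \ref{lem:(pre)image} then shows $\tilde{W}$ is special.

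Next I claim $\tilde{W}$ is optimal for $\tilde{V}$ in $X$ of defect $0$. Suppose $\tilde{W} \subset \tilde{U} \subset \tilde{V}$ with $\delta(\tilde{U}) = 0$, i.e.\ $\tilde{U}$ special. Then by Lemma \ref{lem:(pre)image}, $\phi(\tilde{U})$ is special; it satisfies $W' \subset \phi(\tilde{U}) \subset V'$, so maximality of $W'$ among special subvarieties of $V'$ forces $\phi(\tilde{U}) = W'$. Since $\tilde{U}$ is irreducible and dominates $W'$, the constant fiber dimension yields $\dim \tilde{U} = \dim W' + r = \dim \tilde{W}$, so $\tilde{U} = \tilde{W}$. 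This establishes optimality.

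Finally, applying the hypothesis $\ZP(X, m+r, 0)$ to each of the finitely many $\tilde{V}$'s shows that there are only finitely many possible $\tilde{W}$, and the map $\tilde{W} \mapsto W' = \phi(\tilde{W})$ (a subvariety of $Y$, closed by \ref{ax:4} applied to the distinguished morphism obtained by composing $\phi$ with any distinguished morphism whose image is the special $\tilde{W}$) recovers $W'$. Hence only finitely many $W'$, proving $\ZP(Y, m, 0)$. The only mildly subtle step is verifying that irreducible components of $\phi^{-1}(W')$ over $W'$ have the predicted dimension, but this is exactly the constant-fiber-dimension consequence of $X$ being special in itself, so no real obstacle arises.
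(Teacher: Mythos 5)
Your overall strategy matches the paper's very terse proof: lift a maximal special subvariety $W'$ of $V'$ to a maximal special subvariety of some irreducible component of $\phi^{-1}(V')$ and then invoke $\ZP$ in $X$. The observation that $X$ is special in itself, so that Lemma \ref{lem:dimspecial} gives constant fiber dimension $r$ for $\phi$, is exactly the right starting point.

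However, your construction of $\tilde{W}$ has a genuine gap. You pick $\tilde{V}$ first (a component of $\phi^{-1}(V')$ dominating $V'$) and then assert that a component $\tilde{W}$ of $\phi|_{\tilde{V}}^{-1}(W')$ dominating $W'$ exists ``by surjectivity of $\phi|_{\tilde{V}}\colon \tilde{V}\to V'$''. But $\tilde{V}$ is a component of the preimage of an \emph{arbitrary} subvariety $V'$, so $\tilde V$ is not special and \ref{ax:4} gives no closedness of $\phi(\tilde V)$; all you know is that $\phi|_{\tilde V}$ is dominant, and dominance does not guarantee that $\phi(\tilde V)\cap W'$ is dense in $W'$. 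The subsequent identity $\dim\tilde W = \dim W' + r$ (used twice: to prove $\tilde W$ special, and again in the optimality step) is likewise unjustified as written, since Lemmas \ref{lem:dimspecial} and \ref{lem:dimconstant} apply to $\phi$ or $\phi$ restricted to a special subvariety, not to $\phi|_{\tilde V}$. The correct order of choices is: first take $\tilde{W}$ to be a component of $\phi^{-1}(W')$ that dominates $W'$ (one exists because $\phi$ is surjective and $W'$ is irreducible); then $\tilde W$ is special by Lemma \ref{lem:(pre)image} and $\dim\tilde W = \dim W' + r$ by Lemma \ref{lem:dimconstant}. Then take $\tilde V$ to be any component of $\phi^{-1}(V')$ containing $\tilde W$ — there is no need for $\tilde V$ to dominate $V'$, since the Fiber Dimension Theorem already gives $\dim\tilde V \le \dim V' + r$ for every component (and since $\ZP(X,m',0)$ is assumed for all $m'$, the precise bound is not even needed). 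With this choice your optimality argument goes through unchanged: any special $\tilde U$ with $\tilde W\subset\tilde U\subset\tilde V$ has $\phi(\tilde U) = W'$ by maximality of $W'$ in $V'$, hence $\tilde U\subset\phi^{-1}(W')$ and, since $\tilde W$ is a component of $\phi^{-1}(W')$, $\tilde U = \tilde W$.
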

	
	\begin{proof}
	Let $V \subset Y$ be a subvariety. If $S \subset V$ is optimal for $V$ in $Y$ and $\delta(S) = 0$, then $S$ is a maximal special subvariety of $V$. The lemma now follows from the fact that any irreducible component of $\phi^{-1}(S)$ is a maximal special subvariety of some irreducible component of $\phi^{-1}(V)$.
	\end{proof}
	
	In the following corollary, $\mathbb{H}_g$ denotes the Siegel upper half space of dimension $\frac{g(g+1)}{2}$ and $(\GSp_{2g},\mathbb{H}_g)$ denotes the connected pure Shimura datum associated to the moduli space of principally polarized abelian varieties of dimension $g$.
		
	\begin{cor}\label{cor:uncondzp}
	Let $X = \mathcal{F}_{\mathrm{mSvK}}(P,X^+,\Gamma)$ be a connected mixed Shimura variety of Kuga type of dimension $3$. Let $W$ be the unipotent radical of $P$ and let $(P,X^+)/W$ denote the quotient Shimura datum (see Remark \ref{rmk:quotientshimuradatum}). Suppose that $(P,X^+)/W$ is a Shimura subdatum of $(\GSp_{2g},\mathbb{H}_g)$ for some $g \in \mathbb{N}$. Let $\bar{\mathbb{Q}} \subset K$ be an extension of algebraically closed fields and let $C \subset X_K$ be a curve that is not the base change of a curve in $X$. Then $C$ contains at most finitely many optimal subvarieties.
	\end{cor}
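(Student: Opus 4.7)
The plan is to invoke Theorem \ref{thm:fieldofdef}(2) with the very distinguished category $\mathfrak{C} = \mathfrak{C}_{\mathrm{mSvK}}$ over $\bar{\mathbb{Q}}$ (Theorem \ref{thm:gaoweakfiniteness}), with $L = K$, $V = C$, and $m = 1$. First I would reduce to bounding the optimal points of $C$ of defect at most $1$: any optimal subvariety $W$ of $C$ has dimension $0$ or $1$, and if $\dim W = 1$ then $W = C$ by irreducibility, accounting for the unique top-dimensional optimal subvariety. If $W = \{p\}$ is a point, then since $C$ is the only subvariety of $C$ strictly containing $p$, $W$ is optimal if and only if $\delta(p) < \delta(C) = \dim \langle C \rangle - 1 \leq \dim X - 1 = 2$, so $\delta(p) \leq 1$.

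Next I would specify the class $\mathcal{S}$ required by Theorem \ref{thm:fieldofdef}. I take $\mathcal{S}$ to be the class of connected mixed Shimura varieties of Kuga type over $\bar{\mathbb{Q}}$ whose pure part is obtained from $(\GSp_{2g},\mathbb{H}_g)$ by iterated Shimura subdata and quotient Shimura data. Then $X \in \mathcal{S}$ by hypothesis, and $\mathcal{S}$ is closed under Shimura subdata and quotient Shimura data thanks to Proposition 2.9 in \cite{G17} together with Remark \ref{rmk:imageofu}. The proof of Theorem \ref{thm:gaoweakfiniteness} rests on Gao's Theorem 8.2 in \cite{G18}, which provides pairs $(\phi,\psi)$ in which $\phi$ is a Shimura embedding (so has finite fibers by Proposition \ref{prop:pinkfacts}) and $\psi$ is a quotient Shimura morphism; hence the varieties $Y_\phi$ and $Z_\psi$ realizing \ref{ax:5} may be chosen in $\mathcal{S}$, as required.

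I would then apply Theorem \ref{thm:fieldofdef}(2) with $d = 1$. The hypothesis $\ZP(X',0,1)$ is vacuous because every zero-dimensional variety is a finite set. The remaining hypothesis $\ZP(X',1,0)$ asserts that every curve in $X'$ is either itself special or contains only finitely many special points, which is the André-Oort conjecture for curves in $X'$. For the pure part of $X'$, being an iterated Shimura subquotient of the Hodge type datum $(\GSp_{2g},\mathbb{H}_g)$ and hence of abelian type, this is settled by the work of Pila-Shankar-Tsimerman and Tsimerman; the extension to the Kuga type case follows by combining this with Manin-Mumford on the (semi)abelian fibers, as carried out by Gao. The conclusion of Theorem \ref{thm:fieldofdef}(2) then bounds the optimal subvarieties of $C$ of defect at most $1$, which together with $C$ itself exhausts all optimal subvarieties.

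The main obstacle lies in securing $\ZP(X',1,0)$ for every $X' \in \mathcal{S}$: while the pure André-Oort statement is now completely settled, the reduction from the mixed Kuga type case to the pure case requires care regarding the structure of special points in Kuga type mixed Shimura varieties (arising from CM points on the base combined with torsion points on the abelian fibers) and the corresponding Manin-Mumford input. Once this is in place, the bookkeeping with the class $\mathcal{S}$ and the direct application of Theorem \ref{thm:fieldofdef}(2) are routine.
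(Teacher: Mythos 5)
Your overall strategy is the same as the paper's: apply Theorem~\ref{thm:fieldofdef}(2) with $m = d = 1$, using Theorem~\ref{thm:gaoweakfiniteness} to know that $\mathfrak{C}_{\mathrm{mSvK}}$ over $\bar{\mathbb{Q}}$ is very distinguished, and Gao's Theorem~8.2 in \cite{G18} to describe a suitable class $\mathcal{S}$. The reduction to defect $\leq 1$ via $\delta(C) \leq \dim X - 1 = 2$ is also what the paper does, and your observation that $\ZP(X',0,1)$ is trivial is correct.

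However, there is a genuine gap exactly where you flag your ``main obstacle'': you do not actually establish $\ZP(X',1,0)$ for the elements of $\mathcal{S}$, and the route you sketch (re-deriving André-Oort for each $X'$ from scratch by combining pure abelian-type André-Oort with Manin-Mumford on the Kuga-type fibers) is not the argument the paper needs and is not completed in your proposal. The paper instead dispatches this step formally inside the framework: it first gets $\ZP(X,m,0)$ for all $m$ for the original $X$ directly from Tsimerman's Theorem~5.2 and Gao's Theorem~13.6 in \cite{G17} (using the hypothesis that the pure part of $(P,X^+)$ embeds into $(\GSp_{2g},\mathbb{H}_g)$), and then \emph{propagates} the André-Oort property through $\mathcal{S}$. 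Concretely, the paper's $\mathcal{S}$ consists of iterated quotients of Shimura subdata of $(P,X^+)$ itself, and $\ZP(\ast,m,0)$ is preserved under passing to a Shimura subdatum by Lemma~\ref{lem:zilberpinkfiniteembedding} (Shimura embeddings are finite by Proposition~\ref{prop:pinkfacts}, and the congruence subgroup can be changed by Lemma~\ref{lem:zilberpinkfinitecover}), and preserved under passing to a quotient Shimura datum by Lemma~\ref{lem:aoquotient}. Your proposal never invokes Lemmas~\ref{lem:zilberpinkfiniteembedding}, \ref{lem:zilberpinkfinitecover}, and~\ref{lem:aoquotient}, which is precisely the missing idea; with them, the ``bookkeeping with the class $\mathcal{S}$'' is not merely routine but is the whole content of the proof after the one citation of Tsimerman and Gao. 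Note also that your $\mathcal{S}$ (all Kuga-type varieties whose pure part is an iterated subquotient of $(\GSp_{2g},\mathbb{H}_g)$) is larger than needed, which makes your attempted André-Oort step harder; the paper deliberately takes $\mathcal{S}$ as small as possible so that the propagation chain starts at the single known instance $\ZP(X,m,0)$.
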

	
	Corollary \ref{cor:uncondzp} applies for example to the moduli space of principally polarized abelian surfaces or to the (direct) product of the Legendre family of elliptic curves (see Section \ref{sec:legendre}) with the moduli space of elliptic curves. It also applies to the cube of the moduli space of elliptic curves, for which the corresponding statement has been proven by Pila in Theorem 1.4 in \cite{PilaFermat}.
	
	After the completion of this manuscript, Pila, Shankar, and Tsimerman announced a proof of the Andr\'e-Oort conjecture in full generality in \cite{Andre_Oort_Preprint}. As a consequence of their work, the hypothesis on $(P,X^+)/W$ in Corollary \ref{cor:uncondzp} can be removed.

	\begin{proof}
	Since $X$ has dimension $3$, we have that $\delta(C) \leq 2$ and any proper optimal subvariety of $C$ has defect at most $1$. By Theorem \ref{thm:gaoweakfiniteness}, the category of connected mixed Shimura varieties of Kuga type is very distinguished.
	
	We want to apply Theorem \ref{thm:fieldofdef}(2) with $m = d = 1$, choosing the class $\mathcal{S}$ there as small as possible. Note that $\ZP(X',0,1)$ is trivially satisfied for every $X' \in \mathcal{S}$ while $\ZP(X',1,0)$ is the Andr\'e-Oort conjecture for curves in $X'$. We have that $\ZP(X,m,0)$ holds for all non-negative integers $m$, i.e., that the Andr\'e-Oort conjecture holds for $X$, thanks to Theorem 5.2 in \cite{Tsimerman} and Theorem 13.6 in \cite{G17}.
	
	Theorem 8.2 in \cite{G18} and the proof of Theorem \ref{thm:gaoweakfiniteness} show that we first have to include in $\mathcal{S}$ all triples $((Q,Y^+)/N,\ast)$, where $(Q,Y^+)$ is a Shimura subdatum of $(P,X^+)$, $N$ is a normal algebraic subgroup of $Q$ whose reductive part is semisimple, and $(Q,Y^+)/N$ denotes the quotient Shimura datum (see Remark \ref{rmk:quotientshimuradatum}). This step must then be iterated for every connected mixed Shimura datum thus obtained.
	
	By Lemma \ref{lem:zilberpinkfiniteembedding}, $\ZP(\ast,m,0)$ is preserved when passing to a Shimura subdatum (by Lemma \ref{lem:zilberpinkfinitecover}, the validity of $\ZP(\ast,m,d)$ is independent from the choice of congruence subgroup). If we have $\ZP(\ast,m,0)$ for all non-negative integers $m$, this is also preserved when passing to a quotient Shimura datum by Lemma \ref{lem:aoquotient}. Hence, we have $\ZP(X',1,0)$ for all $X' \in \mathcal{S}$ as desired.
	\end{proof}

\section{Reduction to optimal singletons}\label{sec:redoptsing}

The following theorem is an analogue of Theorem 8.3 in \cite{DawRen} and Theorem 6.1 in \cite{BD} (see also \cite{HP}) and is similar to what was recently done by Cassani in \cite{Cass3} for connected mixed Shimura varieties of Kuga type. The reduction to optimal singletons (i.e., optimal subvarieties of dimension $0$) is useful if one wants to formulate and apply ``large Galois orbit" conjectures and o-minimal point counting, cf. Sections 8 to 10 in \cite{HP}.

\begin{thm}\label{thm:redoptsing}
 	Let $m$ and $d$ be non-negative integers. Let $K$ be an algebraically closed field and let $\mathfrak{C}$ be a distinguished category over $K$ that satisfies \ref{ax:5}. Let $X$ be a distinguished variety and let $\mathcal{S}$ be a class of distinguished varieties such that $X \in \mathcal{S}$ and for every $X' \in \mathcal{S}$ and every subvariety of $X'$ the set in \ref{ax:5} can be chosen with all $Y_\phi,Z_\psi \in \mathcal{S}$. Suppose that, for all $X'\in \mathcal{S}$, every subvariety of $X'$ of dimension at most $m$ contains at most finitely many optimal singletons of defect at most $d$. Then $\ZP (X,m,d)$ holds.
\end{thm}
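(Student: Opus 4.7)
The plan is to reduce the finiteness of optimal subvarieties of defect $\leq d$ in $V$ to the hypothesis about optimal singletons, by pulling back $W$ through the finite-fibre distinguished morphism provided by \ref{ax:5} and then pushing its pullback forward along the second morphism, where it becomes a point. Let $V \subset X$ have $\dim V \leq m$ and let $W \subset V$ be optimal for $V$ in $X$ with $\delta(W) \leq d$. By Proposition \ref{prop:oimplieswo}, $W$ is weakly optimal, so \ref{ax:5} applies to $V$ and produces, from a finite set of pairs depending only on $V$, some $(\phi, \psi)$ with $\phi: Y_\phi \to X$ of finite fibres, $\psi: Y_\phi \to Z_\psi$, $Y_\phi, Z_\psi \in \mathcal{S}$, and some $z \in Z_\psi(K)$ such that $\langle W \rangle_{\ws}$ is an irreducible component of $\phi(\psi^{-1}(z))$. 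It therefore suffices to prove finiteness of the $W$'s compatible with each fixed pair.

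Fix such a pair $(\phi,\psi)$. I first extract an irreducible component $\hat{S}$ of $\psi^{-1}(z)$ with $\phi(\hat{S}) = \langle W \rangle_{\ws}$ (using Lemma \ref{lem:specialclosed} and the fact that $\phi(\psi^{-1}(z))$ is the finite union of the closed sets $\phi(\hat{S}_i)$ over components $\hat{S}_i$). Using $W \subset \langle W \rangle_{\ws} = \phi(\hat{S})$ and the finite fibres of $\phi$, I pick an irreducible component $\hat{W}$ of $\hat{S} \cap \phi^{-1}(W)$ with $\phi(\hat{W}) = W$, and then an irreducible component $\hat{V}$ of $\hat{S} \cap \phi^{-1}(V)$ containing $\hat{W}$. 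A short computation (using Lemma \ref{lem:imageisoptimal}(2) and that $\phi$ has finite fibres) yields $\delta(\hat{W}) = \delta(W)$ and shows that $\hat{W}$ is optimal for $\hat{V}$ in $Y_\phi$: any strictly larger $\tilde{W}$ with $\hat{W} \subsetneq \tilde{W} \subset \hat{V}$ and $\delta(\tilde{W}) \leq \delta(\hat{W})$ would push forward to $\tilde{V} := \overline{\phi(\tilde{W})}$ of equal defect, strictly containing $W$ and contained in $V$, contradicting the optimality of $W$.

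The crux is to apply Lemma \ref{lem:imageisoptimal}(1) to the distinguished morphism $\psi$ on the pair $\hat{W} \subset \hat{V}$. Choose via Lemma \ref{lem:smooth} a dense open $V_0 \subset \hat{V}$ on which $\psi$ is smooth; if $\hat{W}$ fails to meet $V_0$, then $\hat{W}$ lies in one of finitely many proper closed subvarieties of $\hat{V}$, and one handles this branch by a mild induction on $\dim \hat{V}$ combined with Lemma \ref{lem:zilberpinkfiniteembedding} applied to $\phi$. The remaining hypothesis is that $\langle \hat{W} \rangle$ contains an irreducible component of a fibre of $\psi$; for this I identify $\langle \hat{W} \rangle_{\ws}$ with $\hat{S}$ itself. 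Indeed $\phi(\langle \hat{W} \rangle_{\ws})$ is weakly special (Lemma \ref{lem:(pre)image}) and contains $W$, hence contains $\langle W \rangle_{\ws} = \phi(\hat{S})$; since $\phi$ has finite fibres and $\langle \hat{W} \rangle_{\ws} \subset \hat{S}$, a dimension comparison forces equality. Lemma \ref{lem:imageisoptimal}(1) now asserts that $\{z\} = \overline{\psi(\hat{W})}$ is optimal for $V'' := \overline{\psi(\hat{V})}$ in $Z_\psi$ with defect at most $\delta(\hat{W}) = \delta(W) \leq d$.

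Since $\dim V'' \leq \dim \hat{V} \leq \dim V \leq m$ and $Z_\psi \in \mathcal{S}$, the standing hypothesis on optimal singletons produces finitely many admissible $z$. For each $z$ there are finitely many components $\hat{S}$ of $\psi^{-1}(z)$ and hence finitely many possibilities for $\langle W \rangle_{\ws} = \phi(\hat{S})$. Finally, $W$ must be an irreducible component of $\langle W \rangle_{\ws} \cap V$: the component $W^*$ containing $W$ satisfies $\langle W^* \rangle \subset \langle W \rangle$ and $W \subset W^* \subset V$, forcing $\delta(W^*) \leq \delta(W)$ and thus $W = W^*$ by optimality. This yields at most finitely many $W$. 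The main obstacle is the verification of the fibre-component condition in Lemma \ref{lem:imageisoptimal}(1): it hinges on the finite-fibre property of $\phi$ from \ref{ax:5}, without which one cannot identify $\langle \hat{W} \rangle_{\ws}$ with a full component of $\psi^{-1}(z)$ and hence cannot transfer optimality across $\psi$.
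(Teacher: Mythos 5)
There is a genuine gap in your argument, localized to the choice of $\hat{V}$. You take $\hat{V}$ to be an irreducible component of $\hat{S} \cap \phi^{-1}(V)$, where $\hat{S}$ is a component of $\psi^{-1}(z)$. But then $\hat{V} \subset \hat{S} \subset \psi^{-1}(z)$, so $\psi(\hat{V}) = \{z\}$ and hence $V'' := \overline{\psi(\hat{V})} = \{z\}$. The conclusion from Lemma~\ref{lem:imageisoptimal}(1), that $\{z\}$ is optimal for $V''$ in $Z_\psi$, is then trivially true (every point is optimal for itself) and carries no information. More seriously, the singleton hypothesis is now being applied to a \emph{different} subvariety $V'' = \{z\}$ of $Z_\psi$ for each $W$, and finiteness of optimal singletons inside a single subvariety does not bound the set of $z$'s across infinitely many one-point subvarieties. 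The defect bound $\delta(\{z\})\leq d$ alone is not enough: infinitely many $z$ may lie in low-dimensional special subvarieties.

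The fix, and the paper's choice, is to take $\hat{V}$ to be an irreducible component of $\phi^{-1}(V)$ (\emph{not} intersected with $\psi^{-1}(z)$ or $\hat{S}$) that contains $\hat{W}$. Since $\phi$ and $V$ are fixed, $\phi^{-1}(V)$ has only finitely many components, so $\hat{V}$ — and therefore $\tilde{V}:=\overline{\psi(\hat{V})}$ — ranges over a finite set that is independent of $W$ and $z$. One then shows $\{z\}$ is optimal for this fixed $\tilde{V}$, which has $\dim\tilde{V}\leq m$, and the singleton hypothesis applied to $\tilde{V}\subset Z_\psi$ gives finitely many admissible $z$; the rest of your concluding argument (recovering $\langle W\rangle_{\ws}$ from $z$ and then $W$ from $\langle W\rangle_{\ws}\cap V$) then goes through. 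You also still need to verify that $\hat{W}$ is a component of $\psi^{-1}(z)\cap\hat{V}$ (this uses the weak optimality of $W$ and the constancy of fibre dimension, not just containment), and the degenerate branch where $\hat{W}$ misses the smooth locus is best handled by the paper's outer induction on $m$: the induction hypothesis $\ZP(X,m-1,d)$ together with Lemma~\ref{lem:zilberpinkfiniteembedding} applied to $\phi$ gives $\ZP(Y_\phi,m-1,d)$, which finishes that branch; a "mild induction on $\dim\hat{V}$" alone does not obviously yield a finiteness statement for non-singleton optimal subvarieties of $\hat{V}$.
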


\begin{proof}
	We proceed by induction on $m$. Clearly $\ZP (X,0,d)$ holds. We are going to deduce $\ZP (X,m,d)$ from $\ZP (X,m-1,d)$ and from the fact that all subvarieties of dimension at most $m$ of any $X'\in \mathcal{S}$ contain at most finitely many optimal singletons of defect at most $d$.
	
	We fix a subvariety $V$ of $X$ with $\dim V\leq m$ and a subvariety $W\subset V$ optimal for $V$ in $X$ with $\delta(W)\leq d$.
	
	By Proposition \ref{prop:oimplieswo}, $W$ is weakly optimal for $V$ in $X$ and \ref{ax:5} implies that there is a finite set of pairs $(\phi, \psi)$ of distinguished morphisms $\phi: Y_\phi \to X$ and $\psi: Y_\phi \to Z_\psi$, depending only on $V$, such that there exist a pair $(\phi,\psi)$ in this set and $z \in Z_\psi(K)$ such that $\phi$ has finite fibers and $\langle W \rangle_{\ws}$ is an irreducible component of $\phi(\psi^{-1}(z))$. By our hypothesis, we can assume that $Y_\phi,Z_\psi \in \mathcal{S}$.
	As the pair $(\phi,\psi)$ varies in a finite set, we can assume it is fixed.
		
	We now set $\hat W$ to be an irreducible component of $\psi^{-1}(z) \cap \phi^{-1}(W)$ such that $\phi(\hat{W})$ is dense in $W$ and $\hat V$ to be an irreducible component of $\phi^{-1}(V)$ that contains $\hat{W}$. We recall that $\phi$ has finite fibers and thus $\dim \hat{W} = \dim W$ and $\dim \hat{V} \leq \dim V $. Moreover, Lemma \ref{lem:imageisoptimal}(2) implies that $\delta( \hat{W}) = \delta(W) \leq d$.
	
	We claim that $\hat W$ is optimal for $\hat V$ in $Y_\phi$. If this were not the case, there would exist a $\hat U \supsetneq \hat W$ optimal for $\hat V$ and with $\delta (\hat U)\leq \delta (\hat W)$. Using Lemma \ref{lem:imageisoptimal}(2), we see that this would contradict the optimality of $W$ for $V$.

	By definition we have that $\hat W$ is contained in an irreducible component of $\psi^{-1}(z) \cap\hat V$. If this containment were strict, we could find a subvariety $\hat U$ with $\hat W\subsetneq \hat U \subset \psi^{-1}(z) \cap\hat V$ but then, as $\phi$ has finite fibers, $W$ would be strictly contained in the Zariski closure of $\phi(\hat U)\subset \phi(\psi^{-1}(z)) \cap V$. This contradicts the weak optimality of $W$ because every irreducible component of $\phi(\psi^{-1}(z))$ has the same dimension.
	Thus $\hat W$ has to be an irreducible component of $\psi^{-1}(z) \cap\hat V$.
	
	We now set $\tilde V\subset Z_\psi$ to be the Zariski closure of $\psi(\hat{V})$. Moreover, recall that $\psi(\hat{W})= \{z\}  $. By Lemma \ref{lem:smooth} and the Fiber Dimension Theorem, we can find $\hat{V}_0 \subset \hat{V}$ open and dense such that $\psi(\hat{V}_0)$ is open and dense in $\tilde V$ and $\psi|_{\hat{V}_0}: \hat{V}_0 \to \psi(\hat{V}_0)$ is smooth of relative dimension $n = \dim \hat{V}_0 - \dim \psi(\hat{V}_0) = \dim \hat{V} - \dim \tilde V$. Note that both $\hat{V}$ and $\tilde{V}$ come from a finite set that depends only on $V$, $\phi$, and $\psi$.
	
	We distinguish two cases. First, if $\hat W\subset \hat{V} \backslash \hat{V}_0$, then $\hat W$ is contained in one of finitely many proper subvarieties of $\hat{V}$ and is optimal for that subvariety. By Lemma \ref{lem:zilberpinkfiniteembedding} and the fact that, by the inductive hypothesis, $\ZP(X,m-1,d) $ holds, after recalling that $\delta (\hat W)\leq d$, we have at most finitely many possibilities for $\hat W$ and therefore at most finitely many possibilities for $W$.
	
	Let us now assume that $\hat{W} \cap \hat{V}_0 \neq \emptyset$. Since $\hat{W}$ is an irreducible component of $ \psi|_{\hat{V}}^{-1}(z)$, we must have that $\hat{W} \cap \hat{V}_0$ is an irreducible component of $\psi|_{\hat{V}_0}^{-1}(z)$ and thus $n=\dim(\hat{W} \cap \hat{V}_0) = \dim \hat{W}=\dim W$.
	Note that $\dim \tilde  V=\dim \hat V-n\leq \dim V-n\leq m$. 
	
	We now want to apply Lemma \ref{lem:imageisoptimal}(1) to $\psi$ and $\hat W \subset \hat V \subset Y_{\phi}$. For this we need to verify that $\langle \hat{W} \rangle$ contains an irreducible component of a fiber of $\psi$. This fiber can be taken to be nothing but $\psi^{-1}(z)$: we have that $\langle \hat{W} \rangle_{\ws}\subset \langle \hat{W} \rangle$ is contained in an irreducible component of $\psi^{-1}(z)$. If this containment were strict, we would have that
	\[ W \subset \phi(\langle \hat{W} \rangle_{\ws}) \cap \langle W \rangle_{\ws} \subsetneq \langle W \rangle_{\ws}\]
	since $\langle W \rangle_{\ws}$ is an irreducible component of $\phi(\psi^{-1}(z))$, $\phi $ has finite fibers, and all components of $\psi^{-1}(z)$ have the same dimension. By Lemmas \ref{lem:(pre)image} and \ref{lem:intersection}, every irreducible component of $\phi(\langle \hat{W} \rangle_{\ws}) \cap \langle W \rangle_{\ws}$ is weakly special, a contradiction.
	
	We can then apply Lemma \ref{lem:imageisoptimal}(1) and obtain that $\{z\}=\psi (\hat W)$ is optimal for $\tilde V$ in $Z_\psi $ with $\delta (\{z\})\leq \delta (\hat W)\leq d$. Since, by assumption, every subvariety of $Z_\psi$ of dimension at most $m$ contains at most finitely many optimal singletons of defect at most $d$, the point $z$ lies in a finite set.
	
	Finally, recall that $\hat W$ is a component of $\psi^{-1}(z) \cap\hat V$, thus lies in a finite set and the same holds for $W$.	
\end{proof}

\section{Reduction of Zilber's formulation to Pink's formulation}\label{sec:zilberpink}

In this section, we introduce two different formulations of the Zilber-Pink conjecture for a distinguished category and show that they are equivalent in the presence of \ref{ax:5}.

The following is Zilber's version of the conjecture (see \cite{Zilber}), in the formulation by Habegger and Pila in \cite{HP} (see also there for a form of the conjecture that is closer to Zilber's original conjecture; their proof of the equivalence of the two versions works in any distinguished category). 

\begin{conj}\label{conj:zilber}
Let $K$ be an algebraically closed field and let $\mathfrak{C}$ be a distinguished category over $K$. For every distinguished variety $X$ and all non-negative integers $m$ and $d$, $\ZP(X,m,d)$ holds.
\end{conj}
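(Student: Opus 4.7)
The statement is the axiomatic formulation of the Zilber-Pink conjecture and is, in this full generality, open; the plan I would adopt therefore organizes the structural reductions already available in the paper and then appeals to the Pila-Zannier strategy category by category.

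First, under \ref{ax:5}, Theorem \ref{thm:redoptsing} reduces $\ZP(X,m,d)$ to the statement that every subvariety $V \subset X$ of dimension at most $m$ contains only finitely many optimal \emph{singletons} of defect at most $d$. This removes the positive-dimensional optimal components and concentrates everything on atypical special points. Second, when $\mathfrak{C}$ is very distinguished, Theorem \ref{thm:fieldofdef} lets one reduce the transcendence degree of the ground field, so that it is enough to prove the resulting finiteness statement over a suitable ``arithmetic'' base (in the classical examples, over $\bar{\mathbb{Q}}$). Together with Lemmas \ref{lem:zilberpinkfiniteembedding} and \ref{lem:zilberpinkfinitecover}, one can at this stage also replace $X$ by any object linked to it by distinguished morphisms with finite fibers, so the problem becomes intrinsic to the ambient distinguished variety up to natural coverings.

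Once these formal steps are carried out, the remaining task is attacked via the Pila-Zannier method: combine (i) a functional transcendence result of Ax-Schanuel type for the uniformization $u \colon \tilde X \to X^{\an}$; (ii) height upper bounds for atypical special points lying on $V$; (iii) lower bounds for the size of the Galois orbit of a special point of bounded ``complexity''; and (iv) a Pila-Wilkie counting theorem applied to a suitable set, definable in an o-minimal structure in which a fundamental domain for $u$ is definable. The interplay of (iii) and (iv) produces many Galois conjugates, which, coupled with (i), force any hypothetical infinite family of atypical points to lie in a proper special subvariety of $V$; an inductive argument on $\dim V$ and $\delta$ then closes the loop.

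The genuine obstacle is not the formal reduction but the arithmetic input (ii) and (iii): suitable height upper bounds and Galois lower bounds are currently available only in restricted settings (e.g., abelian varieties up to dimension two over $\bar{\mathbb{Q}}$, and Shimura varieties of abelian type after recent work of Pila-Shankar-Tsimerman and others, together with Binyamini-Schmidt-Yafaev). My realistic expectation from this plan is therefore \emph{not} a proof of Conjecture \ref{conj:zilber} in full generality, but rather its unconditional resolution in those categories where these inputs are already at hand, together with a clean conditional reduction of the general case to the corresponding Galois-orbit and height conjectures in each distinguished category.
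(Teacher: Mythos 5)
The statement you are asked to prove is a \emph{conjecture}, and the paper does not prove it --- in fact, as stated (for an arbitrary distinguished category) it is \emph{false}, and the paper says so explicitly right after stating Conjectures \ref{conj:zilber} and \ref{conj:pink}: in $\mathfrak{C}_{\mathrm{add}}$, take $X=\mathbb{G}_{a,K}^4$ with coordinates $x,y,z,w$ and $V\subset X$ the surface $x^2+y^2=z^2$, $x^3+y^3=w^3$. This surface is covered by lines through the origin which are all optimal for $V$ in $X$ (each has defect $2 = \delta(V)-1 < \delta(V)$, and any intermediate subvariety has strictly larger defect), so $\ZP(X,2,2)$ fails. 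Your proposal describes the statement as ``in this full generality, open'' and then sketches a Pila--Zannier attack under \ref{ax:5}; but the correct and more important observation you should have made first is that without an extra hypothesis such as \ref{ax:5} (which $\mathfrak{C}_{\mathrm{add}}$ famously fails) the conjecture is simply false, and the paper's counterexample is the cleanest way to see this.

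Beyond that missing observation, the structural reductions you list --- Theorem \ref{thm:redoptsing} (reduction to optimal singletons, under \ref{ax:5}), Theorem \ref{thm:fieldofdef} (descent in the ground field for very distinguished categories), Lemmas \ref{lem:zilberpinkfiniteembedding} and \ref{lem:zilberpinkfinitecover} (transport along finite distinguished morphisms), and finally Theorem \ref{thm:zilberequivpink} (equivalence with Pink's formulation, again under \ref{ax:5}) --- are indeed the paper's toolkit, and your description of the Pila--Zannier endgame is consistent with how the paper deploys external results in Sections \ref{sec:legendre} and \ref{sec:semiabelian}. But your deliverable here should have been either the explicit counterexample showing the literal statement cannot be proved, or a reformulation that restricts to categories satisfying the additional finiteness axiom, rather than treating ``open'' and ``false'' as interchangeable.
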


We next present a version that corresponds to Pink's Conjecture 1.2 in \cite{PinkUnpubl}. To formulate it, we introduce the following notation: If $X$ is a distinguished variety and $k \in \{0,\hdots,\dim X\}$, then $X^{[k]}$ denotes the union of all special subvarieties of $X$ of codimension at least $k$.

\begin{conj}\label{conj:pink}
Let $K$ be an algebraically closed field, let $\mathfrak{C}$ be a distinguished category over $K$, and let $m$ and $d$ be non-negative integers. Let $X$ be a distinguished variety and $V \subset X$ a subvariety of dimension at most $m$. Then $V \cap X^{[\dim X - \min\{\delta(V)-1,d\}]}$ is not Zariski dense in $V$.
\end{conj}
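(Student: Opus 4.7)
The plan is to derive Conjecture \ref{conj:pink} from Conjecture \ref{conj:zilber}, i.e., from the hypothesis that $\ZP(X,m,d)$ holds for the relevant parameters; this is the natural direction of the promised equivalence. Fix a subvariety $V \subset X$ with $\dim V \leq m$ and set $c = \min\{\delta(V) - 1,\, d\}$. A point $v \in V$ lies in $X^{[\dim X - c]}$ exactly when it is contained in some special subvariety of $X$ of dimension $\leq c$, which in turn is equivalent to $\delta(\{v\}) = \dim \langle \{v\} \rangle \leq c$.

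The key step is a maximal-dimension construction. For each such $v$, consider the family $\mathcal{U}_v$ of subvarieties $U \subset V$ with $v \in U$ and $\delta(U) \leq c$. It is nonempty, as $\{v\} \in \mathcal{U}_v$. Let $W_v \in \mathcal{U}_v$ be of maximal dimension. Then $W_v$ is optimal for $V$ in $X$: any $U$ with $W_v \subsetneq U \subset V$ and $\delta(U) \leq \delta(W_v) \leq c$ would also lie in $\mathcal{U}_v$, contradicting the maximality of $\dim W_v$. Moreover, $W_v \subsetneq V$, because $\delta(W_v) \leq c \leq \delta(V) - 1 < \delta(V)$.

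Since $c \leq d$, every $W_v$ is an optimal subvariety of $V$ in $X$ of defect at most $d$, and by $\ZP(X,m,d)$ there are only finitely many such. Hence the $W_v$ range over a finite set $\{W^{(1)},\ldots,W^{(N)}\}$ of proper closed subvarieties of $V$. Every point of $V \cap X^{[\dim X - c]}$ lies in one of them, so
\[
V \cap X^{[\dim X - \min\{\delta(V)-1,\,d\}]} \;\subset\; W^{(1)} \cup \cdots \cup W^{(N)} \;\subsetneq\; V,
\]
which shows that the intersection is not Zariski dense in $V$.

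The only non-routine point in the argument is the construction of $W_v$, but it is a purely formal maneuver whose success rests on the inequality $c \leq \delta(V)-1$, which forces $W_v \neq V$ and ensures that every $W_v$ fits inside the $\ZP$ framework with defect $\leq d$. Notably, no use of \ref{ax:5} is required for this direction; that axiom is expected to enter only in the harder converse (Pink $\Rightarrow$ Zilber) of the equivalence, where one must manufacture optimal subvarieties whose defect is controlled in terms of ambient codimension — a step presumably carried out in the paper's proof of Theorem \ref{thm:zilberequivpink}.
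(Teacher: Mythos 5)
Your argument is correct and, up to packaging, it is the paper's own proof of Lemma~\ref{lem:zilberimpliespink}, which is exactly the ``Zilber $\Rightarrow$ Pink'' direction of the equivalence. The only surface difference is that you start from a point $v \in V \cap X^{[\dim X - c]}$ and build an optimal $W_v$ containing it, whereas the paper starts from an irreducible component $W$ of $V \cap S$ for $S$ special of small dimension and then observes (using the same maximal-dimension construction you spell out) that $W$ is contained in a proper optimal subvariety of defect at most $d$; both then finish by applying $\ZP(X,m,d)$, and neither uses \ref{ax:5}.
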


We point out that Conjectures \ref{conj:zilber} and \ref{conj:pink} certainly do not hold in an arbitrary distinguished category. Indeed, in $\mathfrak{C}_{\mathrm{add}}$, we can consider $X = \mathbb{G}^4_{a,K}$ with affine coordinates $x,y,z,w$ and $V \subset X$ defined by $x^2 + y^2 = z^2$, $x^3 + y^3 = w^3$. Then $V$ is covered by lines passing through the origin that are all optimal for $V$ in $X$, which contradicts Conjecture \ref{conj:zilber}. Furthermore, any such line is of dimension $1 = \delta(V) - 1$, so we contradict Conjecture \ref{conj:pink} as well.

\begin{lem}\label{lem:zilberimpliespink}
Let $K$ be an algebraically closed field, let $\mathfrak{C}$ be a distinguished category over $K$, let $X$ be a distinguished variety, and let $m$ and $d$ be non-negative integers. Then $\ZP(X,m,d)$ implies Conjecture \ref{conj:pink} for the given $\mathfrak{C}$, $X$, $m$, and $d$.
\end{lem}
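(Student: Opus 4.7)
The plan is to argue by contrapositive. I will assume $\ZP(X,m,d)$ holds and that Conjecture \ref{conj:pink} fails for some subvariety $V \subset X$ of dimension at most $m$, so that $V \cap X^{[k]}$ is Zariski dense in $V$ with $k = \dim X - \min\{\delta(V)-1,d\}$. This forces $\delta(V) \geq 1$, since otherwise $k > \dim X$ and $X^{[k]}$ would be empty. The overall strategy is to attach to each point of the dense intersection an optimal subvariety of $V$ in $X$ of defect at most $d$, strictly smaller than $V$, and then use the density to produce a contradiction to $\ZP(X,m,d)$.

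Concretely, for each $v \in V \cap X^{[k]}$ I will pick a special subvariety $S_v \ni v$ of dimension at most $\min\{\delta(V)-1,d\}$, and let $W_v$ be an irreducible component of $V \cap S_v$ containing $v$. Since $\langle W_v \rangle \subset S_v$, this yields the crucial bound
\[ \delta(W_v) \;\leq\; \dim \langle W_v \rangle \;\leq\; \min\{\delta(V)-1,d\}, \]
which is at most $d$ and strictly less than $\delta(V)$. I will then enlarge $W_v$ to an optimal subvariety $W'_v$ for $V$ in $X$ with $W_v \subset W'_v \subset V$ and $\delta(W'_v) \leq \delta(W_v)$; such a $W'_v$ exists by a routine maximality argument, since any ascending chain of enlargements with non-increasing defect must terminate as dimensions and defects are bounded non-negative integers. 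The resulting inequality $\delta(W'_v) \leq \delta(V) - 1$ then forces $W'_v \subsetneq V$.

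Finally, invoking $\ZP(X,m,d)$, the collection $\{W'_v : v \in V \cap X^{[k]}\}$ must be a finite family $\{W'_1,\ldots,W'_n\}$ of proper closed subvarieties of $V$ whose union contains the Zariski dense set $V \cap X^{[k]}$; since $V$ is irreducible, this is the desired contradiction. The argument is essentially a bookkeeping exercise once the key bound $\dim \langle W_v \rangle \leq \delta(V) - 1$ (built into the exponent of $X^{[k]}$) is in place. I do not anticipate a serious obstacle; the only subtle points are remembering to handle the degenerate case $\delta(V) = 0$ separately and verifying the elementary maximality argument that yields $W'_v$.
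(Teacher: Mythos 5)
Your proof is correct and essentially mirrors the paper's own argument: both bound $\delta(W_v)$ by $\min\{\delta(V)-1,d\}$ using that $\langle W_v\rangle$ sits inside a special subvariety of small dimension, enlarge to an optimal subvariety of defect at most $d$ that is strictly smaller than $V$, and invoke $\ZP(X,m,d)$ to conclude the atypical intersection lies in a finite union of proper subvarieties. The paper phrases this as a direct proof rather than by contrapositive, but the steps are the same.
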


\begin{proof}
Let $V \subset X$ be a subvariety of dimension at most $m$. Let $W$ be an irreducible component of the intersection of $V$ with a special subvariety of codimension at least $\dim X - \min\{\delta(V)-1,d\}$. Then
\[ \delta(W) \leq \min\{\delta(V) - 1,d\} -\dim W \leq \min\{\delta(V)-1,d\}.\]

There is a subvariety $U \subset V$, optimal for $V$ in $X$, such that $W \subset U$ and $\delta(U) \leq \delta(W) \leq d$. Since $\delta(W) < \delta(V)$, we have $U \neq V$. So $W$ is contained in the union of all proper optimal subvarieties for $V$ in $X$ of defect at most $d$, which by $\ZP(X,m,d)$ is a finite union.

We deduce that $V \cap X^{[\dim X - \min\{\delta(V)-1,d\}]}$ is not dense in $V$.
\end{proof}

The following theorem is an analogue of Theorem 1.9 in \cite{BD}. A similar statement for connected mixed Shimura varieties of Kuga type can be found in the work \cite{Cass3} of Cassani.

\begin{thm}\label{thm:zilberequivpink}
Let $K$ be an algebraically closed field, let $\mathfrak{C}$ be a distinguished category over $K$ that satisfies \ref{ax:5}, and let $m$ and $d$ be non-negative integers. Then Conjectures \ref{conj:zilber} and \ref{conj:pink} for $\mathfrak{C}$, $m$, and $d$ are equivalent.
\end{thm}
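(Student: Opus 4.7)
The plan is to observe that the direction Conjecture \ref{conj:zilber} $\Rightarrow$ Conjecture \ref{conj:pink} is already established by Lemma \ref{lem:zilberimpliespink}, so the task reduces to proving the converse. For this, I would invoke Theorem \ref{thm:redoptsing} to reduce the problem of bounding all optimal subvarieties of dimension $\leq m$ and defect $\leq d$ to the problem of bounding merely the optimal \emph{singletons} of defect $\leq d$. Concretely, I would take $\mathcal{S}$ in Theorem \ref{thm:redoptsing} to be the class of all distinguished varieties in $\mathfrak{C}$, so that the closure condition relative to \ref{ax:5} is automatic. It then suffices to show: for every distinguished variety $X'$ and every subvariety $V \subset X'$ with $\dim V \leq m$, the set of optimal singletons for $V$ in $X'$ of defect at most $d$ is finite.

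The next step is a straightforward induction on $\dim V$. The base case $\dim V = 0$ is trivial. For the inductive step, fix an optimal singleton $\{x\} \subsetneq V$ of defect at most $d$. By the definition of optimality applied to the chain $\{x\} \subsetneq V$, we have $\delta(V) > \delta(\{x\})$, and combining this with $\delta(\{x\}) \leq d$ yields $\delta(\{x\}) \leq \min\{\delta(V)-1, d\}$. Hence $\langle \{x\}\rangle$ is a special subvariety of $X'$ of codimension at least $\dim X' - \min\{\delta(V)-1, d\}$, so
\[
\{x\} \subset V \cap (X')^{[\dim X' - \min\{\delta(V)-1, d\}]}.
\]
By Conjecture \ref{conj:pink} applied to $V \subset X'$, this intersection is not Zariski dense in $V$, so it is contained in a finite union $V_1 \cup \cdots \cup V_s$ of proper subvarieties of $V$, each of dimension strictly less than $\dim V$. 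Now any optimal singleton for $V$ in $X'$ lying in some $V_i$ is automatically optimal for $V_i$ in $X'$, since the defining inequality only has to be verified against a smaller collection of intermediate subvarieties when the ambient subvariety shrinks. Applying the inductive hypothesis to each $V_i$ concludes the argument.

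The main conceptual obstacle—and the reason \ref{ax:5} is indispensable—lies in Theorem \ref{thm:redoptsing} itself. Pink's formulation only asserts that atypical intersections are not Zariski dense in $V$, which by itself is a strictly weaker statement than finiteness when the optimal subvarieties can be positive-dimensional: passing to a proper subvariety $V_i \subsetneq V$ does not preserve the optimality of a positive-dimensional subvariety, so the naive induction on $\dim V$ collapses. For singletons, however, ``contained in a proper subvariety of $V$'' immediately produces a genuine drop in the dimension of the enclosing variety, and the optimality condition survives the passage to $V_i$ for free, which makes the induction go through. Thus the whole argument pivots on the availability of the reduction to optimal singletons, which in turn relies on the weak finiteness axiom.
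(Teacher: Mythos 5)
Your proof is correct and follows essentially the same route as the paper: one direction via Lemma \ref{lem:zilberimpliespink}, reduction to optimal singletons via Theorem \ref{thm:redoptsing} (with $\mathcal{S}$ the whole class of objects), and then a descent argument in which Conjecture \ref{conj:pink} shows the relevant singletons are not Zariski dense, combined with the observation that optimality of a singleton is inherited when the ambient subvariety shrinks. The paper phrases the descent as an induction on an auxiliary index $j$ for a fixed $V$ rather than your induction on $\dim V$ across all $V$, but the two are equivalent formulations of the same argument.
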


\begin{proof}
By Lemma \ref{lem:zilberimpliespink}, it suffices to show that Conjecture \ref{conj:pink} for $\mathfrak{C}$, $m$, and $d$ implies $\ZP(X,m,d)$ for every distinguished variety $X$. By Theorem \ref{thm:redoptsing}, it is then enough to show that every subvariety of dimension at most $m$ of a distinguished variety contains at most finitely many optimal singletons of defect at most $d$.

For this, we follow the proof of Theorem 1.9 in \cite{BD}: Let $X$ be a distinguished variety and let $V$ be a subvariety of $X$ of dimension at most $m$. We show the following claim by induction on $j \in \{0,\hdots,\dim V\}$:

\begin{claim}
The optimal singletons for $V$ in $X$ of defect at most $d$ are contained in a finite union of subvarieties of $V$ of dimension at most $\dim V - j$.
\end{claim}

This is obvious for $j = 0$.

Suppose that the claim holds for some $j < \dim V$. Let $W$ be one of the finitely many subvarieties of $V$ of dimension at most $\dim V - j$ that contain the optimal singletons for $V$ in $X$ of defect at most $d$. We can assume without loss of generality that $\dim W = \dim V -j$.

If $\{p\} \subset W$ is an optimal singleton for $W$ in $X$ of defect at most $d$, then $\langle \{p\} \rangle \subset \langle W \rangle$. Since $\dim W = \dim V -j > 0$, we have that $\{p\} \subsetneq W$ and therefore $\delta(\{p\}) = \dim \langle \{p\} \rangle < \delta(W)$. It follows that the codimension of $\langle \{p\} \rangle$ is greater than or equal to $k := \dim X - \min\{\delta(W)-1,d\}$. So the optimal singletons for $W$ in $X$ of defect at most $d$ are contained in $W \cap X^{[k]}$.

It then follows from Conjecture \ref{conj:pink} for $X$, $d$, and $W$ that $W \cap X^{[k]}$ is not dense in $W$ and so the same holds for the union of all optimal singletons for $W$ in $X$ of defect at most $d$. This implies that the optimal singletons for $W$ in $X$ of defect at most $d$ are contained in a proper closed subset of $W$ as desired. This establishes the claim by induction.

\medskip

Now taking $j = \dim V$ in the claim shows that the number of optimal singletons for $V$ in $X$ of defect at most $d$ is finite.
\end{proof}

\section{Zilber-Pink for a complex curve in a fibered power of the Legendre family}\label{sec:legendre}
Let $Y(2) = \mathbb{A}^{1}_{\bar{\mathbb{Q}}}\backslash\{0,1\}$. Let $\mathcal{E}$ denote the Legendre family of elliptic curves, defined in $Y(2) \times_{\bar{\mathbb{Q}}} \mathbb{P}^2_{\bar{\mathbb{Q}}}$ by $Y^2Z = X(X-Z)(X-\lambda Z)$, where $[X:Y:Z]$ are the projective coordinates on $\mathbb{P}^2_{\bar{\mathbb{Q}}}$ and $\lambda$ is the affine coordinate on $Y(2) \subset \mathbb{A}^{1}_{\bar{\mathbb{Q}}}$. For $g \in \mathbb{N}$, let $\mathcal{E}^g$ denote the $g$-th fibered power of $\mathcal{E}$ over $Y(2)$.

Set $I_2 = \begin{pmatrix} 1 & 0 \\ 0 & 1\end{pmatrix}$ and $J_{2} = \begin{pmatrix} 0 & 1 \\ -1 & 0\end{pmatrix}$. Let $P = \GL_{2,\mathbb{Q}}\ltimes \mathbb{Q}^{2g}$, where $\GL_{2,\mathbb{Q}}$ acts on $\mathbb{Q}^{2g} = (\mathbb{Q}^2)^{\oplus g}$ by acting tautologically on each of the $g$ summands. Then $\mathcal{E}^g$ is canonically a connected mixed Shimura variety of Kuga type, associated to the triple $(P, X^+,\Gamma\ltimes \mathbb{Z}^{2g})$, where $X^+$ is a connected component of the $P(\mathbb{R})$-conjugacy class of the homomorphism $h: \mathbb{S}_{\mathbb{C}} \to \GL_{2,\mathbb{C}} \ltimes \mathbb{C}^{2g}$ defined by
\[ \alpha \in \mathbb{S}(\mathbb{R}) = \mathbb{C}^\ast\mapsto h(\alpha) = ((\Re \alpha)I_{2}+(\Im \alpha)J_{2},0)\]
and
\[ \Gamma = \left\{ A = \begin{pmatrix}a & b \\ c & d\end{pmatrix} \in \SL_2(\mathbb{Z});~A \equiv \begin{pmatrix} 1 & 0 \\ 0 & 1 \end{pmatrix} ~\text{mod}~ 2 ~\text{ and }~ a \equiv d \equiv 1~\text{mod}~4 \right\}.\]
As a complex manifold, $X^+$ is isomorphic to $\mathbb{H}\times \mathbb{C}^g$.

\begin{thm}\label{thm:legendre}
Let $K$ be an algebraically closed field. Then $\ZP(\mathcal{E}^g_K,1,d)$ holds for all $g \in \mathbb{N}$ and all non-negative integers $d$.
\end{thm}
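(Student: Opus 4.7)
The plan is to apply Theorem \ref{thm:fieldofdef}(1) to the very distinguished category $(\mathfrak{C}_{\mathrm{mSvK}})_{\bar{\mathbb{Q}}}$, which is very distinguished by Theorem \ref{thm:gaoweakfiniteness}, with $X = \mathcal{E}^g$, $m = 1$, and arbitrary $d \geq 0$. To conclude $\ZP(\mathcal{E}^g_K, 1, d)$ this way, I need to exhibit a class $\mathcal{S}$ of distinguished varieties over $\bar{\mathbb{Q}}$ containing $\mathcal{E}^g$ and closed under the base-change data of the pairs appearing in \ref{ax:5}, such that $\ZP(X', 1, d)$ holds for every $X' \in \mathcal{S}$ and every $d \geq 0$.

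The first step is to identify such a class. By Theorem 8.2 in \cite{G18} (used in the proof of Theorem \ref{thm:gaoweakfiniteness}), the distinguished morphisms $\phi, \psi$ arising in \ref{ax:5} can be chosen as a Shimura embedding $\phi$ and a quotient Shimura morphism $\psi$. Starting from the Shimura datum $(P, X^+) = (\GL_{2,\mathbb{Q}} \ltimes \mathbb{Q}^{2g}, X^+)$ of $\mathcal{E}^g$, the iterated closure under taking Shimura subdata and quotient Shimura data yields, up to finite covers (which are absorbed by Lemma \ref{lem:zilberpinkfinitecover}), the following objects: (i) fibered powers $\mathcal{E}^{g'}$ of the Legendre family for $0 \le g' \le g$ and closely related sub-families of $\mathcal{E}^g$ over $Y(2)$ corresponding to $\GL_{2,\mathbb{Q}}$-submodules of $\mathbb{Q}^{2g}$; (ii) abelian varieties over $\bar{\mathbb{Q}}$ isogenous to powers of elliptic curves, arising as fibers of $\mathcal{E}^{g'}$ over CM points or as sub-abelian varieties thereof (i.e., Shimura subdata whose reductive part is a proper torus in $\GL_{2,\mathbb{Q}}$); and (iii) modular curves or points obtained as pure quotients.

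The second step is to verify $\ZP(X', 1, d)$ for each $X' \in \mathcal{S}$ and each $d$. For fibered powers of the Legendre family over $\bar{\mathbb{Q}}$, this is precisely the combined result of \cite{RemVia}, \cite{Viada2008}, \cite{galateau2010}, \cite{BC}, \cite{Superbarro}. For abelian varieties over $\bar{\mathbb{Q}}$, this is the main theorem of \cite{BD}. For modular curves it is immediate, as they are one-dimensional. For finite covers of any of these objects, $\ZP$ is preserved by Lemma \ref{lem:zilberpinkfinitecover}. Once these verifications are in place, Theorem \ref{thm:fieldofdef}(1) yields $\ZP(\mathcal{E}^g_K, 1, d)$ for every algebraically closed $K$ and every $d$, as desired.

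The main obstacle I expect is the careful, case-by-case enumeration of the Shimura subdata and quotient Shimura data appearing in the iterated closure of $\{\mathcal{E}^g\}$, and the verification that each one falls—up to finite covers—into one of the three cases above. In particular, one must handle Shimura subdata whose reductive part is a proper torus in $\GL_{2,\mathbb{Q}}$ (leading to CM-fibers isogenous to powers of CM elliptic curves, to which \cite{BD} applies), and confirm that the procedure stabilizes at the class described above rather than producing essentially new types of mixed Shimura data.
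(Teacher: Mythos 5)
Your strategy matches the paper's: apply Theorem \ref{thm:fieldofdef}(1) to $(\mathfrak{C}_{\mathrm{mSvK}})_{\bar{\mathbb{Q}}}$ (very distinguished by Theorem \ref{thm:gaoweakfiniteness}), choose a small class $\mathcal{S}$ stable under the data produced by \ref{ax:5}, and verify $\ZP(X',1,d)$ for every $X' \in \mathcal{S}$ using known results. Your informal description of what $\mathcal{S}$ turns out to be is also correct. However, the gap you explicitly flag at the end --- the precise enumeration of the closure of $\{\mathcal{E}^g\}$ under taking quotients $(Q,Y^+)/N$ of Shimura subdata $(Q,Y^+)$ of $(\GL_{2,\mathbb{Q}}\ltimes\mathbb{Q}^{2g},X^+)$ by connected normal $N\subset Q^{\der}$ --- is in fact the substantive technical content of the proof, so the proposal is not yet a complete argument.

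To close it, you need (as the paper does): Proposition 1.2.16 in \cite{GDiss} (or Prop.~3.4 in \cite{G15}) to show every Shimura subdatum has, up to conjugation, underlying group $G_Q\ltimes V$ with $G_Q\subset\GL_{2,\mathbb{Q}}$ reductive and $V\subset\mathbb{Q}^{2g}$ a $G_Q$-submodule; Theorem 1 in \cite{NPT} to conclude $G_Q$ is either $\GL_{2,\mathbb{Q}}$ or a $2$-dimensional torus $T$; and then a short analysis of $N$ yielding that $Q/N$ is, up to isomorphism, $T\ltimes V_N^{\perp}$, $\GL_{2,\mathbb{Q}}\ltimes V_N^{\perp}$, or $\mathbb{G}_{m,\mathbb{Q}}$. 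Only then can one confirm the iteration stabilizes and that $\mathcal{S}$ consists (up to the finite covers absorbed by Lemma \ref{lem:zilberpinkfinitecover}) exactly of points, powers of CM elliptic curves over $\bar{\mathbb{Q}}$, and $\mathcal{E}^j$ for $0\leq j\leq g$ --- which is what your sketch predicts. One small remark on attribution: for the $\bar{\mathbb{Q}}$-abelian varieties (the CM fibers) you cite \cite{BD}, which is sufficient but roundabout, since \cite{BD} is the reduction from $\mathbb{C}$ to $\bar{\mathbb{Q}}$; the paper cites the same references \cite{RemVia}, \cite{Viada2008}, \cite{galateau2010}, \cite{BC}, \cite{Superbarro} for both the CM fibers and the fibered powers $\mathcal{E}^j$, which is cleaner.
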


\begin{proof}
We want to use Theorem \ref{thm:fieldofdef} to deduce $\ZP(\mathcal{E}^g_K,1,d)$ from $\ZP(X,1,d)$ for $X$ equal to a power of a CM elliptic curve over $\bar{\mathbb{Q}}$ and $X$ equal to a fibered power of $\mathcal{E}$, which is known thanks to several authors.

By Theorem \ref{thm:gaoweakfiniteness}, the category of connected mixed Shimura varieties of Kuga type over ${\bar{\mathbb{Q}}}$ is very distinguished. We want to apply Theorem \ref{thm:fieldofdef}(1), choosing the class $\mathcal{S}$ there as small as possible. Theorem 8.2 in \cite{G18} and the proof of Theorem \ref{thm:gaoweakfiniteness} show that we first have to include all triples $((Q,Y^+)/N,\ast)$, where $(Q,Y^+)$ is a Shimura subdatum of $(P,X^+)$ and $N$ is a connected algebraic subgroup of $Q^{\der}$, normal in $Q$ (equivalently: $N$ is a normal connected algebraic subgroup of $Q$ whose reductive part is semisimple). Note that we can assume that $N$ is connected in the conclusion of Theorem 8.2 in \cite{G18} since replacing $N$ by its identity component does not change $N(\mathbb{R})^+$.

By Proposition 1.2.16 and its proof in \cite{GDiss} (cf. Proposition 3.4 in \cite{G15}), we have $Q = (I_2,v)(G_Q \ltimes V)(I_2,v)^{-1}$, where $G_Q \subset \GL_{2,\mathbb{Q}}$ is a reductive subgroup, $V$ is a $G_Q$-invariant vector subspace of $\mathbb{Q}^{2g}$, and $v\in \mathbb{Q}^{2g}$. Up to isomorphism, we can assume that $v = 0$. Furthermore, $G_{Q,\mathbb{R}}$ contains a $2$-dimensional torus (the image of $\mathbb{S}$) that contains the center of $\GL_{2,\mathbb{R}}$. It follows from Theorem 1 in \cite{NPT} that $G_Q$ is either a $2$-dimensional torus $T$ or $G_Q = \GL_{2,\mathbb{Q}}$. Thanks to Proposition 4.17 in \cite{MilneAG}, we can then assume (up to isomorphism) that $V$ is the direct sum of $g_Q$ summands of the direct sum $(\mathbb{Q}^2)^{\oplus g}$ for some non-negative integer $g_Q$ as these summands are all simple as representations of either $T$ or $\GL_{2,\mathbb{Q}}$.

Since $(Q,Y^+)$ is a connected mixed Shimura datum, we have $Q^{\der} = (G_Q)^{\der} \ltimes V$, so $Q^{\der} = V$ in case $G_Q =T$ and $Q^{\der} = \SL_{2,\mathbb{Q}} \ltimes V$ if $G_Q = \GL_{2,\mathbb{Q}}$.

We set $V_N = N \cap V$ and $G_N = N/V_N$, where we regard $G_N$ as an algebraic subgroup of $G_Q = Q/V$. Thanks to Proposition 4.17 in \cite{MilneAG}, we can find a $G_Q$-invariant vector subspace $V_N^{\perp} \subset V$ such that $V = V_N \oplus V_N^{\perp}$. Furthermore, $V_N^{\perp}$ can be chosen as the direct sum of some of the summands in the direct sum $(\mathbb{Q}^2)^{\oplus g_Q}$.

In case $G_Q =T$, we have that $N = V_N$ is a $T$-invariant vector subspace of $V$ and therefore $Q/N \simeq T \ltimes V_N^{\perp}$.

If $G_Q = \GL_{2,\mathbb{Q}}$, then $G_N \subset \SL_{2,\mathbb{Q}}$ is normal and connected and $V_N \subset V$ is a $\GL_{2,\mathbb{Q}}$-invariant vector subspace. Since $\SL_{2,\mathbb{Q}}$ is almost-simple (as defined in Definition 19.7 in \cite{MilneAG}), it follows that $G_N = \{1\}$ or $G_N = \SL_{2,\mathbb{Q}}$. 

In the first case, $N = V_N$ is a $\GL_{2,\mathbb{Q}}$-invariant vector subspace of $V$. Then, $Q/N \simeq \GL_{2,\mathbb{Q}} \ltimes V_N^{\perp}$.

We are then left with the case $G_N = \SL_{2,\mathbb{Q}}$. We have that $(-I_2,w) \in N(\mathbb{C})$ for some $w \in V(\mathbb{C})$ and hence
\[ (-I_2,w)(I_2,v)(-I_2,w)(I_2,-v) = (I_2,-2v) \in N(\mathbb{C})\]
for every $v\in V(\mathbb{C})$. This implies that $N = \SL_{2,\mathbb{Q}} \ltimes V=Q^{\der}$ and therefore $Q/N\simeq \mathbb{G}_{m,\mathbb{Q}}$.

Hence there are the following possibilities for $Q/N$ (up to isomorphism): $T \ltimes  V_N^{\perp}$, $\GL_{2,\mathbb{Q}} \ltimes  V_N^{\perp}$, and $\mathbb{G}_{m,\mathbb{Q}}$. In principle, we now have to iterate this step and again look at all quotients of Shimura subdata as above of these connected mixed Shimura data. But this yields nothing new, so we have found our $\mathcal{S}$. The above considerations also show that the special subvarieties of $\mathcal{E}^g$ are precisely the irreducible components of flat subgroup schemes (see Section 2 of \cite{BC}) and algebraic subgroups of CM fibers.

Instead of checking $\ZP(X,1,d)$ for all $X \in \mathcal{S}$, we can use Lemma \ref{lem:zilberpinkfinitecover} to fix one congruence subgroup for each connected mixed Shimura datum that occurs in $\mathcal{S}$. To be able to apply Theorem \ref{thm:fieldofdef}(1), we therefore only have to check that $\ZP(X,1,d)$ holds for $X$ equal to a point, $Y(2)$, a $j$-th power of a CM elliptic curve over $\bar{\mathbb{Q}}$ ($1 \leq j \leq g)$, or $\mathcal{E}^j$ ($1 \leq j \leq g$). This follows from the results of \cite{RemVia}, \cite{Viada2008}, \cite{galateau2010}, \cite{BC}, and \cite{Superbarro}.
\end{proof}

\section{Zilber-Pink for a complex curve in a semiabelian variety}\label{sec:semiabelian}

In this section, we will apply Theorem \ref{thm:fieldofdef} to deduce the Zilber-Pink conjecture for a complex curve in the base change of a semiabelian variety over $\bar{\mathbb{Q}}$ from the same statement for a curve defined over $\bar{\mathbb{Q}}$, which is known thanks to the first-named author, K\"uhne, and Schmidt.

\begin{thm}\label{thm:semiabelian}
Let $K$ be an algebraically closed field and let $G$ be a semiabelian variety over $\bar{\mathbb{Q}}$. Then $\ZP(G_K,1,d)$ holds for all non-negative integers $d$.
\end{thm}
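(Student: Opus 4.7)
The plan is to apply Theorem \ref{thm:fieldofdef}(1) with the very distinguished category $\mathfrak{C}_{\mathrm{semiab}}$ of semiabelian varieties over $\bar{\mathbb{Q}}$ (which is very distinguished by Theorem \ref{thm:kirbyweakfiniteness}), taking the role of the smaller ground field to be $\bar{\mathbb{Q}}$ and the larger to be our $K$, the distinguished variety to be $X = G$, the dimension bound $m = 1$, and defect bound $d$ arbitrary. We take $\mathcal{S}$ to be the class of \emph{all} semiabelian varieties over $\bar{\mathbb{Q}}$; of course $G \in \mathcal{S}$.

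The first technical step is to verify the hypothesis of Theorem \ref{thm:fieldofdef} on $\mathcal{S}$, namely that for every $X' \in \mathcal{S}$, every extension $\bar{\mathbb{Q}} \subset K'$ of finite transcendence degree, and every subvariety $V \subset X'_{K'}$, the finite set from \ref{ax:5} can be chosen with $Y_\phi$ and $Z_\psi$ equal to base changes of objects of $\mathcal{S}$. This follows directly from Theorem \ref{thm:structuresemiab} (due to Kirby) combined with the descent argument in the proof of Theorem \ref{thm:kirbyweakfiniteness}: for any such $V$ there is a finite set $\Sigma$ of semiabelian subvarieties of $X'$ (over $\bar{\mathbb{Q}}$) such that every weakly optimal $W \subset V$ has $\langle W \rangle_{\ws} = xY_{K'}$ for some $Y \in \Sigma$ and $x \in W(K')$. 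For each $Y \in \Sigma$ one then takes $\phi = \id_{X'_{K'}}$ (trivially with finite fibers) and $\psi$ equal to the base change to $K'$ of the quotient morphism $X' \to X'/Y$. Both the source and target of these morphisms are base changes of semiabelian varieties over $\bar{\mathbb{Q}}$, so lie in the required class.

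The second ingredient is the hypothesis $\ZP(X',1,d)$ for every $X' \in \mathcal{S}$ and every non-negative integer $d$, i.e., the Zilber-Pink conjecture for all curves in all semiabelian varieties defined over $\bar{\mathbb{Q}}$. This is exactly the main result of \cite{BarKS} of the first-named author, K\"uhne, and Schmidt, and constitutes the only serious input of the proof; the rest is formal. Plugging these two ingredients into Theorem \ref{thm:fieldofdef}(1) immediately yields $\ZP(G_K,1,d)$ for every non-negative integer $d$, which is the statement of Theorem \ref{thm:semiabelian}.

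The genuinely hard content sits entirely inside \cite{BarKS}; once that is granted, the remaining obstacle is purely bookkeeping, namely arranging the weak finiteness data for arbitrary base subvarieties so that the descent mechanism of Theorem \ref{thm:fieldofdef} applies. Since the weakly special subvarieties of a semiabelian variety are exactly cosets of semiabelian subvarieties and Kirby's theorem provides the required uniform finiteness, this bookkeeping presents no actual difficulty.
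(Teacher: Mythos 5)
Your proposal is correct and follows the same route as the paper's own proof: invoke Theorem \ref{thm:kirbyweakfiniteness} to establish that $\mathfrak{C}_{\mathrm{semiab}}$ over $\bar{\mathbb{Q}}$ is very distinguished, then apply Theorem \ref{thm:fieldofdef}(1) with $\mathcal{S}$ the class of all semiabelian varieties over $\bar{\mathbb{Q}}$ and feed in $\ZP(X',1,d)$ from \cite{BarKS}. Your explicit construction of the \ref{ax:5} data (taking $\phi = \id$ and $\psi$ the quotient by the semiabelian subvarieties supplied by Theorem \ref{thm:structuresemiab}) is exactly the verification the paper leaves implicit.
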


\begin{proof}
By Theorem \ref{thm:kirbyweakfiniteness}, the category of semiabelian varieties over $\bar{\mathbb{Q}}$ is very distinguished. By Theorem 1.1 in the recent work of the first-named author, K\"uhne, and Schmidt \cite{BarKS}, the statement $\ZP(H,1,d)$ holds for every semiabelian variety $H$ over $\bar{\mathbb{Q}}$ and for all non-negative integers $d$. Theorem \ref{thm:semiabelian} now follows from Theorem \ref{thm:fieldofdef}(1), applied with $\mathcal{S}$ equal to the class of all semiabelian varieties over $\bar{\mathbb{Q}}$.
\end{proof}

\section*{Acknowledgements}

We thank Chris Daw for providing us with the crucial part of the proof of Lemma \ref{lem:milneshih}, David Masser for pointing out his paper \cite{Masser99} to us, and Martin Orr for pointing out the paper \cite{RohlfsSchwermer} to us. We thank Cassani, Ziyang Gao, Philipp Habegger, Lars K\"uhne, Ben Moonen, Martin Orr, Jonathan Pila, Harry Schmidt, Christian Urech, and Immanuel van Santen for relevant and helpful discussions and correspondence. We thank the anonymous referees for their comments, which helped to improve the exposition of this article.  The first-named author received support from the PRIN 2022 project 2022HPSNCR: Semiabelian varieties, Galois representations and related Diophantine problems and from the GNSAGA-INDAM group.
The second-named author was supported during his work on this article by the Swiss National Science Foundation as part of the project ``Diophantine Problems, o-Minimality, and Heights", no. 200021\_165525, as well as through the Early Postdoc.Mobility grant no. P2BSP2\_195703. He thanks the Mathematical Institute of the University of Oxford and his host there, Jonathan Pila, for hosting him as a visitor for the duration of this grant.

\bibliographystyle{acm}

\bibliography{Bibliography}

\end{document}